\numberwithin{equation}{section}
\theoremstyle{plain}
\newtheorem{theorem}{Theorem}[section]
\newtheorem{proposition}{Proposition}[section]
\newtheorem{corollary}{Corollary}[section]
\newtheorem{lemma}{Lemma}[section]
\theoremstyle{remark}
\newtheorem{remark}{Remark}[section]
\newcommand{\noi}{\noindent}
\newcommand{\eps}{{\varepsilon}}  
\newcommand{\veps}{{\varepsilon}}
\newcommand{\Mmb}{{\mathbb{M}}}
\newcommand{\NN}{{\mathbb{N}}}
\newcommand{\RR}{{\mathbb{R}}}
\newcommand{\clb}{{\mathcal{B}}}
\newcommand{\clc}{{\mathcal{C}}}
\newcommand{\clo}{{\mathcal{O}}}
\newcommand{\clz}{{\mathcal{Z}}}
\newcommand{\cla}{{\mathcal{A}}}
\newcommand{\clw}{{\mathcal{W}}}
\newcommand{\cln}{{\mathcal{N}}}
\newcommand{\cld}{{\mathcal{D}}}
\newcommand{\clr}{{\mathcal{R}}}
\newcommand{\clf}{{\mathcal{F}}}
\newcommand{\clm}{{\mathcal{M}}}
\newcommand{\clu}{{\mathcal{U}}}
\newcommand{\clv}{{\mathcal{V}}}
\newcommand{\one}{{\boldsymbol{1}}}
\newcommand{\Ebd}{{\boldsymbol{E}}}
\newcommand{\Pbd}{{\boldsymbol{P}}}
\newcommand{\Wbd}{{\boldsymbol{W}}}
\newcommand{\xbd}{{\boldsymbol{x}}}\newcommand{\Xbd}{{\boldsymbol{X}}}
\newcommand{\Zbd}{{\boldsymbol{Z}}}
\newcommand{\Om}{\Omega}
\newcommand{\ab}[1]{\textcolor{black}{#1}}
\begin{document}

\begin{frontmatter}
\title{Fluctuations of the Atlas model from inhomogeneous stationary profiles}
\runtitle{Inhomogeneous stationary profile fluctuations}

\begin{aug}
\author[A]{\fnms{Sayan}~\snm{Banerjee}\ead[label=e1]{sayan@email.unc.edu}},
\author[A]{\fnms{Amarjit}~\snm{Budhiraja}\ead[label=e2]{budhiraj@email.unc.edu}}
\and
\author[A]{\fnms{Peter}~\snm{Rudzis}\ead[label=e3]{prudzis@unc.edu}}
\address[A]{Department of Statistics and Operations Research,
University of North Carolina - Chapel Hill\printead[presep={,\ }]{e1,e2,e3}}
\end{aug}

\begin{abstract}
The infinite Atlas model describes the evolution of a countable collection of Brownian particles on the real line, where the lowest particle is given a drift of $\gamma \in [0,\infty)$. The case $\gamma=0$ is referred to as the Harris model. In this work we study equilibrium fluctuations for the Atlas model for $\gamma \in [0,\infty)$ when the system of particles starts from an inhomogeneous stationary profile with exponentially growing density. We show that the appropriately centered and scaled occupation measure of the particle positions, with suitable translations, viewed as a space-time random field, converges to a limit that can be characterized in terms of a certain stochastic partial differential equation (SPDE). The initial condition for this equation is given by a Brownian motion, the equation is driven by an additive space-time noise that is white in time and colored in space, and the linear operator governing the evolution is the infinitesimal generator of a geometric Brownian motion. We use this SPDE to also characterize the fluctuations of the ranked particle positions with a suitable centering and scaling. Our results describe the behavior of the particles in the bulk and one finds that the Gaussian process describing the asymptotic fluctuations has the same H\"{o}lder regularity as a fractional Brownian motion with Hurst parameter $1/4$. This connection with a fractional Brownian motion becomes even more exact when the inhomogeneous stationary profiles approach the homogeneous stationary profile in a suitable manner. One finds that, unlike the setting of a homogeneous profile (Dembo and Tsai (2017)), the behavior on the lower edge of the particle system is very different from the bulk behavior and in fact the variance of the Gaussian limit diverges to $\infty$ as one approaches the lower edge. Indeed, our results show that, with the gaps between particles given by one of the inhomogeneous stationary distributions, the lowest particle, started from $0$, with a linear in time translation, converges in distribution to an explicit non-Gaussian limit as $t\to \infty$. In the 
special case of the Harris model this limit is given as the law of the difference of two independent Gumbel distributions.
\end{abstract}

\begin{keyword}[class=MSC]
\kwd{60J60, 60K35, 60J25, 60H10}
\end{keyword}

\begin{keyword}
\kwd{Reflecting Brownian motions, interacting particle systems, infinite Atlas model, equilibrium fluctuations, long time behavior,  stochastic partial differential equation, geometric Brownian motion, fractional Brownian motion}
\end{keyword}

\end{frontmatter}


\section{Introduction}
\label{sec:intro}  

The Atlas model describes the evolution of a collection of Brownian particles on the real line, where the lowest particle is given a drift of $\gamma \in [0,\infty)$. Its dynamics are expressed by the system of equations
\begin{equation} \label{eq:Atlassystem}
dX_i(t) = \gamma \mathbf{1}\left\{X_i(t) = \min_{0 \leq j < N} X_j(t)\right\} dt + dW_i(t), \quad 0 \leq i < N,
\end{equation}
where, depending on the context, $N$ is some positive integer or $N = \infty$, and the $W_i(\cdot)$ are independent standard Brownian motions. The concern of this paper is with the infinite Atlas model, that is, the case where $N = \infty$. Our main results describe the fluctuations of the occupation measure process encoding particle locations in the infinite Atlas model, started from spatially inhomogeneous configurations given in \eqref{eq:inhomogeneous2} that are characterized by exponentially growing particle density as one moves away from the lowest particle. These configurations are known to be stationary in time, up to a deterministic linear time translation, under the dynamics given in \eqref{eq:Atlassystem}, and will henceforth be referred to as `inhomogeneous stationary profiles'. Previously, fluctuation results were obtained by Dembo and Tsai \cite{dembo2017equilibrium} when the starting configuration is  distributed as a Poisson process on $\RR_+ \doteq [0,\infty)$ with intensity $2\gamma$ for $\gamma>0$. This initial configuration is known to keep the distribution of gaps between successive particles invariant in time, but not the particle locations themselves. We will, however, abusing terminology, refer to this configuration as the `homogeneous stationary profile'.

\subsection{Background}
When $\gamma = 0$ in \eqref{eq:Atlassystem} the resulting particle system, which was first studied by Harris \cite{harris1965collisions} and referred to throughout as
 the Harris model, has a history which goes back further than the general $\gamma$ case. See \cite{harris1965collisions} and \cite{durrgoldlebo1985asymptotics}. The Atlas model for general $\gamma$ first appears in the context of stochastic portfolio theory, as a prototypical example of the so-called rank-based diffusions. This class of models was introduced in work of Fernholz \cite{fernholz2002stochastic}, and further developed in \cite{banner2005atlas} and \cite{fernholz2009stochastic}, as a model for stochastic evolution of equity markets. 
Subsequently, this model and its variants have been studied extensively in several contexts including scaling limits of interacting particle systems like the simple exclusion process \cite{karatzas2016systems}, non-linear Markov processes \cite{shkol, jourdain2008propagation}, free boundary problems \cite{cabezas2019brownian} and stochastic control problems \cite{aldous41up,tang2018optimal}. Recently, rank-based diffusions have been shown to have connections with large deviation phenomena for the Kardar-Parisi-Zhang (KPZ) equation \cite{tsai2023high}.


Turning now to the case $N=\infty$, we describe the set of initial conditions for which \eqref{eq:Atlassystem} is well-posed. Following Tsai \cite{tsai_stat} we say a vector $\xbd = (x_i)_{i \in \NN_0} \in \RR^{\infty}$ is rankable if there is a  permutation $p: \NN_0 \to \NN_0$ such that $x_{p(i)} \le x_{p(j)}$ for all $0\le i<j<\infty$. We denote by $p_{\xbd}$ the unique such permutation in which ties are broken in lexicographic order. Define the configuration space
\begin{equation}
\clu \doteq \left\{\xbd = (x_i)_{i \in \NN_0} \in \RR^{\infty}: \lim_{k\to \infty} x_k = \infty,
\mbox{ and } \sum_{i=1}^{\infty} e^{-\alpha x_i^2} <\infty \mbox{ for all } \alpha \in (0, \infty)\right\}.
\end{equation}
Fix $\gamma \in [0, \infty)$. With the above terminology, a solution to \eqref{eq:Atlassystem} is a continuous adapted process that is rankable at each time instant, a.s., and solves the equation
\begin{equation}\label{eq:statproc}
dX_i(t) = \gamma \one\{p_{\Xbd(t)}(i) = 0\} dt + dW_i(t), \; i \in \NN_0.
\end{equation}
Here $\Xbd(t) = (X_i(t))_{i\in \NN_0}$. Similarly, we denote $\Wbd(t) = (W_i(t))_{i\in \NN_0}$. Proposition 3.1 in \cite{shkolnikov2011compete} implies there is a unique weak solution to the system of equations \eqref{eq:statproc} with state space $\clu$, for  any initial distribution $\mu \sim \Xbd(0) = (X_i(0))_{i\in \NN_0}$ that is supported on $\clu$.

\ab{Whether it is necessary for the initial data to lie in $\clu$ in order for a weak solution to exist is unknown to us. Note however that the initial data cannot be too closely packed---for instance, an infinite collection of i.i.d.\ Brownian motions started from a single point will be almost surely unrankable at each time $t > 0$, due to the presence of accumulation points.}

When
$\Xbd(t)$ is a solution, we denote,
$$
X_{p_{\Xbd(t)}(i)}(t) = X_{(i)}(t) \text{ for $i \in \NN_0$ and $t \ge 0$}.
$$
We assume throughout, without loss of generality, that  $X_i(0) = X_{(i)}(0), \, i \in \NN_0$. 
Previously, results on weak existence and uniqueness for the system \eqref{eq:statproc} were obtained in \cite{PP}, and those on strong solutions were described in \cite{ichiba2013strong}, for less general initial configurations. 

If $\gamma > 0$, a kind of competition ensues between the drift received by the lowest particle and the density profile of the higher up particles. This manifests itself in equilibrium behavior, either in the particle gap distribution, or in a re-centered version of the point process itself. 
We denote the sequence of gaps by
\begin{equation}\label{eq:gapt}
Z_i(t) \doteq X_{(i)}(t) - X_{(i-1)}(t), \quad 1 \leq i < N.
\end{equation}
\normalsize
The process $\Zbd(t) \doteq (Z_i(t))_{1 \leq i < N}$, of interest in its own right, can be viewed as a Brownian motion in the positive orthant $\mathbb{R}_+^N$ with oblique reflections from the boundary. For finite $N$, this allows for techniques to be applied from the well developed theory of reflected Brownian motion \cite{HR,harrison1987multidimensional}. Specifically, in the finite $N$ case, for $\gamma > 0$, \cite[Corollary 10]{PP} says that the sequence of gaps $(Z_i(t))_{1 \leq i \leq N - 1}$ converges in total variation distance as $t \to \infty$ to the product-form stationary distribution
\begin{equation} \label{eq:statfinite}
\bigotimes_{i = 1}^{N-1} \text{Exp}\left(2\gamma(1 - i/N)\right).
\end{equation}
In particular, the stationary measure is unique. By taking a formal limit as $N \rightarrow \infty$ above, it is natural to guess that
\begin{equation} \label{eq:homogeneous}
\pi_0^\gamma \doteq \bigotimes_{i = 1}^\infty \text{Exp}(2\gamma)
\end{equation}
is a stationary measure for the infinite Atlas model, which is established in \cite{PP}. In sharp contrast with the finite $N$ case, the measure \eqref{eq:homogeneous} is not the unique stationary distribution for the infinite Atlas model, and the paper \cite{sarantsev2017stationary} proves the existence of a one-parameter family of product-form stationary distributions, namely
\begin{equation} \label{eq:inhomogeneous}
\pi_a^\gamma \doteq \bigotimes_{i = 1}^\infty \text{Exp}(2\gamma + ia), \quad a > 0.
\end{equation}
We emphasize that \eqref{eq:statfinite} and \eqref{eq:homogeneous} only makes sense for a strictly positive drift $\gamma$, and indeed there is no stationary measure for the Harris model ($\gamma=0$) when $N < \infty$. On the other hand, for the case $N=\infty$, the measure \eqref{eq:inhomogeneous} is stationary for any $\gamma \geq 0$, provided that $a > 0$. 

For $i \in \mathbb{N}_0$, let $\tilde{X}_{(i)}(t) \doteq X_{(i)}(t) + \frac{at}{2}$. The process $(\tilde{X}_{(i)}(t))_{i \in \mathbb{N}_0}$ may be identified with the point process associated with its occupation measure $\sum_{i \in \mathbb{N}_0} \delta_{\tilde{X}_{(i)}(t)}$. Extending the stationarity results from \cite{sarantsev2017stationary}, Tsai shows in \cite{tsai_stat} that if $a > 0$, the point process $(\tilde{X}_{(i)}(t))_{i \in \mathbb{N}_0}$ is stationary with respect to the following measure 
\begin{equation} \label{eq:inhomogeneous2}
\nu_a^{\gamma}(A) \doteq \frac{1}{\Gamma(1+ 2\gamma/a)} \int_{A} e^{2\gamma x_{(0)}} d\Pi_a(\xbd),\; A \in \clb(\RR^{\infty}),
\end{equation}
for each $\gamma \ge 0$, where for $\xbd = (x_{i})_{i\in \NN_0}$, $x_{(i)} = x_{p_{\xbd}(i)}$, $i \in \NN_0$, \ab{and $\Pi_a$ is the law of a Poisson point process on $\mathbb{R}$ with intensity measure $ae^{ax}dx$}, regarded as a probability measure on $\RR^{\infty}$. 
Throughout, by the law of a point process, we will mean the probability measure on $\RR^{\infty}$ that corresponds to the law of the ordered vector of points in the point process.
Note that in the case of the Harris model ($\gamma=0$), the above result says that the stationary distribution is simply the Poisson point process $\Pi_a$. 

The normalization \eqref{eq:inhomogeneous2} is chosen to make $\nu_a^\gamma$ probability measure. To see this, let $(U_i)_{i \in \mathbb{N}_0} \sim \Pi_a$, where $U_0 < U_1 < U_2 < \cdots$ a.s. One may compute 
$
\Pbd(U_0 > y) = e^{-e^{ay}},
$
thus obtaining
\begin{equation}
\int_{\RR^{\infty}} e^{2\gamma x_{(0)}} d\nu_a^\gamma(\xbd) = \Ebd(e^{2\gamma U_0})
= a\int_{-\infty}^{\infty}e^{2\gamma u} e^{au} e^{-e^{au}} du = \Gamma(1+ 2\gamma/a). \label{eq:ZisGamma}
\end{equation}
By similar calculations, if $(X_{(i)}(0))_{i \in \mathbb{N}_0} \sim \nu_a^\gamma$, one can verify that the gap sequence $(Z_i(t))_{i \in \mathbb{N}}$ is distributed as $\pi_a^\gamma$ for all $t \geq 0$, recovering the stationarity result from \cite{sarantsev2017stationary}.

A number of works have recently investigated the long time behavior of the gap process $\Zbd(\cdot)$ for the infinite Atlas model. In particular, results on domains of attraction of $\pi_a^\gamma$ for $\gamma>0, \, a \ge 0,$ have been obtained in \cite{AS}, \cite{DJO} and \cite{banerjee2022domains}. For $\gamma=0, \, a>0$, $\{\pi_a^0 : a >0\}$ have been shown to be the only extremal (ergodic) invariant distributions for the gap process in \cite{RuzAiz}. For $\gamma>0$, characterizing all extremal invariant distributions remains a challenging open problem; \cite{banerjee2022extremal} shows that $\pi_a^\gamma$ is extremal for every $\gamma>0, \, a \ge 0$ and, moreover, these are the only product-form invariant distributions under mild regularity assumptions.

\subsection{Fluctuation results for homogeneous stationary profiles}
To put our results in context, we first summarize the results of Dembo and Tsai \cite{dembo2017equilibrium} on the fluctuations of the Atlas model started from  a Poisson process on $\RR_+ \doteq [0,\infty)$ with intensity $2\gamma$ with $\gamma>0$. 


We consider the coordinate processes $(\Xbd,\Wbd)$  defined on the canonical path space $$
(\Om, \clf) \doteq (\clc([0, \infty): \RR^{\infty}\times \RR^{\infty}), \clb(\clc([0, \infty): \RR^{\infty}\times \RR^{\infty})),$$ where for a Polish space $S$, $\clc([0, \infty): S)$ is the space of continuous functions from $[0, \infty)$ to $S$ that is equipped with the usual local uniform topology, $\RR^{\infty}$ is equipped with the product topology and for two Polish spaces $S_1$ and $S_2$, $S_1\times S_2$ is also equipped with the product topology. If $\mu$ is the distribution of $\Xbd(0)$ in $\mathbb{R}^\infty$, we denote by $\Pbd^{\gamma}_{\mu}$
the unique
probability measure on
$(\Omega,\mathcal{F})$ 
under which $\Wbd = (W_i, i \in \NN_0)$ is an infinite collection of independent standard Brownian motions with respect to the filtration
$\sigma\{(\Wbd(s), \Xbd(s)), 0\le s\le t\}$, $\Xbd(0)$ is distributed as $\mu$, and $\Xbd$ solves \eqref{eq:statproc} with driving Brownian motions $\{W_i, i \in \NN_0\}$.

For $\gamma > 0$, let $\mu_0^\gamma$ denote the distribution of a Poisson point process on $\mathbb{R}_+$ with intensity measure $2\gamma \mathbf{1}_{[0,\infty)}(y)dy$, viewed as a probability measure on $\RR^{\infty}$. We let $\Pbd_0^\gamma \doteq 
\Pbd_{\mu_0^\gamma}^\gamma$, denoting the corresponding expectation operator by $\Ebd_0^\gamma$. Note that under $\Pbd_0^\gamma$, the process $\Xbd$ has the stationary gap distribution $\pi_0^\gamma$, defined in \eqref{eq:homogeneous}.

The paper \cite{dembo2017equilibrium} considers the fluctuations, under the measure, $\Pbd_0^{\gamma}$, i.e. in the case where the process is started from $\Xbd(0) \sim \mu_0^\gamma$. Consider the random field 
\begin{equation}\label{ds}
\mathcal{X}^\veps(t,x) \doteq \veps^{1/4}\left( i_\veps(x) - 2\gamma X_{(i_\veps(x))}(\veps^{-1}t) \right),
\end{equation}
where $\veps > 0$, and for $x \geq 0$, $i_\varepsilon(x) \doteq \lfloor (2\gamma \varepsilon^{1/2})^{-1}x \rfloor$. Denote the usual Gaussian heat kernel by
\begin{equation} \label{eq:heatkernel}
p_t(z) \doteq (2\pi t)^{-1/2}e^{-z^2/2t}, t>0, \; z \in \RR.
\end{equation}
and denote the Neumann heat kernel on $[0,\infty)$ by 
$$p_t^{\text{N}}(y,x) \doteq p_t(y - x) + p_t(y + x), t>0,; x,y \in [0, \infty).
$$
Also define
$$
\tilde{\Psi}_t(y,x) \doteq 2 - \int_0^y p_t^{\text{N}}(z,x)dz, \; t>0.
$$
Let $\{\mathcal{X}(t,x), (t,x) \in [0,\infty) \times [0,\infty)\}$ be the continuous mean $0$ Gaussian process, with covariances given by 
$$
\Ebd[\mathcal{X}(t,x)\mathcal{X}(t',x')] = 2\gamma ( \int_0^\infty \tilde{\Psi}_t(y,x)\tilde{\Psi}_{t'}(y,x')dy + \int_0^{t \wedge t'} \int_0^\infty p_{t - s}^{\text{N}}(y,x) p_{t' - s}(y,x')dy ds).
$$
The main result of \cite{dembo2017equilibrium} shows that under $\Pbd_0^\gamma$, as $\veps \to 0$, $\mathcal{X}^\veps$ converges to 
$\mathcal{X}$, uniformly on compact subsets of $[0,\infty)\times [0,\infty)$, in distribution (see above Theorem \ref{thm:main1} for a precise formulation of this convergence).

The field $\mathcal{X}$ may equivalently be described as the unique 
mild
solution of the stochastic heat equation on $[0,\infty)$ with Neumann boundary condition:
$$
\partial_t \mathcal{X}(t,x) - \frac{1}{2}\partial_{xx} \mathcal{X}(t,x) = (2\gamma)^{1/2}\dot{\mathcal{W}}(t,x), \; \mathcal{X}(0,x) = (2\gamma)^{1/2}B(x), \;  (t,x) \in [0,\infty) \times [0,\infty).
$$
Here $\dot{\mathcal{W}}$ is a standard Gaussian white noise on $[0,\infty) \times [0,\infty)$, and $B$ is Brownian motion on $[0,\infty)$, independent of $\clw$. By a \textit{mild solution} of the above equation \ab{with Neumann boundary condition} one means the continuous random field defined as 
$$
\mathcal{X}(t,x) = (2\gamma)^{1/2}\left( \int_0^\infty \tilde{\Psi}_t(y,x)B(dy) + \int_{[0,t] \times (0,\infty)} p_{t - s}^{\text{N}}(y,x) \mathcal{W}(ds\, dy) \right),
$$
where the terms on the right-hand side are stochastic integrals with respect to white noise measures in the sense of Walsh \cite{walsh}. 
 \ab{Note in particular that   this evolution equation is described in terms of the Neumann heat kernels.}

\subsection{Our contributions}
In this work we are interested in studying fluctuations from equilibrium for the infinite Atlas model when the equilibrium is taken to be one of the measures $\nu_a^{\gamma}$ for $a>0$, corresponding to an inhomogeneous stationary profile, defined in \eqref{eq:inhomogeneous2}.
One expects a qualitatively different behavior in these regimes as, owing to the spatial inhomogeneity, one cannot uniformly scale space as in \eqref{ds}. The first simple but illuminating observation we make is that if $(X_{i}(0))_{i \in \mathbb{N}_0} \sim \nu_a^0$, then $(Y_{i}(0) \doteq \exp\{a(X_i(0)\})_{i \in \mathbb{N}_0}$ is distributed as a Poisson process with rate one (Lemma \ref{lem:ispp}).
Also, if instead $(X_{i}(0))_{i \in \mathbb{N}_0}$ is distributed as $\nu_a^{\gamma}$
for $\gamma >0$, then $(Y_{i}(0)-Y_{0}(0))_{i \in \mathbb{N}}$ is again a Poisson process (see second part of Lemma \ref{lem:ispp}).
From \cite{tsai_stat}, we know that $(X_{i}(t) + at/2)_{i \in \mathbb{N}_0}, \, t \ge 0,$ is a stationary process in time under
$\Pbd_a^{\gamma} \doteq \Pbd_{\nu_a^{\gamma}}^{\gamma}$
for any $\gamma\ge 0$. These two observations lead us to work with the process $\left(Y_i(t) \doteq \exp\{a(X_i(t) + \frac{at}{2})\}\right)_{i \in \mathbb{N}_0}, \, t \ge 0,$ as a key intermediate object towards proving the desired fluctuation results. This sequence can be viewed as a collection of rank-based geometric Brownian motions (see \eqref{eq:rgbm}).
A substantial portion of this article is dedicated to proving Theorem \ref{thm:formain1} which furnishes a fluctuation result for the occupation measure of (a rescaled version of) this process encoded via the space-time function defined in \eqref{eq:924}. Reformulating this result in terms of the occupation measure of the process $\Xbd(\cdot)$, we obtain our first main result Theorem \ref{thm:main1}. This theorem says the following. For $\veps \in (0,1)$, define
$$
X^{\veps}_i(t) \doteq X_i(t) + \frac{at}{2} + \frac{1}{2a}\log \veps, \; t \ge 0, i \in \NN_0,
$$
and for $t\ge0, \, x\in (-\infty, \infty)$, let 
$$
R^{\veps}(t,x) = \sum_{i=0}^{\infty}\one_{(-\infty,x]}(X^{\veps}_i(t)).
$$
Then Theorem \ref{thm:main1} shows that the random field
$$
\hat R^{\veps}(t,x) \doteq \veps^{1/4} (R^{\veps}(t,x) - \veps^{-1/2} e^{ax})
$$
converges, uniformly on compact subsets of $[0,\infty)\times (-\infty, \infty)$, in distribution, to $u(t, e^{ax})$, where $\{u(t,x), t\ge 0, x\in (0,\infty)\}$ is the solution of the stochastic partial differential equation (SPDE):
$$
\partial_t u(t,x) = \frac{a^2x^2}{2} \partial_{xx} u(t,x) + ax \dot{\clw}(t,x), \; u(0,x) = B(x), \; (t,x) \in [0,\infty)\times (0,\infty),
$$
where $B$ is standard Brownian motion and ${\clw}$ is the  white noise measure on $[0, \infty) \times (0, \infty)$, independent of $B$. 

This result, in turn, can be used to obtain fluctuation results for the order statistics processes $\{X_{(i)}(t), t\ge 0, i \in \NN_0\}$. Specifically, let
$
X^{\veps}_{(i)}(t) \doteq X_{(i)}(t) + \frac{at}{2} + \frac{1}{2a}\log \veps, \; t \ge 0, i \in \NN_0.
$
Then our second main result, Theorem \ref{thm:main2},  shows that the random field
$$
\hat \xi^{\veps}(t,x) \doteq \veps^{-1/4}\left(X^{\veps}_{(i_{\veps}(x))}(t) - \frac{\log x}{a}\right), \;\;
i_{\veps}(x) \doteq \lfloor \veps^{-1/2}x \rfloor, \ \ (t,x) \in [0,\infty) \times (0,\infty),
$$
weakly converges to $ \xi(t,x) \doteq \frac{1}{ax} u(t, x)$, uniformly on compact subsets of $[0,\infty)\times (0,\infty)$, where $u$ is the solution of the above SPDE with $u(0,x) = B(x)$. Using this result we see that, for each $x>0$, $\veps^{-1/4}\left(X^{\veps}_{(i_{\veps}(x))}(\cdot) - X^{\veps}_{(i_{\veps}(x))}(0)\right)$ converges weakly to a Gaussian process with an explicitly computable covariance function (see Corollary \ref{cor:fixxprocess}). In Remark \ref{rem:fBM}, we use this covariance structure to show that the sample paths of the associated Gaussian process have the same H\"older regularity as a \emph{fractional Brownian motion} with Hurst parameter $1/4$ (written $fBM(1/4)$). 
Further, this Gaussian process weakly converges to a normal random variable as $t \rightarrow \infty$, quantifying the tightness resulting from the dense packing of particles in the bulk of the ensemble.
We also note in this remark that, as the particle density parameter $a$ approaches zero, this process, appropriately rescaled, in fact converges in distribution to $fBM(1/4)$, `interpolating' our result with the fBM fluctuations for the ordered particles in the case $a=0$ obtained in \cite{dembo2017equilibrium}. 
Similar equilibrium fluctuations are known for a tagged particle in doubly-infinite Harris model(see \cite{harris1965collisions},\cite{durrgoldlebo1985asymptotics}) and for various types of discrete lattice models \cite{arrat, demfer, lanvol,rostvar,hernandez2017equilibrium}.

Finally, we investigate the time asymptotics of the lowest particle $X_{(0)}(\cdot)$. We show in Theorem \ref{thm:lpasymptotics} that, if the system is started with the lowest particle at the origin and gaps between particles distributed as $\pi_a^{\gamma}$ defined in \eqref{eq:inhomogeneous}, then $X_{(0)}(t) + at/2$ converges weakly to a random variable with an explicit non-Gaussian distribution depending on $\gamma$. In the special case of $\gamma=0$ this corresponds to the distribution of the difference between two independent Gumbel random variables.

Several remarks are in order. First, in contrast with \cite{dembo2017equilibrium}, where the time is scaled by a factor of $\veps^{-1}$ (see \eqref{ds}), here
there is \emph{no time scaling} with respect to $\veps$. Also, unlike in \cite{dembo2017equilibrium}, where the centering term $i_{\veps}(x)$ involves a scaling of $x$ by $\veps^{-1/2}$, here, due to the increasing density of particles, in addition to a linear translation in time, the centering is given as $\log x$ plus an order $\log \veps$ term.
Second, the SPDE we obtain is of a  different form
in that the linear operator in the equation, instead of being a Laplacian (with a suitable boundary condition), is given as the generator of a geometric Brownian motion, and the noise term, instead of being white in space and time, has a spatial coloring. Third, whereas the fluctuation result in \cite{dembo2017equilibrium} extends all the way to the lowest particle (namely, it holds for $X_{(i_{\veps}(x))}(\cdot)$ for all $x \ge 0$), our fluctuation result holds only in the bulk (for $X_{(i_{\veps}(x))}(\cdot)$ with $x>0$) (this also means that, unlike \cite{dembo2017equilibrium},  our equation does not involve a boundary condition in $x$). This difference is indeed qualitative: whereas \cite{dembo2017equilibrium} use their results to conclude that the lowest particle (which starts from $0$), appropriately rescaled, converges to a fractional Brownian motion, we show in Theorem \ref{thm:lpasymptotics} that the lowest particle in our setting, starting from $0$ and translated by a linear in time factor, weakly converges in law as $t \rightarrow \infty$ without any $\veps$-scaling to a non-Gaussian limit. Thus, for inhomogeneous stationary profiles, \emph{particles near the lower edge behave very differently than particles in the bulk}.

At a  technical level, the fact that the solution to the SPDE that appears in our work involves the transition probability kernel $q_t$ of a \emph{geometric Brownian motion} (see \eqref{eq:spdesoln}), as compared with the heat kernel of standard Brownian motion appearing in \cite{dembo2017equilibrium}, makes the tightness proofs more demanding (see Section \ref{sec2.1}) that 
lead us to develop estimates on the transition kernel $q_t$ and the related kernel $\hat q_t$, and on moments and probabilities of a collection of rank-based geometric Brownian motions,
that we expect would be of independent interest.

Finally, regarding Theorem \ref{thm:lpasymptotics}, we note here that \cite[Corollary 1.3]{tsai_stat} establishes concentration results for $X_{(0)}(t) + at/2$ for any $t \ge 0$ (when $X_{(0)}(0)=0$ and gaps are distributed as $\pi_a^{\gamma}$). In comparison, we identify distributional convergence as $t \rightarrow \infty$ and give an explicit formula for the limit law. See Remark \ref{rem:tsa} for additional discussion on this point.

\subsection{Proof overview}
We begin by noting that several constructions and proof strategies in our work are inspired by \cite{dembo2017equilibrium}. As noted previously,  we work with the rescaled versions of the  point processes $\{Y_i(t), i \in \NN_0\}$. The associated suitably centered and normalized occupation measure can be analyzed through the random field $\{\check \clv^{\veps}(t,x), (t,x) \in [0, \infty)\times (0,\infty)\}$, defined in \eqref{eq:924}, by considering indicator functions
of the form $\mathbf{1}_{(0,x]}$, $x >0,$ as test functions. In order to study the time evolution of these fields, it is convenient to replace the indicator function with a suitably smooth mollification which allows for an application of It\^{o}'s formula. The time evolution for the mollified random field, which is captured by \eqref{eq:VWerrM} is at the heart of the analysis. The error introduced by replacing the original field by the mollified field is controlled in Lemma \ref{lem:secreq} and the tightness of the various terms in the evolution
equation in \eqref{eq:VWerrM} is established in Lemma \ref{lem:tight}. The remaining work in the proof of Theorem \ref{thm:main1} is identifying the limit points, which is first done under $\Pbd^{0}_a$, and then, using a change of measure idea, for $\Pbd^{\gamma}_a$ for $\gamma \neq 0$. For our second main result, Theorem \ref{thm:main2}, the main ingredient is Proposition \ref{prop:yaxis} which, together with the first result, gives the fluctuation for the ranked particle system 
$\{Y_{(i)}(t), i \in \NN_0\}$. The result for the ranked $\{X_{(i)}(t), i \in \NN_0\}$ as given in Theorem \ref{thm:main2} then follows by a log transform and analyzing the terms in a related Taylor series expansion.
Finally, for the proof of the lowest particle asymptotics in 
Theorem \ref{thm:lpasymptotics} the main idea is to show that
$\tilde{X}_{(0)}(0)$ and $\tilde{X}_{(0)}(t)$ are asymptotically independent as $t \to \infty$, under $\Pbd^{\gamma}_a$. This is done by showing that the lowest particle at time $0$ migrates to the bulk at large times $t$, and its dynamics consequently decouples from the lower edge.

\subsection{Related Work and Future Research}
Here we mention two related avenues of work and possibilities for future research in these directions.

In \cite{hernandez2017equilibrium}, the authors obtain SPDE limits (analogous to \cite{dembo2017equilibrium}) for stationary current fluctuations of a zero-range process with a source at the origin, which they view as a \emph{discrete Atlas model}. Although it can be checked that this model has a unique invariant distribution, a natural question is to obtain discrete interacting particle systems which share the characteristics of the Atlas model with inhomogeneous stationary profiles and to investigate their stationary fluctuations.

Another interesting direction involves \emph{non-equilibrium hydrodynamic limits and fluctuations}. \ab{The key reason behind taking the initial distribution to be of the form \eqref{eq:inhomogeneous2} is to exploit \emph{stationarity properties} of the joint particle laws in time, which makes some calculations simpler. Although not explored here, by a coupling/comparison argument, it may be possible to extend our results
to a class of perturbations of the initial distribution in \eqref{eq:inhomogeneous2} (thereby yielding some form of
universality). In future research, we will investigate hydrodynamic limits and  fluctuations for more general classes of, possibly inhomogeneous, out of equilibrium starting profiles.}

Some progress has been made in this direction. In \cite{cabezas2019brownian}, a hydrodynamic limit is obtained for the occupation measure of the Atlas model started from points distributed as a Poisson process with rate $\lambda$ for general $\lambda>0$. The limit is obtained in terms of a Stefan problem and applies to a class of non-equilibrium homogeneous starting profiles. \ab{Non-equilibrium fluctuations have been investigated} for some interacting particle systems in \cite{franceschini2023non,gonccalves2022clt} via refined control on relative entropies and, among other things, we plan to explore this technique in the context of rank-based diffusions and related discrete particle systems.

\section{Main Results}
To state our main results, let $\gamma \in [0,\infty)$, and recall the stationary measures $\nu_a^\gamma$, $a > 0$, defined above in \eqref{eq:inhomogeneous2}. Recall $\Pbd_a^\gamma \doteq \Pbd_{\nu_a^\gamma}^\gamma$, and denote the corresponding expectation operator by $\Ebd_a^\gamma$.
Note from \eqref{eq:inhomogeneous2} and stationarity that, for all $t \in [0,\infty)$, and nonnegative $\sigma\{\Xbd(t)\}$ measurable random variables $U$,
\begin{equation} \label{eq:margCOM}
\Ebd_a^\gamma[U] = \Ebd_a^0[e^{2\gamma(X_{(0)}(t) + \frac{at}{2})}U].
\end{equation}


Let $\clm(\RR)$ denote the space of all $\sigma$-finite measures on $(\RR, \clb(\RR))$
equipped with the smallest $\sigma$-field that makes the function $\mu \mapsto \mu(B)$ a Borel measurable map from $\clm(\RR)$ to $\RR$ for every $B \in \clb(\RR)$. 
For $\veps \in (0,1)$, let
\begin{equation}
X^{\veps}_i(t) \doteq X_i(t) + \frac{at}{2} + \frac{1}{2a}\log \veps, \; t \ge 0, i \in \NN_0,
\end{equation}
and define a stochastic process $\{\clr^{\veps}(t)\}_{t\ge 0}$ with values in $\clm(\RR)$ as
$$\langle\clr^{\veps}(t), \phi\rangle \doteq \sum_{i=0}^{\infty} \phi(X^{\veps}_i(t)), \; \phi \in \Mmb_+(\RR),$$
where $\Mmb_+(\RR)$ is the space of real nonnegative measurable functions on $\RR$.
Consider the random field $\{R^{\veps}(t,x), (t,x) \in \RR_+ \times \RR\}$ defined as
$$R^{\veps}(t,x) \doteq \langle\clr^{\veps}(t), \one_{(-\infty, x]}(\cdot)\rangle.$$
Note that $\Ebd^0_{a} R^{\veps}(t,x) = \veps^{-1/2} e^{ax}$ for all $t \ge 0$ and  $x \in \RR$.
Define the centered random field $\{\hat R^{\veps}(t,x), (t,x) \in \RR_+ \times \RR\}$
as
\begin{equation}
\hat R^{\veps}(t,x) \doteq \veps^{1/4} (R^{\veps}(t,x) - \veps^{-1/2} e^{ax}).
\end{equation}

We will study the weak convergence of the above random field in a suitable  topology. In order to describe the weak limit of the above random field we introduce the following stochastic partial differential equation (SPDE):
\begin{equation}\label{eq:spde}
\partial_t u(t,x) = \frac{a^2x^2}{2} \partial_{xx} u(t,x) + ax \dot{\clw}, \; u(0,x) = u_0(x), \; (t,x) \in \RR_+ \times (0,\infty),
\end{equation}
for a fixed $u_0 \in \clc((0, \infty): \RR)$, where $\dot{\clw}$ is the  white noise measure on $[0, \infty) \times (0, \infty)$ (see \cite{walsh}).
The mild solution of such a SPDE is the continuous $u:\RR_+ \times (0,\infty) \to \RR$
given as
\begin{equation}\label{eq:spdesoln}
u(t,x) = \int_0^{\infty} q_t(y,x) u_0(y) dy +
a\int_{[0,t]\times (0, \infty)} y q_{t-s}(y, x) \clw(ds\, dy), \; (t,x) \in \RR_+ \times (0,\infty),
\end{equation}
where, the second term on the right side in \eqref{eq:spdesoln} is a stochastic integral with respect to the white noise measure in the sense of Walsh \cite{walsh}, 
and  for $(t,x,y) \in (0, \infty)\times (0,\infty)\times (0,\infty)$
\begin{equation}\label{eq:qdef}
q_t(x,y) = \frac{1}{ax} p_t\left(\frac{\log x}{a} - \frac{\log y}{a} + \frac{at}{2}\right),
\end{equation}
with $p_t(\cdot)$ being the heat kernel defined by \eqref{eq:heatkernel}. It is easy to verify that $y \mapsto q_t(x,y)$ defined above is the fundamental solution of the linear partial differential equation (PDE)
$
\partial_t \varphi = \frac{a^2y^2}{2}\partial_{yy} \varphi$,
which justifies referring to $u$ defined by \eqref{eq:spdesoln} as the solution of the SPDE \eqref{eq:spde}. Sometimes to emphasize the dependence on the initial condition, we will denote the solution $u$ of \eqref{eq:spde} as $u_{u_0}$.


Given an open set $\clo$ in $\RR$, we will say a random field $\{\clz^{\veps}(t,x), (t,x) \in \RR_+\times \clo\}$ converges, uniformly on compact subsets of $\RR_+ \times \clo$, in distribution to a continuous random field
$\{\clz(t,x), (t,x) \in \RR_+ \times \clo\}$, if there exists a sequence $\{\tilde \clz^{\veps}(t,x), (t,x) \in \RR_+ \times \clo\}$ of continuous random fields such that $\tilde \clz^{\veps} \to \clz$ in distribution in $C(\RR_+ \times \clo : \RR)$, as $\veps \to 0$, where the latter space is equipped with the local uniform topology and 
$$\sup_{0\le t \le T} \sup_{x \in K} |\tilde \clz^{\veps}(t,x) - \clz^{\veps}(t,x)| \to 0 \mbox{ in probability, as } \veps\to 0,$$
for every $T \in (0,\infty)$ and compact $K \subset \clo$.

We can now present our first main result. The proof is given in Section \ref{sec:mainthmpfs}.
\begin{theorem}\label{thm:main1}
Fix $a \in (0, \infty)$ and $\gamma \ge 0$. 
Let $B$ and $W$ be mutually independent white noise measures on $(0,\infty)$ 
and $\RR_+ \times (0,\infty)$, respectively. Then, under $\Pbd^{\gamma}_a$, as $\veps\to 0$, $\hat R^{\veps}$ converges in distribution,  uniformly on compact subsets of $\RR_+\times \RR$, to $ R(t,x) \doteq u(t, e^{ax})$ where $u$ is the solution of the SPDE in \eqref{eq:spde} with $u_0(x) = B((0,x])$.
\end{theorem}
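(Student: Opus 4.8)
The plan is to pass to exponential coordinates, in which the inhomogeneous stationary profile becomes a homogeneous Poisson process and the particles become rank-based geometric Brownian motions, and then to run a tightness-and-identification scheme in the spirit of \cite{dembo2017equilibrium}, treating $\gamma>0$ via the change of measure \eqref{eq:absctyn}. Concretely, set $Y_i(t)\doteq\exp\{a(X_i(t)+at/2)\}$, so that $X_i^\veps(t)\le x$ iff $\veps^{1/2}Y_i(t)\le e^{ax}$; writing $\check\clv^\veps(t,y)\doteq\veps^{1/4}\big(\sum_i\one_{(0,y]}(\veps^{1/2}Y_i(t))-\veps^{-1/2}y\big)$ as in \eqref{eq:924}, one has $\hat R^\veps(t,x)=\check\clv^\veps(t,e^{ax})$, so it suffices to prove $\check\clv^\veps\to u$ uniformly on compacts of $\RR_+\times(0,\infty)$, where $u$ is the mild solution \eqref{eq:spdesoln} of \eqref{eq:spde} with $u_0(y)=B((0,y])$; this is Theorem~\ref{thm:formain1}. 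By Lemma~\ref{lem:ispp}, under $\Pbd_a^0$ the $(Y_i(0))$ form a rate-one Poisson process on $(0,\infty)$, and by It\^{o}'s formula the $Y_i$ solve rank-based SDEs with diffusion coefficient $aY_i$, so the generator acting on the occupation field produces the geometric Brownian operator $\tfrac{a^2y^2}{2}\partial_{yy}$ — the accompanying first-order drift being exactly the one rendering the Poisson-type profile stationary, hence absorbed by the deterministic centering $\veps^{-1/2}y$ (consistent with $\Ebd_a^0R^\veps(t,x)=\veps^{-1/2}e^{ax}$) — together with a reflection term carried by the lowest particle. Since by \eqref{eq:absctyn} the density $d\Pbd_a^\gamma/d\Pbd_a^0$ is a fixed, $\veps$-independent random variable with all moments under $\Pbd_a^0$ (an \eqref{eq:ZisGamma}-type computation), tightness under $\Pbd_a^0$ transfers to $\Pbd_a^\gamma$, and, once the limit has been identified jointly with $X_{(0)}(0)$ under $\Pbd_a^0$, it transfers to $\Pbd_a^\gamma$ as well provided the limiting field is independent of $X_{(0)}(0)$ — which holds because $B((0,\cdot])$ comes from bulk Poisson fluctuations at the macroscopic level $\veps^{-1/2}$ and the driving noise $W$ is independent of the initial configuration, while $X_{(0)}(0)$ is an $O(1)$ edge quantity.

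Next I would derive and exploit the semimartingale decomposition of $\check\clv^\veps$. Since $\one_{(0,y]}$ is not smooth, first replace it by a mollification $\psi_y^\delta$ and apply It\^{o}'s formula to $\sum_i\psi_y^\delta(\veps^{1/2}Y_i(t))$, obtaining the evolution equation \eqref{eq:VWerrM}: a centered initial term, a drift term $\int_0^t\langle\check\clv^\veps(s),\tfrac{a^2y^2}{2}(\psi_y^\delta)''\rangle\,ds$, a martingale $M^{\veps,\delta}_y$ carrying the $\veps^{1/4}$ normalization with predictable quadratic variation $a^2\veps^{1/2}\sum_i\int_0^t(\veps^{1/2}Y_i(s))^2\,(\psi_y^\delta)'(\veps^{1/2}Y_i(s))^2\,ds$, a reflection/boundary error, and the mollification error. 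The mollification error is controlled in Lemma~\ref{lem:secreq} by density estimates for the rescaled $Y$-system near level $y$, and the drift and martingale terms are shown tight in Lemma~\ref{lem:tight} via Burkholder--Davis--Gundy together with sharp bounds on the geometric Brownian transition kernel $q_t$ (and a related two-point kernel) and on moments and hitting probabilities of the rank-based GBM collection; tightness of the initial field $\check\clv^\veps(0,\cdot)$ is a Donsker-type statement for the rate-one Poisson process whose limit is the Brownian motion $y\mapsto B((0,y])$. A Kolmogorov moment criterion in $(t,y)$ then yields tightness of $\{\check\clv^\veps\}$ in $C(\RR_+\times(0,\infty):\RR)$.

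For the identification, let $\clv$ be any subsequential weak limit. Passing to the limit in \eqref{eq:VWerrM} — first $\veps\to0$, then removing the mollification — and using the law of large numbers for the rescaled occupation measure (Poisson of intensity $\veps^{-1/2}\,dw$, so $\sum_i f(\veps^{1/2}Y_i(s))\approx\veps^{-1/2}\int_0^\infty f$ to leading order), the quadratic variation of $M^{\veps,\delta}_y$ converges to the deterministic $a^2t\int_0^\infty w^2\,(\psi_y^\delta)'(w)^2\,dw$; hence the limiting martingale is Gaussian with independent increments, and rewriting the passage to the limit in mild form against the geometric Brownian semigroup with kernel $q_t$ identifies $\clv$ with the right-hand side of \eqref{eq:spdesoln}, the noise being $aw\dot{\clw}$ and the initial datum $B((0,\cdot])$, independent of the noise. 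Uniqueness of mild solutions of \eqref{eq:spde} then forces $\clv=u$, so the whole family converges; undoing $y=e^{ax}$ and removing the change of measure as above gives the statement for every $\gamma\ge0$.

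The main obstacle is the tightness step: the geometric Brownian kernel $q_t$ is not translation invariant and the rank interaction at the lowest particle must be absorbed, so the elementary Gaussian estimates of \cite{dembo2017equilibrium} must be replaced by genuinely new bounds on $q_t$ (and the related kernel) and on the moments and probabilities of the rank-based GBM system. A secondary difficulty is arranging the mollification/de-mollification so that the singular ``derivative of the indicator'' enters only through the mild formulation, where it is integrable against $q_t$, and so that the reflection error from the lowest particle is negligible at the $\veps^{1/4}$ scale in the bulk.
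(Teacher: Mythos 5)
Your overall scheme — exponentiate to rank-based geometric Brownian motions so that $\nu_a^0$ becomes a rate-one Poisson process, mollify the indicator, apply It\^o's formula, establish tightness through kernel estimates, identify the limit, and transfer from $\Pbd_a^0$ to $\Pbd_a^\gamma$ via \eqref{eq:absctyn} — is the same as the paper's. You depart from the paper at two points. First, the paper mollifies $\one_{(0,x]}$ by the specific smoothing $\Psi^x(\veps,\cdot)=\int_0^x\hat q_\veps(z,\cdot)\,dz$ and then runs the backward-in-time test function $\phi^{x,\veps}_t(s,\cdot)=\Psi^x(t-s+\veps,\cdot)$, which solves \eqref{eq:pdefor}; this makes the entire drift integral in the It\^o expansion vanish (second equality in \eqref{eq:qvepst}), leaving only the initial field $\hat\clw^\veps$, a lowest-particle reflection term (Lemma~\ref{lem:Y0term}), and the martingale $\hat\clm^\veps$. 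Your version keeps the drift $\int_0^t\langle\cdot,\tfrac{a^2y^2}{2}\partial_{yy}\psi^\delta\rangle\,ds$, which then has to be tracked through the limit. Second, and more substantively, your identification is a martingale-problem argument: pass to the limit in the weak formulation, deduce that the limit martingale has deterministic bracket, then invoke uniqueness of the mild solution of \eqref{eq:spde}. The paper (Theorem~\ref{thm:formain1}) instead computes finite-dimensional distributions directly: the Poisson characteristic-function formula gives $\hat\clw^\veps\Rightarrow\hat\clw$ (Lemma~\ref{lem:Wfind}), while a Dambis--Dubins--Schwarz time change with an auxiliary Brownian motion (to make the clock strictly increasing) plus $L^1$-convergence of the bracket (Lemma~\ref{lem:rint}) gives $\hat\clm^\veps$; the time change also delivers the asymptotic independence of $\hat\clw^\veps$ and $\hat\clm^\veps$ for free, since the time-changed Brownian motion is independent of the initial $\sigma$-field $\mathcal Y$. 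Your route would have to argue that independence separately and would also need a uniqueness-in-law statement for the distributional form of \eqref{eq:spde} that the paper never uses. Both paths are viable; the paper's kernel-adapted mollifier and the time change trade some explicit calculus for fewer loose ends in the identification.
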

We remark that,  using integration by parts, the $u$ given in the above theorem can be written as a sum of two stochastic integrals, see \eqref{eq:832}.

The next result gives the fluctuations of the ranked particles.
Let
$$X^{\veps}_{(i)}(t) \doteq X_{(i)}(t) + \frac{at}{2} + \frac{1}{2a}\log \veps, \; t \ge 0, i \in \NN_0,$$
and define for $(t,x) \in \RR_+ \times (0,\infty)$,
\begin{equation}\label{eq:defnhatxin}
\hat \xi^{\veps}(t,x) \doteq \veps^{-1/4}\left(X^{\veps}_{(i_{\veps}(x))}(t) - \frac{\log x}{a}\right), \;\;
i_{\veps}(x) \doteq \lfloor \veps^{-1/2}x \rfloor.
\end{equation}
The fact that  $\frac{\log x}{a}$ is the appropriate centering can be seen by recalling  that $\{Y_i(t) = \exp\{a(X_{i}(t) + \frac{at}{2})\}, i \in \NN_0\}$ are points of a standard Poisson process under $\Pbd^{0}_a$.

\begin{theorem}\label{thm:main2}
Fix $a \in (0, \infty)$ and $\gamma \ge 0$. 
Let $B$ and $W$ be mutually independent white noise measures on $(0,\infty)$ 
and $\RR_+ \times (0,\infty)$. Then, as $\veps\to 0$, $\hat \xi^{\veps}$ converges in distribution, under $\Pbd^{\gamma}_a$, uniformly on compact subsets of $\RR_+ \times (0,\infty)$, to $ \xi(t,x) \doteq \frac{1}{ax} u(t, x)$ where $u$ is the solution of the SPDE in \eqref{eq:spde} with $u_0(x) = B((0,x])$.
\end{theorem}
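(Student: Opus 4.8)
The plan is to reduce Theorem~\ref{thm:main2} to Theorem~\ref{thm:main1} in three moves: first recast Theorem~\ref{thm:main1} as a fluctuation statement for the counting function of the rescaled geometric Brownian particles $\veps^{1/2}Y_i(t)=e^{aX^\veps_i(t)}$ viewed on the $y$-axis; then invert that counting-function limit to obtain a fluctuation limit for the ranked particles $\veps^{1/2}Y_{(i_\veps(x))}(t)$; and finally pass from the $Y$'s back to the $X$'s by a Taylor expansion of $\frac1a\log(\cdot)$. Since $u$ is a linear, hence symmetric, functional of the Gaussian white noises $(B,W)$, one has $-u\stackrel{d}{=}u$, so throughout it suffices to identify limits up to an overall sign; all statements below are under $\Pbd_a^\gamma$.

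\emph{Reformulation and inversion.} As $x\mapsto e^{ax}$ is an increasing homeomorphism of $\RR$ onto $(0,\infty)$, the $i$-th order statistic of $(\veps^{1/2}Y_j(t))_j$ is $\veps^{1/2}Y_{(i)}(t)=e^{aX^\veps_{(i)}(t)}$, and the associated counting function $\check R^\veps(t,y)\doteq\#\{j:\veps^{1/2}Y_j(t)\le y\}$ equals $R^\veps(t,a^{-1}\log y)$. Evaluating Theorem~\ref{thm:main1} at $x=a^{-1}\log y$ (a compact $K\subset(0,\infty)$ maps to a compact subset of $\RR$ under $y\mapsto a^{-1}\log y$, and composition with this fixed homeomorphism preserves local uniform convergence) shows that $G^\veps(t,y)\doteq\veps^{1/4}(\check R^\veps(t,y)-\veps^{-1/2}y)$ converges, uniformly on compacts of $\RR_+\times(0,\infty)$ and in distribution, to $u(t,y)$, the mild solution of \eqref{eq:spde} with $u_0(x)=B((0,x])$. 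Now $\check R^\veps(t,\cdot)$ is nondecreasing and right-continuous with unit jumps at the points $\veps^{1/2}Y_{(j)}(t)$ (the gaps being a.s.\ positive under $\Pbd_a^\gamma$), so $\{\veps^{1/2}Y_{(i)}(t)\le y\}=\{\check R^\veps(t,y)\ge i+1\}$. Writing $i=i_\veps(x)$, $y=x+\veps^{1/4}r$, and $i_\veps(x)+1=\veps^{-1/2}x+\beta_\veps(x)$ with $\beta_\veps(x)\in(0,1]$, this identity rearranges to
$$
\Bigl\{\,\veps^{-1/4}\bigl(\veps^{1/2}Y_{(i_\veps(x))}(t)-x\bigr)\le r\,\Bigr\}=\Bigl\{\,r+G^\veps\bigl(t,\,x+\veps^{1/4}r\bigr)\ge\veps^{1/4}\beta_\veps(x)\,\Bigr\}.
$$
Since $\veps^{1/4}\beta_\veps(x)\to0$ uniformly and, by the previous display together with the sample-path continuity of $u$, $G^\veps(t,x+\veps^{1/4}r)\to u(t,x)$, one reads off that $\eta^\veps(t,x)\doteq\veps^{-1/4}(\veps^{1/2}Y_{(i_\veps(x))}(t)-x)$ has all finite-dimensional limits equal to $-u(t,x)$. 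To promote this to convergence uniformly on compacts in distribution one supplies (a) tightness of $\{\eta^\veps\}$, through modulus-of-continuity bounds for $\veps^{1/2}Y_{(i)}(t)$ in both $t$ and the rank $i$, and (b) the continuous approximants required by the definition, obtained by interpolating $x\mapsto\veps^{1/2}Y_{(i_\veps(x))}(t)$ linearly across the grid $\veps^{1/2}\NN_0$; the approximation error is $\veps^{-1/4}\cdot\veps^{1/2}(Y_{(k+1)}(t)-Y_{(k)}(t))=O(\veps^{1/4}\log(1/\veps))\to0$ in probability, uniformly on compacts, using that under $\Pbd_a^0$ the $(Y_j(t))_j$ form a rate-one Poisson process for every $t$ and \eqref{eq:absctyn} for $\gamma>0$.

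\emph{Log transform and conclusion.} Fix a compact $K\subset(0,\infty)$, on which $x\ge c>0$. Since $\eta^\veps$ is tight, for small $\veps$ the quantity $\veps^{1/2}Y_{(i_\veps(x))}(t)=x+\veps^{1/4}\eta^\veps(t,x)$ lies in $[c/2,\infty)$ with probability close to one, and
$$
X^\veps_{(i_\veps(x))}(t)=\frac1a\log\bigl(x+\veps^{1/4}\eta^\veps(t,x)\bigr)=\frac{\log x}{a}+\frac{\veps^{1/4}}{a}\,\frac{\eta^\veps(t,x)}{x}+\veps^{1/2}\rho^\veps(t,x),
$$
where the quadratic remainder of $\log(1+\cdot)$ obeys $|\rho^\veps|\le C(\eta^\veps)^2/x^2$ and is hence bounded in probability uniformly on $K$. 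Thus $\hat\xi^\veps(t,x)=\eta^\veps(t,x)/(ax)+\veps^{1/4}\rho^\veps(t,x)$: the last term tends to $0$ uniformly on compacts in probability, while $\eta^\veps/(ax)$ converges uniformly on compacts in distribution to $-u(t,x)/(ax)$, since multiplication by the fixed continuous function $1/(ax)$ on $(0,\infty)$ preserves this mode of convergence and carries the continuous approximants along. Combining these two facts through the triangle inequality in the definition of convergence uniformly on compacts in distribution, and using $-u\stackrel{d}{=}u$, gives $\hat\xi^\veps\to\xi=u(t,x)/(ax)$, which is exactly Theorem~\ref{thm:main2}.

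\emph{Main obstacle.} The reformulation and the log transform are essentially bookkeeping; the substantive work is the inversion, specifically the passage from the easy finite-dimensional convergence of $\eta^\veps$ to convergence uniformly on compacts. This requires genuine tightness estimates for the ranked rescaled geometric Brownian particles---both a modulus of continuity in time and control of the spacings $Y_{(k+1)}(t)-Y_{(k)}(t)$ uniformly over $k$ in the relevant window---and it is here that the refined bounds on the geometric-Brownian transition kernels $q_t$ and $\hat q_t$ and the moment and deviation estimates for rank-based geometric Brownian motions developed for Theorem~\ref{thm:main1} are indispensable. One must also verify that neither the $O(1)$ discretization in $i_\veps(x)=\lfloor\veps^{-1/2}x\rfloor$ nor the unit jumps of $\check R^\veps$ survive the $\veps^{-1/4}$ rescaling, which again rests on the spacing bounds just mentioned.
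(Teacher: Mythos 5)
Your overall route is exactly the paper's: Theorem \ref{thm:main1} gives the counting-function limit, you invert to get fluctuations of $\veps^{1/2}Y_{(i_\veps(x))}(t)$, then Taylor-expand $a^{-1}\log(\cdot)$ to reach $X_{(i_\veps(x))}$. The log-transform step and the use of $-u\stackrel{d}{=}u$ reproduce the paper's conclusion of Section \ref{sec:mainthmpfs} (compare \eqref{eq:148}, \eqref{eq:344}, \eqref{eq:344b}), and your distribution-function identity correctly yields pointwise, hence finite-dimensional, convergence of $\eta^\veps$ to $-u$, using that $u(t,x)$ has a non-degenerate Gaussian law at each $(t,x)\in\RR_+\times(0,\infty)$.

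The genuine gap is the inversion at the level of processes. You assert that modulus-of-continuity bounds for $\veps^{1/2}Y_{(i)}(t)$ in $t$ and in rank, together with control of the spacings $Y_{(k+1)}(t)-Y_{(k)}(t)$, will upgrade finite-dimensional to locally-uniform convergence; but you do not prove these bounds, and this is precisely where all the hard work lies. In the paper this is Proposition \ref{prop:yaxis}, whose proof (Section \ref{sec:proofprop}) runs through a chain of quantitative estimates: exponential moment bounds on ranked increments $X_{(j)}(s)-X_{(j)}(0)$ (Lemma \ref{lem:expbd}), a short-time tail estimate for $\veps^{1/4}Y_{(i_\veps(x))}(0)$ times the exponential increment (Lemma \ref{lem:expdec1}), Poisson large deviations (Lemma \ref{lem:PPLD}), and a time-discretization and union-bound scheme that makes all of this uniform in $(t,x)$ (Lemma \ref{lem:111}). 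The organization is also somewhat different from what you propose: rather than proving tightness of $\eta^\veps$ separately, Proposition \ref{prop:yaxis} shows directly that $\check\chi^\veps$, $\tilde\chi^\veps$, and $\chi^\veps$ ($=\check\clv^\veps$) are pairwise asymptotically close in probability, uniformly on compacts, so the locally-uniform distributional convergence of $\check\chi^\veps$ is inherited at once from that of $\check\clv^\veps$. Your sketch correctly identifies what must be shown, but as written it establishes the theorem only modulo estimates equivalent to the bulk of Section \ref{sec:proofprop}.
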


As a consequence of the above we have the following corollary. 
Throughout, `$\Rightarrow$' will denote convergence in distribution.
\begin{corollary} \label{cor:fixxprocess}
Fix $a \in (0, \infty)$, $\gamma \ge 0$ and $x \in (0,\infty)$.
Then, as $\veps \to 0$, under $\Pbd^{\gamma}_a$,
\begin{equation} \veps^{-1/4}\left(X^{\veps}_{(i_{\veps}(x))}(\cdot) - X^{\veps}_{(i_{\veps}(x))}(0)\right) \Rightarrow G^x(\cdot) \doteq \frac{1}{ax}(u(\cdot,x) - u(0,x)), \label{eq:fixx}
\end{equation}
in $C([0, \infty): \RR)$. Furthermore, $G^x(\cdot)$ is a continuous, mean-zero Gaussian process with covariances, for $t, t' \in [0,\infty)$, given as
$$
\Ebd_a^\gamma[G^x(t)G^x(t')] = \frac{2}{a^2 x} \left(\Phi\left(\frac{a}{2}t^{1/2}\right) + \Phi\left(\frac{a}{2}(t')^{1/2}\right) - \Phi\left(\frac{a}{2}|t' - t|^{1/2}\right) - \frac{1}{2}\right).
$$
Here $\Phi(\xi) \doteq \int_{-\infty}^\xi p_1(z)dz$ is the standard normal cumulative distribution function (cdf).

\end{corollary}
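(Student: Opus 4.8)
The plan is to deduce \eqref{eq:fixx} from Theorem \ref{thm:main2} by an elementary projection, and then to compute the covariance of $G^x$ directly from the mild‑solution representation \eqref{eq:spdesoln}. First I would record the identity, immediate from \eqref{eq:defnhatxin},
\[
\veps^{-1/4}\bigl(X^{\veps}_{(i_{\veps}(x))}(t) - X^{\veps}_{(i_{\veps}(x))}(0)\bigr) \;=\; \hat\xi^{\veps}(t,x) - \hat\xi^{\veps}(0,x), \qquad t\ge 0 .
\]
Fixing $x\in(0,\infty)$, the map $\Lambda\colon g \mapsto \bigl(t\mapsto g(t,x)-g(0,x)\bigr)$ is continuous from $C(\RR_+\times(0,\infty):\RR)$ to $C([0,\infty):\RR)$ for the local uniform topologies, and $\Lambda(\xi)=G^x$. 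Let $\ti\clz^{\veps}$ be the approximating continuous random fields furnished by Theorem \ref{thm:main2} (in the sense of the convergence notion introduced after Definition \ref{defn:wnsi}), so that $\ti\clz^{\veps}\to\xi$ in distribution in $C(\RR_+\times(0,\infty):\RR)$ and, taking the compact set $K=\{x\}$, $\sup_{0\le t\le T}|\ti\clz^{\veps}(t,x)-\hat\xi^{\veps}(t,x)|\to 0$ in probability for every $T$. By the continuous mapping theorem $\Lambda(\ti\clz^{\veps})\Rightarrow G^x$ in $C([0,\infty):\RR)$, while
\[
\sup_{0\le t\le T}\bigl|\Lambda(\ti\clz^{\veps})(t) - \bigl(\hat\xi^{\veps}(t,x)-\hat\xi^{\veps}(0,x)\bigr)\bigr| \le 2\sup_{0\le t\le T}\bigl|\ti\clz^{\veps}(t,x)-\hat\xi^{\veps}(t,x)\bigr| \to 0
\]
in probability, so a converging‑together argument gives \eqref{eq:fixx}. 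Path continuity of $G^x$ is inherited from that of the field $u$.

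Next I would set up the covariance computation. By \eqref{eq:832} (the integration‑by‑parts rewriting of the first term of \eqref{eq:spdesoln} with $u_0(y)=B((0,y])$),
\[
u(t,x) = \int_0^{\infty}\bar{Q}_t(y,x)\,B(dy) + a\int_{[0,t]\times(0,\infty)} y\,q_{t-s}(y,x)\,\clw(ds\,dy), \qquad \bar{Q}_t(y,x)\doteq\int_y^{\infty}q_t(z,x)\,dz,
\]
so $\{G^x(t)\}_{t\ge0}$, being an affine image of the Gaussian white noises $B$ and $\clw$, is a centered Gaussian process. Writing $u(0,x)=B((0,x])=\int_0^{\infty}\one_{(0,x]}(y)B(dy)$ and using independence of $B$ and $\clw$ together with the It\^o isometry,
\[
\Ebd_a^{\gamma}\bigl[G^x(t)G^x(t')\bigr] = \frac{1}{a^2x^2}\bigl(I_B(t,t') + I_{\clw}(t,t')\bigr),
\]
where $I_B(t,t')=\int_0^{\infty}\bigl(\bar{Q}_t(y,x)-\one_{(0,x]}(y)\bigr)\bigl(\bar{Q}_{t'}(y,x)-\one_{(0,x]}(y)\bigr)dy$ and $I_{\clw}(t,t')=a^2\int_0^{t\wedge t'}\int_0^{\infty} y^2 q_{t-s}(y,x)q_{t'-s}(y,x)\,dy\,ds$; both integrands are square integrable, so these are well defined.

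It then remains to evaluate the two integrals. For $I_{\clw}$: by \eqref{eq:qdef} the substitution $v=\log y$ turns the inner integral into $\int_{\RR} g_1(v)g_2(v)e^{v}\,dv$, where $g_1,g_2$ are the densities of $N(\log x-\tfrac{a^2(t-s)}{2},\,a^2(t-s))$ and $N(\log x-\tfrac{a^2(t'-s)}{2},\,a^2(t'-s))$; combining the Gaussian‑product identity, the exponential moment $\int g(v)e^{v}\,dv=e^{m+\sigma^2/2}$ for $g\sim N(m,\sigma^2)$, and the algebraic collapse $(t'-t)^2+4(t-s)(t'-s)=(t+t'-2s)^2$, the inner integral (multiplied by $a^2$) reduces to $\tfrac{ax}{\sqrt{2\pi(t+t'-2s)}}\,e^{-a^2(t+t'-2s)/8}$, and then $r=t+t'-2s$ ($r$ ranging over $[|t-t'|,\,t+t']$) together with the primitive $\int\tfrac{1}{\sqrt{2\pi r}}e^{-a^2r/8}\,dr=\tfrac{4}{a}\Phi(\tfrac{a}{2}\sqrt{r})+C$ gives $I_{\clw}(t,t')=2x\bigl(\Phi(\tfrac{a}{2}\sqrt{t+t'})-\Phi(\tfrac{a}{2}\sqrt{|t-t'|})\bigr)$. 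For $I_B$: by \eqref{eq:qdef}, $\bar{Q}_t(y,x)=\Pbd(M_t>\log(y/x))$ with $M_t\sim N(-\tfrac{a^2t}{2},a^2t)$ the log‑increment of a zero‑drift geometric Brownian motion; substituting $y=xe^{\theta}$ and introducing \emph{independent} variables $M_t\sim N(-\tfrac{a^2t}{2},a^2t)$, $M_{t'}\sim N(-\tfrac{a^2t'}{2},a^2t')$, the elementary identity $\one_{\{M>\theta\}}-\one_{\{0>\theta\}}=\mathrm{sgn}(M)\,\one_{\{\theta\in[0\wedge M,\,0\vee M)\}}$ and Fubini give $I_B(t,t')=x\bigl(\Ebd[e^{M_t\wedge M_{t'}}]-\Ebd[e^{0\wedge M_t}]-\Ebd[e^{0\wedge M_{t'}}]+1\bigr)$; the martingale identity $\Ebd[e^{M_t}]=1$ together with $e^{m\wedge m'}=e^{m}-(e^{m}-e^{m'})^{+}$ and $e^{0\wedge m}=e^{m}-(e^{m}-1)^{+}$ then yield
\[
I_B(t,t') = x\bigl(\Ebd[(e^{M_t}-1)^{+}]+\Ebd[(e^{M_{t'}}-1)^{+}]-\Ebd[(e^{M_t}-e^{M_{t'}})^{+}]\bigr).
\]
These expectations are Black--Scholes / Margrabe prices: $\Ebd[(e^{M_t}-1)^{+}]=2\Phi(\tfrac{a}{2}\sqrt{t})-1$ and, by independence, $\Ebd[(e^{M_t}-e^{M_{t'}})^{+}]=2\Phi(\tfrac{a}{2}\sqrt{t+t'})-1$, so $I_B(t,t')=x\bigl(2\Phi(\tfrac{a}{2}\sqrt{t})+2\Phi(\tfrac{a}{2}\sqrt{t'})-2\Phi(\tfrac{a}{2}\sqrt{t+t'})-1\bigr)$. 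Adding $I_B$ and $I_{\clw}$, the $\Phi(\tfrac{a}{2}\sqrt{t+t'})$ terms cancel, and dividing by $a^2x^2$ delivers the asserted covariance.

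The main obstacle is the cross term $I_B$: one must hit on the change of variables $y=xe^{\theta}$ that exposes the logarithmic geometry of the geometric Brownian motion kernel $q_t$, and then recognize the resulting one‑dimensional integrals as option‑pricing expectations — equivalently, systematically exploit the martingale property of the zero‑drift geometric Brownian motion, which both makes $\int_0^{\infty}(\bar{Q}_t(y,x)-\one_{(0,x]}(y))\,dy=0$ and reduces the minima appearing in $I_B$ to call payoffs. The evaluation of $I_{\clw}$ is by comparison routine, although care is needed with the exponent bookkeeping that produces the perfect square $(t+t'-2s)^2$.
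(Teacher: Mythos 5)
Your proof is correct, and for the covariance it takes a genuinely different route than the paper's. The projection step (writing $\veps^{-1/4}(X^\veps_{(i_\veps(x))}(t)-X^\veps_{(i_\veps(x))}(0))=\hat\xi^\veps(t,x)-\hat\xi^\veps(0,x)$ and pushing the field convergence through the continuous map $g\mapsto g(\cdot,x)-g(0,x)$) is the same argument the paper compresses into the phrase ``immediate from Theorem \ref{thm:main2},'' just spelled out. Your treatment of $I_\clw$ (the $\hat\clm$ covariance) is essentially the same Gaussian bookkeeping as the paper's: both reduce $a^2\int_0^\infty y^2 q_{t-s}(y,x)q_{t'-s}(y,x)\,dy$ to $\frac{ax}{\sqrt{2\pi(t+t'-2s)}}e^{-a^2(t+t'-2s)/8}$ and integrate in $s$. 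The real divergence is in $I_B$. The paper expands $\frac{1}{a^2x^2}\Ebd[(u(t,x)-u(0,x))(u(t',x)-u(0,x))]$ into five separate covariances of $\hat\clw$ and $\hat\clm$, evaluates $\Ebd[\hat\clw(t,x)\hat\clw(t',x)]$ for $t,t'>0$ by integrating by parts against $y\,\partial_y$ and then collapsing the resulting double Gaussian integral via rotational invariance of $p_1(z)p_1(w)\,dz\,dw$, and obtains the $t=0$ or $t'=0$ terms by a dominated-convergence passage to the boundary. You instead keep the initial-condition contribution as the single integral $I_B=\int_0^\infty\bigl(\bar Q_t(y,x)-\one_{(0,x]}(y)\bigr)\bigl(\bar Q_{t'}(y,x)-\one_{(0,x]}(y)\bigr)\,dy$, note that $\bar Q_t(y,x)=\Pbd(M_t>\log(y/x))$ with $M_t\sim N(-a^2t/2,a^2t)$, and reduce $I_B$ to Black--Scholes and Margrabe option prices by exploiting the martingale property $\Ebd[e^{M_t}]=1$. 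This buys you a more unified computation — the cases $t=0$ or $t'=0$ are covered without any limit argument, since $M_0\equiv 0$ makes the call payoffs vanish — and an interpretable intermediate formula $I_B=x\bigl(\Ebd[(e^{M_t}-1)^+]+\Ebd[(e^{M_{t'}}-1)^+]-\Ebd[(e^{M_t}-e^{M_{t'}})^+]\bigr)$, at the modest cost of having to recall/rederive the one-strike exchange-option formula for independent geometric Brownian motions. All the ingredients you invoke (the indicator identity $\one_{\{M>\theta\}}-\one_{\{0>\theta\}}=\mathrm{sgn}(M)\one_{[0\wedge M,0\vee M)}(\theta)$, the algebraic collapse $(t'-t)^2+4(t-s)(t'-s)=(t+t'-2s)^2$, the primitive $\frac{d}{dr}\frac{4}{a}\Phi(\frac{a}{2}\sqrt{r})=\frac{1}{\sqrt{2\pi r}}e^{-a^2 r/8}$, and $\Ebd[(e^{M_t}-e^{M_{t'}})^+]=2\Phi(\frac{a}{2}\sqrt{t+t'})-1$ from independence) check out, and $I_B+I_\clw$ reproduces the stated covariance exactly.
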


Indeed, it is immediate from Theorem \ref{thm:main2} that the left-hand side of \eqref{eq:fixx} converges weakly to $\frac{1}{ax}(u(\cdot,x) - u(0,x))$, and the rest is just a matter of calculating the covariances. See Section \ref{ssec:corproof}.

\begin{remark}\label{rem:fBM}
    The above corollary leads to the following observations.

    (i) For fixed $x>0$, the process $G^x(\cdot)$ has the same local path behavior as a fractional Brownian motion with Hurst parameter $1/4$ (abbreviated as $fBM(1/4))$. In fact, by directly checking the conditions in \cite[Theorems 6 and 7, Page 50]{marcus1968holder}, one obtains the following:
    \begin{align}
    \limsup_{h \rightarrow 0} \frac{|G^x(t+h) - G^x(t)|}{h^{1/4}\left(\log \log \frac{1}{h}\right)^{1/2}} &= \frac{\sqrt{ax} \, \pi^{1/4}}{2^{3/4}} \ \text{ almost surely for any } t \ge 0,\\
    \limsup_{|t'-t|=h \rightarrow 0; \, \, t',t \in [0,1]} \frac{|G^x(t+h) - G^x(t)|}{h^{1/4}\left(\log \frac{1}{h}\right)^{1/2}} &= \frac{\sqrt{ax} \, \pi^{1/4}}{2^{3/4}} \ \text{ almost surely.}
    \end{align}
    (ii) Highlighting dependence on $a$, consider the process $H^x_a(t) \doteq \frac{(2\pi)^{1/4}\sqrt{ax}}{\sqrt{2}}G^x(t)$, $t \ge 0$. Then, it can be checked by computing limits of covariances and using the fact that all the considered processes are Gaussian that
    $$
    H^x_a(\cdot) \Rightarrow fBM(1/4) \ \text{as} \ a \rightarrow 0
    $$
    in $C([0,\infty):\RR)$. Hence, as the initial profile becomes less densely packed (captured by $a \rightarrow 0$), the rescaled order statistics fluctuations approach that of $fBM(1/4)$. This result can be interpreted as `interpolating' our result with that of \cite[Corollary 1.4]{dembo2017equilibrium} which proves that analogous fluctuations in the case of the homogeneous stationary profile (i.e. when $a=0$) indeed converge to $fBM(1/4)$.\\\\
    (iii) For any fixed $x>0$, Corollary \ref{cor:fixxprocess} implies that $G^x(t)\Rightarrow N(0, 2/(a^2x))$ as $t \rightarrow \infty$. Thus, unlike the homogeneous case, the process limits of rescaled fluctuations of order statistics are tight in time, which intuitively results from the dense packing of particles in the bulk for inhomogeneous stationary profiles. Moreover, the increasing variance of the normal random variable as $x \rightarrow 0$ reveals the higher variability of order statistics as one approaches the lower edge of the particle ensemble.
\end{remark}

The above results tell us about the behavior of the particles in the bulk. The next result gives us information on the asymptotics of the lowest particle.
\begin{theorem} \label{thm:lpasymptotics}
Fix $a \in (0, \infty)$ and $\gamma \ge 0$.
Then, under $\Pbd^{\gamma}_a$, as $t\to \infty$, $(X_{(0)}(0), X_{(0)}(t) + at/2)$
converges in distribution to $(\eta_1,\eta_2)$, where $\eta_1$ and $\eta_2$ are independent random variables with density function, $f_{\eta}$ given as
\begin{equation}\label{eq:feta}
f_\eta(x) \doteq \frac{1}{\Gamma(1+ 2\gamma/a)} e^{(2\gamma+a)x} e^{-e^{ax}}.\end{equation}
In particular, suppose we denote by $\tilde \nu_a^{\gamma}$ the law of the infinite vector
$$0=\tilde U_0< \tilde U_1 < \ldots <\infty,$$
where the vector $(\tilde U_i -\tilde U_{i-1})_{i \in \NN}$ is distributed as 
$$\pi^{\gamma}_a \doteq \otimes_{i=1}^{\infty} \mbox{Exp}(2\gamma+ia).$$
Then, under $\Pbd^{\gamma}_{\tilde \nu_a^{\gamma}}$,
$X_{(0)}(t)+at/2$ converges in distribution to $\eta_2-\eta_1$.
\end{theorem}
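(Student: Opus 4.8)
\textbf{Proof proposal for Theorem \ref{thm:lpasymptotics}.}

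The plan is to establish asymptotic independence of $X_{(0)}(0)$ and $X_{(0)}(t)+at/2$ under $\Pbd^{\gamma}_a$ directly, by exploiting the stationarity of $(\tilde X_{(i)}(\cdot))_{i\in\NN_0}$ (recall $\tilde X_{(i)}(t) = X_{(i)}(t)+at/2$) together with the change-of-measure relation \eqref{eq:absctyn}. First, I would observe that under the base measure $\Pbd^{0}_a$ the marginal law of $X_{(0)}(0)$ is explicit: from \eqref{eq:inhomogeneous2} (with $\gamma=0$) and the computation $\Pbd(U_0 > y) = e^{-e^{ay}}$ following \eqref{eq:ZisGamma}, the density of $X_{(0)}(0)$ is $ae^{ax}e^{-e^{ax}}$, and tilting by $e^{2\gamma X_{(0)}(0)}/\Gamma(1+2\gamma/a)$ gives precisely $f_\eta$ in \eqref{eq:feta}. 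By stationarity of $(\tilde X_{(i)}(\cdot))_{i\in\NN_0}$ under $\Pbd^{\gamma}_a$, the marginal law of $\tilde X_{(0)}(t) = X_{(0)}(t)+at/2$ is $f_\eta$ for every $t\ge 0$ as well. Thus the content of the theorem is the asymptotic \emph{independence}, and the weak limit will then automatically be the product $f_\eta\otimes f_\eta$.

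The heart of the argument is a coupling/decoupling estimate showing that, as $t\to\infty$, the particle which is lowest at time $0$ is with high probability no longer near the lower edge at time $t$, and symmetrically the particle lowest at time $t$ was not near the lower edge at time $0$; consequently the random variable $\tilde X_{(0)}(t)$ becomes measurable (in the limit) with respect to a $\sigma$-field asymptotically independent of $X_{(0)}(0)$. Concretely, I would use the geometric-Brownian-motion representation $Y_i(t)=\exp\{a\tilde X_i(t)\}$, which under $\Pbd^{0}_a$ forms a rank-based system of geometric Brownian motions started from a rate-one Poisson process (Lemma \ref{lem:ispp}), so that $\min_i Y_i(0)$ has an explicit small-value distribution. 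Since $Y_{(0)}(t)\to\infty$ a.s.\ as $t\to\infty$ (the lowest particle diffuses upward away from $0$ on the $Y$-scale because the $Y$'s are, roughly, non-negative martingales plus rank-based corrections that do not pin them at $0$), one shows that for any $M$, $\Pbd^{0}_a(X_{(0)}(0)\le M \text{ affects } X_{(0)}(t))\to 0$. Making this precise requires a quantitative bound: I would compare the gap process to its stationary version $\pi_a^0$, use the estimates on moments and hitting probabilities of rank-based geometric Brownian motions developed for the tightness proofs (Section \ref{sec2.1}), and show the ``influence'' of the label initially at rank $0$ on the rank-$0$ position at time $t$ decays. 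One clean route: for large $t$, condition on the configuration at time $t/2$; by that time the gaps are close to $\pi_a^0$ and the lowest-at-time-$0$ particle has, with probability $\to 1$, acquired a large rank, after which its contribution to $\tilde X_{(0)}(t)$ is negligible by an exponential-tail estimate on how far a high-rank particle can descend.

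For the $\gamma>0$ case I would transfer the $\gamma=0$ decoupling via \eqref{eq:absctyn}: since $d\Pbd^{\gamma}_a/d\Pbd^{0}_a = e^{2\gamma X_{(0)}(0)}$ depends only on $X_{(0)}(0)$, and $X_{(0)}(0)$ has exponential-type tails under $\Pbd^{0}_a$, the Radon--Nikodym derivative has finite moments of all orders; a uniform-integrability argument then upgrades the $\Pbd^{0}_a$-asymptotic independence and the convergence of the pair $(X_{(0)}(0),\tilde X_{(0)}(t))$ to the $\Pbd^{\gamma}_a$ statement, with the tilted marginal $f_\eta$ appearing on the $X_{(0)}(0)$-coordinate and — because the tilt does not touch the time-$t$ coordinate in the limit, as the two coordinates decouple — also on the $\eta_2$-coordinate by stationarity. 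Finally, the last assertion under $\Pbd^{\gamma}_{\tilde\nu_a^{\gamma}}$ follows by writing $X_{(0)}(t)+at/2$ under the shifted initial law as the difference of the time-$t$ position and the (deterministically zeroed) time-$0$ position: starting the gaps from $\pi_a^\gamma$ with $X_{(0)}(0)=0$ is the law $\nu_a^\gamma$ conditioned to have $X_{(0)}(0)=0$, and one checks via a disintegration of $\nu_a^\gamma$ over the value of $X_{(0)}(0)$ (the gaps being independent of $X_{(0)}(0)$ under $\nu_a^\gamma$) that $X_{(0)}^{\tilde\nu}(t)+at/2 \overset{d}{=} \big(\tilde X_{(0)}(t) - X_{(0)}(0)\big)$ under $\Pbd^{\gamma}_a$ for each $t$; passing to the limit and using the just-established independence of $\eta_1=X_{(0)}(0)$ and $\eta_2=\lim\tilde X_{(0)}(t)$ yields the law of $\eta_2-\eta_1$. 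In the Harris case $\gamma=0$, $f_\eta(x)=ae^{ax}e^{-e^{ax}}$ is (after the change of variables $x\mapsto ax$) a Gumbel density, so $\eta_2-\eta_1$ is a difference of two independent Gumbels.

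\textbf{Main obstacle.} The genuinely hard step is the quantitative decoupling in the $\gamma=0$ case: proving that the label initially at the bottom has asymptotically vanishing influence on who/where the bottom particle is at time $t$, uniformly enough to pass to the limit. This needs control of the rank-based GBM system — in particular an estimate that a particle which has risen to rank $k$ by time $t/2$ stays at height $\gtrsim$ some slowly growing function of $k$ at time $t$ with high probability, together with a coupling of the bottom of the system to its $\pi_a^0$-stationary version — and this is where the kernel estimates on $q_t$, $\hat q_t$ and the moment/probability bounds for rank-based geometric Brownian motions alluded to in the proof overview will be invoked.
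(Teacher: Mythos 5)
Your high-level strategy — stationarity gives both marginals equal to $f_\eta$, asymptotic independence of $X_{(0)}(0)$ and $\tilde X_{(0)}(t)$ gives the product limit, and the $\tilde\nu_a^\gamma$ statement follows by the translation argument using independence of $X_{(0)}(0)$ from the gaps under $\nu_a^\gamma$ — is the same as the paper's, and your treatment of the second part of the theorem is essentially correct. However, the mechanism you propose for the decoupling step differs from, and is considerably weaker than, what the paper does. The paper does not condition at time $t/2$ or compare gaps to $\pi_a^0$ (under $\Pbd_a^0$ the gaps are already exactly stationary, so ``by time $t/2$ the gaps are close to $\pi_a^0$'' is vacuous and points toward a quantitative ergodicity argument that is not needed). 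Instead, the paper exploits the exact independence structure of the Poisson initial data: defining $J_0(M)$ as the index of the first particle above level $M$ at time $0$, and $A_{T,M}$ the event that for all $t\ge T$ no particle with index $< J_0(M)$ is at rank $0$, it first shows $\Pbd_a^\gamma(A_{T,M})\to 1$ by splitting into the event that particles initially below $M$ stay above $-at/4$ (finite union, LLN for BM) and the event that some particle initially above $M$ dips below $-at/4$ for all large $t$ (union bound over integer time windows plus a moment estimate on $\sup_{[n,n+1]}Y^{M,*}$). Then, on $A_{T,M}$, $\tilde X_{(0)}(t) = \inf_{i\ge J_0(M)}\tilde X_i(t)$, which under $\Pbd_a^0$ is measurable with respect to $\sigma\{J_0(a,b]: M<a\le b\}\vee\sigma\{W_i\}$ — a $\sigma$-field exactly independent of $\sigma\{J_0(a,b]: a\le b\le M\}$ by the Poisson property, and the latter $\sigma$-field carries $\mathbf{1}\{J_0(M)\ge 1\}Y_{(0)}(0)^{2\gamma/a}$. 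This exact factorization is what makes the argument short; your sketch would have to prove a quantitative version of it from scratch.

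Two more points worth flagging. First, there is an error in your setup: you claim ``$Y_{(0)}(t)\to\infty$ a.s.'' — but $Y_{(0)}(t)$ is the ranked minimum, which is \emph{stationary} (distributed $\text{Exp}(1)$ under $\Pbd_a^0$ for all $t$) and certainly does not go to infinity. What goes to infinity is $Y_i(t)$ for a \emph{fixed label} $i$ (since $\tilde X_i(t) = X_i(0)+W_i(t)+at/2\to\infty$ a.s. under $\Pbd_a^0$). Your later phrasing (``the lowest-at-time-$0$ particle has, with probability $\to 1$, acquired a large rank'') is the correct statement, so this is presumably a slip, but as written the premise of your decoupling argument is false. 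Second, the ``kernel estimates on $q_t$, $\hat q_t$'' you cite as the needed technical input are actually used for the tightness proofs of the fluctuation theorems and play no role in the paper's proof of Theorem \ref{thm:lpasymptotics}; the latter only uses Lemma \ref{lem:tailbd}-style moment bounds, the LLN for Brownian motion, and union bounds.
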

The second part of the above theorem gives the time asymptotic behavior of the lowest particle in the Atlas and Harris models when initially the lowest particle starts at $0$ and the gaps are given according to the stationary distribution $\pi_a^{\gamma}$.
We remark that in the case when $\gamma=0$ (Harris model) the random variables $\eta_i$ are distributed as independent Gumbel random variables.  
\begin{remark}
\label{rem:tsa}
In \cite{tsai_stat} Tsai establishes a related result. In the notation of the current paper, the result in \cite{tsai_stat} says that, for $a>0$ and $\gamma \ge 0$, there is a $c\in (0,\infty)$ such that
$$\Pbd^{\gamma}_{\tilde \nu_a^{\gamma}}(
|X_{(0)}(t)+at/2| \ge x) \le c \exp\{-\frac{1}{2}(2\gamma+a)x\}, \mbox{ for all } t\ge 0 \mbox{ and } x \ge 0.$$
We note that the above result is valid for all $t\ge 0$ whereas Theorem 
\ref{thm:lpasymptotics} characterizes the behavior as $t\to \infty$. On the other hand Theorem 
\ref{thm:lpasymptotics} gives an explicit simple form  limit law instead of an upper bound on tail probabilities.
\end{remark}


\section{Proof of Fluctuation Results}
In this section we will prove Theorems \ref{thm:main1} and
\ref{thm:main2}. In the statements of lemmas and theorems in this and subsequent sections, we denote constants by $C$, $C_1$, $C_2$, etc., sometimes writing expressions such as $C = C(p_1,p_2,\dots,p_n)$ if we wish to emphasize the dependence of $C$ on parameters $p_1, p_2 \dots, p_n$. Within proofs, we also denote constants by $c_1, c_2, c_3, $ etc., where it is implicitly understood that these may depend on the parameters fixed at the beginning of the proof.

Define 
\begin{equation}
Y_i(t) \doteq \exp\{a(X_i(t) + \frac{at}{2})\}, 
\; Y_i^{\veps}(t) \doteq \veps^{1/2} Y_i(t),\; t \ge 0, \; i \in \NN_0, \; \veps \in (0,\infty).
\end{equation}
The following lemma says that, under $\Pbd^{0}_a$, for every $t \ge 0$, 
$\{Y_i(t), i \in \NN_0\}$ are points of a rate $1$ Poisson process. Let $Y_{(i)}(t) \doteq \exp\{a(X_{(i)}(t) + \frac{at}{2})\}$ for  $t\ge 0$, $i \in \NN_0.$
\begin{lemma}\label{lem:ispp}
For $t \ge 0$ and $x \ge 0$, let 
$N_t(x) = \sum_{i=0}^{\infty} \one_{[0,x]}(Y_i(t))$.
Then, for all $a>0$, under $\Pbd^{0}_a$, for every $t \ge 0$, $\{N_t(x), x \ge 0\}$ is a rate $1$ Poisson process.

Furthermore, letting $N_t^*(x) = \sum_{i=1}^{\infty} \one_{[0,x]}(Y_{(i)}(t) - Y_{(0)}(t))$ for $t \ge 0$ and $x \ge 0$, we have that, for every $a>0$ and  $t \ge 0$, under $\Pbd^{\gamma}_a$,  $\{N_t^*(x), x \ge 0\}$ is a rate $1$ Poisson process.
\end{lemma}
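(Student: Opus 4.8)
The plan is to reduce both claims to the mapping and restriction properties of Poisson point processes (PPPs), using Tsai's stationarity result to move from time $t$ back to time $0$. First I would record the consequence of \cite{tsai_stat} that under $\Pbd_a^\gamma$ the law of $(X_i(t)+\tfrac{at}{2})_{i\in\NN_0}$ equals $\nu_a^\gamma$ for every $t\ge 0$; hence the law of the family $(Y_i(t))_{i\in\NN_0}$ is independent of $t$ and coincides with the law of $(e^{aX_i(0)})_{i\in\NN_0}$ when $(X_i(0))_{i\in\NN_0}\sim\nu_a^\gamma$. Since $a>0$ and $x\mapsto e^{ax}$ is an increasing bijection of $\RR$ onto $(0,\infty)$, the ordered vector $(Y_{(i)}(t))_{i\in\NN_0}$ is its image applied to $(X_{(i)}(t))_{i\in\NN_0}$, so $0<Y_{(0)}(t)<Y_{(1)}(t)<\cdots$ a.s.\ and it suffices to study the image point configuration under $\mu_a$ (for the first claim) and under $\nu_a^\gamma$ (for the second).

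For the first claim I would invoke the mapping theorem: under $\mu_a$ the vector $(X_i(0))_{i\in\NN_0}$ is the increasing enumeration of a PPP on $\RR$ with intensity $ae^{ax}\,dx$, and the substitution $y=e^{ax}$ shows that the push-forward of this intensity under $x\mapsto e^{ax}$ is Lebesgue measure on $(0,\infty)$. Hence under $\Pbd^0_a$ the points $\{Y_i(t)\}_i$ form a unit-rate PPP on $(0,\infty)$ (in particular locally finite on each $(0,b]$), which is precisely the statement that $\{N_t(x):x\ge 0\}$ is a rate $1$ Poisson process. For the second claim it helps to represent this unit-rate PPP through its interarrival decomposition: writing its ordered points as $V_i=E_0+E_1+\cdots+E_i$, $i\in\NN_0$, with $E_0,E_1,\dots$ i.i.d.\ $\mathrm{Exp}(1)$, the bottom point is $V_0=E_0$, and $\{V_i-V_0:i\ge 1\}=\{E_1+\cdots+E_i:i\ge 1\}$ is, being generated by $(E_1,E_2,\dots)$ alone, again a unit-rate PPP on $(0,\infty)$, independent of $V_0$. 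This settles the second claim when $\gamma=0$.

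For general $\gamma\ge 0$ I would bring in the absolute continuity in \eqref{eq:inhomogeneous2}: $\nu_a^\gamma$ has density $\Gamma(1+2\gamma/a)^{-1}e^{2\gamma x_{(0)}}$ with respect to $\mu_a$, and since $x_{(0)}=\tfrac1a\log V_0$ under the exponential map, this transports to the density $\Gamma(1+2\gamma/a)^{-1}V_0^{2\gamma/a}$ with respect to the unit-rate PPP law — and $V_0^{2\gamma/a}/\Gamma(1+2\gamma/a)$ is indeed a probability density against $\mathrm{Exp}(1)$, consistently with \eqref{eq:ZisGamma}. Because this Radon--Nikodym factor depends on the configuration only through $V_0=E_0$, and $E_0$ is independent of $(E_1,E_2,\dots)$ under the unit-rate PPP law, the change of measure modifies only the marginal law of $E_0$ (turning it into $\mathrm{Gamma}(1+2\gamma/a,1)$) and leaves the conditional law of $(E_1,E_2,\dots)$ — hence of $\{V_i-V_0:i\ge 1\}$ — untouched. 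Therefore $\{N_t^*(x):x\ge 0\}$ is a rate $1$ Poisson process under $\Pbd_a^\gamma$ as well.

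The step needing the most care is this last one: one must be precise that the tilt in \eqref{eq:inhomogeneous2} acts only on the lowest point and that the interarrival representation genuinely decouples $V_0$ from the gap configuration $\{V_i-V_0\}_{i\ge 1}$. Equivalently — and this is probably the cleanest rigorous phrasing — one can appeal to the restriction property of the PPP: conditionally on $\{V_0=v\}$ the points in $(v,\infty)$ form a unit-rate PPP on $(v,\infty)$, a law which, after translating by $v$, does not depend on $v$, so $\{V_i-V_0\}_{i\ge 1}$ is independent of $V_0$ and tilting $V_0$ is immaterial. The only other mild bookkeeping point is the push-forward and reordering under $x\mapsto e^{ax}$, which is harmless precisely because that map is an increasing bijection of $\RR$ onto $(0,\infty)$.
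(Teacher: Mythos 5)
Your proof is correct and takes essentially the same route as the paper's: both reduce to time $0$ via stationarity, identify the unit-rate Poisson structure of $\{Y_i(0)\}$ under $\Pbd_a^0$ by pushing forward the intensity $ae^{ax}\,dx$ through $x\mapsto e^{ax}$, and then observe that the Radon--Nikodym density in \eqref{eq:inhomogeneous2} depends on the configuration only through $Y_{(0)}(0)$, which under $\Pbd_a^0$ is independent of the gap sequence. Your interarrival decomposition $V_i = E_0 + \cdots + E_i$ and the paper's factorization $\Ebd_a^\gamma[\prod_i f_i(U_i)] = \Gamma(1+2\gamma/a)^{-1}\Ebd_a^0[Y_{(0)}(t)^{2\gamma/a}]\,\Ebd_a^0[\prod_i f_i(U_i)]$ express exactly the same independence fact.
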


\begin{proof}
Fix $t\ge 0$ and $a>0$. Under $\Pbd_a^0$,  if we view $(X_i(t) + \frac{at}{2})_{i \in \mathbb{N}_0}$ as a point process on $\mathbb{R}$, then its distribution is given by the probability measure $\nu_a^0$, defined by \eqref{eq:inhomogeneous2}, namely a Poisson point process on $\mathbb{R}$ with intensity measure $\lambda_a(dx) \doteq ae^{ax}dx$. \ab{Consequently, by the Poisson mapping theorem (see p.\ 18 of \cite{kingBook})}, if we define $H_a : \mathbb{R} \to (0,\infty)$ by $H_a(x) \doteq e^{ax}$, then $(Y_i(t))_{i \in \mathbb{N}} = (H_a(X_i(t) + at/2))_{i \in \mathbb{N}_0}$ is distributed as a Poisson point process on $(0,\infty)$ with intensity measure $\lambda_a \circ H_a^{-1}(dy) = \mathbf{1}(0,\infty)dy$, from which the first statement follows.

To prove the second statement, it suffices to show that, for each $t \geq 0$, under $\Pbd^\gamma_a$, the increments $(Y_{(i+1)}(t) - Y_{(i)}(t))_{i \in \mathbb{N}}$ form a sequence of i.i.d.\ Exp(1) random variables. This is immediate from the formula \eqref{eq:margCOM}, and the fact that, by the first statement, under the measure $\Pbd^0_a$, the increments $(Y_{(0)}(t), Y_{(1)}(t) - Y_{(0)}(t), Y_{(1)}(t) - Y_{(0)}(t), \dots)$ are distributed as i.i.d.\ Exp(1) random variables.
\end{proof}

Note that, for $a \in (0, \infty)$ and $\gamma \ge 0$, under $\Pbd^{\gamma}_a$, $\{Y_i^{\veps}\}_{i \in \NN_0}$ solve the system of equations, for $t\ge 0$ and $i \in \NN_0$,
\begin{equation}
Y_i^{\veps}(t) = Y_i^{\veps}(0) + a^2\int_0^t Y_i^{\veps}(s) ds +
a\int_0^t Y_i^{\veps}(s) \one\{Y_i^{\veps}(s) = Y_{(0)}^{\veps}(s)\} ds  + a\int_0^t Y_i^{\veps}(s) dW_i(s),
\label{eq:rgbm}
\end{equation}
where
$Y_{(i)}^{\veps}(t) \doteq \veps^{1/2}Y_{(i)}(t)$,
$t\ge 0$,  $i \in \NN_0$. Through these evolution equations, $\{Y_i^{\veps}(\cdot)\}_{i \in \NN_0}$ can be viewed as a collection of rank-based geometric Brownian motions.

The following lemma gives an important moment bound. 



\begin{lemma}
\label{lem:tailbd}
Fix $a \in (0,\infty)$, $\gamma \geq 0$, $m > 1$, $q \geq 1$, and $T \in (0,\infty)$. Then there exists $C \in (0,\infty)$ such that for all $\veps > 0$ and $j >1$,
\begin{equation} \label{eq:unordered}
\sum_{i = j}^\infty \left[\Ebd^{\gamma}_a \sup_{0 \leq t \leq T} (Y_i^\veps(s))^{-mq} \right]^{1/q} \leq C\veps^{-m/2}j^{-m + 1},
\end{equation}
and
\begin{equation} \label{eq:ordered}
\left[\Ebd^{\gamma}_a \left(\sum_{i = j}^\infty \sup_{0 \leq t \leq T} (Y_{(i)}^\veps(s))^{-m}\right)^q \right]^{1/q} \leq C\veps^{-m/2}j^{-m + 1}.
\end{equation}
\end{lemma}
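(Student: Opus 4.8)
The plan is to reduce both bounds to a single pathwise estimate obtained by discarding the nonnegative rank drift in \eqref{eq:rgbm}. Applying It\^o's formula to $\log Y_i^\veps$ in \eqref{eq:rgbm} gives
\[
\log Y_i^\veps(t) = \log Y_i^\veps(0) + \tfrac{a^2}{2}t + a\int_0^t \one\{Y_i^\veps(s) = Y_{(0)}^\veps(s)\}\,ds + aW_i(t),
\]
and, since the two drift terms are nonnegative and nondecreasing in $t$, for each $i$
\[
\sup_{0\le t\le T}(Y_i^\veps(t))^{-1} \le (Y_i^\veps(0))^{-1}\exp\Big(a\sup_{0\le t\le T}(-W_i(t))\Big).
\]
Because $W_i$ is independent of $\Xbd(0)$ and, by the reflection principle, $\sup_{0\le t\le T}(-W_i(t))\overset{d}{=}|W_i(T)|$, the quantity $\Ebd[\exp(amq\sup_{0\le t\le T}(-W_i(t)))]$ is a finite constant depending only on $a,m,q,T$; hence, using $Y_i^\veps(0)=\veps^{1/2}Y_{(i)}(0)$ (recall $X_i(0)=X_{(i)}(0)$),
\[
\Ebd_a^\gamma\Big[\sup_{0\le t\le T}(Y_i^\veps(t))^{-mq}\Big] \le C\,\veps^{-mq/2}\,\Ebd_a^\gamma\big[(Y_{(i)}(0))^{-mq}\big].
\]
By Lemma \ref{lem:ispp}, under $\Pbd_a^0$ the variable $Y_{(i)}(0)$ is $\mathrm{Gamma}(i+1,1)$-distributed, so $\Ebd_a^0[(Y_{(i)}(0))^{-p}]=\Gamma(i+1-p)/\Gamma(i+1)=O(i^{-p})$ whenever $i+1>p$; for general $\gamma\ge 0$ one inserts the Radon--Nikodym derivative $Y_{(0)}(0)^{2\gamma/a}$ from \eqref{eq:absctyn} and applies Cauchy--Schwarz, reducing to the $-2mq$ moment under $\Pbd_a^0$ and still obtaining $\Ebd_a^\gamma[(Y_{(i)}(0))^{-mq}]\le Ci^{-mq}$ once $i+1>2mq$. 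Taking $1/q$-th powers and summing $\sum_{i\ge j}i^{-m}\le Cj^{-m+1}$ (legitimate since $m>1$) gives \eqref{eq:unordered}. (Implicitly $j$ is taken large enough, say $j>2mq$, for the left sides to be finite; the constant absorbs the finitely many remaining indices.)

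For \eqref{eq:ordered}, the first step would be a pathwise order-statistics comparison: if $t^*\in[0,T]$ attains $\inf_{0\le t\le T}Y_{(i)}^\veps(t)$, then at time $t^*$ at least $i+1$ particles sit at or below $Y_{(i)}^\veps(t^*)$, so at least $i+1$ of the running minima $\ell_k:=\inf_{0\le t\le T}Y_k^\veps(t)$ are $\le Y_{(i)}^\veps(t^*)$; therefore $\inf_{0\le t\le T}Y_{(i)}^\veps(t)\ge M_{(i)}$, where $M_{(i)}$ denotes the $(i+1)$-st smallest element of $\{\ell_k:k\ge 0\}$. Combining this with $\ell_k\ge Y_{(k)}^\veps(0)B_k$, where $B_k:=\exp(a\inf_{0\le t\le T}W_k(t))\in(0,1]$ are i.i.d.\ and independent of $\Xbd(0)$, yields $\inf_{0\le t\le T}Y_{(i)}^\veps(t)\ge \veps^{1/2}\widetilde M_{(i)}$ with $\widetilde M_{(i)}$ the $(i+1)$-st smallest of $\{Y_{(k)}(0)B_k\}_{k\ge 0}$. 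By Minkowski's inequality it then suffices to show $\Ebd_a^\gamma[\widetilde M_{(i)}^{-mq}]\le Ci^{-mq}$ for $i$ large and sum as before. To get this I would condition on the Poisson points $(Y_{(k)}(0))_k$: conditionally, the events $\{Y_{(k)}(0)B_k<\delta\}$ are independent with probabilities $q_k$ equal to $1$ if $\delta\ge Y_{(k)}(0)$ and at most $\exp(-c(\log(Y_{(k)}(0)/\delta))^2)$ otherwise, so $\Pbd_a^0(\widetilde M_{(i)}<\delta\mid (Y_{(k)}(0))_k)\le (\sum_k q_k)^{i+1}/(i+1)!$. Since $Y_{(k)}(0)$ concentrates around $k+1$, one can show $\sum_k q_k\le C\delta$ for $\delta\lesssim i$ off an event of super-exponentially small probability, which gives $\Pbd_a^0(\widetilde M_{(i)}<\delta)\le C(C'\delta)^{i+1}/(i+1)!$ and, after inserting $Y_{(0)}(0)^{2\gamma/a}$, the same bound up to a constant under $\Pbd_a^\gamma$. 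A layer-cake computation $\Ebd[\widetilde M_{(i)}^{-mq}]=mq\int_0^\infty\delta^{-mq-1}\Pbd(\widetilde M_{(i)}<\delta)\,d\delta$, split at $\delta\asymp i$ and using the $\delta^{i+1}$ factor near $0$, then yields $\Ebd_a^\gamma[\widetilde M_{(i)}^{-mq}]\le Ci^{-mq}$, hence \eqref{eq:ordered}.

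The hard part will be the ordered bound, and inside it the sharp control of $\widetilde M_{(i)}$ for small $\delta$: a crude Cauchy--Schwarz argument would pit an estimate like $\Pbd(\widetilde M_{(i)}<\kappa\veps^{1/2}i)\le e^{-ci}$ against negative moments of $B_k$, which blow up like $e^{Ci^2}$ and destroy the bound, so one genuinely has to exploit the independence of the $B_k$ together with the Gaussian-square decay of their lower tails through the conditioning argument. The order-statistics comparison itself is elementary, but it is exactly the device that lets one control the ranked dynamics despite the fact that the individual processes $Y_{(i)}^\veps$ do not solve autonomous SDEs; the remaining ingredients (negative moments of Gamma variables, exponential moments of a Gaussian, and the change of measure \eqref{eq:absctyn}) are routine. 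These per-$i$ moment bounds are among the estimates on rank-based geometric Brownian motions flagged in the introduction.
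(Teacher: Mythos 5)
Your proof of \eqref{eq:unordered} takes essentially the same route as the paper: discard the nonnegative rank drift to compare $Y_i^\veps$ with a driftless geometric Brownian motion, control the running minimum via the reflection principle, exploit the $\mathrm{Gamma}(i+1,1)$ law of $Y_{(i)}(0)$ under $\Pbd_a^0$ together with Cauchy--Schwarz and the tilt \eqref{eq:absctyn} for $\gamma>0$, and sum the tail. That part is fine.

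Your route to \eqref{eq:ordered} is genuinely different. The paper proves the \emph{pathwise} inequality
\begin{equation}
\sum_{i=j}^{\infty}\sup_{0\le t\le T}Y_{(i)}(t)^{-m}\ \le\ \sum_{i=j}^{\infty}\sup_{0\le t\le T}Y_i(t)^{-m}
\end{equation}
by introducing the order-preserving rank operators $\mathrm{Rk}_i$ and noting that summing $\mathrm{Rk}_i$ over all $i$ reproduces the original sum; this reduces \eqref{eq:ordered} to \eqref{eq:unordered} at once. You instead retain the per-index comparison $\inf_t Y_{(i)}^\veps(t)\ge M_{(i)}\ge\veps^{1/2}\widetilde M_{(i)}$ (which is correct) and must then prove a sharp moment bound $\Ebd_a^\gamma[\widetilde M_{(i)}^{-mq}]\le C i^{-mq}$ for the $(i+1)$-st smallest of $\{Y_{(k)}(0)B_k\}$. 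This buys you a per-$i$ estimate that the paper never needs, but at the cost of a nontrivial Poisson computation.

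There is a genuine gap in that computation. The claim that "since $Y_{(k)}(0)$ concentrates around $k+1$, one can show $\sum_k q_k\le C\delta$ for $\delta\lesssim i$ off an event of super-exponentially small probability" is false for small $\delta$: indeed $\sum_k q_k\ge\#\{k:Y_{(k)}(0)\le\delta\}$, which under $\Pbd_a^0$ is $\mathrm{Poisson}(\delta)$, so $\Pbd_a^0(\sum_k q_k\ge 1)=1-e^{-\delta}\asymp\delta$, a polynomially (not super-exponentially) small probability. The exceptional event then contributes $\gtrsim\delta$ to $\Pbd(\widetilde M_{(i)}<\delta)$, and $\int_0^{\kappa i}\delta^{-mq-1}\cdot\delta\,d\delta$ diverges at $0$. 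More structurally, the chain "bound the conditional probability by $(\sum_k q_k)^{i+1}/(i+1)!$, then bound $\sum_k q_k$" is too lossy: $\Ebd_a^0[(\sum_k q_k)^{i+1}]$ scales like $\delta$ (not $\delta^{i+1}$) for small $\delta$ because one Poisson atom in $[0,\delta]$ already gives $\sum_k q_k\ge 1$. The correct $\delta^{i+1}$ scaling only survives if you keep the $(i+1)$-st \emph{factorial} moment of the success count intact. The cleanest fix: under $\Pbd_a^0$ the marked points $\{Y_{(k)}(0)B_k\}_{k\ge 0}$ form a homogeneous Poisson process on $(0,\infty)$ of rate $C_0\doteq\Ebd[B^{-1}]=\Ebd[e^{a|W(T)|}]<\infty$ (marking/mapping theorem, since the pushforward of $dy\otimes\mathrm{Law}(B)$ under $(y,b)\mapsto yb$ is $C_0\,dy$). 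Hence $\#\{k:Y_{(k)}(0)B_k<\delta\}\sim\mathrm{Poisson}(C_0\delta)$ and $\Pbd_a^0(\widetilde M_{(i)}<\delta)\le(C_0\delta)^{i+1}/(i+1)!$ exactly. With this in hand your layer-cake and Cauchy--Schwarz steps do yield $\Ebd_a^\gamma[\widetilde M_{(i)}^{-mq}]\le Ci^{-mq}$ for $i>mq$ and the argument closes; as written, the decisive tail bound is unproved and the stated intermediate claim is false. You may also find it worthwhile to compare with the paper's $\mathrm{Rk}$-operator reduction, which avoids the Poisson analysis entirely.
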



\begin{proof}
The proof uses some constructions and ideas from  Lemma 3.1 in \cite{dembo2017equilibrium}. For $i \in \mathbb{N}_0$ and $t \geq 0$, let $X_i^\veps(t) \doteq X_i(t) + a^{-1}\log \veps^{1/2}$, and let
\begin{align*}
X_i^{\veps,\ell}(t) \doteq X_i^\veps(0) + W_i(t), \;
Y_i^{\veps,\ell}(t) \doteq e^{a\left( X_i^{\veps,\ell}(t) + \frac{at}{2} \right)}.
\end{align*}
Note that $Y_i^{\veps,\ell}(t) \leq Y_i^{\veps}(t)$ for all $i \in \mathbb{N}_0$ and all $t \geq 0$, a.s. For a fixed $r \geq 1$, we have 
\begin{multline} 
\Ebd_a^\gamma\left[ \sup_{0 \leq t \leq T} Y_i^\veps(t)^{-r} \right]  \leq \Ebd_a^\gamma\left[ \sup_{0 \leq t \leq T} Y_i^{\veps,\ell}(t)^{-r} \right] \\
 = \Ebd_a^\gamma\left[ \sup_{0 \leq t \leq T} \veps^{-r/2}Y_i(0)^{-r} e^{-r\left( a W_i(t) + \frac{a^2t}{2} \right)} \right] 
 \leq \veps^{-r/2} \Ebd_a^\gamma\left[ Y_i(0)^{-r} \right] \Ebd_a^\gamma\left[ e^{ra \overline{W}_i(T)} \right], \label{eq:supYmoment}
\end{multline}
where $\overline{W}_i(T) = \sup_{0 \leq t \leq T} W_i(t)$. By the reflection principle,
\begin{equation} \label{eq:refprinc}
\Ebd_a^\gamma\left[ e^{ra \overline{W}_i(T)} \right] \leq 2 \Ebd_a^\gamma\left[ e^{ra W_i(T)} \right] \doteq c_1 = c_1(r,a,T).
\end{equation}
Furthermore, from Lemma \ref{lem:ispp}, under $\Pbd_a^0$, for $i \in \NN$, the difference $Y_i(0) - Y_0(0)$ is distributed as a sum of $i$ independent, $\text{Exp}(1)$ random variables, \ab{namely $\text{Gamma}(i,1)$}. 
Also note that for $p \in \RR_+$,
$\Ebd_a^0(e^{pX_{(0)}(0)}) = \int_0^{\infty} u^{p/a} e^{-u} du <\infty.$
Therefore, recalling \eqref{eq:inhomogeneous2},  for $i \geq 2r + 1$,
\begin{multline}
\Ebd_a^\gamma\left[ Y_i(0)^{-r} \right]  = \Ebd_a^0\left[ e^{2\gamma X_{(0)}(0)} Y_i(0)^{-r} \right] 
 \leq \Ebd_a^0\left[ e^{4\gamma X_{(0)}(0)} \right]^{1/2} \Ebd_a^0\left[ Y_i(0)^{-2r} \right]^{1/2} \\
 \leq c_2 \left( \int_0^\infty \frac{x^{i - 2r - 1}e^{-x}dx}{(i - 1)!} \right)^{1/2} 
 \leq c_2\left( \frac{\lceil i - 2r - 1 \rceil !}{(i - 1)!} \right)^{1/2}, \label{eq:initmoment}
\end{multline}
where $c_2 =(\int_0^{\infty} u^{4\gamma/a} e^{-u} du)^{1/2}$ and the last inequality follows on noting that
 $x\mapsto \Gamma(x)$ is monotonically increasing on $[1,\infty)$.
 Thus with some $\tilde c_2$ (depending on $r$)
 \begin{equation}\label{eq:ip1}
\Ebd_a^\gamma\left[ Y_i(0)^{-r} \right]\le \tilde c_2 (i+1)^{-r} \mbox{ for all } i\ge 1.
 \end{equation}
 By \eqref{eq:supYmoment}, \eqref{eq:refprinc}, and \eqref{eq:ip1}, for $i \geq 1$,
$$
\Ebd_a^\gamma\left[ \sup_{0 \leq t \leq T} Y_i^\veps(t)^{-r} \right] \leq c_3\veps^{-r/2} (i+1)^{-r},
$$
where $c_3=\tilde c_2 c_1(r,a,T)$. It follows that, for any $m > 1$, $q \geq 1$, and $j >1$, 
$$
\sum_{i = j}^\infty \left( \Ebd_a^\gamma\left[ \sup_{0 \leq t \leq T} Y_i^\veps(t)^{-mq} \right] \right)^{1/q} \leq c_4 \sum_{i = j+1}^\infty (\veps^{1/2}i)^{-m} \leq c_5\veps^{-m/2}j^{-m + 1},
$$
where $c_4, c_5$ depend only on $c_3$ and $q$.
This proves \eqref{eq:unordered}.

To prove \eqref{eq:ordered}, we first introduce some notation. Suppose $\overline{s} = (s_i : i \in \mathbb{N}_0)$ is a sequence in $\mathbb{R}$ which satisfies the following property: (P) \textit{For any $d_1 \in (\inf\overline{s}, \infty)$, the interval $[d_1, \infty)$ contains only finitely many points of the sequence $\overline{s}$}. We define $\left(\text{Rk}_i(\overline{s}) : i \in \mathbb{N}_0 \right)$ to be the unique sequence (associated with the lexicographic tie-breaking) in $\mathbb{R}$ such that the set equality $\{s_0, s_1, s_2, \dots\} = \{\text{Rk}_0(\overline{s}), \text{Rk}_1(\overline{s}), \text{Rk}_2(\overline{s}), \dots\}$ holds, and $\text{Rk}_0(\overline{s}) \geq \text{Rk}_1(\overline{s}) \geq \text{Rk}_2(\overline{s}) \geq \cdots$. The operators $\text{Rk}_j$ are order-preserving, in the sense that if $\overline{u} = \{u_i : i \in \mathbb{N}_0\}$ is another sequence satisfying property (P) such that $s_j \le u_j$ for all $j \in \NN_0$, then $\text{Rk}_j(\overline{s}) \leq \text{Rk}_j(\overline{u})$ for all $j \in \mathbb{N}_0$.

The sequences $\{Y_i(s)^{-m} : i \in \mathbb{N}_0\}$, for $s \in [0,T]$, and $\{\sup_{0 \leq t \leq T} Y_i(t)^{-m} : i \in \mathbb{N}_0\}$ both satisfy property (P), under $\Pbd^{\gamma}_a$, for all $a\in (0,\infty)$ and $\gamma \ge 0$. In particular, for the second sequence, this follows by the Borel-Cantelli Lemma, since,
$\inf \{\sup_{0 \leq t \leq T} Y_i(t)^{-m} : i \in \mathbb{N}_0\} \geq 0$, and
by \eqref{eq:unordered}, for any $d_1 >  0$,
$$
\sum_{i = 1}^\infty \Pbd_a^\gamma\left( \sup_{0 \leq t \leq T} Y_i^\veps(t)^{-m} \geq d_1 \right) \leq d_1^{-1} \sum_{i = 1}^\infty \Ebd_a^\gamma\left[ \sup_{0 \leq t \leq T} Y_i^\veps(t)^{-m} \right] < \infty. 
$$
The verification for the first sequence is done similarly.

For any $i, j \in \mathbb{N}_0$ and $s \in [0,T]$, 
$$
Y_{(i + j)}(s)^{-m} \leq \text{Rk}_i\left( Y_{k + j}(s)^{-m} : k \in \mathbb{N}_0 \right) \leq \text{Rk}_i\left( \sup_{0 \leq t \leq T}Y_{k + j}(t)^{-m} : k \in \mathbb{N}_0 \right).
$$
Taking the supremum over $s \in [0,T]$ and summing over $i \in \mathbb{N}_0$, we obtain that 
\begin{multline*}
\sum_{i = j}^\infty \sup_{0 \leq t \leq T}Y_{(i)}(t)^{-m}  = \sum_{i = 0}^\infty \sup_{0 \leq t \leq T} Y_{(i + j)}(t)^{-m} 
 \leq \sum_{i = 0}^\infty \text{Rk}_i\left( \sup_{0 \leq t \leq T}Y_{k + j}(t)^{-m} : k \in \mathbb{N}_0 \right) \\
 = \sum_{i = 0}^\infty \sup_{0 \leq t \leq T}Y_{i + j}(t)^{-m} = \sum_{i = j}^\infty \sup_{0 \leq t \leq T}Y_i(t)^{-m}.
\end{multline*}
The bound \eqref{eq:ordered} follows by taking the $L^q(\Pbd_a^\gamma)$-norm of both sides and using the bound \eqref{eq:unordered} already proved.
\end{proof}

 Consider the collection
\begin{equation}
\cla_T \doteq \{\psi \in C^2( [0,T]\times (0, \infty)): |\psi|_{\cla_T,m} <\infty \mbox{ for all } m \in \NN\},
\end{equation}
where
$$|\psi|_{\cla_T,m} \doteq \sup_{t \in [0,T]} (|\partial_t \psi(t, \cdot)|_{\cla,m}
+ |\partial_x \psi(t, \cdot)|_{\cla,m} + |\partial_{xx}\psi(t, \cdot)|_{\cla,m}
+ |\psi(t, \cdot)|_{\cla,m}),
$$
and for $\phi \in C((0, \infty):\RR) $ and $m \in \NN$
$$|\phi|_{\cla,m} \doteq \sup_{y \in (0,\infty)} 
|\phi(y)|(1+y)^{m} .$$ 
A related collection of test functions, with exponential instead of polynomial weights, was considered in  \cite{dembo2017equilibrium}. \ab{The reason our setting requires a broader class of test functions will be seen later in the section when we introduce a test function $\psi^{x,\delta}$ 
for our analysis. This  test function is natural for the problem, in that it solves the PDE \eqref{eq:pdefor}. In Lemma \ref{lem:psiinat} we show that the test function is in $\cla_T$, however the right-tail of $\psi^{x,\delta}$ decays at a slower than exponential rate and so it does not belong to the test function family considered in \cite{dembo2017equilibrium}.}

An immediate consequence of Lemma \ref{lem:tailbd} is the following result.
 \begin{corollary} \label{cor:uniform}
Let $\phi \in \cla_T$.
Then, for every $a>0$, $\gamma \ge 0$, $m \in \NN_0$,
$$\sum_{i=0}^k (Y^{\veps}_i(s))^m \phi(t,Y^{\veps}_i(s)) \to \sum_{i=0}^{\infty} (Y^{\veps}_i(s))^m \phi(t,Y^{\veps}_i(s)), \mbox{ uniformly for } s,t \in [0,T], \;\Pbd^{\gamma}_a \mbox{ a.s.}$$
\end{corollary}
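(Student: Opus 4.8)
The plan is to reduce the claimed uniform (in $s,t \in [0,T]$), almost sure convergence of the partial sums $\sum_{i=0}^k (Y^{\veps}_i(s))^m \phi(t,Y^{\veps}_i(s))$ to an application of the tail bound \eqref{eq:unordered} from Lemma \ref{lem:tailbd}. Fix $a>0$, $\gamma \ge 0$, $m \in \NN_0$ and $\phi \in \cla_T$. The key observation is that, since $\phi \in \cla_T$, we have $|\phi(t,y)| \le |\phi|_{\cla_T, m'} (1+y)^{-m'}$ uniformly in $t \in [0,T]$, for every $m' \in \NN$. Choosing $m'$ large (say $m' = m + 3$) gives the pointwise bound $(Y^{\veps}_i(s))^m |\phi(t, Y^{\veps}_i(s))| \le |\phi|_{\cla_T,m'} (Y^{\veps}_i(s))^m (1+Y^{\veps}_i(s))^{-m'} \le |\phi|_{\cla_T,m'} (Y^{\veps}_i(s))^{-(m' - m)} = |\phi|_{\cla_T,m'}(Y^{\veps}_i(s))^{-3}$, valid simultaneously for all $s,t \in [0,T]$. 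Thus the tail of the series, $\sum_{i = k}^\infty (Y^{\veps}_i(s))^m |\phi(t, Y^{\veps}_i(s))|$, is dominated by $|\phi|_{\cla_T,m'} \sum_{i=k}^\infty \sup_{0 \le s \le T}(Y^{\veps}_i(s))^{-3}$, which no longer depends on $s$ or $t$.

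Next I would show that the random tail $T_k \doteq \sum_{i=k}^\infty \sup_{0 \le s \le T}(Y^{\veps}_i(s))^{-3}$ converges to $0$, $\Pbd^\gamma_a$-a.s., as $k \to \infty$. By Lemma \ref{lem:tailbd} applied with $(m,q) = (3,1)$ (note $3 > 1$), $\Ebd^\gamma_a[T_k] = \sum_{i=k}^\infty \Ebd^\gamma_a[\sup_{0 \le s \le T}(Y^{\veps}_i(s))^{-3}] \le C\veps^{-3/2} k^{-2} < \infty$ for every $k > 1$; in particular $T_2 < \infty$ a.s., so the series defining $T_k$ converges and $T_k \downarrow 0$ monotonically as $k \to \infty$, hence $T_k \to 0$ $\Pbd^\gamma_a$-a.s. (One could alternatively invoke Borel--Cantelli along a subsequence together with monotonicity.) Consequently,
\begin{equation}
\sup_{s,t \in [0,T]} \left| \sum_{i=0}^\infty (Y^{\veps}_i(s))^m \phi(t,Y^{\veps}_i(s)) - \sum_{i=0}^{k} (Y^{\veps}_i(s))^m \phi(t,Y^{\veps}_i(s)) \right| \le |\phi|_{\cla_T,m'}\, T_{k+1} \to 0,
\end{equation}
$\Pbd^\gamma_a$-a.s., which is exactly the asserted uniform convergence (and also shows the infinite series is well-defined a.s.).

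The only mild subtlety — the main (minor) obstacle — is bookkeeping: one must be careful that the dominating bound $T_k$ is genuinely uniform over both $s$ and $t$, which works because the weight $(1+y)^{-m'}$ in the definition of $|\cdot|_{\cla_T,m'}$ absorbs the supremum over $t$, and the supremum over $s \in [0,T]$ is taken \emph{inside} each summand before summing, matching precisely the form of the left-hand side of \eqref{eq:unordered}. No delicate estimate is needed beyond Lemma \ref{lem:tailbd}; the corollary is essentially a dominated-tail argument. I should also note that, for $m = 0$, the argument is identical (indeed easier), and that the finiteness $\Ebd^\gamma_a[T_2] < \infty$ already guarantees $\sum_{i=0}^\infty (Y^\veps_i(s))^m \phi(t, Y^\veps_i(s))$ is absolutely convergent uniformly in $s,t$, a fact used implicitly when writing the limit.
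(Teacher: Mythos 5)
Your proof is correct and follows essentially the same route as the paper's: bound $(Y_i^\veps(s))^m|\phi(t,Y_i^\veps(s))|$ uniformly in $s,t$ by a multiple of $\sup_{s\le T}(Y_i^\veps(s))^{-r}$ using the polynomial weight in the definition of $\cla_T$, then invoke the tail estimate \eqref{eq:unordered} of Lemma \ref{lem:tailbd} to show that $\sum_i \sup_{s\le T}(Y_i^\veps(s))^{-r}$ has finite expectation and is hence a.s.\ finite, so its tails vanish a.s. The paper takes a generic $r>1$ whereas you specialize to $r=3$ (i.e.\ $m'=m+3$); this is immaterial.
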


\begin{proof}
Fix $r > 1$. By definition of the class $\mathcal{A}_T$, there exist a constant $c_1$ such that, for all $(t,y) \in [0,T] \times (0,\infty)$,
$
y^m|\phi(t,y)| \leq c_1 y^m(1 + y)^{-m - r} \le c_1 y^{-r}.
$
The result now follows  from Lemma \ref{lem:tailbd} (equation \eqref{eq:ordered}).
\end{proof}

For $k \in \NN$, $t\ge 0$, and a $\phi:(0,\infty) \to \RR$, define
$\langle Q^{\veps,k}_t, \phi\rangle \doteq \sum_{i=0}^k \phi(Y_i^{\veps}(t)).$

The following lemma is immediate from \eqref{eq:rgbm} on applying It\^{o}'s formula.
\begin{lemma}\label{lem:ito}
Let $\psi \in \cla_T$. Then, for every $k \in \NN$, $t \in [0,T]$,
$\Pbd^{\gamma}_a$ a.s.
\begin{multline*}
\langle Q^{\veps,k}_t, \psi(t, \cdot)\rangle
= \langle Q^{\veps,k}_0, \psi(0, \cdot)\rangle
 + a \int_0^t \sum_{i=0}^k Y^{\veps}_i(s)\one\{Y^{\veps}_i(s)= Y^{\veps}_{(0)}(s)\}
\partial_y \psi(s, Y^{\veps}_i(s)) ds \\
+ \int_0^t \sum_{i=0}^k\left(\partial_s \psi(s, Y^{\veps}_i(s)) +
a^2 Y^{\veps}_i(s) \partial_y \psi(s, Y^{\veps}_i(s)) + \frac{a^2}{2}
(Y^{\veps}_i(s))^2 \partial_{yy}\psi(s, Y^{\veps}_i(s))\right)ds \\
+ M^{\veps, \psi}_k(t),
\end{multline*}
where, for $t\ge 0$,
\begin{equation}\label{eq:mdef}
M^{\veps, \psi}_k(t) = a\sum_{i=0}^k \int_0^t Y^{\veps}_i(s)
\partial_y \psi(s, Y^{\veps}_i(s)) dW_i(s).
\end{equation}
\end{lemma}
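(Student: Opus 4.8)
The plan is to apply It\^{o}'s formula to $t\mapsto\psi(t,Y_i^{\veps}(t))$ separately for each fixed index $i\in\{0,1,\dots,k\}$, and then add the finitely many resulting identities. From \eqref{eq:rgbm}, and since $W_i$ is a standard Brownian motion under $\Pbd^{\gamma}_a$, for each $i$ the process $Y_i^{\veps}(\cdot)$ is a continuous semimartingale whose finite-variation part has $ds$-density $a^2Y_i^{\veps}(s)+aY_i^{\veps}(s)\one\{Y_i^{\veps}(s)=Y_{(0)}^{\veps}(s)\}$, whose local martingale part is $a\int_0^{\cdot}Y_i^{\veps}(s)\,dW_i(s)$, and whose quadratic variation is therefore $\langle Y_i^{\veps}\rangle_t=a^2\int_0^t(Y_i^{\veps}(s))^2\,ds$. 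Since $\psi\in\cla_T\subset C^{1,2}([0,T]\times(0,\infty))$ and $Y_i^{\veps}(\cdot)$ is $(0,\infty)$-valued (so the behaviour of $\psi$ near $0$ is irrelevant), It\^{o}'s formula applies and yields, $\Pbd^{\gamma}_a$-a.s.\ for $t\in[0,T]$,
\begin{align*}
\psi(t,Y_i^{\veps}(t))&=\psi(0,Y_i^{\veps}(0))
+\int_0^t\left(\partial_s\psi+a^2\,Y_i^{\veps}\,\partial_y\psi+\tfrac{a^2}{2}\,(Y_i^{\veps})^2\,\partial_{yy}\psi\right)(s,Y_i^{\veps}(s))\,ds\\
&\quad+a\int_0^tY_i^{\veps}(s)\,\one\{Y_i^{\veps}(s)=Y_{(0)}^{\veps}(s)\}\,\partial_y\psi(s,Y_i^{\veps}(s))\,ds\\
&\quad+a\int_0^tY_i^{\veps}(s)\,\partial_y\psi(s,Y_i^{\veps}(s))\,dW_i(s).
\end{align*}
Summing over $i=0,\dots,k$, recalling that $\langle Q^{\veps,k}_t,\phi\rangle=\sum_{i=0}^k\phi(Y_i^{\veps}(t))$, and identifying the sum of the last terms with $M^{\veps,\psi}_k(t)$ from \eqref{eq:mdef}, gives exactly the asserted identity.

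The only point requiring a word of care is that all integrals above are finite (so the identity is between a.s.\ finite quantities) and that the stochastic integral is well defined; this is where the weight class $\cla_T$ enters. By definition of $|\psi|_{\cla_T,m}$, for every $m\in\NN$ one has $|\partial_y\psi(s,y)|\le|\psi|_{\cla_T,m}(1+y)^{-m}$ and $|\partial_{yy}\psi(s,y)|\le|\psi|_{\cla_T,m}(1+y)^{-m}$ uniformly over $(s,y)\in[0,T]\times(0,\infty)$, and likewise $|\partial_s\psi|$ is bounded there. Taking $m=1$ shows $\sup_{s,y}|y\,\partial_y\psi(s,y)|<\infty$ and $m=2$ shows $\sup_{s,y}|y^2\,\partial_{yy}\psi(s,y)|<\infty$; since $Y_i^{\veps}(\cdot)$ is continuous and positive, each $ds$-integrand in the display is bounded along the trajectory, so the Lebesgue integrals are finite, while the integrand $Y_i^{\veps}(s)\,\partial_y\psi(s,Y_i^{\veps}(s))$ of $M^{\veps,\psi}_k$ is bounded uniformly in $(s,\omega)$, so $M^{\veps,\psi}_k$ is in fact a square-integrable martingale (not merely a local one). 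The indicator $\one\{Y_i^{\veps}(s)=Y_{(0)}^{\veps}(s)\}$ is a bounded adapted process and causes no difficulty, and the sum over the finite set $\{0,\dots,k\}$ requires no interchange of limits.

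There is essentially no real obstacle here; the lemma is, as stated, an immediate consequence of \eqref{eq:rgbm} and It\^{o}'s formula. If any step deserves to be singled out, it is the elementary but essential observation that the polynomial-decay weights defining $\cla_T$ dominate the polynomial growth factors $Y_i^{\veps}$ and $(Y_i^{\veps})^2$ thrown off by the It\^{o} expansion of a geometric-Brownian-motion--type equation, which is precisely what keeps all terms in the evolution equation a.s.\ finite and makes $M^{\veps,\psi}_k$ a genuine martingale.
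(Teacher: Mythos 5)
Your proof is correct and takes exactly the route the paper intends: the paper simply states that the lemma "is immediate from \eqref{eq:rgbm} on applying It\^{o}'s formula," and you supply precisely that argument, applying It\^{o} to each $t\mapsto\psi(t,Y_i^{\veps}(t))$ and summing over the finitely many indices $i\le k$. The integrability check via the polynomial weights in $\cla_T$ (so that the $ds$-integrands and the stochastic-integral integrand are bounded along the trajectory, making $M^{\veps,\psi}_k$ a genuine martingale) is the right observation and fills in the detail the paper leaves implicit.
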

The process $M^{\veps, \psi}_k(t)$ is a martingale with respect to the
canonical filtration $\clf_t = \sigma\{\Wbd(s), \Xbd(s): 0 \le s \le t\}$.
The following lemma shows that this martingale converges uniformly, as $k\to \infty$, in $L^q(\Pbd^{\gamma}_a)$.
\begin{lemma}\label{lem:mart}
Let $\psi \in \cla_T$. 
Then there is a continuous $\clf_t$-martingale $M^{\veps, \psi}_{\infty}(t)$ such that
$\Ebd^{\gamma}_a |M^{\veps, \psi}_{\infty}(T)|^q <\infty$ for all $q \in \NN$ and,
as $k\to \infty$,
$$\Ebd^{\gamma}_a\sup_{0\le t \le T} |M^{\veps, \psi}_k(t) - M^{\veps, \psi}_{\infty}(t)|^q \to 0.$$
\end{lemma}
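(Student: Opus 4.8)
The plan is to show that $\{M^{\veps,\psi}_k(t)\}_{k\ge 0}$ is Cauchy in the Banach space of continuous $L^q(\Pbd^{\gamma}_a)$-martingales on $[0,T]$ (under the norm $\|M\|\doteq \Ebd^{\gamma}_a[\sup_{0\le t\le T}|M(t)|^q]^{1/q}$), and to identify the limit as a martingale $M^{\veps,\psi}_\infty$ with all moments finite. Completeness of this space then yields the uniform $L^q$-convergence, and the martingale property of the limit is preserved because $L^q$-convergence of $M_k(T)$ implies convergence of the conditional expectations. The quantitative input is Lemma \ref{lem:tailbd}, which bounds negative moments of $Y^{\veps}_i$, combined with the decay of $\psi\in\cla_T$ built into the weight $|\cdot|_{\cla,m}$.

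First I would record the increment. For $k'>k$,
$$
M^{\veps,\psi}_{k'}(t)-M^{\veps,\psi}_k(t)=a\sum_{i=k+1}^{k'}\int_0^t Y^{\veps}_i(s)\,\partial_y\psi(s,Y^{\veps}_i(s))\,dW_i(s),
$$
a sum of orthogonal (in $i$) continuous martingales. By the Burkholder--Davis--Gundy inequality applied to this $\Rmb$-valued martingale (the $W_i$ are independent, so the quadratic variation is the sum of the individual ones),
$$
\Ebd^{\gamma}_a\Big[\sup_{0\le t\le T}\big|M^{\veps,\psi}_{k'}(t)-M^{\veps,\psi}_k(t)\big|^q\Big]
\le C_q\,a^q\,\Ebd^{\gamma}_a\Big[\Big(\sum_{i=k+1}^{k'}\int_0^T (Y^{\veps}_i(s))^2\,\partial_y\psi(s,Y^{\veps}_i(s))^2\,ds\Big)^{q/2}\Big].
$$
Now I would use that $\psi\in\cla_T$ gives, for any $m\in\NN$, a constant $c$ with $|\partial_y\psi(s,y)|\le c(1+y)^{-m}$, so $y^2\partial_y\psi(s,y)^2\le c^2(1+y)^{-2m+2}\le c^2 y^{-(2m-2)}$. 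Choosing $m$ large (say $2m-2=2r$ with $r>1$ as needed below), the integrand is bounded by $c^2\sup_{0\le s\le T}(Y^{\veps}_i(s))^{-2r}$, and after integrating in $s$ the sum is dominated by $c^2 T\sum_{i=k+1}^{k'}\sup_{0\le s\le T}(Y^{\veps}_i(s))^{-2r}$. Applying the $L^{q/2}$-triangle inequality (Minkowski) over the index $i$ and then the per-particle bound from the proof of Lemma \ref{lem:tailbd} (namely $\Ebd^{\gamma}_a[\sup_{0\le t\le T}(Y^{\veps}_i(t))^{-p}]\le c\,\veps^{-p/2}(i+1)^{-p}$, valid for $p>1$), one gets
$$
\Ebd^{\gamma}_a\Big[\Big(\sum_{i=k+1}^{k'}\sup_{0\le s\le T}(Y^{\veps}_i(s))^{-2r}\Big)^{q/2}\Big]^{2/q}\le \sum_{i=k+1}^{k'}\Ebd^{\gamma}_a\big[\sup_{0\le s\le T}(Y^{\veps}_i(s))^{-rq}\big]^{2/q}\le c\,\veps^{-r}\sum_{i=k+1}^{\infty}(i+1)^{-2r},
$$
which tends to $0$ as $k\to\infty$ provided $2r>1$, i.e. $r>1/2$; taking $r>1$ is safe and matches the hypothesis of Lemma \ref{lem:tailbd}. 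Hence $\{M^{\veps,\psi}_k\}$ is Cauchy, and its limit $M^{\veps,\psi}_\infty$ is a continuous martingale; the same BDG-plus-weight estimate with $k=-1$ (i.e. bounding the full sum) gives $\Ebd^{\gamma}_a|M^{\veps,\psi}_\infty(T)|^q<\infty$ for every $q\in\NN$, since the bound $\sum_i(i+1)^{-2r}<\infty$ is uniform in the truncation.

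The main technical point — really the only place care is needed — is the interchange of the $L^{q/2}$-norm with the infinite sum over particles: this is exactly Minkowski's integral inequality for $q/2\ge 1$, and it is what reduces the problem to the single-particle negative-moment bounds already in hand. (If one wanted $q<2$ the argument still works after first noting $\big(\sum a_i\big)^{q/2}\le\sum a_i^{q/2}$.) A secondary routine point is that the stochastic integrals for distinct $i$ are orthogonal so the cross terms vanish and BDG applies to the aggregated martingale with quadratic variation equal to the sum; this uses only that the $W_i$ are independent. Continuity of the limit in $t$ is automatic from uniform-on-$[0,T]$ $L^q$ (hence in probability, along a subsequence a.s.) convergence of continuous processes.
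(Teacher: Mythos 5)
Your proof is correct and follows essentially the same route as the paper's: verify the Cauchy property in the sup-$L^q$ norm via BDG, interchange the sum over $i$ (and time integral) with the $L^{q/2}$-norm by Minkowski, and invoke the per-particle negative-moment estimate underlying Lemma~\ref{lem:tailbd} together with the polynomial weight built into $\cla_T$. The only cosmetic difference is that the paper hard-codes the weight to $|\psi|_{\cla_T,3}$ and applies \eqref{eq:unordered} with $m=4$, whereas you keep a free exponent $r$; both lead to the same summable tail in $i$ and hence the same conclusion.
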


\begin{proof}
It suffices to show that $D^\veps_{k,k'}(t) \doteq M_{k}^{\veps,\psi}(t) - M_{k'}^{\veps,\psi}(t)$,  converges to $0$ in $L^q(C([0,T],\mathbb{R}), \Pbd_a^\gamma)$ as $k, k' \to \infty$. This is
immediate from Burkholder-Davis-Gundy (BDG) inequality, on observing that, for $y>0$, $[y
\partial_y \psi(s, y)]^2 \le c_1y^{-4}$ and applying 
Lemma \ref{lem:tailbd} (estimate \eqref{eq:unordered}). 
We omit the details.
\end{proof}

For a $\phi \in \cla_T$, we define, for $s,t\in  [0,T]$,
$\langle Q^{\veps}_s, \phi(t, \cdot)\rangle \doteq \sum_{i=0}^{\infty}  \phi(t,Y^{\veps}_i(s)),$
where the series converges $\Pbd^{\gamma}_a$ a.s. in view of Corollary \ref{cor:uniform}.
Occasionally, for clarity, we will write the left side as $\langle Q^{\veps}_s, \phi(t, y)\rangle$.

Note that, for $t\in [0,T]$ and  $\psi \in \cla_T$,
\begin{multline*}
\int_0^t \sum_{i=0}^k Y^{\veps}_i(s)\one\{Y^{\veps}_i(s)= Y^{\veps}_{(0)}(s)\}
\partial_y \psi(s, Y^{\veps}_i(s)) ds\\
= \int_0^t  Y^{\veps}_{(0)}(s)\partial_y \psi(s, Y^{\veps}_{(0)}(s))\sum_{i=0}^k\one\{Y^{\veps}_i(s)= Y^{\veps}_{(0)}(s)\}
 ds
 \to \int_0^t  Y^{\veps}_{(0)}(s)\partial_y \psi(s, Y^{\veps}_{(0)}(s))
 ds,
\end{multline*}
a.s.
Using the above observations, Corollary \ref{cor:uniform}, and  Lemmas \ref{lem:mart} and \ref{lem:ito},
we now have the following result.
\begin{lemma}\label{lem:limito}
Let $\psi \in \cla_T$. Then, for every  $t \in [0,T]$,
$\Pbd^{\gamma}_a$ a.s.
\begin{multline*}
\langle Q^{\veps}_t, \psi(t, \cdot)\rangle
= \langle Q^{\veps}_0, \psi(0, \cdot)\rangle
 + a \int_0^t Y^{\veps}_{(0)}(s)\partial_y \psi(s, Y^{\veps}_{(0)}(s)) ds \\
+ \int_0^t \langle Q^{\veps}_s , \partial_s \psi(s, y)
+ a^2 y \partial_y \psi(s, y)
 + \frac{a^2}{2}
y^2 \partial_{yy}\psi(s, y)\rangle ds
+ M^{\veps, \psi}_{\infty}(t).
\end{multline*}
\end{lemma}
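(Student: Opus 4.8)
The plan is to obtain the identity by letting $k\to\infty$ in the finite-$k$ identity of Lemma \ref{lem:ito} and passing the limit through each of its five terms, using Corollaries and Lemmas already in hand. For the two ``state'' terms, Corollary \ref{cor:uniform} applied with $m=0$ and $\phi=\psi$ gives, $\Pbd^{\gamma}_a$-a.s., that $\langle Q^{\veps,k}_t,\psi(t,\cdot)\rangle\to\langle Q^{\veps}_t,\psi(t,\cdot)\rangle$ and $\langle Q^{\veps,k}_0,\psi(0,\cdot)\rangle\to\langle Q^{\veps}_0,\psi(0,\cdot)\rangle$, in fact uniformly over $t\in[0,T]$; note this also shows the series defining $\langle Q^\veps_\cdot,\cdot\rangle$ converges.

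For the generator term I would split the $i$-th summand as $(\partial_s\psi+a^2 y\,\partial_y\psi+\tfrac{a^2}{2}y^2\partial_{yy}\psi)$ evaluated at $(s,Y^{\veps}_i(s))$ and apply Corollary \ref{cor:uniform} three times: with $m=0$ to $\partial_s\psi$, with $m=1$ to $a^2\partial_y\psi$, and with $m=2$ to $\tfrac{a^2}{2}\partial_{yy}\psi$ --- each of these coefficient functions inherits from the finiteness of $|\psi|_{\cla_T,m'}$ for all $m'$ the uniform-in-$s$ polynomial decay in $y$ that the proof of Corollary \ref{cor:uniform} actually relies on. This yields, a.s.\ and uniformly in $(s,t)\in[0,T]^2$, convergence of the partial sums to $\langle Q^{\veps}_s,\partial_s\psi(s,y)+a^2 y\partial_y\psi(s,y)+\tfrac{a^2}{2}y^2\partial_{yy}\psi(s,y)\rangle$, so the limit passes through $\int_0^t\,ds$, uniformly in $t$. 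The drift term converges to $a\int_0^t Y^{\veps}_{(0)}(s)\partial_y\psi(s,Y^{\veps}_{(0)}(s))\,ds$ by exactly the display immediately preceding the lemma: for each $s$ the index of the minimal particle is finite, hence lies in $\{0,\dots,k\}$ once $k$ is large, one has $\sum_{i=0}^k\one\{Y^{\veps}_i(s)=Y^{\veps}_{(0)}(s)\}=1$ for Lebesgue-a.e.\ $s$ (since $\{s:X_i(s)=X_j(s)\}$ is Lebesgue-null a.s.\ for each $i\neq j$), and $\sup_{s,y}|y\,\partial_y\psi(s,y)|<\infty$ because $\psi\in\cla_T$ forces $|\partial_y\psi(s,y)|\le C(1+y)^{-m}$ for every $m$; bounded convergence then handles the $ds$-integral, uniformly in $t$.

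Finally, Lemma \ref{lem:mart} gives $M^{\veps,\psi}_k(t)\to M^{\veps,\psi}_\infty(t)$ in $L^q(\Pbd^{\gamma}_a)$ uniformly in $t\in[0,T]$, with $M^{\veps,\psi}_\infty$ a continuous martingale; passing to a subsequence $k_j$ along which $\sup_{0\le t\le T}|M^{\veps,\psi}_{k_j}(t)-M^{\veps,\psi}_\infty(t)|\to 0$ a.s., all five terms of Lemma \ref{lem:ito} converge a.s.\ along $k_j$ (the other four along the whole sequence), so the identity passes to the limit and yields the assertion, in fact simultaneously for all $t\in[0,T]$, a.s. I do not anticipate a genuine obstacle here; the only step requiring a modicum of care is the drift term --- specifically the claim $\sum_{i=0}^k\one\{Y^{\veps}_i(s)=Y^{\veps}_{(0)}(s)\}\to1$ for a.e.\ $s$, which rests on the finiteness of the minimizing index and the absence of ties for a.e.\ $s$ --- and this is already recorded in the display just before the lemma.
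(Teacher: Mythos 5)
Your proposal is correct and follows the same route the paper intends: specialize the finite-$k$ It\^o formula of Lemma \ref{lem:ito} and pass $k\to\infty$ term by term, using Corollary \ref{cor:uniform} for the state and generator terms, the display preceding the lemma for the drift term, and the $L^q$-uniform convergence from Lemma \ref{lem:mart} (upgraded along a subsequence to a.s.) for the martingale. The paper compresses all this into a one-line assembly of those same ingredients, so your write-up is simply the detailed version of the intended argument; your observation that the proof of Corollary \ref{cor:uniform} only uses the weighted $L^\infty$ decay of $\phi$ and not membership in $\cla_T$ itself is a nice point worth retaining.
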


Define for $t>0$ and $x,y \in (0, \infty)$
$$\hat q_t(x,y) \doteq \frac{1}{ax} p_t\left(\frac{\log x}{a} - \frac{\log y}{a} - \frac{at}{2}\right).$$
Using properties of the heat kernel it is easily seen that $y \mapsto \hat q_t(x,y)$ is the fundamental solution of the PDE 
\begin{equation}
\partial_t \varphi =  a^2 y \partial_y \varphi + \frac{a^2y^2}{2}\partial_{yy} \varphi.
\end{equation}
In order to prove Theorem \ref{thm:main1} it would be useful to analyze the time evolution of the random field $\check \clv^{\veps}(t,x)$ in \eqref{eq:924}.
For this, in view of Lemma \ref{lem:limito}, it will be convenient to replace 
the indicator function $\mathbf{1}{[0,x]}$ with a suitable mollification. It is not surprising that the mollification obtained by convolving with the kernel $\hat q_t$ introduced below is particularly tractable.

For $x \in (0, \infty)$ and $t>0$, define
$$
\Psi^x(t,y) \doteq \int_{0}^{x} \hat q_t(z,y) dz, \; y \in (0,\infty).
$$

The following elementary lemma notes the fact that, for $t>0$, the evolution kernel $\hat{q}_t(x,y)$ is a probability density in $x$ and in $y$.

\begin{lemma} \label{lem:areprobs}
For any $t > 0$, the following equalities hold:
\begin{equation} \label{eq:hatqisprob}
\int_0^\infty \hat{q}_t(z,y)dy = \int_0^\infty \hat{q}_t(z,y)dz = 1.
\end{equation}
\end{lemma}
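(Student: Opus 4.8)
The plan is to reduce each of the two equalities to an elementary one-dimensional Gaussian computation by undoing the logarithmic change of variables that is built into the definition of $\hat q_t$. Throughout, fix $t>0$ and $z\in(0,\infty)$, and recall $\hat q_t(x,y) = \tfrac{1}{ax}\, p_t\!\big(\tfrac{\log x}{a} - \tfrac{\log y}{a} - \tfrac{at}{2}\big)$ with $p_t$ the Gaussian heat kernel from \eqref{eq:heatkernel}.

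For the identity $\int_0^\infty \hat q_t(z,y)\,dz = 1$, I would substitute $u = a^{-1}\log z$, so that $z = e^{au}$ and $dz = a e^{au}\,du = az\,du$; the prefactor $\tfrac{1}{az}$ in $\hat q_t(z,y)$ cancels the Jacobian exactly, leaving $\int_{\RR} p_t\!\big(u - a^{-1}\log y - \tfrac{at}{2}\big)\,du = 1$ since $p_t$ is a probability density on $\RR$. Equivalently, one may simply invoke the interpretation (implicit in the PDE noted just above the lemma) that $\hat q_t(\,\cdot\,,y)$ is the time-$t$ transition density of the geometric Brownian motion with generator $a^2 y\partial_y + \tfrac{a^2 y^2}{2}\partial_{yy}$ started from $y$, hence integrates to $1$ in its first argument.

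For the identity $\int_0^\infty \hat q_t(z,y)\,dy = 1$, I would first substitute $v = a^{-1}\log y$, so $dy = a e^{av}\,dv$, and then $w = a^{-1}\log z - v - \tfrac{at}{2}$; keeping track of the exponential factor $e^{av} = z\,e^{-a^2 t/2}\,e^{-aw}$ produced along the way, the integral collapses to $e^{-a^2 t/2}\int_{\RR} e^{-aw}p_t(w)\,dw$, which equals $1$ by the Gaussian moment generating function identity $\int_{\RR} e^{-aw}p_t(w)\,dw = e^{a^2 t/2}$. A slicker route, avoiding the explicit computation, is to observe that the formal adjoint $L^*$ of $L = a^2 y\partial_y + \tfrac{a^2 y^2}{2}\partial_{yy}$ satisfies $L^*1 = -a^2 + a^2 = 0$, so that $t\mapsto\int_0^\infty \hat q_t(z,y)\,dy$ is constant in $t$ and equals its limiting value $1$ as $t\downarrow 0$. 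No step presents a genuine obstacle here; the only points requiring care are the bookkeeping of the constant and exponential factors generated by the two successive changes of variables and the correct sign in the Gaussian moment generating function.
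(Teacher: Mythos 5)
Your proof is correct and follows essentially the same route as the paper: both identities are reduced to one-dimensional Gaussian computations via the logarithmic change of variables, with the second identity coming down to $\int_{\RR} e^{-aw}p_t(w)\,dw = e^{a^2t/2}$ (the paper instead completes the square to write $p_t(w)e^{-aw-a^2t/2}=p_t(w+at)$, but this is the same calculation). The alternative "slicker" observations you mention are fine as heuristics but would need justification of boundary terms and the $t\downarrow 0$ limit to stand alone; since you give the explicit substitution as the primary argument, the proof is complete.
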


\begin{proof}
Note that if $\Phi_t(u)$, $u \in \mathbb{R}$, is the cdf for $\mathcal{N}(0,t)$ distribution, then $z \mapsto \Phi_t(\frac{\log z}{a} - \frac{\log y}{a} - \frac{at}{2})$ is the cdf of a probability distribution on $(0,\infty)$ for each fixed $y>0$ and $t>0$, and its derivative is $\hat{q}_t(z,y)$. This shows $z \mapsto \hat{q}_t(z,y)$ is a probability density. To see that $y \mapsto \hat{q}_t(z,y)$ is a probability density, we calculate directly, 
\begin{align}
\int_0^\infty \hat{q}_t(z,y)dy = \int_{-\infty}^\infty \frac{1}{az}p_t(w)az e^{-az - \frac{a^2t}{2}}dw = \int_{-\infty}^\infty p_t(w + at)dw = 1.
\end{align}
where we have made the change of variables $w = \frac{\log z}{a} - \frac{\log y}{a} - \frac{at}{2}$. 
\end{proof}

Note that, by Fubini's theorem, the above lemma implies a frequently useful conservation of mass property: For any $f \in L^1(0,\infty)$, 
\begin{equation} \label{eq:cofmass}
\int_{(0,\infty) \times (0,\infty)} f(y)\hat{q}_t(z,y)dy dz =  \int_0^\infty f(z)dz= \int_{(0,\infty) \times (0,\infty)} f(z)\hat{q}_t(z,y)dz dy.
\end{equation}
Since $q_t(z,y) = \hat{q}_t(z,ye^{-at})$, it follows directly from the lemma that $z \mapsto q_t(z,y)$ is also a probability density. On the other hand, $y \mapsto q_t(z,y)$ is not, and a similar calculation to the one in the proof above shows that $\int_0^\infty {q}_t(z,y)dy = e^{a^2t}$.

We will need the following $L^q$-bounds for the derivatives of $\Psi^x$.

\begin{lemma} \label{lem:derpsi}
Fix $T \in (0,\infty)$, $0 < \ell < L <\infty$ and $q \in [1,\infty)$. Then, there
is a $C=C(q, \ell, L,T) \in (0,\infty)$ such that, for all $t \in [0,T]$, $x \in [\ell,L]$, and $i= 1,2,3,4$,
$$\int_0^\infty |g_i(t,y)|^q dy \le C t ^{-m_i(q)},$$
where
\begin{align*}
g_1(t,y) = y \partial_y \Psi^x(t,y), \; m_1(q) =(q-1)/2,& \;
g_2(t,y) =  \partial_t \Psi^x(t,y), \; m_2(q) = q-1/2, \\ 
g_3(t,y) = y \partial_t \partial_y \Psi^x(t,y), \; m_3(q) =(3q-1)/2, & \; g_4(t,y) = y \partial_x \partial_y \Psi^x(t,y), \; m_4(q) = q-1/2.
\end{align*}
\end{lemma}

\begin{proof}
Consider $i=1$. By the change of variables $u = \frac{\log z}{a} - \frac{\log y}{a} - \frac{at}{2}$, and with $p_t$ as in \eqref{eq:heatkernel}, we may write 
\begin{equation}
\Psi^x(t,y) = \int_0^x (az)^{-1}p_t\left( \frac{\log z}{a} - \frac{\log y}{a} - \frac{at}{2} \right)dz = \int_{-\infty}^{\frac{\log x}{a} - \frac{\log y}{a} - \frac{at}{2}} p_t(u)du. \label{eq:Psixexp}
\end{equation}
Differentiating this expression with respect to $y$, we obtain 
\begin{equation}
y \partial_y \Psi^x(t,y) = -\frac{1}{a}p_t\left( \frac{\log x}{a} - \frac{\log y}{a} - \frac{at}{2} \right) \label{eq:ydPsi}
\end{equation}
Letting $w = \frac{\log x}{a} - \frac{\log y}{a} - \frac{at}{2}$, one may show
\begin{equation}
\int_0^\infty |y \partial_y \Psi^x(t,y)|^q dy = c_1 ax\int_{-\infty}^\infty p_t(w)^{q-1} p_t(w + at) dw \leq c_2t^{-(\frac{q-1}{2})}.
\end{equation}

Consider now $i=2$. Another direct calculation (note that $p_t(w)$ solves the heat equation) yields
\begin{equation}
\partial_t \Psi^x(t,y) = \left( -\frac{a}{2} - t^{-1}\left( \frac{\log x}{a} - \frac{\log y}{a} - \frac{at}{2} \right) \right)p_t\left( \frac{\log x}{a} - \frac{\log y}{a} - \frac{at}{2} \right). \label{eq:dPsixt}
\end{equation}
We obtain by the same change of variables as before
\begin{align}
\begin{split}
\int_0^\infty |\partial_t \Psi^x(t,y)|^{q} dy & = ax \int_{-\infty}^\infty \left| -\frac{a}{2} - t^{-1}w \right|^{q} p_t(w)^{q - 1}p_t(w + at) dw \\
& \leq c_3 t^{-(\frac{q-1}{2})} \int_{-\infty}^\infty \left| \frac{a}{2} - t^{-1}w \right|^{q} p_t(w) dw \leq c_4 t^{-q + \frac{1}{2}}.
\end{split}
\end{align}

For $i=3$, differentiating \eqref{eq:ydPsi} with respect to $t$, one may show that
\begin{align}
& y \partial_t \partial_y \Psi^x(t,y) \\
& = \left(\frac{a^{-1}t^{-1}}{2} - \frac{t^{-1}}{2}\left( \frac{\log x}{a} - \frac{\log y}{a} - \frac{at}{2} \right) - \frac{a^{-1}t^{-2}}{2}\left( \frac{\log x}{a} - \frac{\log y}{a} - \frac{at}{2} \right)^2 \right) \\
& \quad\quad \times p_t\left( \frac{\log x}{a} - \frac{\log y}{a} - \frac{at}{2} \right).
\end{align}
Hence,  
\begin{align}
\int_0^\infty |y \partial_t \partial_y \Psi^x(t,y)|^q dy
& = \int_{-\infty}^\infty \left| \frac{a^{-1}t^{-1}}{2} - \frac{t^{-1}w}{2} - \frac{a^{-1}t^{-2}w^2}{2} \right|^q p_t(w)^{q - 1} p_t(w + at) axdw \\
& \leq c_5 t^{-(\frac{q-1}{2})} \int_{-\infty}^\infty t^{-q}(1+ |w|^q + t^{-q} |w|^{2q}) p_t(w) dw \leq c_6 t^{-(\frac{3q - 1}{2})}.
\end{align}

Finally for $i=4$, differentiating \eqref{eq:ydPsi} with respect to $x$ yields
\begin{align}
y \partial_x \partial_y \Psi^x(t,y) = \frac{1}{a^2x t} \left( \frac{\log x}{a} - \frac{\log y}{a} - \frac{at}{2} \right)p_t\left( \frac{\log x}{a} - \frac{\log y}{a} - \frac{at}{2} \right).
\end{align}
Therefore,
\begin{align}
\int_0^\infty |y \partial_x \partial_y \Psi^x(t,y)|^q dy & = \int_{-\infty}^\infty (\frac{1}{a^2 xt})^q |w|^q ax p_t(w)^{q-1} p_t(w + at) dw \\
& \leq c_7 t^{-q} \int_{-\infty}^\infty |w|^q p_t(w)^{q-1} p_t(w + at) dw \leq c_8 t^{-q + 1/2},
\end{align}
where the next to last inequality uses the fact that $x \in [\ell, L]$. This concludes the proof.
\end{proof}


The behavior of the function $\Psi^x(t,y)$ is hard to control when $t$ approaches $0$, \ab{since this quantity is $O(t^{-1/2})$ as $t \to 0$}. To handle this we consider the following $\delta$-perturbation.
Fix $x \in (0, \infty)$, $\delta \in (0, \infty)$,
$t \in (0,\infty)$, and define
$$\phi_t^{x,\delta}(s,y)  \doteq \psi^{x,\delta}(t-s,y), \; 0 \le s \le t, \; y \in (0,\infty),$$
where
$$\psi^{x,\delta}(s,y) \doteq \Psi^x(s+\delta,y), \; 0 \le s \le t, \; y \in (0,\infty).$$
Note that 
$\psi^{x,\delta}$ solves
\begin{equation}\label{eq:pdefor}
\begin{aligned}
\partial_t \psi^{x,\delta}(t,y) &= a^2y\partial_y \psi^{x,\delta}(t,y) +
\frac{a^2y^2}{2} \partial_{yy}\psi^{x,\delta}(t,y),\; (t,y) \in (0, \infty)\times (0, \infty),\\
\psi^{x,\delta}(0,\cdot) &= \Psi^x(\delta,\cdot).
\end{aligned}
\end{equation}

\begin{lemma} \label{lem:psiinat}
For  every $x \in (0, \infty)$, $T >0$, and $\delta > 0$, $\psi^{x,\delta} \in \cla_T$.
\end{lemma}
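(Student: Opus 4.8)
The plan is to reduce the claim to the explicit Gaussian formula for $\Psi^x$ together with an elementary Gaussian tail bound, using in an essential way that the perturbation $\delta>0$ confines the time argument $t=s+\delta$ to the compact interval $[\delta,T+\delta]\subset(0,\infty)$. Recall from \eqref{eq:Psixexp} that $\Psi^x(t,y)=\Phi_t\big(g(y)-\tfrac{at}{2}\big)$ with $g(y)\doteq a^{-1}\log(x/y)$ and $\Phi_t$ the $\mathcal N(0,t)$ cdf, so $\psi^{x,\delta}(s,y)=\Phi_{s+\delta}\big(g(y)-\tfrac{a(s+\delta)}{2}\big)$. First I would record that $\psi^{x,\delta}\in C^\infty\big([0,T]\times(0,\infty)\big)\subset C^2$, since it is the composition of the $C^\infty$ map $(s,y)\mapsto\big(s+\delta,\,g(y)-\tfrac{a(s+\delta)}{2}\big)$ from $[0,T]\times(0,\infty)$ into $(0,\infty)\times\RR$ with the $C^\infty$ function $(t,u)\mapsto\Phi_t(u)$. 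It then remains to show that $|\psi^{x,\delta}|_{\cla_T,m}<\infty$ for every $m$, i.e. that $\psi^{x,\delta}$ and its derivatives $\partial_s\psi^{x,\delta}$, $\partial_y\psi^{x,\delta}$, $\partial_{yy}\psi^{x,\delta}$ each decay faster than any polynomial in $y$ as $y\to\infty$ and remain bounded as $y\downarrow0$, uniformly over $s\in[0,T]$.

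The key observation is that all four functions share the common shape $P(s,y)\,p_{s+\delta}\big(u_\delta(s,y)\big)$, where $u_\delta(s,y)\doteq g(y)-\tfrac{a(s+\delta)}{2}$ and $P$ is a finite sum of monomials $c(s)\,y^{-j}(\log y)^k$ with $j\in\{0,1,2\}$, $k\in\{0,1\}$ and $\sup_{s\in[0,T]}|c(s)|<\infty$; for $\psi^{x,\delta}$ itself one simply bounds $\psi^{x,\delta}=\Phi_{s+\delta}(u_\delta)\le 1$. Concretely, \eqref{eq:ydPsi} gives $\partial_y\psi^{x,\delta}=-\tfrac{1}{ay}\,p_{s+\delta}(u_\delta)$; differentiating once more and using $\partial_u p_t(u)=-\tfrac{u}{t}p_t(u)$ gives $\partial_{yy}\psi^{x,\delta}=\tfrac{1}{ay^2}\big(1-\tfrac{u_\delta}{a(s+\delta)}\big)p_{s+\delta}(u_\delta)$; and \eqref{eq:dPsixt} gives $\partial_s\psi^{x,\delta}=\big(-\tfrac a2-\tfrac{u_\delta}{s+\delta}\big)p_{s+\delta}(u_\delta)$. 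Since $s+\delta\ge\delta$ and $|u_\delta|\le a^{-1}(|\log x|+|\log y|)+\tfrac{a(T+\delta)}{2}$, one checks $|P(s,y)|\le C(1+|\log y|)(1+y^{-2})$ for a constant $C=C(a,\delta,T,x)$, uniformly in $s$. The Gaussian factor I would then control uniformly in $s$ by $p_{s+\delta}(u_\delta)\le(2\pi\delta)^{-1/2}\exp\big(-u_\delta^2/(2(T+\delta))\big)$ and, when $u_\delta\le0$, $\Phi_{s+\delta}(u_\delta)\le\exp\big(-u_\delta^2/(2(T+\delta))\big)$. As $y\to\infty$ we have $u_\delta\sim-a^{-1}\log y$, so this factor is $\le\exp\!\big(-c(\log y)^2\big)$ for some $c>0$, which beats $(1+y)^m|P(s,y)|$ for every $m$; as $y\downarrow0$ we have $u_\delta\sim a^{-1}\log(1/y)\to\infty$, so $u_\delta^2$ is quadratic in $\log(1/y)$ while $y^{-2}=\exp(2\log(1/y))$ is only exponential in $\log(1/y)$, hence $P(s,y)\,p_{s+\delta}(u_\delta)\to0$. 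Therefore each of $\psi^{x,\delta},\partial_s\psi^{x,\delta},\partial_y\psi^{x,\delta},\partial_{yy}\psi^{x,\delta}$ satisfies $\sup_{s\in[0,T]}|f(s,\cdot)|_{\cla,m}<\infty$ for every $m$, giving $\psi^{x,\delta}\in\cla_T$.

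The main obstacle is precisely this last uniformity: one must simultaneously control the polynomial-in-$\log y$ coefficients and the negative powers of $y$ produced by differentiating in $y$ (an exponential of a Gaussian variable), uniformly in $s\in[0,T]$ and at both ends $y\to0$ and $y\to\infty$. It goes through only because differentiating $g(y)=a^{-1}\log(x/y)$ brings down factors $y^{-1}$ whereas the Gaussian weight decays like $\exp(-u_\delta^2/(2t))$ with $u_\delta$ itself of order $\log(1/y)$, so the exponent is quadratic in $\log(1/y)$ and dominates. This is exactly where $\delta>0$ is used: it keeps $t=s+\delta$ bounded away from $0$, so $p_t$, $\Phi_t$ and their $s$- and $y$-derivatives stay smooth with uniformly controlled Gaussian tails; at $\delta=0$ the kernel $\Psi^x(s,\cdot)$ degenerates to the indicator $\one_{(0,x]}$ as $s\downarrow0$ and neither $C^2$ nor membership in $\cla_T$ survives.
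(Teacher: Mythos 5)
Your proof is correct and takes essentially the same route as the paper: both read off the explicit Gaussian formulas \eqref{eq:Psixexp}, \eqref{eq:ydPsi}, \eqref{eq:dPsixt} (and the second $y$-derivative), and exploit that for $t=s+\delta\in[\delta,T+\delta]$ the Gaussian factor decays like $\exp(-c(\log y)^2)$, which dominates any power of $y$ and the $y^{-j}(\log y)^k$ prefactors at both ends. Your version is slightly tidier in that it records the $C^\infty$ smoothness and packages all four quantities under a common shape $P(s,y)\,p_{s+\delta}(u_\delta)$, but the underlying argument is the same.
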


\begin{proof}
Fix $m \in \mathbb{N}$ and $x,T,\delta$ as in the lemma. The constants $c_1,c_2,c_3,$ etc. in this proof are allowed to depend on $a, x, T$, $\delta$, and $m$. We will refer a number of times to the calculations done in the proof of Lemma \ref{lem:derpsi}. Recall that $\psi^{x,\delta}(s,y) = \Psi^x(s + \delta, y)$. 
By \eqref{eq:Psixexp} and standard Gaussian tail bounds
\begin{multline} 
 (1 + y)^m|\psi^{x,\delta}(s,y)| 
 \leq c_1(1 + y)^m\left(\mathbf{1}[0,c_2](y) + \mathbf{1}[c_2,\infty)(y)\exp\left( -c_3(\log y)^2\right) \right) \leq c_4.
  \label{eq:inatBd1}
\end{multline}
Also, by \eqref{eq:ydPsi}, 
\begin{align}
(1 + y)^m|\partial_y \psi^{x,\delta}(s,y)| \leq c_5 \frac{(1 + y)^m}{y(s + \delta)^{1/2}}\exp\left(-c_6(\log y)^2\right) \leq c_7. \label{eq:inatBd2}
\end{align}
Differentiating \eqref{eq:ydPsi}, we have 
\begin{align*}
\partial_{yy} \psi^{x,\delta}(s,y) &= -\frac{1}{a^2 y^2(s + \delta)} \left( \frac{\log x}{a} - \frac{\log y}{a} - \frac{3a(s + \delta)}{2} \right)\\
&\quad \times  p_{s + \delta}\left( \frac{\log x}{a} - \frac{\log y}{a} - \frac{a(s + \delta)}{2} \right).
\end{align*}
Therefore, using the bound,
$$
\left|\frac{\log x}{a} - \frac{\log y}{a} - \frac{3a(s + \delta)}{2} \right| \leq c_8(1 + |\log y|), \; s \in [0,T],\; y \in (0,\infty),
$$
we have
\begin{align}
(1 + y)^m|\partial_{yy} \psi^{x,\delta}(s,y)| \leq \frac{c_{9}(1 + y)^m}{y^2}  (1 + |\log y|) \exp\left(-c_{10}(\log y)^2 \right) \leq c_{11}. \label{eq:inatBd3}
\end{align}
Lastly, \eqref{eq:dPsixt} gives us 
\begin{align}
|\partial_t \Psi^x(t,y)| \leq c_{12}(1 + |\log y|) \exp\left(-c_{13}(\log y)^2\right) \leq c_{14}. \label{eq:inatBd4}
\end{align}
The bounds \eqref{eq:inatBd1}, \eqref{eq:inatBd2}, \eqref{eq:inatBd3}, and \eqref{eq:inatBd4} imply the result.
\end{proof}

It follows from Lemma \ref{lem:psiinat} that for each $x \in (0, \infty)$, $t > 0$,
and $\delta>0$, $\phi_t^{x,\delta} \in \cla_t$.
Thus applying Lemma \ref{lem:limito} with $T$ replaced by $t$ and
$\psi$ replaced with $\phi_t^{x,\delta}$, and using \eqref{eq:pdefor}, we have, for $t\ge 0$,
\begin{multline}
\langle Q^{\veps}_t, \phi_t^{x,\delta}(t, \cdot)\rangle
 = \langle Q^{\veps}_0, \phi_t^{x,\delta}(0, \cdot)\rangle
 + a \int_0^t Y^{\veps}_{(0)}(s)\partial_y \phi_t^{x,\delta}(s, Y^{\veps}_{(0)}(s)) ds + M^{\veps, \phi_t^{x,\delta}}_{\infty}(t).
 \label{eq:qvepst}
\end{multline}

Consider now $\delta=\veps$.
From \eqref{eq:cofmass}, we have the equality
\begin{equation}\label{eq:340}
\int^{\infty}_0 \Psi^x(\veps, y) dy = x = \int^{\infty}_0 \one_{(0, x]}(y) dy,
\end{equation}
from which it follows that
\begin{equation}\label{eq:firstreq}
\sup_{\ell \le x \le L} \left |\veps^{-1/4} \left(\int^{\infty}_0 \Psi^x(\veps, y) dy
- \int^{\infty}_0 \one_{(0, x]}(y) dy\right)\right| = 0.
\end{equation}
The following lemma, which controls the error on replacing the indicator
$\mathbf{1}[0,x]$ with a mollified approximation, will be proved in  Section \ref{sec2.1}.
\begin{lemma}
\label{lem:secreq}
For all $0<\ell <L <\infty$ and $T\in (0, \infty)$,
$$\sup_{\ell \le x \le L} \sup_{0\le t \le T}
\left| \veps^{1/4} \sum_{i=0}^{\infty}
\left(\Psi^x(\veps, Y^{\veps}_i(t)) - \one_{(0,x]}(Y^{\veps}_i(t))\right)\right| \to 0,
$$
in probability, as $\veps \to 0$.
\end{lemma}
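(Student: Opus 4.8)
\emph{Reduction.} The plan is to reduce to the reference measure $\Pbd^0_a$, where the particles evolve as independent geometric Brownian motions started from a Poisson configuration, and then to bound the quantity in the lemma crudely by a count of particles in a vanishing window around level $x$, discarding all cancellation; the $\veps^{1/4}$ prefactor leaves room for this. By \eqref{eq:absctyn} the density $d\Pbd^\gamma_a/d\Pbd^0_a$ is a constant multiple of $e^{2\gamma X_{(0)}(0)}$, which lies in $L^1(\Pbd^0_a)$ by \eqref{eq:ZisGamma}; hence if events $A_\veps$ satisfy $\Pbd^0_a(A_\veps)\to0$ then $\Pbd^\gamma_a(A_\veps)=\Ebd^0_a[\one_{A_\veps}e^{2\gamma X_{(0)}(0)}]\to0$ by uniform integrability, so it suffices to argue under $\Pbd^0_a$. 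Under $\Pbd^0_a$ the drift in \eqref{eq:statproc} vanishes, so $X_i(t)=X_i(0)+W_i(t)$ and $Y^\veps_i(t)=Y^\veps_i(0)\,e^{aW_i(t)+a^2t/2}$ are independent geometric Brownian motions; by Lemma \ref{lem:ispp} (composed with $y\mapsto\veps^{1/2}y$) the collection $\{Y^\veps_i(t)\}_i$ is, for each fixed $t\ge0$, a Poisson point process on $(0,\infty)$ of intensity $\veps^{-1/2}\,dy$.

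\emph{Pointwise domination.} Write $f^\veps_x\doteq\Psi^x(\veps,\cdot)-\one_{(0,x]}$, so that the quantity in the lemma is $\veps^{1/4}\sum_i f^\veps_x(Y^\veps_i(t))$ and $|f^\veps_x|\le1$. From the representation \eqref{eq:Psixexp}, $f^\veps_x(y)=\Phi\big(\veps^{-1/2}(\tfrac{\log x}{a}-\tfrac{\log y}{a}-\tfrac{a\veps}{2})\big)-\one\{y\le x\}$ with $\Phi$ the standard normal cdf, so a Gaussian tail bound gives $|f^\veps_x(y)|\le\veps^{50}$ whenever $|\log(x/y)|\ge c_0\sqrt{\veps|\log\veps|}$ for a suitable $c_0=c_0(a)$, while also $|f^\veps_x(y)|\le C_m(\ell,L)(1+y)^{-m}$ for every $m$, uniformly in $x\in[\ell,L]$. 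Setting $I^\veps_x\doteq\{y\in(0,\infty):|\log(x/y)|\le c_0\sqrt{\veps|\log\veps|}\}$, an interval of Lebesgue length $O(\sqrt{\veps|\log\veps|})$, the part of the sum outside $I^\veps_x$ is controlled by Lemma \ref{lem:tailbd}: $\sum_{i:\,Y^\veps_i(t)\notin I^\veps_x}|f^\veps_x(Y^\veps_i(t))|\le\veps^{25}C\sum_i\sup_{0\le s\le T}(1+Y^\veps_i(s))^{-m}\doteq\veps^{25}\,\Xi^\veps$, where $\Xi^\veps$ is dominated, uniformly in $x\in[\ell,L]$, by an a.s.\ finite random variable with finite moments. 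Hence
\[
\sup_{\ell\le x\le L}\ \sup_{0\le t\le T}\Big|\veps^{1/4}\sum_i f^\veps_x(Y^\veps_i(t))\Big|\ \le\ \veps^{1/4}\sup_{\ell\le x\le L}\sup_{0\le t\le T}\#\{i:Y^\veps_i(t)\in I^\veps_x\}\ +\ \veps^{25}\,\Xi^\veps ,
\]
the second term tends to $0$ in probability, and it remains to show $\sup_x\sup_t\#\{i:Y^\veps_i(t)\in I^\veps_x\}\le(\log(1/\veps))^2$ with probability tending to $1$.

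\emph{Uniform window count.} Discretize $[\ell,L]$ by $O(\veps^{-1/2})$ points at spacing $\asymp\sqrt{\veps|\log\veps|}$, so that every $I^\veps_x$, $x\in[\ell,L]$, is contained in one of $O(\veps^{-1/2})$ fixed (slightly enlarged) test windows $\tilde I_j$ of Lebesgue length $O(\sqrt{\veps|\log\veps|})$, and discretize $[0,T]$ by $O(\veps^{-2})$ points $t_k$. For fixed $(k,j)$, $\#\{i:Y^\veps_i(t_k)\in\tilde I_j\}$ is Poisson with mean $\veps^{-1/2}|\tilde I_j|=O(\sqrt{|\log\veps|})$, and the Chernoff bound for the Poisson law at level $(\log(1/\veps))^2/2$ yields a tail probability which, summed over the $O(\veps^{-5/2})$ pairs $(k,j)$, still tends to $0$; hence with high probability $\max_{k,j}\#\{i:Y^\veps_i(t_k)\in\tilde I_j\}\le(\log(1/\veps))^2/2$. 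To pass from the time grid to all of $[0,T]$, note that a particle present in some $I^\veps_x$ at a time $t\in[t_k,t_{k+1}]$ but absent from the enlarged test window at $t_k$ must have moved by more than $\veps^{3/4}$ in $\log$-scale over a time interval of length $\veps^2$; since $\log Y^\veps_i$ is a Brownian motion plus bounded drift, the reflection principle bounds the probability of this for any one of the $O(\veps^{-1/2})$ particles near $[\ell,L]$ by $4e^{-c\veps^{-1/2}}$, and a union bound over the $O(\veps^{-2})$ time intervals shows no such particle exists with high probability. Combining, $\sup_x\sup_t\#\{i:Y^\veps_i(t)\in I^\veps_x\}\le(\log(1/\veps))^2$ with high probability, and since $\veps^{1/4}(\log(1/\veps))^2\to0$ the lemma follows.

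\emph{Main obstacle.} The substantive difficulty is the passage to the supremum over $t\in[0,T]$. The map $t\mapsto\veps^{1/4}\sum_i f^\veps_x(Y^\veps_i(t))$ is not continuous — it jumps by $\veps^{1/4}$ each time a particle crosses level $x$, and a.s.\ there are infinitely many such crossings on any interval — and indeed its increments are, at the relevant scale, comparable to those of a fractional Brownian motion of Hurst index $1/4$; so no Kolmogorov--\v{C}entsov or Lipschitz-in-time argument applies. Uniformity in $t$ must instead come from the two structural facts used above: at any instant only $O(\sqrt{|\log\veps|})$ particles lie in the $O(\sqrt{\veps|\log\veps|})$-window about any $x\in[\ell,L]$ that contributes to the error, and over a short time window of length $\veps^2$ Gaussian small-ball estimates preclude fresh particles from entering that window, uniformly over the $O(\veps^{-5/2})$ relevant space-time blocks. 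Making these two facts quantitative, together with the attendant union bounds over the two grids, is the heart of the argument.
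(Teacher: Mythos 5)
Your proof is essentially correct, but it takes a genuinely different route from the paper's. The paper's trick is to dominate the pointwise discrepancy by a \emph{continuous} nonnegative process: it shows (Claim~1) that $|\Psi^x(\veps,y)-\one_{(0,x]}(y)|\le C_0\,\tilde F^\veps(x,y)$ where $\tilde F^\veps(x,y)=\Psi^{xe^{a\veps^{1/2}}}(\veps,y)-\Psi^{xe^{-a\veps^{1/2}}}(\veps,y)$, so the dominating field $\tilde G^\veps(t,x)=\veps^{1/4}\sum_i\tilde F^\veps(x,Y_i^\veps(t))$ is a difference $\hat\clv^\veps(t,xe^{a\veps^{1/2}})-\hat\clv^\veps(t,xe^{-a\veps^{1/2}})$ plus a deterministic correction, and hence inherits the $L^q$ increment bounds already proved in Lemma~\ref{lem:contbounds}(v)--(vi). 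The Kolmogorov--Chentsov criterion then gives tightness of $\tilde G^\veps$ in $C([0,\infty)\times(0,\infty))$, and together with the easy pointwise $L^1$ convergence $\tilde G^\veps(t,x)\to0$ this yields uniform-on-compacts convergence to zero. So your remark that ``no Kolmogorov--\v Centsov argument applies'' is true for the original discontinuous quantity but misses that one can pass to a continuous dominating process for which KC does apply; the paper gets the uniformity in $t$ essentially for free from machinery already built for Lemma~\ref{lem:tight}.

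Your route instead discretizes space into $O(\veps^{-1/2})$ windows of log-width $\asymp\sqrt{\veps|\log\veps|}$ and time into $O(\veps^{-2})$ grid points, controls each window count by a Chernoff bound for the rate-$\veps^{-1/2}$ Poisson profile, passes from the time grid to $[0,T]$ by a Brownian small-increment estimate, and handles the far-field by combining the Gaussian decay of $|\Psi^x(\veps,\cdot)-\one_{(0,x]}|$ off the window with the polynomial tail control from Lemma~\ref{lem:tailbd}. This is a valid alternative and is more self-contained, at the cost of redoing union bounds and large-deviation estimates that the paper's approach sidesteps. Two small remarks: your ``$\Xi^\veps$ ... dominated by an a.s.\ finite random variable with finite moments'' is not literally uniform in $\veps$ (by Lemma~\ref{lem:tailbd} it grows like $\veps^{-m/2}$), so one should instead just note $\veps^{25}\Xi^\veps\to0$ in $L^1$ for $m<50$, which is what your argument actually uses; and the passage from the time grid needs, as you implicitly acknowledge, a spelled-out restriction to the finitely many (say $O(\veps^{-1/2})$) particles that could plausibly enter $[\ell/2,2L]$ during any $\veps^2$-window, plus a super-exponential bound on the remaining tail, but this follows from the same Poisson/Brownian estimates and is routine to make precise.
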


Note from \eqref{eq:340}, that
\begin{align*}
\veps^{1/4}
\left(\langle Q^{\veps}_t, \phi_t^{x,\veps}(t, \cdot)\rangle
- \veps^{-1/2} x\right) &= 
\veps^{1/4}\left(\langle Q^{\veps}_t, \phi_t^{x,\veps}(t, \cdot)\rangle
- \veps^{-1/2}\int_0^{\infty}\Psi^x(\veps, y) dy\right).
\end{align*}
Letting
\begin{equation}\hat \clv^{\veps}(t,x)
\doteq \veps^{1/4}\left(\langle Q^{\veps}_t, \phi_t^{x,\veps}(t, \cdot)\rangle
- \veps^{-1/2}\int_0^{\infty}\Psi^x(\veps, y) dy\right) 
\label{eq:hatvdef}
\end{equation}
and
$$\hat \clw^{\veps}(t,x)
\doteq \veps^{1/4}\left(\langle Q^{\veps}_0, \phi_t^{x,\veps}(0, \cdot)\rangle
- \veps^{-1/2}\int_0^{\infty}\Psi^x(\veps, y) dy\right),$$
we have from \eqref{eq:qvepst},
\begin{equation} \label{eq:VWerrM}
\hat \clv^{\veps}(t,x) = \hat \clw^{\veps}(t,x)
+  a \veps^{1/4} \int_0^t Y^{\veps}_{(0)}(s)\partial_y \phi_t^{x,\veps}(s, Y^{\veps}_{(0)}(s)) ds + \veps^{1/4} M^{\veps, \phi_t^{x,\veps}}_{\infty}(t).
\end{equation}
Furthermore, with 
\begin{equation}\label{eq:924}
\check \clv^{\veps}(t,x) \doteq \veps^{1/4}
\left(\sum_{i=0}^{\infty}
 \one_{(0,x]}(Y^{\veps}_i(t)) - \veps^{-1/2}\int^{\infty}_0 \one_{(0, x]}(y) dy\right), (t,x) \in [0, \infty)\times (0, \infty),
\end{equation}
we have, on noting that $\phi^{x,\veps}_t(t, \cdot) = \Psi^x(\veps,\cdot)$, and using \eqref{eq:firstreq}, \eqref{eq:hatvdef}, and Lemma \ref{lem:secreq}, that
\begin{equation}\label{eq:almostsame}
\hat \clv^{\veps} - \check \clv^{\veps} \to 0 \mbox{ in probability, uniformly for compact sets in } [0,\infty)\times (0,\infty), \mbox{ as } \veps \to 0.
\end{equation}

The following lemma shows that the contribution from the lowest particle $Y^{\veps}_{(0)}(\cdot)$ in \eqref{eq:qvepst} vanishes asymptotically as $\veps \rightarrow 0$.
\begin{lemma} \label{lem:Y0term}
For all $0<\ell<L<\infty$, $T\in (0,\infty)$, $a>0$, $\gamma \ge 0$, and $q \in [1,\infty]$,
\begin{equation}
\sup_{\ell\le x \le L} \veps^{1/4} \sup_{0\le t \le T}\int_0^t  Y^{\veps}_{(0)}(s)|\partial_y \phi_t^{x,\veps}(s, Y^{\veps}_{(0)}(s))|
 ds \to 0 \mbox{ in } L^q(\Pbd^{\gamma}_a) \mbox{ as } \veps \rightarrow 0.
\end{equation}
\end{lemma}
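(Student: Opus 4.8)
The proof is short once the definitions are unwound. By construction $\phi_t^{x,\veps}(s,y)=\psi^{x,\veps}(t-s,y)=\Psi^x(t-s+\veps,y)$, so $\partial_y\phi_t^{x,\veps}(s,y)=(\partial_y\Psi^x)(t-s+\veps,y)$. The strategy is to feed this into the \emph{exact} identity \eqref{eq:ydPsi} for $y\,\partial_y\Psi^x$, which exhibits it as a (bounded) Gaussian density, and thereby obtain a \emph{deterministic} bound on the quantity in the statement; the $L^q$ claim for all $q\in[1,\infty]$ then follows trivially.

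Concretely, I would first record, from \eqref{eq:ydPsi} together with $p_r(\cdot)\le(2\pi r)^{-1/2}$ (see \eqref{eq:heatkernel}), the pointwise estimate
$$
\bigl|y\,\partial_y\Psi^x(r,y)\bigr|=\frac1a\,p_r\!\Bigl(\tfrac{\log x}{a}-\tfrac{\log y}{a}-\tfrac{ar}{2}\Bigr)\le\frac{1}{a\sqrt{2\pi r}},\qquad r>0,\ y>0,
$$
which carries no dependence on $x$. Taking $r=t-s+\veps$ and $y=Y^{\veps}_{(0)}(s)$ gives, pathwise,
$$
Y^{\veps}_{(0)}(s)\,\bigl|\partial_y\phi_t^{x,\veps}(s,Y^{\veps}_{(0)}(s))\bigr|\le\frac{1}{a\sqrt{2\pi(t-s+\veps)}},\qquad 0\le s\le t.
$$
Integrating in $s$ over $[0,t]$ gives $\int_0^t(t-s+\veps)^{-1/2}\,ds=2(\sqrt{t+\veps}-\sqrt{\veps})\le 2\sqrt{T+1}$ for all $\veps\in(0,1)$ and $t\in[0,T]$, so that
$$
\sup_{\ell\le x\le L}\veps^{1/4}\sup_{0\le t\le T}\int_0^t Y^{\veps}_{(0)}(s)\,\bigl|\partial_y\phi_t^{x,\veps}(s,Y^{\veps}_{(0)}(s))\bigr|\,ds\le\frac{2\sqrt{T+1}}{a\sqrt{2\pi}}\,\veps^{1/4}.
$$
Since the right-hand side is deterministic and tends to $0$ as $\veps\to0$, the displayed quantity converges to $0$ in $L^q(\Pbd^{\gamma}_a)$ for every $q\in[1,\infty]$, $\gamma\ge0$ and $a>0$, which is the assertion of the lemma.

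There is essentially no obstacle to overcome here; the only point worth flagging is the algebraic cancellation that makes \eqref{eq:ydPsi} clean, namely that differentiating $\Psi^x(r,y)=\int_{-\infty}^{(\log x-\log y)/a-ar/2}p_r$ in $y$ produces a factor $-1/(ay)$ which exactly absorbs the leading $y=Y^{\veps}_{(0)}(s)$, so the (a priori uncontrolled) size of the lowest particle plays no role. The regularization $\delta=\veps>0$ then keeps the time argument $t-s+\veps\ge\veps>0$, so that the $s$-integral of $(t-s+\veps)^{-1/2}$ is bounded uniformly in $\veps\le1$ and $t\le T$; this is the sole reason the $\veps$-shift is carried along.
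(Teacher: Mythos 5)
Your proof is correct and takes essentially the same approach as the paper: both rewrite $\partial_y\phi_t^{x,\veps}$ via $\phi_t^{x,\veps}(s,y)=\Psi^x(t-s+\veps,y)$, invoke the identity \eqref{eq:ydPsi} to get the pathwise bound $Y^\veps_{(0)}(s)\,|\partial_y\phi_t^{x,\veps}(s,Y^\veps_{(0)}(s))|\le C(t-s+\veps)^{-1/2}$, and integrate in $s$ to obtain a deterministic $O(\veps^{1/4})$ bound uniform in $(x,t)\in[\ell,L]\times[0,T]$. The only cosmetic difference is that you make the constant and the $\veps\le 1$ restriction explicit; the substance is identical.
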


\begin{proof}
Note that $\phi_t^{x,\veps}(s, y) = \Psi^x(t - s + \veps, y)$. By \eqref{eq:ydPsi}, for all $0 \leq s \leq t \leq T$, $\veps>0$, and $x,y \in (0,\infty)$,
$$
y |\partial_y \Psi^x(t - s + \veps, y)| = \frac{1}{a}p_{t - s + \veps}\left( \frac{\log x}{a} - \frac{\log y}{a} - \frac{at}{2}\right) \leq c_1(t - s + \veps)^{-1/2}.
$$
Therefore,  
\begin{align}
\sup_{\ell\le x \le L} \veps^{1/4} \sup_{0\le t \le T}\int_0^t  Y^{\veps}_{(0)}(s)|\partial_y \phi_t^{x,\veps}(s, Y^{\veps}_{(0)}(s))|
 ds &\leq c_1 \veps^{1/4} \sup_{0 \leq t \leq T} \int_0^t (t - s + \veps)^{-1/2} ds\\
 &\leq 2c_1\veps^{1/4}(T + \veps)^{1/2},
\end{align}
which implies the result.
\end{proof}

Let $\hat \clm^{\veps}(t,x) \doteq \veps^{1/4} M^{\veps, \phi_t^{x,\veps}}_{\infty}(t)$. 
The proof of the following key tightness result is given in Section \ref{sec2.1}.
\begin{lemma}
\label{lem:tight}
The collection
$\{(\hat \clw^{\veps}, \hat \clm^{\veps}, \hat \clv^{\veps}),\; \veps>0\}$
is tight in $[C([0,\infty)\times(0,\infty):\RR)]^3$.
\end{lemma}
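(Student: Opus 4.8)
The overall plan is to reduce tightness of the triple to tightness of its first two components, and then, for each of those, to verify a multiparameter Kolmogorov--Chentsov moment criterion on an arbitrary compact box $[0,T]\times[\ell,L]\subset[0,\infty)\times(0,\infty)$. Several reductions come first. Since $[C([0,\infty)\times(0,\infty):\RR)]^3$ is a product space, tightness of $\{(\hat\clw^\veps,\hat\clm^\veps,\hat\clv^\veps)\}$ is equivalent to tightness of each of the three families separately; and by the identity \eqref{eq:VWerrM} together with Lemma \ref{lem:Y0term}, the difference $\hat\clv^\veps-(\hat\clw^\veps+\hat\clm^\veps)$ converges to $0$ uniformly on compacts in $L^q(\Pbd_a^\gamma)$ and is therefore tight, so it suffices to prove tightness of $\{\hat\clw^\veps\}$ and $\{\hat\clm^\veps\}$. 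Moreover, by \eqref{eq:absctyn} and \eqref{eq:ZisGamma} the density $d\Pbd_a^\gamma/d\Pbd_a^0=e^{2\gamma X_{(0)}(0)}$ lies in every $L^p(\Pbd_a^0)$, so a family tight under $\Pbd_a^0$ is tight under $\Pbd_a^\gamma$, and we may take $\gamma=0$. Finally, each of $\hat\clw^\veps,\hat\clm^\veps$ is genuinely a continuous random field: the defining series converge locally uniformly a.s.\ (Corollary \ref{cor:uniform}, with the bounds of Lemma \ref{lem:psiinat} holding uniformly for $(t,x)$ in a box), and $M^{\veps,\phi_t^{x,\veps}}_\infty$ is the locally uniform $L^q$ limit of continuous martingales (Lemma \ref{lem:mart}). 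So the task is, for large $p$ and each box, to bound $\Ebd_a^0|Z^\veps(t,x)-Z^\veps(t',x')|^{2p}\le C(|t-t'|^{p/2}+|x-x'|^p)$ uniformly in $\veps\in(0,1)$, for $Z^\veps=\hat\clw^\veps$ and $Z^\veps=\hat\clm^\veps$.

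For $\hat\clw^\veps$: under $\Pbd_a^0$, $\{Y_i^\veps(0)\}$ is a Poisson point process of intensity $\veps^{-1/2}\,dy$ (Lemma \ref{lem:ispp}), and since $\int_0^\infty\Psi^x(s,y)\,dy=x$ for every $s>0$ (by \eqref{eq:cofmass}, \eqref{eq:340}), one has $\hat\clw^\veps(t,x)=\veps^{1/4}\big(\langle Q_0^\veps,\Psi^x(t+\veps,\cdot)\rangle-\veps^{-1/2}x\big)$, a compensated Poisson integral of the mollifier $\Psi^x(t+\veps,\cdot)$. The key trick is to represent increments by compensated integrals of \emph{mean-zero} functions: for time increments, $\Psi^x(t+\veps,\cdot)-\Psi^x(t'+\veps,\cdot)=\int_{t'}^t\partial_s\Psi^x(s+\veps,\cdot)\,ds$ with $\int_0^\infty\partial_s\Psi^x(s+\veps,y)\,dy=0$; for space increments, $\Psi^x(t+\veps,\cdot)-\Psi^{x'}(t+\veps,\cdot)$ integrates to $x-x'$, which is exactly cancelled by the compensator. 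One then uses the Poisson--Rosenthal bound $\big\|\langle Q_0^\veps,g\rangle-\veps^{-1/2}\!\int g\big\|_{L^{2p}}\le C_p\big(\veps^{-1/4}\|g\|_{L^2}+\veps^{-1/(4p)}\|g\|_{L^{2p}}\big)$ (which follows from the cumulant expansion of the Poisson integral and log-convexity of $k\mapsto\|g\|_{L^k}^k$) together with Minkowski's integral inequality, reducing matters to $L^k$ estimates on $\partial_s\Psi^x(s+\veps,\cdot)$ and on $\Psi^x(s,\cdot)-\Psi^{x'}(s,\cdot)$. These come from \eqref{eq:Psixexp}--\eqref{eq:ydPsi}, Lemma \ref{lem:derpsi}(ii), and the elementary bounds $\|\Psi^x(s,\cdot)-\Psi^{x'}(s,\cdot)\|_{L^1}=|x-x'|$, $\|\,\cdot\,\|_{L^\infty}\le 1$, $\|\Psi^x(s,\cdot)-\Psi^{x'}(s,\cdot)\|_{L^k}\le C|x-x'|\,s^{-(k-1)/(2k)}$ (Minkowski plus a Gaussian change of variables as in the proof of Lemma \ref{lem:derpsi}). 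A short case analysis comparing $\veps$, $t+\veps$ and $|x-x'|^2$ then shows that the residual factor $\veps^{1/4-1/(4p)}$ in the Rosenthal bound is always absorbed --- into the diffusive scale $(t+\veps)^{-1}$ when $|x-x'|^2\le t+\veps$, and into $|x-x'|$ itself (via $\veps<|x-x'|^2$) otherwise --- yielding the desired increment bound; together with $\sup_\veps\Ebd_a^0|\hat\clw^\veps(0,x_0)|^{2p}\le C_p x_0^p<\infty$ and Kolmogorov--Chentsov (take $p>4$), this gives tightness of $\{\hat\clw^\veps\}$.

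For $\hat\clm^\veps$: here $\hat\clm^\veps(t,x)=\veps^{1/4}M^{\veps,\phi_t^{x,\veps}}_\infty(t)$ with $\phi_t^{x,\veps}(s,y)=\Psi^x(t-s+\veps,y)$, and by \eqref{eq:mdef} and Lemma \ref{lem:mart} the martingale $M^{\veps,\phi_t^{x,\veps}}_\infty$ has predictable quadratic variation $a^2\int_0^t\sum_i\big(Y_i^\veps(s)\,\partial_y\Psi^x(t-s+\veps,Y_i^\veps(s))\big)^2\,ds$. By the Burkholder--Davis--Gundy inequality and Minkowski, moments of increments of $\hat\clm^\veps$ reduce to moments of such integrals. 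The pointwise identity \eqref{eq:ydPsi} gives $|y\,\partial_y\Psi^x(\tau,y)|=\tfrac1a\,p_\tau\!\big(\tfrac{\log x}{a}-\tfrac{\log y}{a}-\tfrac{a\tau}{2}\big)$, which decays faster than any power of $y$ and of $1/y$ and is of order $\tau^{-1/2}$; combined with Lemma \ref{lem:tailbd} (which controls the $\veps^{-1/2}$-scale spatial density of $\{Y_i^\veps(s)\}$) this yields $\Ebd_a^0\big[\langle\hat\clm^\veps(\cdot,x)\rangle_t\big]\le C\sqrt{T}$ and the corresponding higher-moment bounds. For increments one decomposes a time increment $t\to t'$ into the change of the stochastic-integral horizon $[0,t']\subset[0,t]$ and the change of integrand $\Psi^x(t-s+\veps,\cdot)\to\Psi^x(t'-s+\veps,\cdot)$, and a space increment through $\partial_y\Psi^x-\partial_y\Psi^{x'}$, estimating the resulting singular-in-$\tau$ factors by Lemma \ref{lem:derpsi}(iii)--(iv) and Lemma \ref{lem:tailbd}; Kolmogorov--Chentsov then closes the argument, and via \eqref{eq:VWerrM} and Lemma \ref{lem:Y0term} delivers tightness of $\{\hat\clv^\veps\}$ as well.

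I expect the main obstacle to be the interaction between the vanishing parameter $\veps$ and the singularity of the mollifying kernel as its time-argument tends to $0$. For $\hat\clw^\veps$ this surfaces as the correction term $\veps^{1/4-1/(4p)}\|g\|_{L^{2p}}$ in the Poisson--Rosenthal bound, which carries only a first power of the spatial increment and must be shown never to dominate; this forces the simultaneous control of both the height and the support of the increments of $\Psi^x$, together with the case split against $\veps$ described above. For $\hat\clm^\veps$, the test function $\phi_t^{x,\veps}$ itself depends on the terminal time $t$, so a time increment perturbs both the martingale horizon and the integrand, and the relevant kernel bounds (Lemma \ref{lem:derpsi}(iii)--(iv)) blow up like $\tau^{-(3q-1)/2}$ as $\tau=t-s+\veps\downarrow 0$; these singularities have to be integrated against the $\veps^{1/4}$ prefactor and the particle-sum moment bounds of Lemma \ref{lem:tailbd} in a way that remains uniform in $\veps$.
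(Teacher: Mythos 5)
Your overall strategy is the one the paper follows: reduce $\hat\clv^\veps$ to $\hat\clw^\veps+\hat\clm^\veps$ via \eqref{eq:VWerrM} and Lemma \ref{lem:Y0term}, and verify a Kolmogorov--Chentsov moment criterion (Lemma \ref{lem:KCcrit}) on compact boxes, with the paper's increment bounds isolated in Lemma \ref{lem:contbounds} and tightness at a point in Lemma \ref{lem:tightatpt}. You do depart from the paper in a few ways that are worth noting. First, you reduce to $\gamma=0$ once and for all using that $d\Pbd_a^\gamma/d\Pbd_a^0 = e^{2\gamma X_{(0)}(0)} = Y_{(0)}(0)^{2\gamma/a}$ lies in every $L^p(\Pbd_a^0)$, so that a family tight under $\Pbd_a^0$ is automatically tight under $\Pbd_a^\gamma$; the paper instead performs the Girsanov-plus-Cauchy--Schwarz step inside each individual moment estimate, which is why a factor $\Ebd_a^0[e^{4\gamma X_{(0)}(0)}]^{1/2}$ keeps reappearing in the proofs of Lemma \ref{lem:contbounds}. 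Your reduction is cleaner. Second, your two-term Rosenthal bound $\veps^{-1/4}\|g\|_{L^2}+\veps^{-1/(4p)}\|g\|_{L^{2p}}$ is exactly the paper's Lemma \ref{lem:poissonint} collapsed to the endpoints $k=1,p$ via the log-convexity of $k\mapsto \log\|g\|_{L^{2k}}$; your case analysis comparing $\veps$ to $|x-x'|^2$ then gives the spatial increment of $\hat\clw^\veps$ with exponent $1/2$ in the $L^{2p}$ norm, which is sharper than the paper's $|x'-x|^{1/8}$ in Lemma \ref{lem:contbounds}(iii) (the paper discards most of the information in the region $|x'-x|>\veps$). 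One small inaccuracy: for $\hat\clm^\veps$ you attribute the control on $\veps^{1/2}\sum_i(\cdots)$ to Lemma \ref{lem:tailbd}, but that lemma bounds tails of $\sum_{i\geq j}\sup_s (Y^\veps_{(i)}(s))^{-m}$ rather than giving what you actually need, namely that $\{Y_i^\veps(s)\}_i$ is a rate-$\veps^{-1/2}$ Poisson process under $\Pbd_a^0$ (Lemma \ref{lem:ispp}) together with the Poisson moment bound of Lemma \ref{lem:poissonint}(i); the paper uses the latter. Finally, you should anticipate that the time increment of $\hat\clm^\veps$ in $L^{2p}$ picks up a logarithmic correction $|t'-t|^{1/4}(1+|\log\frac{1}{t'-t}|^{1/2})$ coming from the $k=1$ term (cf.\ Lemma \ref{lem:contbounds}(ii)), rather than a clean power $|t'-t|^{1/4}$; this is harmless for Lemma \ref{lem:KCcrit} because any exponent strictly below $1/4$ suffices, but it is not the bound you stated.
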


Theorem \ref{thm:main1} will be an immediate consequence of the following result.
The proof is given in Section \ref{ssec:main3prf}.
\begin{theorem}\label{thm:formain1}
Let $B$ and $W$ be   mutually independent white noise measures on $(0,\infty)$ 
and $[0,\infty)\times (0,\infty)$ given on some probability space $(\Om, \clf, \Pbd)$.
Define, for $(t,x) \in (0, \infty)\times (0,\infty)$ 
\begin{align*}
\hat \clw(t,x) \doteq \int_0^{\infty} \Psi^x(t,y) B(dy), \;\;
\hat \clm(t,x) &\doteq \int_{(0,t)\times (0,\infty)} ay q_{t-s}(y,x) W(ds\, dy)
\end{align*}

Also set
$\hat \clw(0,x) \doteq B((0,x])$, $\hat \clm(0,x) =0$, $x \in (0,\infty)$.
Let
$
\hat \clv(t,x) \doteq \hat \clw(t,x) + \hat \clm(t,x)$, $(t,x) \in [0,\infty)\times (0,\infty)$.

Then, as $\veps \to 0$, in $[C([0,\infty)\times(0,\infty):\RR)]^3$,
$
(\hat \clw^{\veps}, \hat \clm^{\veps}, \hat \clv^{\veps}) \Rightarrow (\hat \clw, \hat \clm, \hat \clv).
$
\end{theorem}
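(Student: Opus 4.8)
\textbf{Proof proposal for Theorem~\ref{thm:formain1}.}

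The plan is to leverage the tightness from Lemma~\ref{lem:tight}, so that it suffices to show that every subsequential weak limit of $(\hat\clw^\veps,\hat\clm^\veps,\hat\clv^\veps)$ agrees in law with $(\hat\clw,\hat\clm,\hat\clv)$. Passing to such a limit in the identity \eqref{eq:VWerrM} and invoking Lemma~\ref{lem:Y0term} to discard the lowest-particle drift term, any limit point must satisfy $\hat\clv=\hat\clw+\hat\clm$; thus the task reduces to identifying the joint limiting law of $(\hat\clw^\veps,\hat\clm^\veps)$, which I would obtain by showing convergence of all finite-dimensional characteristic functions, checking that each component converges to the stated Gaussian field and that the two become asymptotically independent. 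I plan to carry this out first under $\Pbd^0_a$, where by Lemma~\ref{lem:ispp} the scaled configuration $\{Y^\veps_i(s)\}_i$ is, for each fixed $s$, a Poisson process on $(0,\infty)$ with intensity $\veps^{-1/2}\,dy$, and then transfer to $\Pbd^\gamma_a$ via the absolute continuity \eqref{eq:absctyn}, using that the single edge particle contributes only $O(\veps^{1/4})$ to each of the $\veps^{1/4}$-scaled fields, so that $(\hat\clw^\veps,\hat\clm^\veps,\hat\clv^\veps)$ is asymptotically independent of the Radon--Nikodym weight $e^{2\gamma X_{(0)}(0)}$.

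For the initial-condition field, the definitions give $\phi^{x,\veps}_t(0,\cdot)=\Psi^x(t+\veps,\cdot)$, and the conservation-of-mass identity \eqref{eq:cofmass} gives $\int_0^\infty\Psi^x(t+\veps,y)\,dy=x$, so $\hat\clw^\veps(t,x)$ is precisely $\veps^{1/4}$ times a compensated functional of a rate-$\veps^{-1/2}$ Poisson process. The L\'evy--Khinchine formula then expresses the finite-dimensional characteristic functions of $\hat\clw^\veps$ in closed form; expanding $e^{i\veps^{1/4}\theta f}-1-i\veps^{1/4}\theta f=-\tfrac12\veps^{1/2}\theta^2 f^2+O(\veps^{3/4}|\theta f|^3)$, together with $\Psi^x(t+\veps,\cdot)\to\Psi^x(t,\cdot)$ in $L^2(0,\infty)$ (with $\Psi^x(0,\cdot)=\one_{(0,x]}$, recovering $\hat\clw(0,x)=B((0,x])$) and the pointwise decay bounds on $\Psi^x$ from the proof of Lemma~\ref{lem:psiinat} for uniform integrability of the error terms, identifies the limit as the Gaussian field $(t,x)\mapsto\int_0^\infty\Psi^x(t,y)\,B(dy)$, i.e. $\hat\clw$.

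For the martingale field, write $\hat\clm^\veps(t,x)=\veps^{1/4}a\sum_i\int_0^t Y^\veps_i(s)\,\partial_y\Psi^x(t-s+\veps,Y^\veps_i(s))\,dW_i(s)$ and note from \eqref{eq:ydPsi} that $y\,\partial_y\Psi^x(\tau,y)=-y\,q_\tau(y,x)$, so the pre-limit integrand is (up to the irrelevant sign of the noise) exactly the kernel $a y\,q_{t-s}(y,x)$ of $\hat\clm$, with a temporal regularization, evaluated along the particle trajectories. The central computation is that, using It\^o's isometry, independence of the $W_i$, and the stationarity in $s$ of the rate-$\veps^{-1/2}$ Poisson law (Lemma~\ref{lem:ispp}), the covariance $\Ebd^0_a[\hat\clm^\veps(t,x)\,\hat\clm^\veps(t',x')]$ equals $a^2\int_0^{t\wedge t'}\!\int_0^\infty y^2\,q_{t-s+\veps}(y,x)\,q_{t'-s+\veps}(y,x')\,dy\,ds$, which converges, by the $L^q$ kernel estimates of Lemma~\ref{lem:derpsi}, to $\Ebd[\hat\clm(t,x)\hat\clm(t',x')]$; the same estimates together with the moment bounds of Lemma~\ref{lem:tailbd} upgrade this to convergence in probability of the relevant predictable quadratic covariations. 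Since, for a fixed terminal time $T$, the process $t\mapsto\veps^{1/4}M^{\veps,\phi^{x,\veps}_T}_\infty(t)$ is a genuine continuous martingale with increments of order $\veps^{1/4}$ and no jumps, a martingale central limit theorem applies with the Lindeberg condition automatic, and recovering the $t$-dependence from this family of martingales identifies all subsequential limits of $\hat\clm^\veps$ as the centered Gaussian field $\hat\clm$. Joint convergence of $(\hat\clw^\veps,\hat\clm^\veps)$ with $B$ and $W$ independent then follows by conditioning on $\clf_0=\sigma(\Xbd(0))$: $\hat\clw^\veps$ is $\clf_0$-measurable, the $W_i$ are independent of $\clf_0$, and the conditional quadratic covariations of $\hat\clm^\veps$ given $\clf_0$ still converge to the same deterministic limits, so the conditional law of $\hat\clm^\veps$ given $\clf_0$ is asymptotically a non-random Gaussian.

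The step I expect to be the main obstacle is the identification of $\hat\clm^\veps$: because the test function $\phi^{x,\veps}_t$ depends on $t$, $\hat\clm^\veps(\cdot,x)$ is not itself a martingale, so the central limit argument has to be organized around the auxiliary genuine martingales obtained by freezing the terminal time and then reassembled as a space-time field; moreover both the evaluation of the limiting covariance and the passage from convergence in mean to convergence in probability hinge on careful use of the kernel bounds of Lemma~\ref{lem:derpsi} near the temporal singularity $s\uparrow t$, where $q_{t-s+\veps}$ concentrates as $\veps\downarrow 0$, and on Lemma~\ref{lem:tailbd} to control sums over particles near the lower edge. Once these estimates and the martingale CLT are in place, the asymptotic independence and the final change of measure to $\Pbd^\gamma_a$ should be comparatively routine.
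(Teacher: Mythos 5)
Your proposal is correct and is architecturally the same as the paper's proof: tightness (Lemma~\ref{lem:tight}) reduces everything to finite-dimensional limits; the initial-condition field $\hat\clw^\veps$ is identified via the Poisson/L\'evy--Khinchine characteristic-function computation, first under $\Pbd_a^0$ and then transferred to $\Pbd_a^\gamma$ by decoupling from the lowest particle (the paper's Lemma~\ref{lem:Wfind}); the martingale field $\hat\clm^\veps$ is identified by proving convergence of the quadratic covariations (the paper's Lemma~\ref{lem:rint}, built on the kernel estimates of Lemma~\ref{lem:qest}); and $\hat\clv^\veps$ is recovered from \eqref{eq:VWerrM} together with Lemma~\ref{lem:Y0term}. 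The one place your implementation genuinely diverges from the paper is the joint-convergence/independence step. You propose to condition on $\clf_0 = \sigma(\Xbd(0))$ and invoke a conditional martingale CLT, arguing that the conditional law of $\hat\clm^\veps$ given $\clf_0$ is asymptotically a nonrandom Gaussian. The paper instead constructs the single process $V^\veps(t) = W_{-1}(t) + \sum_j v_j\,\veps^{1/4}M^{\veps,\phi_{t_j}^{x_j,\veps}}_\infty(t\wedge t_j)$, which \emph{is} a genuine martingale because the test functions are frozen at the terminal times $t_j$ (this is precisely the device you allude to as the "main obstacle"), with an auxiliary Brownian motion $W_{-1}$ appended so that its quadratic variation $A^\veps$ is strictly increasing and tends to infinity; applying the Dambis--Dubins--Schwarz time change then yields a Brownian motion $B^\veps(\cdot) = V^\veps(T^\veps(\cdot))$ that is \emph{exactly}, rather than merely asymptotically, independent of $\mathcal{Y} = \sigma\{Y_i(0)\}$, so the joint law is read off directly from Lemma~\ref{lem:Wfind} plus the uniform $L^1$ convergence $A^\veps \to A^0$ (Lemma~\ref{lem:rint}). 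Both routes should succeed, but the paper's DDS construction cleanly sidesteps the conditional-CLT bookkeeping your approach would need; if you pursue the conditioning route you should make explicit how the unconditional $L^1$ convergence of the quadratic covariation upgrades to convergence in probability of the $\clf_0$-conditional characteristic functions.
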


The following proposition,
which allows us to go from particle counts in a given interval to the position of a typical ranked particle in the bulk,
will be needed in the proof of Theorem
\ref{thm:main2}. Note that $\chi^{\veps}$ in the proposition is exactly the same as
$\check \clv^{\veps}$ in \eqref{eq:924}.
For $(t,x) \in [0, \infty) \times (0, \infty)$, define
$$I^{\veps}_t(x) \doteq \inf\{i \in \NN_0: Y_{(i)}(t) > \veps^{-1/2}x\}
= \sum_{i=0}^{\infty} \one_{(0, \veps^{-1/2}x]} (Y_i(t)).$$
\begin{proposition}
\label{prop:yaxis}
For $(t, x) \in [0, \infty) \times (0, \infty)$, define
$\check \chi^\veps(t,x) \doteq \veps^{1/4} (i_{\veps}(x) - Y_{(i_{\veps}(x))}(t)),$
where $i_{\veps}(x)$ is as introduced in \eqref{eq:defnhatxin},
$$\tilde \chi^{\veps}(t,x) \doteq \veps^{1/4} (I^{\veps}_0(x) - Y_{(I^{\veps}_0(x))}(t)),
\mbox{ and }
\chi^{\veps}(t,x) \doteq \veps^{1/4}(I^{\veps}_t(x) - x\veps^{-1/2}).$$
Then for all $T \in (0, \infty)$ and $0< \ell<L<\infty$
\begin{align}
\sup_{t \in [0,T]} \sup_{x \in [\ell, L]}
|\chi^{\veps}(t,x) - \tilde\chi^{\veps}(t,x)| \to 0 \mbox{ in  $\Pbd^{\gamma}_a$ probability, as } \veps\to 0. \label{eq:show1}\\
\sup_{t \in [0,T]} \sup_{x \in [\ell, L]}
|\tilde \chi^{\veps}(t,x) - \check\chi^{\veps}(t,x)| \to 0 \mbox{ in  $\Pbd^{\gamma}_a$ probability, as } \veps\to 0. \label{eq:show2}
\end{align}
\end{proposition}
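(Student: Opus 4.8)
The plan is to prove both convergences first under the reference measure $\Pbd_a^0$ and then transfer them to $\Pbd_a^\gamma$: by \eqref{eq:absctyn} we have $\Pbd_a^\gamma \ll \Pbd_a^0$, so any family of random variables tending to $0$ in $\Pbd_a^0$-probability also tends to $0$ in $\Pbd_a^\gamma$-probability. Working under $\Pbd_a^0$, the structural input is Lemma \ref{lem:ispp}: for \emph{every} fixed $s \ge 0$ the points $\{Y_i(s)\}_{i \in \NN_0}$ form a rate-one Poisson process on $(0,\infty)$. In particular $I^{\veps}_0(x)$ is $\mathrm{Poisson}(\veps^{-1/2}x)$-distributed, and for each fixed $t \ge 0$ the successive gaps $G_i(t) \doteq Y_{(i)}(t) - Y_{(i-1)}(t)$, $i \ge 1$, are i.i.d.\ $\mathrm{Exp}(1)$. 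A preliminary estimate used repeatedly controls $\Delta_\veps \doteq \sup_{\ell \le x \le L}|I^{\veps}_0(x) - i_{\veps}(x)|$: since $I^{\veps}_0(x)$ counts the points of a rate-one Poisson process in $(0, \veps^{-1/2}x]$, standard Poisson tail bounds and a union bound over a fine net in $x$ (using monotonicity of $x \mapsto I^{\veps}_0(x)$) give $\Pbd_a^0(\Delta_\veps > \veps^{-1/4-\kappa}) \to 0$ for every $\kappa > 0$.

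For \eqref{eq:show2}, observe that $\tilde \chi^{\veps}(t,x)$ and $\check \chi^{\veps}(t,x)$ both have the form $\veps^{1/4}(n - Y_{(n)}(t))$, with $n = I^{\veps}_0(x)$ and $n = i_{\veps}(x)$ respectively, so that
$$\tilde \chi^{\veps}(t,x) - \check \chi^{\veps}(t,x) = \veps^{1/4}\Big[\big(I^{\veps}_0(x) - i_{\veps}(x)\big) - \big(Y_{(I^{\veps}_0(x))}(t) - Y_{(i_{\veps}(x))}(t)\big)\Big].$$
Since $Y_{(n_2)}(t) - Y_{(n_1)}(t) = \sum_{n_1 < i \le n_2} G_i(t)$, the right-hand side equals $\pm\veps^{1/4}$ times the partial sum $\sum_{i \in J_{\veps}(x)}\big(G_i(t) - 1\big)$ of the centered time-$t$ gaps over the index window $J_{\veps}(x)$ lying between $i_{\veps}(x)$ and $I^{\veps}_0(x)$. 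On the event $\{\Delta_\veps \le \veps^{-1/4-\kappa}\}$, $J_{\veps}(x)$ consists of at most $2\veps^{-1/4-\kappa}$ consecutive integers within $2\veps^{-1/4-\kappa}$ of $\veps^{-1/2}x$, so for a \emph{fixed} $t$ a maximal inequality for the i.i.d.\ mean-zero sequence $(G_i(t) - 1)_{i \ge 1}$ — applied simultaneously over all such sub-intervals of the index range corresponding to $x \in [\ell, L]$, which produces the supremum over $x$ — bounds the sum by a quantity that is $O(\veps^{-1/8-\kappa/2}\sqrt{\log(1/\veps)})$ in $\Pbd_a^0$-probability. Multiplying by $\veps^{1/4}$ and taking $\kappa$ small gives \eqref{eq:show2} for fixed $t$.

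For \eqref{eq:show1}, write $m \doteq I^{\veps}_0(x)$ and subtract to obtain $\chi^{\veps}(t,x) - \tilde \chi^{\veps}(t,x) = \veps^{1/4}\big[(I^{\veps}_t(x) - m) + (Y_{(m)}(t) - \veps^{-1/2}x)\big]$; the point is that $x \mapsto I^{\veps}_t(x)$ and $i \mapsto Y_{(i)}(t)$ are mutual inverses up to $\veps$-scaling, so this bracket should be close to $0$. On the good event for $\Delta_\veps$ one has $m = \veps^{-1/2}x + O(\veps^{-1/4-\kappa})$, and, using the fixed-$t$ concentration of the order statistics of the rate-one Poisson process, $Y_{(m)}(t) = \veps^{-1/2}x + O(\veps^{-1/4-\kappa})$ in $\Pbd_a^0$-probability. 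But then $I^{\veps}_t(x) - m$ equals, up to a single $O(1)$ boundary gap, the signed number of Poisson points lying between the levels $Y_{(m)}(t)$ and $\veps^{-1/2}x$, which differs from its mean $-(Y_{(m)}(t) - \veps^{-1/2}x)$ by a Poisson fluctuation of order $\sqrt{|Y_{(m)}(t) - \veps^{-1/2}x|} = O(\veps^{-1/8-\kappa/2})$ (uniformity in $x$ is again obtained by passing to a supremum over deterministic sub-intervals of a window of width $O(\veps^{-1/4-\kappa})$ about $\veps^{-1/2}x$). Hence the bracket is of order $\veps^{-1/8-\kappa/2}$, and multiplication by $\veps^{1/4}$ gives \eqref{eq:show1} for fixed $t$. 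Alternatively, one can use the a.s.\ identity $\chi^{\veps}\big(t, \veps^{1/2}Y_{(m)}(t)\big) = \tilde \chi^{\veps}(t,x) + \veps^{1/4}$ (valid since the $Y_{(i)}(t)$ are distinct), the convergence $\veps^{1/2}Y_{(m)}(t) \to x$, and the uniform-on-compacts modulus of continuity of $\chi^{\veps} = \check\clv^{\veps}$ supplied by Lemma \ref{lem:tight} and \eqref{eq:almostsame}.

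The main obstacle is promoting these \emph{fixed-$t$} estimates to bounds that are uniform in $t \in [0,T]$, since the clean distributional facts (rate-one Poisson, i.i.d.\ exponential gaps) hold only time-by-time. I would handle this by first proving a continuity-in-time estimate for the functionals $t \mapsto \big(Y_{(n_2)}(t) - Y_{(n_1)}(t)\big) - (n_2 - n_1)$ and $t \mapsto \veps^{1/2}Y_{(n)}(t)$, of the form $\Ebd_a^0\big[\sup_{|t-t'|\le h}|(\cdot)_t - (\cdot)_{t'}|^q\big]^{1/q} \le C h^{1/2}|n_2 - n_1|$, obtained by applying It\^{o}'s formula and the BDG inequality to the rank-based geometric Brownian motions in \eqref{eq:rgbm} and using the negative-moment bounds of Lemma \ref{lem:tailbd} (and their positive-moment analogues) to control the collision-drift contribution of the lowest particle, in the spirit of the proofs of Lemmas \ref{lem:tailbd} and \ref{lem:mart}. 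Combining this modulus with the fixed-time estimates over a polynomially-in-$\veps^{-1}$ fine grid of times in $[0,T]$ and a chaining argument then yields uniformity in $t$, the losses being only powers of $\log(1/\veps)$ which are absorbed by the surplus power of $\veps$. Uniformity in $x \in [\ell, L]$ is comparatively routine and has been built into the estimates above via the monotonicity of $x \mapsto I^{\veps}_t(x)$ and $x \mapsto i_{\veps}(x)$ and the passage to suprema over index sub-intervals.
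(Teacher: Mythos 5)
Your proposal shares the overall architecture of the paper's proof: passage from $\Pbd_a^\gamma$ to $\Pbd_a^0$, use of the exact fixed-time Poisson/exponential structure from Lemma~\ref{lem:ispp}, control of fixed-$t$ fluctuations by centered partial sums of i.i.d.\ gaps, and promotion to uniform-in-$t$ via a time grid, small-window continuity, and union bounds. Your fixed-time argument for~\eqref{eq:show2} --- writing $\tilde\chi^\veps-\check\chi^\veps$ as $\pm\veps^{1/4}$ times a centered sum of time-$t$ gaps over a short random index window --- is essentially the paper's quantity $\veps^{1/4}\cld(I^\veps_0(x),i_\veps(x),t)$ with $\cld(j,j',t)=(j-j')-(Y_{(j)}(t)-Y_{(j')}(t))$, controlled through moment bounds and a union bound over index pairs.

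There is, however, a genuine gap in your uniformity-in-$t$ plan for~\eqref{eq:show1}. The process $\chi^\veps(t,x)=\veps^{1/4}(I^\veps_t(x)-\veps^{-1/2}x)$ is built from the counting process $I^\veps_t(x)=\sum_i\one_{(0,\veps^{-1/2}x]}(Y_i(t))$, which \emph{jumps in $t$} each time a particle crosses the level $\veps^{-1/2}x$. The It\^{o}/BDG moduli you propose control the \emph{continuous} rank functionals $t\mapsto(Y_{(n_2)}(t)-Y_{(n_1)}(t))-(n_2-n_1)$ and $t\mapsto\veps^{1/2}Y_{(n)}(t)$; they do not by themselves bound $\sup_{t\in[t_k^\veps,t_{k+1}^\veps]}|I^\veps_t(x)-I^\veps_{t_k^\veps}(x)|$. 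Converting a modulus of continuity of the paths $Y_i(\cdot)$ into a bound on the number of level crossings over a short window additionally requires controlling the local density of particles near $\veps^{-1/2}x$, and this is exactly the technical step the paper supplies in the $i=1$, $\theta=2$ case of Lemma~\ref{lem:111}: it introduces the free comparison processes $X_i^l(t)=X_i(0)+W_i(t)$ and $X_i^r(t)=X_i(0)+W_i(t)+\gamma t$, defines the crossing counts $H^{\veps,l}(j)$ and $H^{\veps,r}(j)$, and bounds their $L^q$-norms uniformly in $j$ and $\veps$ by combining the exponential-gap structure of $\nu_a^\gamma$ with Gaussian tails. Nothing in your proposal plays this role. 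Your alternative route, via the identity $\chi^\veps\big(t,\veps^{1/2}Y_{(I^\veps_0(x))}(t)\big)=\tilde\chi^\veps(t,x)+\veps^{1/4}$ and the modulus of continuity of $\chi^\veps=\check\clv^\veps$ from Lemma~\ref{lem:tight} and~\eqref{eq:almostsame}, is an attractive shortcut but faces the same difficulty: applying the compact-subset modulus requires $\sup_{t\in[0,T]}\sup_{x\in[\ell,L]}|\veps^{1/2}Y_{(I^\veps_0(x))}(t)-x|\to 0$ in probability (and the random argument to stay inside a fixed compact), which reduces to $\sup_{t,x}|\tilde\chi^\veps(t,x)|$ being stochastically bounded --- itself a uniform-in-$t$ statement that does not follow from tightness of $\chi^\veps$ without~\eqref{eq:show1} already in hand.
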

 Proof of the proposition is in Section \ref{sec:proofprop}.

\subsection{Proofs of Theorems \ref{thm:main1} and \ref{thm:main2}}
\label{sec:mainthmpfs}
We can now complete the proof of Theorems \ref{thm:main1} and \ref{thm:main2}.\\ \ \\

\noi {\bf Proof of Theorem \ref{thm:main1}.}
We note that, for $t>0$ and $x, y \in (0,\infty)$,
\begin{align*}
\partial_y \Psi^x(t,y) &= \partial_y \int_0^x \hat q_t(z,y) dz
= -\frac{1}{ay} p_t\left( \frac{\log y}{a}, \frac{\log x}{a} - \frac{at}{2}\right) = - q_t(y,x).
 \end{align*}
 Integrating by parts, this shows that
 $$\hat \clw(t,x) = \int_0^{\infty} \Psi^x(t,y) B(dy) = \int_0^{\infty} B((0,y]) q_t(y,x) dy.$$
 Thus, for $(t,x) \in [0,\infty)\times (0,\infty)$, 
\begin{equation}\label{eq:832}
u(t,x) = \hat \clv(t,x) = \int_0^{\infty} \Psi^x(t,y) B(dy) +
\int_{(0,t)\times (0,\infty)} ay q_{t-s}(y,x) W(ds\, dy).
\end{equation}
 where $u$ is given as a solution of the SPDE in \eqref{eq:spde} with $u_0(x) = B((0,x])$. Taking $x = e^{az}$ we see that $\check \clv^{\veps}(t,x) = \hat R^{\veps}(t, z)$. The result now follows on combining Theorem \ref{thm:formain1} with \eqref{eq:almostsame}.
\hfill \qed

We now proceed to the proof of Theorem \ref{thm:main2}.
\\ \ \\

\noindent {\bf Proof of Theorem \ref{thm:main2}.}
We recall  that $\chi^{\veps}$ is the same as $\check \clv^{\veps}$ defined in \eqref{eq:924}.
From Proposition \ref{prop:yaxis} and
Theorem \ref{thm:main1}, under $\Pbd_a^{\gamma}$
$\check \chi^\veps$ converges in distribution,  uniformly on compact subsets of $[0, \infty) \times (0,\infty)$, to  $u$, where
$u$ is the solution  of the SPDE in \eqref{eq:spde} with $u_0(x) = B((0,x])$ and $B, W$ are as in Theorem \ref{thm:formain1}.
Thus we have, under $\Pbd_a^{\gamma}$,
\begin{equation}\label{eq:148}
\veps^{-1/4} \left( \frac{Y_{(i^{\veps}(x))}(t)}{\veps^{-1/2}} -x\right)\to u(t,x),
\end{equation}
in distribution,  and $\frac{Y_{(i^{\veps}(x))}(t)}{\veps^{-1/2}} \to x$ in probability,
uniformly on compact subsets of $[0, \infty) \times (0,\infty)$.

Fix $T \in (0, \infty)$ and $0<\ell<L<\infty$.
As an immediate consequence of the above uniform convergence in probability it follows that
\begin{equation}
\limsup_{\eta \to 0}\limsup_{\veps\to 0} 
\Pbd^{\gamma}_a\left(\inf_{t \in [0,T]}\inf_{x \in [\ell, L]} \frac{Y_{(i^{\veps}(x))}(t)}{\veps^{-1/2}} < \eta\right) = 0.
\end{equation}
For $y_1,y_2 \in (0,\infty)$ we can write
$$\log y_1 - \log y_2 = (y_1-y_2) \frac{1}{y_1} 
+ (y_1-y_2)^2 r(y_1, y_2),$$
where
$r:(0, \infty) \times (0,\infty) \to \RR$
is a measurable map with
$|r(y_1, y_2)| \le (y_1 \wedge y_2)^{-2}$.
Thus we can write
\begin{align}
\veps^{-1/4} \left( \log \left(\frac{Y_{(i^\veps(x))}(t)}{\veps^{-1/2}}\right) -\log x\right)
&= \veps^{-1/4} \left(\frac{Y_{(i^\veps(x))}(t)}{\veps^{-1/2}} - x\right)\frac{1}{x} \nonumber\\
&\quad
+ \veps^{-1/4}\left(\frac{Y_{(i^\veps(x))}(t)}{\veps^{-1/2}} - x\right)^2
r(\frac{Y_{(i^\veps(x))}(t)}{\veps^{-1/2}}, x).\label{eq:344}
\end{align}
Also, for any $\alpha>0$,
\begin{align*}
&\Pbd^{\gamma}_a\left(\sup_{t \in [0,T]}\sup_{x \in [\ell, L]} \veps^{-1/4}\left(\frac{Y_{(i^{\veps}(x))}(t)}{\veps^{-1/2}} - x\right)^2
|r(\frac{Y_{(i^{\veps}(x))}(t)}{\veps^{-1/2}}, x)| > \alpha\right)\\
&\le \Pbd^{\gamma}_a\left(\inf_{t \in [0,T]}\inf_{x \in [\ell, L]} \frac{Y_{(i^\veps(x))}(t)}{\veps^{-1/2}} < \eta\right)\\
&\quad + \Pbd^{\gamma}_a\left(\sup_{t \in [0,T]}\sup_{x \in [\ell, L]} \veps^{-1/4}\left(\frac{Y_{(i^{\veps}(x))}(t)}{\veps^{-1/2}} - x\right)^2 (\eta^{-2} + \ell^{-2})
 > \alpha\right).
\end{align*}
Taking limit as $\veps\to 0$ and using \eqref{eq:148}
we have
\begin{align}
&\limsup_{\veps\to 0}\Pbd^{\gamma}_a\left(\sup_{t \in [0,T]}\sup_{x \in [\ell, L]} \veps^{-1/4}\left(\frac{Y_{(i^{\veps}(x))}}{\veps^{-1/2}} - x\right)^2
|r(\frac{Y_{(i^{\veps}(x))}}{\veps^{-1/2}}, x)| > \alpha\right)\nonumber\\
&\le \limsup_{\eta \to 0}\limsup_{\veps\to 0} 
\Pbd^{\gamma}_a\left(\inf_{t \in [0,T]}\inf_{x \in [\ell, L]} \frac{Y_{(i^{\veps}(x))}(t)}{\veps^{-1/2}} < \eta\right) = 0.\label{eq:344b}
\end{align}
The result now follows upon observing that
$$
\veps^{-1/4} \left( \log \left(\frac{Y_{(i^\veps(x))}(t)}{\veps^{-1/2}}\right) -\log x\right) = a\veps^{-1/4} \left(X_{(i^\veps(x))}(t) + \frac{at}{2} + \frac{1}{2a}\log \veps - \frac{\log x}{a}\right)
$$
and appealing to \eqref{eq:148}, \eqref{eq:344} and \eqref{eq:344b}. \hfill \qed

\subsection{Proofs of Lemmas \ref{lem:secreq} and \ref{lem:tight}} 
\label{sec2.1}

We will prove Lemma \ref{lem:tight} first and then Lemma \ref{lem:secreq}. \ab{The proofs depend on the classical Kolmogorov-Chentsov criterion for tightness (see  \cite[Theorem 1.4.1]{kun}).}
We will also make use of the following moment bounds for integrals with respect to Poisson processes (cf. \cite{bassanbona1990PRMmoments}). \ab{We omit the straightforward proof.}

\begin{lemma} \label{lem:poissonint}
(i) Let $R(du)$ be a Poisson random measure on $\mathbb{R}$ of intensity $\mu(du)$, where $\mu$ is a $\sigma$-finite measure on $\mathbb{R}$. For all $n \in \mathbb{N}$ and all nonnegative measurable functions $f$ on $\mathbb{R}$, 
$$
\left|\Ebd \left( \int_{\mathbb{R}} f(u)R(du) \right)^n  \right| \leq C_n\max_{1 \leq k \leq n} \left(\int_{\mathbb{R}} f(u)^k \mu(du) \right)^{n/k},
$$
where $C_n$ is a constant depending on $n$.

(ii) Denote the corresponding compensated Poisson process by $\widetilde{R}(du) \doteq R(du) - \mu(du)$. For all $n \in \mathbb{N}$ and all $\mu$-integrable functions $f$ on $\mathbb{R}$,
$$
\Ebd \left| \int_{\mathbb{R}} f(u)\widetilde{R}(du) \right|^{2n}  \leq C_n \max_{1 \leq k \leq n} \left(\int_{\mathbb{R}} |f(u)|^{2k} \mu(du) \right)^{n/k},
$$
where $C(n)$ is a constant depending on $n$.
\end{lemma}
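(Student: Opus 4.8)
The plan is to reduce both bounds to the classical cumulant expansion of the moments of Poisson integrals (as in \cite{bassanbona1990PRMmoments}) together with an elementary weighted arithmetic--geometric mean estimate. For part (i), if $\int_{\RR} f^k\,d\mu=\infty$ for some $1\le k\le n$ the right-hand side is infinite and there is nothing to prove, so I may assume $\kappa_j\doteq\int_{\RR}f^j\,d\mu<\infty$ for $j=1,\dots,n$. Under this assumption the Laplace functional $\Ebd\exp(-\theta\int f\,dR)=\exp(\int(e^{-\theta f}-1)\,d\mu)$ shows that $\int f\,dR$ has a finite $n$-th moment with cumulants $\kappa_1,\dots,\kappa_n$, whence
\[
\Ebd\Bigl(\int_{\RR}f\,dR\Bigr)^n=B_n(\kappa_1,\dots,\kappa_n)=\sum_{(m_1,\dots,m_n)}\frac{n!}{\prod_j(j!)^{m_j}m_j!}\prod_{j=1}^n\kappa_j^{m_j},
\]
where $B_n$ is the $n$-th complete Bell polynomial and the sum is over the finitely many tuples of nonnegative integers with $\sum_j j\,m_j=n$. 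Since there are only $C_n$ such monomials (with coefficients bounded by a constant depending on $n$), it suffices to bound one of them. Writing $a_j\doteq\int_{\RR}f^j\,d\mu\ge0$ and $p_j\doteq j m_j/n$, so that $\sum_j p_j=1$,
\[
\prod_{j=1}^n a_j^{m_j}=\prod_{j=1}^n\bigl(a_j^{1/j}\bigr)^{n p_j}=\Bigl(\prod_{j=1}^n\bigl(a_j^{1/j}\bigr)^{p_j}\Bigr)^n\le\Bigl(\max_{1\le j\le n}a_j^{1/j}\Bigr)^n=\max_{1\le k\le n}a_k^{n/k},
\]
the inequality being the bound of a weighted geometric mean by its maximum; summing over the monomials gives (i).

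\noindent For part (ii) the argument is identical but applied to the compensated integral $\int f\,d\widetilde R$, whose cumulants are $\kappa_1=0$ and $\kappa_j=\int_{\RR}f^j\,d\mu$ for $j\ge2$. Assuming (as we may) that $c_k\doteq\int_{\RR}|f|^{2k}\,d\mu<\infty$ for $k=1,\dots,n$, Cauchy--Schwarz yields the interpolation bounds $\int_{\RR}|f|^{2k+1}\,d\mu\le c_k^{1/2}c_{k+1}^{1/2}$, so all cumulants up to order $2n$ are finite and the moment formula applies:
\[
\Ebd\Bigl|\int_{\RR}f\,d\widetilde R\Bigr|^{2n}=B_{2n}(0,\kappa_2,\dots,\kappa_{2n})=\sum_{(m_2,\dots,m_{2n})}\frac{(2n)!}{\prod_j(j!)^{m_j}m_j!}\prod_{j=2}^{2n}\kappa_j^{m_j},
\]
where now $m_1=0$ is forced (otherwise the monomial vanishes) and $\sum_{j\ge2}j\,m_j=2n$. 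For each monomial I bound $|\kappa_j|\le\int_{\RR}|f|^j\,d\mu$ and then replace each odd factor $\int_{\RR}|f|^{2k+1}\,d\mu$ by $c_k^{1/2}c_{k+1}^{1/2}$; since $\tfrac12(2k)+\tfrac12(2k+2)=2k+1$ this substitution preserves the homogeneity weight, so the monomial is bounded by $\prod_{k=1}^n c_k^{\alpha_k}$ with $\alpha_k\ge0$ and $\sum_k 2k\,\alpha_k=2n$. The same weighted geometric-mean estimate as in part (i), with weights $p_k\doteq 2k\alpha_k/(2n)$, gives $\prod_k c_k^{\alpha_k}\le\max_{1\le k\le n}c_k^{n/k}$, and summing over the $C_n$ monomials completes the proof.

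\noindent The only points requiring care are (a) justifying the cumulant--moment expansion under merely the finiteness of the asserted right-hand side, which is precisely what the reduction to the case of finite $\mu$-moments (plus, in part (ii), the interpolation inequalities) accomplishes, and (b) checking that the odd-order substitution in part (ii) is weight-preserving, so that the final geometric-mean bound applies verbatim; neither presents a genuine obstacle. I also note, as an aside, that by log-convexity of $k\mapsto\log\int_{\RR}|f|^{k}\,d\mu$ the maxima over $k$ on the right-hand sides in fact reduce to the two endpoints $k=1$ and $k=n$, so that (ii) is nothing but the Burkholder--Rosenthal inequality for the compensated Poisson integral; the argument above avoids invoking that result.
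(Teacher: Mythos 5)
Your proof is correct. For part (i), your argument is essentially the paper's argument: you both start from the explicit moment formula for Poisson integrals (the paper cites (2.1) of Bassan--Bona, you write it via Bell polynomials, which are the same thing), and you both bound each monomial $\prod_j a_j^{m_j}$ (with $a_j = \int f^j\,d\mu$ and $\sum_j j m_j = n$) by $\max_j a_j^{n/j}$; your weighted geometric-mean formulation just makes the paper's ``replace each factor by the appropriately scaled max'' step more transparent.

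For part (ii) you take a genuinely different route. The paper first invokes Burkholder's inequality to pass from $\Ebd|\int f\,d\widetilde R|^{2n}$ to $C_n\,\Ebd(\int |f|^2\,dR)^n$ and then applies part (i) with $f$ replaced by $|f|^2$; this is short and reuses (i) directly. You instead apply the cumulant expansion to the compensated integral itself (noting $\kappa_1 = 0$ kills every monomial with $m_1 > 0$), and then absorb the odd-order cumulants by the Cauchy--Schwarz interpolation $\int|f|^{2k+1}\,d\mu \le c_k^{1/2}c_{k+1}^{1/2}$, which keeps the homogeneity weight so the same geometric-mean bound applies. Your route avoids Burkholder's (martingale) inequality and stays purely within elementary Poisson-integral algebra, at the cost of the odd-order bookkeeping; the paper's route is slightly shorter but pulls in a stochastic-analysis black box. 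Both are sound, and your log-convexity remark about the max collapsing to the endpoints $k=1$ and $k=n$ is also correct (for $g$ convex and $k>0$, $(g(k)/k)' = (g'(k)k-g(k))/k^2$ has a nondecreasing numerator, so $g(k)/k$ is unimodal with its maximum at the endpoints).
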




In view of  the Kolmogorov-Chentsov criterion (cf.  \cite[Theorem 1.4.1]{kun}), tightness of the collection $\{(\hat\clw^\veps, \hat\clm^\veps, \hat \clv^\veps), \veps > 0\}$ (Lemma \ref{lem:tight}) will be a consequence of the following two lemmas.

\begin{lemma} \label{lem:tightatpt}
For any $x_0 \in (0,\infty)$, the collection $\{(\hat\clw^\veps(0,x_0), \hat\clm^\veps(0,x_0), \hat\clv^\veps(0,x_0)), \veps > 0\}$ is a tight collection of $\RR^3$ valued random variables.
\end{lemma}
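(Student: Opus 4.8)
The plan is to reduce the $\RR^3$-tightness to a one-dimensional statement and prove the latter by a uniform $L^2$ estimate under $\Pbd_a^0$ followed by a change of measure. Recall that a family of $\RR^3$-valued random variables is tight iff each of its three coordinate families is. At $t=0$ two of the three coordinates are trivial: since $\phi_0^{x_0,\veps}(0,\cdot)=\Psi^{x_0}(\veps,\cdot)$, the definitions of $\hat{\clv}^{\veps}$ and $\hat{\clw}^{\veps}$ coincide at $t=0$, so $\hat{\clv}^{\veps}(0,x_0)=\hat{\clw}^{\veps}(0,x_0)$; and the martingale $\hat{\clm}^{\veps}(0,x_0)=\veps^{1/4}M^{\veps,\phi_0^{x_0,\veps}}_\infty(0)=0$, since each $M^{\veps,\psi}_k$ starts at $0$ by \eqref{eq:mdef} and $M^{\veps,\psi}_\infty$ is their $L^q$-limit (Lemma \ref{lem:mart}). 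Hence, using \eqref{eq:340}, everything reduces to the tightness in $\RR$ of
\begin{equation*}
\hat{\clw}^{\veps}(0,x_0)=\veps^{1/4}\Big(\sum_{i=0}^{\infty}\Psi^{x_0}(\veps,Y_i^{\veps}(0))-\veps^{-1/2}x_0\Big),\qquad \veps\in(0,1).
\end{equation*}

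For the uniform moment bound I would work under $\Pbd_a^0$, where Lemma \ref{lem:ispp} gives that $\{Y_i^{\veps}(0)=\veps^{1/2}Y_i(0)\}_{i\in\NN_0}$ is a Poisson random measure on $(0,\infty)$ with intensity $\veps^{-1/2}\mathbf 1_{(0,\infty)}(y)\,dy$. Since $\Psi^{x_0}(\veps,y)=\int_0^{x_0}\hat q_\veps(z,y)\,dz\in[0,1]$ (Lemma \ref{lem:areprobs}) and $\int_0^{\infty}\Psi^{x_0}(\veps,y)\,dy=x_0$ (by \eqref{eq:340}), the sum is a.s.\ finite with mean $\veps^{-1/2}x_0$, so $\hat{\clw}^{\veps}(0,x_0)$ is the integral of $\veps^{1/4}\Psi^{x_0}(\veps,\cdot)$ against the compensated Poisson measure; the moment estimate of Lemma \ref{lem:poissonint}(ii) (with $n=1$) then gives
\begin{equation*}
\Ebd_a^0\big[\hat{\clw}^{\veps}(0,x_0)^2\big]\le C\int_0^{\infty}\Psi^{x_0}(\veps,y)^2\,dy\le C\int_0^{\infty}\Psi^{x_0}(\veps,y)\,dy=Cx_0,
\end{equation*}
where I used $\Psi^{x_0}(\veps,\cdot)^2\le\Psi^{x_0}(\veps,\cdot)$ and \eqref{eq:340}. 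Thus $\{\hat{\clw}^{\veps}(0,x_0)\}_{\veps\in(0,1)}$ is bounded in $L^2(\Pbd_a^0)$, hence tight under $\Pbd_a^0$.

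To transfer this to $\Pbd_a^\gamma$, I would invoke \eqref{eq:absctyn}: $d\Pbd_a^\gamma/d\Pbd_a^0=e^{2\gamma X_{(0)}(0)}$, and, under $\Pbd_a^0$, $Y_{(0)}(0)=e^{aX_{(0)}(0)}$ is $\mathrm{Exp}(1)$ (the minimum of the rate-$1$ Poisson process of Lemma \ref{lem:ispp}), so $\Ebd_a^0[e^{4\gamma X_{(0)}(0)}]=\Gamma(1+4\gamma/a)<\infty$. By Cauchy--Schwarz, $\Ebd_a^\gamma[|\hat{\clw}^{\veps}(0,x_0)|]\le\Ebd_a^0[\hat{\clw}^{\veps}(0,x_0)^2]^{1/2}\,\Gamma(1+4\gamma/a)^{1/2}$, uniformly in $\veps\in(0,1)$, so $\{\hat{\clw}^{\veps}(0,x_0)\}$ is bounded in $L^1(\Pbd_a^\gamma)$ and therefore tight; combined with $\hat{\clm}^{\veps}(0,x_0)=0$ and $\hat{\clv}^{\veps}(0,x_0)=\hat{\clw}^{\veps}(0,x_0)$ this proves the lemma. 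I do not expect a real obstacle here: the only point needing care is the $t=0$ bookkeeping in the first step, while the analytic content --- the uniform $L^2$ bound --- comes essentially for free from the fact that $\Psi^{x_0}(\veps,\cdot)$ is $[0,1]$-valued with $L^1$-norm exactly $x_0$.
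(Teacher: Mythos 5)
Your proof is correct and follows essentially the same route as the paper's: reduce to tightness of $\hat{\clw}^\veps(0,x_0)$ (using $\hat{\clm}^\veps(0,x_0)=0$ and that \eqref{eq:VWerrM} trivializes at $t=0$), then bound its second moment under $\Pbd_a^0$ via the Poisson-integral moment estimate and the $[0,1]$-valuedness and unit $L^1$-mass of $\Psi^{x_0}(\veps,\cdot)$, and finally transfer to $\Pbd_a^\gamma$ by \eqref{eq:absctyn} and Cauchy--Schwarz. The only cosmetic difference is that you obtain a uniform $L^1(\Pbd_a^\gamma)$ bound directly, whereas the paper writes the same chain of inequalities starting from a Markov bound on $\Pbd_a^\gamma(|\hat{\clw}^\veps(0,x_0)|\ge M)$.
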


\begin{lemma} \label{lem:contbounds}
For any $q \in \mathbb{N} \cap [2,\infty)$, $T \in (0,\infty)$, $0 < \ell < L < \infty$, there is
a $C=C(q,\ell,L,T) \in (0,\infty)$ such that, for all 
$t, t' \in [0,T]$, $x,x' \in [\ell,L]$, and $\veps > 0$, the following bounds hold:
\begin{longlist}
\item 
$
\|\hat\clm^\veps(t,x') - \hat\clm^\veps(t,x)\|_q \leq C|x' - x|^{1/4}$,
\item 
$
\|\hat\clm^\veps(t',x) - \hat\clm^\veps(t,x)\|_q \leq C|t' - t|^{1/4}\left(1 + \left|\log\left(\frac{1}{t'-t}\right)\right|^{1/2}\right)$, 
\item 
$
\|\hat\clw^\veps(t,x') - \hat\clw^\veps(t,x)\|_q \leq C|x' - x|^{1/8}$
\item 
$
\|\hat\clw^\veps(t',x) - \hat\clw^\veps(t,x)\|_q \leq C|t' - t|^{1/4}$,
\item 
$
\|\hat\clv^\veps(t,x') - \hat\clv^\veps(t,x)\|_q \leq C|x' - x|^{1/8}$, and
\item 
$
\|\hat\clv^\veps(t',x) - \hat\clv^\veps(t,x)\|_q \leq C|t' - t|^{1/4}\left(1 + \left|\log\left(\frac{1}{t'-t}\right)\right|^{1/2}\right)$

\end{longlist}
\end{lemma}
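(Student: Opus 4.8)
The plan is to reduce everything to the measure $\Pbd^0_a$, record convenient representations of the three fields together with $L^{2k}(dy)$-bounds on the kernels appearing in them, and then assemble the six inequalities by Poisson/martingale moment bounds followed by a two-regime bookkeeping of powers of $\veps$. Throughout I would take $\veps\in(0,1)$ and $q=2n$, $n\in\NN$ (odd $q$ reduces to the even case via $\|\cdot\|_q\le\|\cdot\|_{2\lceil q/2\rceil}$). First I would use \eqref{eq:absctyn}: $d\Pbd^\gamma_a/d\Pbd^0_a=e^{2\gamma X_{(0)}(0)}=Y_{(0)}(0)^{2\gamma/a}$, which has moments of all orders under $\Pbd^0_a$ by \eqref{eq:ZisGamma}, so by Cauchy--Schwarz every $L^{2n}(\Pbd^\gamma_a)$-bound follows from the corresponding $L^{4n}(\Pbd^0_a)$-bound. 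Under $\Pbd^0_a$, Lemma \ref{lem:ispp} gives that for each fixed $s$ the configuration $\{Y_i^\veps(s)\}_i$ is a Poisson random measure on $(0,\infty)$ of intensity $\veps^{-1/2}dy$. Combining this with $\partial_y\Psi^x(t,y)=-q_t(y,x)$, identity \eqref{eq:ydPsi}, and the conservation of mass $\int_0^\infty\Psi^x(\tau,y)dy=x$ for every $\tau>0$ (Fubini and Lemma \ref{lem:areprobs}), one gets the representations $\hat\clw^\veps(t,x)=\veps^{1/4}\int_0^\infty\Psi^x(t+\veps,y)\,\tilde N^\veps(dy)$ with $\tilde N^\veps$ the compensated time-$0$ PRM, $\hat\clm^\veps(t,x)=-a\veps^{1/4}\sum_{i\ge0}\int_0^tY_i^\veps(s)q_{t-s+\veps}(Y_i^\veps(s),x)\,dW_i(s)$ (a continuous martingale by Lemma \ref{lem:mart}), and, by \eqref{eq:VWerrM}, $\hat\clv^\veps=\hat\clw^\veps+J^\veps+\hat\clm^\veps$ with $J^\veps(t,x)=a\veps^{1/4}\int_0^tY_{(0)}^\veps(s)\partial_y\phi_t^{x,\veps}(s,Y_{(0)}^\veps(s))\,ds$.

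Next I would record the kernel estimates, all for $0<\tau\le\tau'\le T+1$ and $x,x'\in[\ell,L]$: (a) $\int_0^\infty\Psi^x(\tau,y)^{2k}dy\le C$ (direct Gaussian change of variables, the integral converging to $x$ as $\tau\downarrow0$); (b) $\int_0^\infty|\Psi^{x'}(\tau,y)-\Psi^x(\tau,y)|^{2k}dy\le|x'-x|$ (since $0\le|\Psi^{x'}-\Psi^x|\le1$ and, by monotonicity in $x$ together with conservation of mass, $\int_0^\infty(\Psi^{x'}-\Psi^x)(\tau,y)dy=x'-x$); (c) $\int_0^\infty|\Psi^x(\tau',y)-\Psi^x(\tau,y)|^{2k}dy\le C(\tau'-\tau)^{1/2}$ (Minkowski's integral inequality on $\int_\tau^{\tau'}\partial_\sigma\Psi^x\,d\sigma$ with Lemma \ref{lem:derpsi}(ii)); and, from Lemma \ref{lem:derpsi}(i),(iii),(iv) plus Minkowski, $\|y\partial_y\Psi^x(\tau,\cdot)\|_{L^{2k}(dy)}^{2k}\le C\tau^{-k+1/2}$, together with $\|y(\partial_y\Psi^{x'}-\partial_y\Psi^x)(\tau,\cdot)\|_{L^{2k}}\le C\tau^{-1/2+1/(4k)}\min\{1,|x'-x|\tau^{-1/2}\}$ and $\|y(\partial_y\Psi^x(\tau',\cdot)-\partial_y\Psi^x(\tau,\cdot))\|_{L^{2k}}\le C\tau^{-1/2+1/(4k)}\min\{1,(\tau'-\tau)\tau^{-1}\}$, each $\min$ being the comparison of an increment bound (Lemma \ref{lem:derpsi}(iv), resp. (iii)) with the stationary bound (i). For $\hat\clw^\veps$, Lemma \ref{lem:poissonint}(ii) then gives $\Ebd^0_a|\hat\clw^\veps(u)-\hat\clw^\veps(v)|^{2n}\le C\veps^{n/2}\max_{1\le k\le n}(\veps^{-1/2}\int_0^\infty|\Delta\Psi|^{2k}dy)^{n/k}$ with $\Delta\Psi$ the space- or time-increment of $\Psi^x(t+\veps,\cdot)$; inserting (b) or (c), the $k=1$ term dominates whenever $\veps^{-1/2}$ times the increment is $\ge1$, producing the strong powers $|x'-x|^{1/2}$ and $|t'-t|^{1/4}$, whereas in the complementary regime (increment $<\veps^{1/2}$) the $k=n$ term leaves a negative power of $\veps$ that is absorbed by a positive power of the now-tiny increment — this is precisely why the exponents $1/8$ and $1/4$ in (iii),(iv) are taken below the strong ones. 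The same computation with $\Delta\Psi=\Psi^{x_0}(\veps,\cdot)$ and (a) also yields the tightness-at-a-point Lemma \ref{lem:tightatpt}.

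For $\hat\clm^\veps$, I would apply Burkholder--Davis--Gundy, then Minkowski's integral inequality in the time variable, then Lemma \ref{lem:poissonint}(i) to the time-$s$ PRM, to obtain $\Ebd^0_a|\hat\clm^\veps(u)-\hat\clm^\veps(v)|^{2n}\le C\veps^{n/2}\big(\int_0^t\max_{1\le k\le n}(\veps^{-1/2}\|y\Delta(s,\cdot)\|_{L^{2k}(dy)}^{2k})^{1/k}\,ds\big)^n$, where $\Delta(s,\cdot)$ is the relevant increment of $\partial_y\Psi^x(t-s+\veps,\cdot)$; for the time increment one first splits off the ``new window'' $\int_t^{t'}$, bounded via $\|y\partial_y\Psi^x\|_{L^{2k}}$. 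Plugging in the gradient bounds, replacing $\max_k$ by $\sum_k$, and evaluating the $ds$-integral — splitting the range $\tau=t-s+\veps\in[\veps,t+\veps]$ at the scale $\tau\asymp|x'-x|^2$ (space) or $\tau\asymp|t'-t|$ (time), each resulting integrand having an integrable singularity at $\tau=\veps$ — gives quantities of the form $C|x'-x|\veps^{-1/2}\wedge C|x'-x|^2\veps^{-1}$ resp.\ $C|t'-t|^{1/2}\veps^{-1/2}\wedge\cdots$ (with a possible additional factor that is at most a power of $\log(1/|t'-t|)$, coming from the piece of the time-integral near $\tau=\veps$, which accounts for the $\log$ in (ii)); the same two-regime exponent check as before then proves (i),(ii). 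Finally, $J^\veps$ satisfies $|J^\veps(t,x)|\le c\,\veps^{1/4}(T+\veps)^{1/2}$ uniformly over $x\in[\ell,L]$, $t\in[0,T]$ (as in the proof of Lemma \ref{lem:Y0term}, via \eqref{eq:ydPsi}); combining this uniform bound with an elementary Lipschitz-in-$x$ (resp.\ in-$t$) estimate of $p_\tau$ controls its increments — e.g.\ $|J^\veps(t,x')-J^\veps(t,x)|\le C\veps^{1/4}\min\{1,|x'-x|\log(1/\veps)\}$, which in either regime ($|x'-x|\gtrless\veps^2$) is $\le C|x'-x|^{1/8}$ — and then (v),(vi) follow from (i)--(iv) and the triangle inequality. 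These six bounds, fed into the Kolmogorov--Chentsov criterion (Lemma \ref{lem:KCcrit}) together with Lemma \ref{lem:tightatpt}, give Lemma \ref{lem:tight}.

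The main obstacle is the $\hat\clm^\veps$ part. There are two intertwined difficulties. First, the trajectory $\{Y_i^\veps(\cdot)\}$ is only a system of rank-based geometric Brownian motions, yet the martingale's bracket is a time-integral of functionals of the \emph{time-$s$} (hence Poisson) configuration, so one must insert Minkowski's integral inequality \emph{before} invoking the Poisson moment bound Lemma \ref{lem:poissonint}(i). Second — and this is the technical heart — every kernel estimate carries a negative power of $\veps$, arising from the blow-up of the geometric-Brownian-motion transition density $q_t$ as $t\downarrow0$ against the $\veps$-scale mollification, and one must verify that these cancel exactly against the $\veps^{1/4}$ prefactor in the ``large increment'' regime and are dominated by a power of the increment in the ``small increment'' regime; this works precisely because the asserted Hölder exponents lie strictly below those produced by the clean, $\veps$-free computation. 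Carrying out this exponent bookkeeping uniformly in $\veps$, together with the (harmless) logarithmic correction in time, is where essentially all the work lies; the rest is routine.
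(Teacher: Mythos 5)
Your overall strategy matches the paper's: change of measure to $\Pbd^0_a$ via \eqref{eq:absctyn} and Cauchy--Schwarz, BDG and Minkowski's integral inequality, Lemma \ref{lem:poissonint} applied to the time-$s$ Poisson configuration, the $L^{2k}(dy)$ estimates from Lemma \ref{lem:derpsi}, and a two-regime analysis of the resulting $\veps$-powers. Your handling of the drift term $J^\veps=\hat{\mathcal L}^\veps$ is a genuinely different and simpler route than the paper's: you bound it pathwise using $|y\partial_y\Psi^x(\tau,y)|\le c\tau^{-1/2}$ and a Lipschitz estimate of $p_\tau$, whereas the paper estimates its $L^q(\Pbd^0_a)$ norm via the exponential law of $Y_{(0)}(s)$ and the moment bounds in Lemma \ref{lem:derpsi}(iii)--(iv). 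Your pathwise argument works (I checked the regime split at $|x'-x|\asymp\veps^2$ and an analogous one in time), and is an improvement in cleanliness.

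However, there is a genuine gap in parts (iii) and (iv): the crude $L^{2k}$ estimates (b) and (c) are not strong enough to give a bound of the form $C|x'-x|^{1/8}$ (resp.\ $C|t'-t|^{1/4}$) uniformly in $\veps$ for all $q$. After Lemma \ref{lem:poissonint}(ii) you arrive at
$\|\hat\clw^\veps(t,x')-\hat\clw^\veps(t,x)\|_{2n}\le C\max_{1\le k\le n}\veps^{\frac14-\frac1{4k}}\bigl(\int_0^\infty|\Psi^{x'}-\Psi^x|^{2k}(t+\veps,y)\,dy\bigr)^{1/2k}$.
Your (b) gives $\int|\Psi^{x'}-\Psi^x|^{2k}dy\le|x'-x|$, so the $k=n$ contribution becomes $\veps^{\frac14-\frac1{4n}}|x'-x|^{\frac1{2n}}$. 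In the regime $|x'-x|\ll\veps^{1/2}$ (where this $k=n$ term dominates, as you note) this is of order $|x'-x|^{1/2n}$, which for $n\ge5$ is a strictly \emph{worse} power than $|x'-x|^{1/8}$: fixing $\veps=1/2$ and letting $|x'-x|\to0$, $\veps^{\frac14-\frac1{4n}}|x'-x|^{1/2n}/|x'-x|^{1/8}\to\infty$. Your phrase ``the $k=n$ term leaves a negative power of $\veps$'' is not right -- $\veps^{\frac14-\frac1{4n}}$ is a \emph{positive} power and contributes nothing useful here; the problem is the too-small power of the increment. What rescues the estimate -- and what the paper uses -- is the sharpened Lipschitz-in-$x$ bound $\int_x^{x'}\hat q_{t+\veps}(z,y)\,dz\le c|x'-x|/(t+\veps)^{1/2}$ (equation \eqref{eq:bd1qint}), which produces a compensating factor $(t+\veps)^{-1/2}$ that cancels against $\veps^{\frac14-\frac1{4k}}$. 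The analogous issue arises in (iv): your (c) loses the $\tau$-dependence, whereas the paper's computation keeps it as $(t+\veps)^{-\frac14+\frac1{4k}}|t'-t|^{1/4}$ (by factoring $\sigma^{-1+1/4k}=\sigma^{-1/4+1/4k}\sigma^{-3/4}$ and pulling out the first factor at $\sigma=t+\veps$), which then cancels exactly against $\veps^{\frac14-\frac1{4k}}$. You include the refined $\min$-form estimates for $\|y\partial_y(\Psi^{x'}-\Psi^x)\|_{L^{2k}}$ in your toolbox for the $\hat\clm^\veps$ bounds, but the $\hat\clw^\veps$ bounds need the analogous refined estimates on $\|\Psi^{x'}-\Psi^x\|_{L^{2k}}$ and $\|\Psi^x(\tau')-\Psi^x(\tau)\|_{L^{2k}}$, and (b), (c) as stated do not carry the required $\tau$-dependence.
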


\begin{remark} \normalfont
Since \eqref{eq:VWerrM} holds, the bounds (i)-(iv), together with Lemma \ref{lem:Y0term}, are sufficient to establish tightness of the triple $\{(\hat\clw^\veps, \hat\clm^\veps, \hat \clv^\veps), \veps > 0\}$. However, we will make use of the bounds (v) and (vi) to prove Lemma \ref{lem:secreq}.
\end{remark}

\begin{proof}[Proof of Lemma \ref{lem:tightatpt}]
In view of \eqref{eq:VWerrM}, Lemma \ref{lem:Y0term}, and the fact that $\hat\clm^\veps(0,x_0) = 0$ for all $\veps > 0$, it is sufficient to show that $\{\hat\clw^\veps(0,x_0), \veps > 0\}$ is tight. For any $M > 0$, 
\begin{align}
\Pbd_a^\gamma\left( \left|\hat\clw^\veps(0,x_0)\right| \geq M \right) & = \Pbd_a^\gamma\left( \veps^{1/4}\left| \sum_{i = 0}^\infty \Psi^{x_0}(\veps, Y_i^\veps(0)) - \veps^{-1/2}\int_0^\infty \Psi^{x_0}(\veps, y)dy \right| \geq M \right) \\
& \leq M^{-1}\veps^{1/4}\Ebd_a^\gamma\left| \sum_{i = 0}^\infty \Psi^{x_0}(\veps, Y_i^\veps(0)) - \veps^{-1/2}\int_0^\infty \Psi^{x_0}(\veps, y)dy \right| \\
& = M^{-1}\Ebd_a^0\left[ (Y_{(0)}(0))^{2\gamma/a}\veps^{1/4}\left| \int \Psi^{x_0}(\veps,y)\tilde{N}(\veps^{-1/2}dy) \right| \right].
\end{align}
Here $\tilde{N}(\veps^{-1/2}dy) \doteq N(\veps^{-1/2}dy) - \veps^{-1/2}\mathbf{1}[0,\infty)(y)dy$, where $N(\veps^{-1/2}dy) \doteq \sum_{i = 0}^\infty \delta_{Y_i^\veps(0)}(dy)$. The last line above follows by \eqref{eq:margCOM}. We recall from Lemma \ref{lem:ispp} that, under $\Pbd_a^0$, $N(\veps^{-1/2}dy)$ is distributed as a Poisson random measure of intensity $\veps^{-1/2}\mathbf{1}[0,\infty)(y)dy$, and hence $\tilde{N}(\veps^{-1/2}dy)$ is the corresponding compensated measure. Applying Cauchy-Schwarz inequality to the last line yields the following upper bound 
\begin{align}
& M^{-1}\left[\Ebd_a^0 (Y_{(0)}(0))^{4\gamma/a} \right]^{1/2} \veps^{1/4}\left[\Ebd_a^0 \left| \int \Psi^{x_0}(\veps,y)\tilde{N}(\veps^{-1/2}dy) \right|^2\right]^{1/2} \\
& \leq c_1 M^{-1} \left( \int_0^\infty |\Psi^{x_0}(\veps,y)|^2 dy \right)^{1/2} \leq c_1 M^{-1} \left( \int_0^\infty \Psi^{x_0}(\veps,y) dy \right)^{1/2} \\ 
& = c_1M^{-1} \left( \int_0^{x_0} \int_0^\infty \hat{q}_\veps(z,y) dz dy \right)^{1/2} = c_1M^{-1} x_0^{1/2}.
\end{align}
The first inequality above follows by Lemma \ref{lem:poissonint}(ii) and noting that $\Ebd_a^0\left[ Y_{(0)}(0)^{4\gamma/a} \right] < \infty$ 
on recalling that, $Y_{(0)}(0)$ is distributed as 
$\text{Exp}(1)$ under $\Pbd_a^0$. The second inequality follows because $0 \leq \Psi^{x_0}(\veps,y) \leq 1$ (see Lemma \ref{lem:areprobs}). The first equality is obtained by interchanging order of integration, and the second equality follows again on recalling from Lemma \ref{lem:areprobs} that $\int_0^\infty \hat{q}_\veps(z,y)dz = 1$. Tightness of $\{\hat\clw^\veps(0,x_0), \veps > 0\}$ is an immediate consequence of this bound.
\end{proof}

\begin{proof}[Proof of Lemma \ref{lem:contbounds}(i)] We may assume without loss of generality that $x' > x$. We have 
that $\hat\clm^\veps(t,x') - \hat\clm^\veps(t,x)$ equals
\begin{align}
  a \veps^{1/4}\sum_{i = 0}^\infty \int_0^t Y_i^\veps(s)\left[ \partial_y \Psi^{x'}(t - s + \veps, Y^\veps_i(s)) - \partial_y \Psi^x(t - s + \veps, Y^\veps_i(s))\right] dW_i(s).
\end{align}
Applying the BDG inequality, we obtain 
\begin{multline}
 \|\hat\clm^\veps(t,x') - \hat\clm^\veps(t,x)\|_q^2 
 \leq c_1\Big\| \int_0^t \veps^{1/2} \sum_{i = 0}^\infty (Y_i^\veps(s))^2\left( (\partial_y \Psi^{x'} - \partial_y \Psi^x)(t - s + \veps, Y^\veps_i(s)) \right)^2 ds \Big\|_{q/2} \\
 \leq c_1 \int_0^t \Big( \Ebd^\gamma_a\Big[ \Big( \veps^{1/2} \sum_{i = 0}^\infty (Y_i^\veps(s))^2\Big( (\partial_y \Psi^{x'} - \partial_y \Psi^x)(t - s + \veps, Y^\veps_i(s)) \Big)^2 \Big)^{q/2} \Big] \Big)^{2/q} ds \\
 = c_1 \int_0^t \Big( \Ebd^0_a\Big[ (Y_{(0)}(s))^{\frac{2\gamma}{a}} \Big( \veps^{1/2} \int_0^\infty y^2\Big( (\partial_y \Psi^{x'} - \partial_y \Psi^x)(t - s + \veps, y) \Big)^2 N(\veps^{-1/2}dy)  \Big)^{q/2} \Big] \Big)^{2/q} ds,
\label{eq:Mxdiff}
\end{multline}
where we use Minkowski's inequality to obtain the third line, and in the fourth line we use that $N(\veps^{-1/2}dy)$ is a Poisson random measure on $\mathbb{R}$ of intensity $\veps^{-1/2}\mathbf{1}[0,\infty)(y)dy$. By Cauchy-Schwarz, the last line is bounded above by 
\begin{align}
 \;\; \;\; c_2 \int_0^t \Ebd^0_a\left[ \left( \veps^{1/2} \int_0^\infty \left( y(\partial_y \Psi^{x'} - \partial_y \Psi^x)(t - s + \veps, y) \right)^2 N(\veps^{-1/2}dy)  \right)^{q} \right]^{1/q} ds, \label{eq:Mxdiffbd1}
\end{align} 
upon noting that $\Ebd^0_a[ (Y_{(0)}(s))^{\frac{4\gamma}{a}}]^{1/q}$ does not depend on $s$, by stationarity, and is finite as noted in the proof of Lemma \ref{lem:tightatpt}. In view of Lemma \eqref{lem:poissonint}(i), 
\begin{align}
&\Ebd^0_a\left[ \left( \veps^{1/2} \int_0^\infty \left( y(\partial_y \Psi^{x'} - \partial_y \Psi^x)(t - s + \veps, y) \right)^2 N(\veps^{-1/2}dy)  \right)^{q} \right]^{1/q} \\
& \leq c_3 \max_{1 \leq k \leq q} \veps^{\frac{1}{2} - \frac{1}{2k}}\left( \int_0^\infty \left( y(\partial_y \Psi^{x'} - \partial_y \Psi^x)(t - s + \veps, Y^\veps_i(s)) \right)^{2k} dy \right)^{1/k}. \label{eq:Mxdiffmoments}
\end{align}
Observe that for $1 \leq k \leq q$,
\begin{multline}
 \int_0^\infty \left( y(\partial_y \Psi^{x'} - \partial_y \Psi^x)(t - s + \veps, y) \right)^{2k} dy 
 \leq \int_0^\infty \left( \int_x^{x'} |y \partial_u \partial_y \Psi^u(t - s + \veps, y)|du \right)^{2k} dy \\
 \leq \left( \int_x^{x'} \left( \int_0^\infty |y \partial_u \partial_y \Psi^u(t - s + \veps, y)|^{2k} dy \right)^{1/2k} du \right)^{2k} \\
 \leq c_4 \left( |x' - x| (t - s + \veps)^{-1 + 1/4k} \right)^{2k} = c_4(t - s + \veps)^{-2k + 1/2}|x' - x|^{2k},
\end{multline}
where we use Minkowski's inequalty in the third line and Lemma \ref{lem:derpsi}($i=4$) in the fourth. The first line above is also bounded by $c_5(t - s + \veps)^{-k + 1/2}$ by Lemma \ref{lem:derpsi}($i=1$). We conclude that 
\begin{align*}
&\left( \int_0^\infty \left( y(\partial_y \Psi^{x'} - \partial_y \Psi^x)(t - s + \veps, y) \right)^{2k} dy \right)^{1/k} \\
&\leq c_6 \min\left\{ (t - s + \veps)^{-1 + \frac{1}{2k}}, (t - s + \veps)^{-2 + \frac{1}{2k}}|x' - x|^2 \right\}.
\end{align*}
By the above and \eqref{eq:Mxdiffmoments}, the last quantity in \eqref{eq:Mxdiffbd1} is bounded above by 
\begin{align}
& c_{7} \max_{1 \leq k \leq q} \veps^{\frac{1}{2} - \frac{1}{2k}} \int_0^t \min\left\{ \frac{1}{(t - s + \veps)^{1 - \frac{1}{2k}}}, \frac{|x' - x|^2}{(t - s + \veps)^{2 - \frac{1}{2k}}}\right\} ds \\
& \leq c_{7} \max_{1 \leq k \leq q} \veps^{\frac{1}{2} - \frac{1}{2k}} \left( \int_{\veps}^{|x' - x| \vee \veps} \frac{d\sigma}{\sigma^{1 - \frac{1}{2k}}} + \int_{|x' - x| \vee \veps}^\infty \frac{|x' - x|^2  }{\sigma^{2 - \frac{1}{2k}}}d\sigma\right). \label{eq:Mxdiffbd3}
\end{align}
Consider the following cases. First, if $|x' - x| \leq \veps$, then \eqref{eq:Mxdiffbd3} is bounded above by
\begin{align}
c_{7} \max_{1 \leq k \leq q} \veps^{\frac{1}{2} - \frac{1}{2k}} \int_\veps^\infty \frac{|x' - x|^2}{\sigma^{2 - \frac{1}{2k}}} d\sigma & = c_{7} \max_{1 \leq k \leq q} \veps^{\frac{1}{2} - \frac{1}{2k}} \frac{1}{(1 - \frac{1}{2k}) \veps^{1 - \frac{1}{2k}}}|x' - x|^2 \\
& \leq c_{8} \veps^{-\frac{1}{2}}|x' - x|^2 \leq c_{8}|x' - x|^{3/2}.
\end{align}
Second, if $|x' - x| > \veps$, then \eqref{eq:Mxdiffbd3} is bounded above by
\begin{align}
& c_{7} \max_{1 \leq k \leq q} \veps^{\frac{1}{2} - \frac{1}{2k}} \left( 2k|x' - x|^{1/2k} + \frac{|x' - x|^2}{(1 - \frac{1}{2k})} \frac{1}{|x' - x|^{1 - \frac{1}{2k}}} \right) \\
& \leq c_{9} \max_{1 \leq k \leq q} \veps^{\frac{1}{2} - \frac{1}{2k}} |x' - x|^{1/2k} \leq c_{9}|x' - x|^{1/2}.
\end{align}
In either case, \eqref{eq:Mxdiffbd3}, and hence \eqref{eq:Mxdiffbd1}, is bounded above by $c_{10}|x' - x|^{1/2}$. In view of \eqref{eq:Mxdiff}, the result follows.
\end{proof}

\begin{proof}[Proof of Lemma \ref{lem:contbounds}(ii)] Assume without loss of generality that $t' > t$. Then 
\begin{align*}
&\hat\clm^\veps(t',x) - \hat\clm^\veps(t,x) \\
& = a\veps^{1/4} \sum_{i = 0}^\infty \Bigg\{ \int_0^t Y_i^\veps(s) \left[ \partial_y \Psi^x(t' - s + \veps, Y_i^\veps(s)) - \partial_y \Psi^x(t - s + \veps, Y_i^\veps(s)) \right] dW_i(s) \\
& \hspace{1in} + \int_t^{t'} Y_i(s) \partial_y \Psi^x(t' - s + \veps, Y_i^\veps(s)) dW_i(s) \Bigg\}.
\end{align*}
From this and the BDG inequality, we obtain
\begin{align}
& \| \hat\clm^\veps(t',x) - \hat\clm^\veps(t,x) \|_q \\
& \leq c_1\left\| \veps^{1/2} \sum_{i = 0}^\infty \int_0^t (Y_i^\veps(s))^2 \left[ \partial_y \Psi^x(t' - s + \veps, Y_i^\veps(s)) - \partial_y \Psi^x(t - s + \veps, Y_i^\veps(s)) \right]^2 ds \right\|_{q/2}^{1/2} \\
& \quad + c_1\left\| \veps^{1/2} \sum_{i = 0}^\infty \int_t^{t'} (Y_i(s))^2 \left[\partial_y \Psi^x(t' - s + \veps, Y_i^\veps(s)) \right]^2 ds \right\|_{q/2}^{1/2}.
\end{align}
We denote the first term above by $T_1 = T_1(x,t,t',\veps)$ and the second term above by $T_2 = T_2(x,t,t',\veps)$. To bound $T_1$, we apply Minkowski's inequality and the change of measure \eqref{eq:margCOM}, and apply Cauchy-Schwarz, in a similar way as we did in \eqref{eq:Mxdiff}, to obtain 
\begin{align}
T_1 & = \left( \Ebd_a^\gamma\left[ \left( \veps^{1/2} \sum_{i = 0}^\infty \int_0^t (Y_i^\veps(s))^2 \left( \int_{t - s + \veps}^{t' - s + \veps} \partial_u \partial_y \Psi^x(u, Y_i^\veps(s)) du \right)^2 ds \right)^{q/2} \right] \right)^{1/q} \\
& \leq c_2 \left(\int_0^t \left\{ \Ebd_a^0\left( \veps^{1/2} \int_0^\infty \left( \int_{t - s + \veps}^{t' - s + \veps} y\partial_u \partial_y \Psi^x(u, y) du \right)^2 N(\veps^{-1/2} dy) \right)^q  \right\}^{1/q} ds \right)^{1/2}.\\ \label{eq:T1bd1}
\end{align}
Denote the term inside the integral $\int_0^t$ by $I_1 = I_1(x,t,t',s,\veps)$. By Lemma \ref{lem:poissonint} and Lemma \ref{lem:derpsi}($i=3$).
\begin{multline}
I_1  \leq c_3 \max_{1 \leq k \leq q} \veps^{\frac{1}{2} - \frac{1}{2k}} \left( \int_0^\infty \left( \int_{t - s + \veps}^{t' - s + \veps} y\partial_u \partial_y \Psi^x(u, y) du \right)^{2k} dy \right)^{1/k} \\
 \leq c_3 \max_{1 \leq k \leq q} \veps^{\frac{1}{2} - \frac{1}{2k}} \left( \int_{t - s + \veps}^{t' - s + \veps} \left( \int_0^\infty |y\partial_u \partial_y \Psi^x(u, y)|^{2k} dy \right)^{1/2k} du \right)^{2} \\
 \leq c_3 \max_{1 \leq k \leq q} \veps^{\frac{1}{2} - \frac{1}{2k}} \left( \int_{t - s + \veps}^{t' - s + \veps} u^{-\frac{3}{2} + \frac{1}{4k}} du \right)^2 
 \leq c_4 \sum_{k = 1}^q \veps^{\frac{1}{2} - \frac{1}{2k}} \frac{(t' - t)^{1 - \frac{1}{2k}}}{(t' - s + \veps)^{1 - \frac{1}{2k}}(t - s + \veps)^{1 - \frac{1}{2k}}}. \label{eq:I1bd1}
\end{multline}
Note that in the last line we use the following inequality: For any $\alpha \in (0,1)$ and $0 < A < B < \infty$, $B^\alpha - A^\alpha \leq (B - A)^\alpha$. By \eqref{eq:T1bd1} and \eqref{eq:I1bd1},
\begin{align}
T_1 & \leq c_5 \left(\sum_{k = 1}^q \veps^{\frac{1}{2} - \frac{1}{2k}} \int_0^t \frac{(t' - t)^{1 - \frac{1}{2k}}}{(t' - s + \veps)^{1 - \frac{1}{2k}}(t - s + \veps)^{1 - \frac{1}{2k}}} ds\right)^{1/2} \\
& = c_5 \left(\sum_{k = 1}^q \veps^{\frac{1}{2} - \frac{1}{2k}} \int_0^t \frac{(t' - t)^{1 - \frac{1}{2k}}}{(u + \veps)^{1 - \frac{1}{2k}}(t' - t + u + \veps)^{1 - \frac{1}{2k}}} du \right)^{1/2},\label{eq:T1bd2}
\end{align}
where in the last equality we have used the substitution $u = t - s$. By considering two cases, we will show that the right-hand side of \eqref{eq:T1bd2} is bounded above by $c_6|t' - t|^{1/4}$.  First, if $t' - t \leq \veps$, 
then the summand in the last line of \eqref{eq:T1bd2} is bounded above by 
\begin{align}
\veps^{\frac{1}{2} - \frac{1}{2k}} \int_0^t \frac{(t' - t)^{1-1/2k}}{(u + \veps)^{2 - \frac{1}{k}}}du &\leq c_7 \veps^{\frac{1}{2} - \frac{1}{2k}}\frac{(t' - t)^{1-1/2k}}{\veps^{1 - \frac{1}{k}}}\left(1 + \log(T\veps^{-1} + 1)\right)\\
&\leq c_7 \veps^{\frac{1}{2} - \frac{1}{2k}}\frac{(t' - t)^{1-1/2k}}{\veps^{1 - \frac{1}{k}}}\left(1 + \log(2T(t'-t)^{-1})\right)\\
&\leq c_8(t' - t)^{1/2}\left(1 + |\log(2T)| + \left|\log\left(\frac{1}{t'-t}\right)\right|\right),
\end{align}
where we have used $(t' - t + u + \veps) \geq (u + \veps)$ for all $u \in [0,t]$,  and the logarithmic term arises from considering the $k=1$ case. Second, we consider the case $t' - t > \veps$. When $k=1$, the summand in the last line of \eqref{eq:T1bd2} is bounded above by  
\begin{align}
&\int_0^{t'-t} \frac{1}{(u + \veps)^{\frac{1}{2}}} du + \int_{t'-t}^T \frac{(t' - t)^{\frac{1}{2}}}{(u + \veps)} du \\
&\le (2\sqrt{2} + |\log (2T)|)(t'-t)^{1/2} + (t'-t)^{1/2}\left|\log\left(\frac{1}{t'-t}\right)\right|,
\end{align}
where we used $\veps < t'-t \le T$ in the last step.
For $k \ge 2$, we bound this summand by the following:
\begin{align}
 \veps^{\frac{1}{2} - \frac{1}{2k}} \int_0^t \frac{(t' - t)^{1 - \frac{1}{2k}}}{(t' - t)^{\frac{1}{2} - \frac{1}{2k}}(u + \veps)^{\frac{3}{2} - \frac{1}{2k}}}du &\leq \veps^{\frac{1}{2} - \frac{1}{2k}} (t' - t)^{\frac{1}{2}} \int_0^t \frac{du}{(u + \veps)^{\frac{3}{2} - \frac{1}{2k}}} \\& \leq c_9\veps^{\frac{1}{2} - \frac{1}{2k}} (t' - t)^{\frac{1}{2}}\veps^{-\frac{1}{2} + \frac{1}{2k}} \leq c_9(t' - t)^{1/2},
\end{align}
where we have used the inequality  $(t' - t + u + \veps)^{1 - \frac{1}{2k}} \geq (t' - t)^{\frac{1}{2} - \frac{1}{2k}}(u + \veps)^{\frac{1}{2}}$. These bounds imply that $T_1 \le c_{10}(t'-t)^{1/4}\left(1 + \left|\log\left(\frac{1}{t'-t}\right)\right|^{1/2}\right)$. 

Next we bound $T_2$. We have 
\begin{align}
T_2 & = c_1 \left( \Ebd_a^\gamma \left( \veps^{1/2} \sum_{i = 0}^\infty \int_t^{t'} (Y_i(s))^2 \left[\partial_y \Psi^x(t' - s + \veps, Y_i^\veps(s)) \right]^2 ds\right)^{q/2}  \right)^{1/q} \\
& \leq c_{11}\left( \int_t^{t'} \left( \Ebd_a^0 \left( \veps^{1/2} \int_0^\infty y^2[\partial_y \Psi^x(t' - s + \veps,y)]^2 N(\veps^{-1/2}dy) \right)^q  \right)^{1/q} ds \right)^{1/2}.
\end{align}
Denote the integrand with respect to $ds$ by $I_2(s)$. By Lemma \ref{lem:poissonint} and Lemma \ref{lem:derpsi}($i=1$),
\begin{align}
I_2(s) & \leq c_{12} \max_{1 \leq k \leq q} \veps^{\frac{1}{2} - \frac{1}{2k}} \left( \int_0^\infty \left(y\partial_y \Psi^x(t' - s + \veps,y)\right)^{2k}dy \right)^{1/k} \\ \label{eq:i2bd1}
 & \leq c_{13} \max_{1 \leq k \leq q} \veps^{\frac{1}{2} - \frac{1}{2k}}(t' - s + \veps)^{-1 + \frac{1}{2k}}.
\end{align}
Hence,
\begin{align*}
T_2 &\leq 
c_{14} \max_{1 \leq k \leq q}\left( \veps^{\frac{1}{2} - \frac{1}{2k}} \int_0^{t'-t} (u + \veps)^{-1 + \frac{1}{2k}} du \right)^{1/2}.
\end{align*}
If $t' - t \leq \veps$, then for all $k \geq 1$, 
$$
\veps^{\frac{1}{2} - \frac{1}{2k}} \int_0^{t'-t} (u + \veps)^{-1 + \frac{1}{2k}} du \leq c_{15}\veps^{-1/2}(t' - t) \leq c_{15}(t' - t)^{1/2}.
$$
 If, on the other hand, $t' - t > \veps$, then for $1 \leq k \leq q$,
\begin{align}
\veps^{\frac{1}{2} - \frac{1}{2k}} \int_0^{t'-t} (u + \veps)^{-1 + \frac{1}{2k}} du \leq \veps^{\frac{1}{2} - \frac{1}{2k}} \int_0^{t'-t} u^{-1 + \frac{1}{2k}} du  = 2k\veps^{\frac{1}{2} - \frac{1}{2k}}(t' - t)^{\frac{1}{2k}} \leq c_{16}(t' - t)^{1/2}.
\end{align}
We conclude that $T_2 \leq c_{17}|t' - t|^{1/4}$.
\end{proof}

\begin{proof}[Proof of Lemma \ref{lem:contbounds}(iii)] By definition, and the mass conservation property in \eqref{eq:cofmass},
\begin{align}
\hat \clw^\veps(x, t) 
& = \veps^{1/4}\left( \sum_{i = 0}^\infty \Psi^x(t + \veps, Y_i^\veps(0)) - \veps^{-1/2} \int_0^\infty \Psi^x(t + \veps,y)dy \right).
\end{align}
Consequently, for $0 < x < x' < \infty$, 
\begin{align}\label{eq:wdiff1}
&\|\hat \clw^\veps(x',t) - \hat \clw^\veps(x,t) \|_q \\
& = \veps^{1/4} \left( \Ebd_a^\gamma\left[ \left| \sum_{i = 0}^\infty (\Psi^{x'} - \Psi^x)(t + \veps, Y_i^\veps(0)) - \veps^{-1/2} \int_0^\infty (\Psi^{x'} - \Psi^x)(t + \veps,y)dy \right|^q \right] \right)^{1/q} \\
& = \veps^{1/4} \left( \Ebd_a^0\left[ Y_{(0)}(0)^{2\gamma/a} \left| \int_0^\infty (\Psi^{x'} - \Psi^x)(t + \veps,y)\widetilde{N}(\veps^{-1/2}dy) \right|^q \right] \right)^{1/q} \\
& \leq c_1 \veps^{1/4} \left( \Ebd_a^0\left[ \left| \int_0^\infty (\Psi^{x'} - \Psi^x)(t + \veps,y)\widetilde{N}(\veps^{-1/2}dy) \right|^{2q} \right] \right)^{1/2q},
\end{align}
where $\widetilde{N}(\veps^{-1/2}dy) = N(\veps^{1/2}dy) - \veps^{-1/2}\mathbf{1}[0,\infty)(y)dy $ is the compensated Poisson point process, and the last inequality follows by Cauchy-Schwarz. By Lemma \ref{lem:poissonint}(ii) and writing $(\Psi^{x'} - \Psi^x)(t + \veps, y) = \int_x^{x'} \hat{q}_{t + \veps}(z,y)dz$, the last line in \eqref{eq:wdiff1} is bounded above by 
\begin{align}
c_2 \max_{1 \leq k \leq q} \veps^{\frac{1}{4} - \frac{1}{4k}} \left( \int_0^\infty \left| \int_x^{x'} \hat{q}_{t + \veps}(z,y)dz  \right|^{2k}dy \right)^{1/2k}.
\end{align}
We will be done if we can show that, for $1 \leq k \leq q$, $$\veps^{\frac{1}{4} - \frac{1}{4k}} \left( \int_0^\infty \left| \int_x^{x'} \hat{q}_{t + \veps}(z,y)dz  \right|^{2k}dy \right)^{1/2k} \leq c_3|x' - x|^{1/8}.$$

\textit{Case 1. $k = 1$}. Recalling Lemma \ref{lem:areprobs}, we obtain
\begin{align}
&\left( \int_0^\infty \left| \int_x^{x'} \hat{q}_{t + \veps}(z,y)dz  \right|^{2} dy \right)^{1/2} \leq \left( \int_x^{x'} \int_0^\infty \hat{q}_{t + \veps}(z,y) dy dz \right)^{1/2} = |x' - x|^{1/2},
\end{align}
where the inequality follows by noting that $0 \leq \int_x^{x'} \hat{q}_{t + \veps}(z,y)dz \leq 1$ and Fubini's Theorem. 

\textit{Case 2. $2 \leq k \leq q$.} Let $\alpha_0 = |\log L|$ and $\alpha_1 = \alpha_0 + 1$. First we write
\begin{align}
&\veps^{\frac{1}{4} - \frac{1}{4k}}\left(\int_0^\infty \left| \int_x^{x'} \hat{q}_{t + \veps}(z,y)dz  \right|^{2k} dy \right)^{1/2k} \\
& \leq c_4 \Big\{ \veps^{\frac{1}{4} - \frac{1}{4k}}\Big(\int_0^{e^{\alpha_1}} \Big| \int_x^{x'} \hat{q}_{t + \veps}(z,y)dz  \Big|^{2k}dy \Big)^{1/2k} \\
& \hspace{1in} + \veps^{\frac{1}{4} - \frac{1}{4k}}\Big(\int_{e^{\alpha_1}}^\infty \Big| \int_x^{x'} \hat{q}_{t + \veps}(z,y)dz  \Big|^{2k} dy \Big)^{1/2k} \Big\}. \label{eq:splitmoment}
\end{align}
To bound the first term in \eqref{eq:splitmoment}, note that if $|x' - x| > \veps$ then\begin{equation}\label{eq:bd1ne}
\veps^{\frac{1}{4} - \frac{1}{4k}}\left(\int_0^{e^{\alpha_1}} \left| \int_x^{x'} \hat{q}_{t + \veps}(z,y)dz  \right|^{2k}dy \right)^{1/2k} \leq c_5 \veps^{\frac{1}{4} - \frac{1}{4k}} \leq c_5|x' - x|^{1/8},
\end{equation}
where we have used the observation that $0 \leq \int_x^{x'} \hat{q}_{t + \veps}(z,y)dz \leq 1$. If on the other hand $|x' - x| \leq \veps$, we have that 
\begin{equation}
\int_x^{x'} \hat{q}_{t+\veps}(z,y)dz = \int_x^{x'} (az)^{-1}p_{t + \veps}\left( \frac{\log z}{a} - \frac{\log y}{a} - \frac{a(t + \veps)}{2} \right) dz \leq c_6\frac{|x' - x|}{(t + \veps)^{1/2}}, \label{eq:bd1qint}
\end{equation}
and consequently, 
\begin{multline}
\veps^{\frac{1}{4} - \frac{1}{4k}}\left(\int_0^{e^{\alpha_1}} \left| \int_x^{x'} \hat{q}_{t + \veps}(z,y)dz  \right|^{2k}dy \right)^{1/2k}  \leq c_7\veps^{\frac{1}{4} - \frac{1}{4k}}\frac{|x' - x|}{(t + \veps)^{1/2}} 
\leq c_7|x' - x|^{1/2}, \label{eq:bd2qint}
\end{multline}
on noting $\veps \ge |x' - x|$. The bounds \eqref{eq:bd1ne} and \eqref{eq:bd2qint} give the desired bounds for the first term in \eqref{eq:splitmoment}.

To bound the second term in \eqref{eq:splitmoment}, changing variables in \eqref{eq:bd1qint} yields 
\begin{multline}
\int_x^{x'} \hat{q}_{t+\veps}(z,y)dz
 = \int_{\frac{\log x}{a} - \frac{\log y}{a} - \frac{a(t + \veps)}{2}}^{\frac{\log x'}{a} - \frac{\log y}{a} - \frac{a(t + \veps)}{2}} p_{t + \veps}(w) dw 
 = \int_{B(x,y,t+\veps)}^{B(x',y,t+\veps)} \frac{1}{\sqrt{2\pi}}e^{-w^2/2} dw, \label{eq:psixdiff}
\end{multline}
where 
$$
B(\tilde x,\tilde y,\tilde t) \doteq \frac{\log \tilde x}{a\tilde{t}^{1/2}} - \frac{\log \tilde y}{a\tilde{t}^{1/2}} - \frac{a\tilde{t}^{1/2}}{2} \quad \text{ for all $\tilde x,\tilde y \in (0,\infty)$ and $\tilde t \in (0,\infty)$.}
$$
Recall $\log \ell \leq \log x \leq \log x' \leq \log L \doteq \alpha_0$. For $\log y \geq \alpha_1$ and $w \in [B(x,y,t + \veps), B(x',y,t + \veps)]$, 
$$
\frac{1}{\sqrt{2\pi}}e^{-w^2/2} \leq c_8 \exp\left( -\frac{(\log y - \alpha_0)^2}{2a^2(t + \veps)} \right). \label{eq:ylarge}
$$
and for such $y$, \eqref{eq:psixdiff} is bounded in absolute value by 
\begin{align}
    c_9\frac{1}{(t+\veps)^{1/2}}|x' - x|\exp\left( -\frac{(\log y - \alpha_0)^2}{2a^2(t + \veps)} \right) \le c_{10}|x' - x|\exp\left( -\frac{(\log y - \alpha_0)^2}{4a^2(t + \veps)} \right), 
\end{align}
where we have used the fact  that $x,x' \in [\ell,L]$, so there exists a constant $\tilde c$ such that $|B(x',y,t + \veps) - B(x',y,t + \veps)| \leq \tilde c|x' - x|/(t + \veps)^{1/2}$.
Consequently, 
\begin{align}
\left(\int_{e^{\alpha_1}}^\infty \left| \int_x^{x'} \hat{q}_{t + \veps}(z,y)dz  \right|^{2k} dy \right)^{1/2k} & \leq c_{10}|x' - x| \left( \int_{e^{\alpha_1}}^\infty \exp\left( -\frac{k(\log y - \alpha_0)^2}{2a^2(t + \veps)} \right)dy  \right)^{1/2k} \\
& \leq c_{10}|x' - x| \left( \int_{\alpha_1}^\infty \exp\left(z -\frac{k(z - \alpha_0)^2}{2a^2(t + \veps)} \right)dz  \right)^{1/2k},
\label{eq:secondtermbd}
\end{align}
where we have made the substitution $z = \log y$ in the last line. By observing that
\begin{align}
 \int_{\alpha_1}^\infty \exp\left(z -\frac{k(z - \alpha_0)^2}{2a^2(t + \veps)} \right)dz 
< \infty,
\end{align}
we see that \eqref{eq:secondtermbd} is bounded above by $c_{11}|x' - x|$, which implies the desired bound for the second term in \eqref{eq:splitmoment}. This completes the proof.
\end{proof}

\begin{proof}[Proof of Lemma \ref{lem:contbounds}(iv).] Assume without loss of generality that $t' > t$. We have 
\begin{align}
& \|\hat\clw^\veps(t',x) - \hat\clw^\veps(t,x) \|_q \\
& = \veps^{1/4}\left\| \sum_{i = 0}^\infty \Psi^x(t' + \veps, Y_i^\veps(0)) - \Psi^x(t + \veps, Y_i^\veps(0)) \right\|_q \\
& = \veps^{1/4}\left( \Ebd^0_a\left[ (Y_{(0)}^\veps(0))^{\frac{2\gamma}{a}} \left| \int_0^{\infty} \left[ \Psi^x(t' + \veps, y) - \Psi^x(t + \veps, y) \right]N(\veps^{-1/2}dy) \right|^q \right] \right)^{1/q} \\
& \quad \leq c_1 \veps^{1/4} \left( \Ebd^0_a \left|\int_0^{\infty} \left[\Psi^x(t' + \veps, y) - \Psi^x(t + \veps, y) \right]N(\veps^{-1/2}dy)\right|^{2q}  \right)^{1/2q}, \label{eq:wtdiff}
\end{align}
on using the Cauchy-Schwarz inequality in the last line. By conservation of mass property in Lemma \ref{lem:areprobs}, $\int_0^\infty \Psi^x(t' + \veps,y) dy = \int_0^\infty \Psi^x(t + \veps,y)dy$, and therefore we may replace $N$ with $\widetilde{N}$ in the last expression above. Thus, by Lemma \ref{lem:poissonint}(ii), the last quantity in \eqref{eq:wtdiff} is bounded above by
\begin{align}
   c_1 \max_{1 \leq k \leq q} \veps^{\frac{1}{4} - \frac{1}{4k}} \left( \int_0^\infty |\Psi^x(t' + \veps, y) - \Psi^x(t + \veps, y)|^{2k}dy \right)^{1/2k}. \label{eq:wtdiffub}
\end{align}
Applying Minkowski's inequality and Lemma \ref{lem:derpsi}($i=2$),
\begin{multline}
\Bigg(\int_0^\infty |\Psi^x(t' + \veps, y)  - \Psi^x(t + \veps, y)|^{2k}dy \Bigg)^{1/2k} = \left(\int_0^\infty \left| \int_{t + \veps}^{t' + \veps} \partial_u \Psi^x(u,y)du \right|^{2k} dy\right)^{1/2k} \\
 \leq \int_{t + \veps}^{t' + \veps} \left( \int_0^\infty |\partial_u\Psi^x(u,y)|^{2k}dy \right)^{1/2k} du 
 \leq c_2\int_{t + \veps}^{t' + \veps} u^{-1 + \frac{1}{4k}} du \leq c_2 (t + \veps)^{-\frac{1}{4} + \frac{1}{4k}} \int_{t + \veps}^{t' + \veps} u^{-\frac{3}{4}} du \\
 \leq c_3 (t + \veps)^{-\frac{1}{4} + \frac{1}{4k}}[(t' + \veps)^{1/4} - (t + \veps)^{1/4}] 
 \leq c_3 (t + \veps)^{-\frac{1}{4} + \frac{1}{4k}}|t' - t|^{1/4}.
\end{multline}
 Therefore, \eqref{eq:wtdiffub} is bounded above by 
$c_3|t' - t|^{1/4}$,
as desired.
\end{proof}

\begin{proof}[Proof of Lemma \ref{lem:contbounds}(v) and (vi)] By \eqref{eq:VWerrM}, we may write
\begin{equation}
\hat{\clv}^\veps(t,x) = \hat{\clw}^\veps(t,x) + \hat{\mathcal{L}}^\veps(t,x) + \hat{\clm}^\veps(t,x),
\end{equation}
where 
\begin{align}
\hat{\mathcal{L}}^\veps(t,x) \doteq a\veps^{1/4}\int_0^t Y_{(0)}^\veps(s)\partial_y \Psi^x(t - s + \veps, Y_{(0)}^\veps(s))ds.
\end{align}
In view of parts (i)-(iv) of the lemma, it is sufficient to establish bounds analogous to (v)-(vi) for $\hat{\mathcal{L}}^\veps$.  Assume without loss of generality that $x' > x$ and $t' > t$. With respect to $\Pbd_a^0$, for each $s \in [0,\infty)$, $Y_{(0)}(s)$ is distributed as $\text{Exp}(1)$. Using this, we compute 
\begin{align}
& \| \hat{\mathcal{L}}^\veps(t,x') - \hat{\mathcal{L}}^\veps(t,x) \|_q = \left( \Ebd_a^\gamma \left| a\veps^{1/4}\int_0^t Y_{(0)}^\veps(s)\partial_y (\Psi^{x'} - \Psi^x)(t - s + \veps, Y_{(0)}^\veps(s))ds \right|^q  \right)^{1/q} \\
& \leq a\veps^{1/4}\int_0^t \left( \Ebd_a^\gamma \left| Y_{(0)}^\veps(0)\partial_y (\Psi^{x'} - \Psi^x)(t - s + \veps, Y_{(0)}^\veps(0)) \right|^q \right)^{1/q} ds  \\
& \le c_2\veps^{1/4} \int_0^t \left(\int_0^\infty |y \partial_y(\Psi^{x'} - \Psi^x)(t - s + \veps, y)|^{2q} e^{-\veps^{-1/2}y} \veps^{-1/2}dy\right)^{1/2q} ds \\
& = c_2\veps^{1/4} \int_0^t \left(\int_0^\infty \left| \int_{x}^{x'} y \partial_u \partial_y\Psi^u(t - s + \veps, y)du \right|^{2q} e^{-\veps^{-1/2}y} \veps^{-1/2}dy\right)^{1/2q} ds \\
& \leq c_2 \veps^{\frac{1}{4} - \frac{1}{4q}} \int_0^t \int_x^{x'} \left( \int_0^\infty |y \partial_u \partial_t\Psi^u(t - s + \veps, y)|^{2q}dy \right)^{1/2q}du ds \\
& \leq c_3 \veps^{\frac{1}{4} - \frac{1}{4q}}|x' - x| \int_0^t (t - s + \veps)^{-1 + \frac{1}{4q}} ds \leq c_3|T + \veps|^{1/4q} |x' - x|.
\end{align}
Here the first inequality is obtained by applying Minkowski's inequality and noting that the process is stationary. In the second inequality, we use \eqref{eq:margCOM} and apply Cauchy-Schwarz. In the last two lines, we apply Minkowski's inequality again, and we use Lemma \ref{lem:derpsi}($i=4$). Part (v) follows. To obtain (vi), we write 
\begin{multline}
\hat{\mathcal{L}}^\veps(t',x) - \hat{\mathcal{L}}^\veps(t,x) = a\veps^{1/4}\int_t^{t'} Y_{(0)}^\veps(s) \partial_y \Psi^x(t' - s + \veps, Y_{(0)}^\veps(s))ds \\
 + a\veps^{1/4} \int_0^t Y_{(0)}^\veps(s) \left[\partial_y \Psi^x(t' - s + \veps, Y_{(0)}^\veps(s)) - \partial_y \Psi^x(t - s + \veps, Y_{(0)}^\veps(s))\right]ds.
\end{multline}
Denote the first term on the right-hand side by $T_1 = T_1(x,t,t',\veps,q)$, and denote the second term by $T_2 = T_2(x,t,t',\veps,q)$. It is sufficient to establish continuity bounds in time for each term. Proceeding in a similar fashion as before, we have 
\begin{align}
\|T_1\|_q & \leq a\veps^{1/4} \int_t^{t'} \left( \Ebd_a^\gamma \left| Y_{(0)}^\veps(s) \partial_y \Psi^x(t' - s + \veps, Y_{(0)}^\veps(s)) \right|^q  \right)^{1/q} ds \\
& \leq c_4 \veps^{\frac{1}{4} - \frac{1}{4q}} \int_t^{t'} \left( \int_0^\infty |y \partial_y \Psi^x(t' - s + \veps, y)|^{2q} dy \right)^{1/2q} ds \\
& \leq c_5 \veps^{\frac{1}{4} - \frac{1}{4q}} \int_t^{t'} (t' - s + \veps)^{-\frac{1}{2} + \frac{1}{4q}}ds 
 \leq c_6|t' - t|^{1/2}.
\end{align}
Note that in the second to last inequality, we use Lemma \ref{lem:derpsi}($i=1$). For the second term, writing
$$
T_2 = a\veps^{1/4} \int_0^t \left[ \int_{t - s + \veps}^{t' - s + \veps} Y_{(0)}^\veps(s)\partial_u \partial_y \Psi^x(u, Y_{(0)}^\veps(s)) du \right] ds,
$$
we  have 
\begin{align}
\|T_2\|_q & \leq a\veps^{1/4}\int_0^t \int_{t - s + \veps}^{t' - s + \veps} \left( \Ebd_a^\gamma |Y_{(0)}^\veps(s)\partial_u \partial_y \Psi^x(u, Y_{(0)}^\veps(s))|^{q} \right)^{1/q} du ds \\
& \leq c_8 \veps^{\frac{1}{4} - \frac{1}{4q}} \int_0^t \int_{t - s + \veps}^{t' - s + \veps} \left( \int_0^\infty |y\partial_u \partial_y \Psi^x(u, y) |^{2q} e^{-\veps^{-1/2}y} dy \right)^{1/2q} du ds \\
& \leq c_9 \veps^{\frac{1}{4} - \frac{1}{4q}} \int_0^t \int_{t - s + \veps}^{t' - s + \veps} u^{-\frac{3}{2} + \frac{1}{4q}} du ds \\
& \leq c_{10}|t' - t|^{1/2},
\end{align}
where we use Lemma \ref{lem:derpsi}($i=3$) in the third line.
\end{proof}

\begin{proof}[Proof of Lemma \ref{lem:secreq}]
For $x,y \in (0,\infty)$, $t \in [0,\infty)$, $\veps \in (0,1)$ and $\eta \in (0,1)$, let 
$$
F^{\veps,\eta}(x,y) \doteq \Psi^{xe^{a\eta}}(\veps,y) - \Psi^{xe^{-a\eta}}(\veps,y),
\mbox{ and }
\tilde{F}^{\veps}(x,y) \doteq F^{\veps,\veps^{1/2}}(x,y).
$$
Let 
$$
G^{\veps,\eta}(t,x) \doteq \veps^{1/4} \sum_{i = 0}^\infty F^{\veps,\eta}(x,Y_i^\veps(t)), \mbox{ and }
\tilde{G}^{\veps}(t,x) \doteq G^{\veps, \veps^{1/2}}(t,x)=\veps^{1/4} \sum_{i = 0}^\infty \tilde{F}^{\veps}(x,Y_i^\veps(t)).
$$
The proof proceeds by establishing the following three claims.

\textbf{Claim 1.} \textit{There exists a constant $C_0 \in (0,\infty)$ such that for all $x, y \in (0,\infty)$, and $\veps \in (0,1)$,}
\begin{equation}
|\Psi^x(\veps,y) - \mathbf{1}_{[0,x]}(y)| \leq C_0\tilde{F}^\veps(x,y).
\end{equation}

\textbf{Claim 2.} \textit{Let $0 < \ell < L < \infty$, $T \in (0,\infty)$, and $q \in [2,\infty)$. Then there is a $C_1=C_1(\ell,L,T,q) \in (0,\infty)$ such that for all $x, x' \in [\ell, L]$, $t, t' \in [0,T]$, and $\veps \in (0,1)$,
\begin{equation}
\| \tilde{G}^{\veps}(t,x) - \tilde{G}^{\veps}(t,x') \|_q \leq C_1|x' - x|^{1/8},
\end{equation}
and 
\begin{equation}
\| \tilde{G}^{\veps}(t',x) - \tilde{G}^{\veps}(t,x) \|_q \leq C_1|t' - t|^{1/4}\left(1 + \left|\log\left( \frac{1}{t' - t} \right)\right|^{1/2}\right).
\end{equation}
}

\textbf{Claim 3.} \textit{For all $x \in (0,\infty)$ and $t \in [0,\infty)$, $\tilde{G}^\veps(t,x) \to 0$ in $L^1(\Pbd_a^\gamma)$ as $\veps \to 0$.}\\

\textit{Proof of Claim 1.} In the proof, we will write $\vartheta(x,y,\veps) \doteq \frac{\log x}{a} - \frac{\log y}{a} - \frac{a\veps}{2}$, $x,y \in (0,\infty)$. By \eqref{eq:Psixexp} and standard Gaussian tail bounds, if $y \in [0,x]$, then,
\begin{align}
|\Psi^x(\veps,y) - \mathbf{1}_{[0,x]}(y)|  &= 1 - \int_{-\infty}^{\vartheta(x,y,\veps)} p_\veps(z)dz = \int_{\vartheta(x,y,\veps)}^\infty p_\veps(z)dz \\
&  \leq c_1\left(1 \wedge \frac{\veps^{1/2}}{\left|\vartheta(x,y,\veps)\right|}\right) \exp\left( -\frac{\vartheta(x,y,\veps)^2}{2\veps}\right), \label{eq:l8c1bd2}
\end{align}
and if $y > x$, then the same bound holds: 
\begin{align}
|\Psi^x(\veps,y) - \mathbf{1}_{[0,x]}(y)| = \int_{-\infty}^{\vartheta(x,y,\veps)} p_\veps(z)dz \leq c_1\left(1 \wedge \frac{\veps^{1/2}}{\left|\vartheta(x,y,\veps)\right|}\right) \exp\left( -\frac{\vartheta(x,y,\veps)^2}{2\veps}\right).\\ \label{eq:l8c1bd3}
\end{align}
In the above, we adopt the convention $1 \wedge \infty=1$. Now, by \eqref{eq:Psixexp}, if $|\vartheta(x,y,\veps)| \ge 2\veps^{1/2}$,
\begin{align}
\tilde{F}^\veps(x,y) & = \int_{\vartheta(x,y,\veps) - \veps^{1/2}}^{\vartheta(x,y,\veps) + \veps^{1/2}} p_{\veps}(w)dw \geq \frac{\veps^{1/2}}{|\vartheta(x,y,\veps)| + \veps^{1/2}}\int_{\vartheta(x,y,\veps) - \veps^{1/2}}^{\vartheta(x,y,\veps) + \veps^{1/2}} \frac{|w|}{\veps^{1/2}}\, p_{\veps}(w)dw \\
& = \frac{\veps^{1/2}}{\sqrt{2\pi}(|\vartheta(x,y,\veps)| + \veps^{1/2})} \Big(\exp\Big( -\frac{(|\vartheta(x,y,\veps)| - \veps^{1/2})^2}{2\veps}\Big)\\
&\quad\quad\quad\quad\quad  - \exp \Big( -\frac{(|\vartheta(x,y,\veps)| + \veps^{1/2})^2}{2\veps}\Big)\Big)\\
&\geq c_2 \frac{\veps^{1/2}}{|\vartheta(x,y,\veps)|}\exp\left( -\frac{\vartheta(x,y,\veps)^2}{2\veps}\right).
\label{inter}
\end{align}

On the other hand, again by \eqref{eq:Psixexp}, if $|\vartheta(x,y,\veps)| < 2\veps^{1/2}$,
\begin{align}
\tilde{F}^\veps(x,y) & = \int_{\vartheta(x,y,\veps) - \veps^{1/2}}^{\vartheta(x,y,\veps) + \veps^{1/2}} p_{\veps}(w)dw \geq 2\veps^{1/2}p_{\veps}\left( \left|\vartheta(x,y,\veps)\right| + \veps^{1/2}\right) \ge c_3.
\label{eq:l8c1bd1}
\end{align}
The bounds \eqref{eq:l8c1bd2}, \eqref{eq:l8c1bd3}, \eqref{inter} and \eqref{eq:l8c1bd1} imply the claim.

\textit{Proof of Claim 2.} Observe that 
\begin{align}
\tilde{G}^\veps(t,x) & = \veps^{1/4}\left( \langle Q_t^\veps, \phi^{xe^{a\veps^{1/2}},\veps}_t(t,\cdot)
\rangle - \langle Q_t^\veps,  \phi^{xe^{-a\veps^{1/2}},\veps}_t(t,\cdot) \rangle\right) \\
& = \hat{\clv}^\veps(t,xe^{a\veps^{1/2}}) - \hat{\clv}^\veps(t,xe^{-a\veps^{1/2}}) + \veps^{-1/4}\int_0^\infty \left[\Psi^{xe^{a\veps^{1/2}}}(\veps,y) - \Psi^{xe^{-a\veps^{1/2}}}(\veps,y)\right]dy, \label{eq:GasV}
\end{align}
where $\hat{\clv}^\veps$ is as defined in \eqref{eq:hatvdef}.  By Lemma \ref{lem:contbounds}(v) and (vi), for some $C= C(q,\ell, L, T)\in (0,\infty)$,
$$
\|\hat{\clv}^\veps(t,x'e^{\pm a\veps^{1/2}}) - \hat{\clv}^\veps(t,xe^{\pm a\veps^{1/2}})\|_q \leq C|x' - x|^{1/8},
$$
and 
$$
\|\hat{\clv}^\veps(t',xe^{\pm a\veps^{1/2}}) - \hat{\clv}^\veps(t,xe^{\pm a\veps^{1/2}})\|_q \leq C|t' - t|^{1/4} \left(1 + \left|\log\left( \frac{1}{t' - t} \right)\right|^{1/2}\right).
$$
Thus, it remains to show  appropriate continuity bounds for the last term in \eqref{eq:GasV}.
 Note that this term does not depend on $t$ so we only need a continuity estimate in $x$.
We have 
\begin{align}
&\veps^{-1/4}\left| \int_0^\infty \left( \Psi^{x'e^{a\veps^{1/2}}}(\veps,y) - \Psi^{x'e^{-a\veps^{1/2}}}(\veps,y) - \Psi^{xe^{a\veps^{1/2}}}(\veps,y) + \Psi^{xe^{-a\veps^{1/2}}}(\veps,y) \right)dy \right| \\
& = \veps^{-1/4} \left|\int_0^\infty \left( \int_{x'e^{-a\veps^{1/2}}}^{x'e^{a\veps^{1/2}}} \hat{q}_\veps(z,y) dz  - \int_{xe^{-a\veps^{1/2}}}^{xe^{a\veps^{1/2}}} \hat{q}_\veps(z,y) dz \right) dy \right| \\
& = \veps^{-1/4} |x' - x|(e^{a\veps^{1/2}} - e^{-a\veps^{1/2}}) \leq c_4\veps^{1/4}|x' - x|,
\end{align}
where the third line follows by changing the order of integration and using $\int_0^\infty \hat{q}_\veps(z,y) dy = 1$.

\textit{Proof of Claim 3.} We compute 
\begin{align}
\Ebd_a^\gamma[| \tilde{G}^\veps(t,x)|] & = \veps^{1/4}\Ebd_a^0\left[(Y_{(0)}(0))^{2\gamma/a}\left| \int \tilde{F}^\veps(x,y)N(\veps^{-1/2}dy) \right| \right] \\
& \leq \veps^{1/4}\Ebd_a^0\left[ (Y_{(0)}(0))^{4\gamma/a} \right]^{1/2} \left(\Ebd_a^0  \left| \int \tilde{F}^\veps(x,y)N(\veps^{-1/2}dy) \right|^2 \right)^{1/2} \\
& \leq c_5 \max_{k \in \{1,2\}} \veps^{\frac{1}{4} - \frac{1}{2k}} \left( \int_0^\infty |\tilde{F}^\veps(x,y)|^k dy \right)^{1/k},
\end{align}
where the last line uses Lemma \ref{lem:poissonint}(i). For $k \in \{1,2\}$, 
\begin{multline}
 \left( \int_0^\infty |\tilde{F}^\veps(x,y)|^k dy \right)^{1/k}  =  \left( \int_0^\infty \left| \int_{xe^{-a\veps^{1/2}}}^{xe^{a\veps^{1/2}}} \hat{q}_{\veps}(z,y)dz \right|^k dy \right)^{1/k} \\
   \leq \left( \int_{xe^{-a\veps^{1/2}}}^{xe^{a\veps^{1/2}}} \int_0^\infty \hat{q}_{\veps}(z,y)dy dz \right)^{1/k}
  = |x|\cdot|e^{a\veps^{1/2}} - e^{-a\veps^{1/2}}|^{1/k} \leq c_6 \veps^{1/2k}.
\end{multline}
Here the first inequality follows by noting that the quantity inside the absolute value signs is bounded above by 1. Substituting this bound into the previous bound gives us $\Ebd_a^\gamma[| \tilde{G}^\veps(t,x)|] \leq c_7 \veps^{1/4}$, and the claim follows.

\textit{Completing the proof of Lemma \ref{lem:secreq}.} Note for any $x_0 \in (0,\infty)$, the sequence $\{\tilde{G}^\veps(0,x_0), \veps > 0\}$ is tight by Claim 3. Hence, by the Kolmogorov-Chentsov criterion(cf.  \cite[Theorem 1.4.1]{kun}) and Claim 2, the sequence $\{\tilde{G}^\veps, \veps > 0\}$ is tight in $C([0,\infty) \times (0,\infty) : \mathbb{R})$. Thus, in view of Claim 3, $\tilde{G}^\veps \to 0$ uniformly on compacts in probability. Applying Claim 1, we therefore have 
\begin{align}
\sup_{x \in [\ell, L], t \in [0,T]} \veps^{1/4} \left| \sum_{i = 0}^\infty \left(\Psi^x(\veps, Y_i^\veps(t)) - \mathbf{1}_{(0,x]}(Y_i^\veps(t))\right) \right| \leq C_0\sup_{x \in [\ell, L], t \in [0,T]} \tilde{G}^\veps(t,x) \to 0, 
\end{align}
as $\veps \to 0$, as desired.
\end{proof}

\subsection{Proof of Theorem \ref{thm:formain1}.} \label{ssec:main3prf}

We begin with the following lemma which will be used in the proof of Lemma \ref{lem:rint} below.

\begin{lemma}\label{lem:qest} 
Fix $T > 0$. There exist constants $C_1$ and $C_2$ (depending only on $a$, $\gamma$, and $T$) such that for all $x,y \in (0,\infty)$ and $t \in (0,T]$, the following bounds hold:
\begin{longlist}
\item $
x q_t(x,y) \leq C_1 t^{-1/2} \exp(-C_2 t^{-1}|\log x - \log y|^2),
$



\item For all $m > 1$, 
$
\int_0^\infty [xq_t(x,y)]^m dx \leq C y t^{-(m-1)/2},
$
for some constant $C = C(a,\gamma,T,m)$.
\end{longlist}
\end{lemma}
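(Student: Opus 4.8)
The plan is to prove both bounds by a direct change of variables that turns $q_t$ into a Gaussian heat kernel. Recall from \eqref{eq:qdef} that $q_t(x,y) = \frac{1}{ax}p_t\!\left(\frac{\log x}{a} - \frac{\log y}{a} + \frac{at}{2}\right)$, so that
$$
x q_t(x,y) = \frac{1}{a}p_t\!\left(\frac{\log x - \log y}{a} + \frac{at}{2}\right).
$$
For part (i), I would simply observe that $p_t(z) = (2\pi t)^{-1/2}e^{-z^2/2t}$, write $z = \frac{\log x - \log y}{a} + \frac{at}{2}$, and use the elementary inequality $(u+v)^2 \geq \frac{1}{2}u^2 - v^2$ with $u = \frac{\log x - \log y}{a}$ and $v = \frac{at}{2}$ to get $z^2 \geq \frac{1}{2a^2}|\log x - \log y|^2 - \frac{a^2t^2}{4}$. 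Since $t \le T$, the term $e^{a^2t^2/(8t)} = e^{a^2t/8}$ is bounded by a constant depending on $a,T$, and so $x q_t(x,y) \le \frac{1}{a}(2\pi t)^{-1/2}e^{a^2T/8}\exp\!\left(-\frac{1}{4a^2}t^{-1}|\log x - \log y|^2\right)$, giving the claim with $C_1 = \frac{e^{a^2T/8}}{a\sqrt{2\pi}}$ and $C_2 = \frac{1}{4a^2}$.

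For part (ii), the strategy is to substitute $w = \frac{\log x - \log y}{a} + \frac{at}{2}$, so that $x = y\,e^{a(w - at/2)}$ and $dx = ay\,e^{a(w-at/2)}dw = a\,x\,dw$, hence $dx = \frac{a}{w\text{-coordinate}}$... more precisely $dx = ax\,dw$ so $\frac{dx}{x} = a\,dw$. Then
$$
\int_0^\infty [xq_t(x,y)]^m\,dx = \int_{-\infty}^\infty \left(\frac{1}{a}p_t(w)\right)^m \cdot x \cdot a\,dw = a^{1-m}\int_{-\infty}^\infty p_t(w)^m\, y\,e^{a(w - at/2)}\,dw.
$$
This is $a^{1-m}y\,e^{-a^2t/2}\int_{-\infty}^\infty p_t(w)^m e^{aw}\,dw = a^{1-m}y\,e^{-a^2t/2}\int_{-\infty}^\infty p_t(w)^{m-1}\,[p_t(w)e^{aw}]\,dw$. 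Using $p_t(w)^{m-1} = (2\pi t)^{-(m-1)/2}e^{-(m-1)w^2/2t} \le (2\pi t)^{-(m-1)/2}$ (since $m>1$), the remaining integral $\int p_t(w)e^{aw}\,dw = e^{a^2t/2}$ is a Gaussian moment-generating-function computation, bounded by $e^{a^2T/2}$. This yields $\int_0^\infty [xq_t(x,y)]^m\,dx \le C(a,T,m)\,y\,t^{-(m-1)/2}$, as desired. (Alternatively, one can write $p_t(w)^{m-1}p_t(w)e^{aw} = p_t(w)^{m-1}e^{a^2t/2}p_t(w+at)$ and integrate exactly, but the crude bound suffices.)

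I do not anticipate a serious obstacle here — both parts are routine manipulations of Gaussian kernels, and the only mild subtlety is keeping track of the $t^{-1/2}$ versus $t^{-(m-1)/2}$ powers and ensuring the constants depend only on the allowed parameters. The dependence on $\gamma$ stated in the lemma is vacuous (the kernel $q_t$ does not involve $\gamma$), so one can simply absorb it into the list of parameters without using it. The one place to be slightly careful is the $m > 1$ hypothesis: it is exactly what makes $p_t(w)^{m-1}$ integrable-after-bounding and produces the singularity $t^{-(m-1)/2}$ rather than something worse; for $m = 1$ the integral would be $\int p_t(w)e^{aw}\,dw = e^{a^2t/2}$, finite but with no decay, consistent with the bound degenerating as $m \downarrow 1$.
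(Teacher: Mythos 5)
Your proposal is correct, and part (ii) takes a genuinely different route from the paper. For part (i), both arguments expand the square inside the Gaussian exponent and must dispose of the cross-term: the paper writes $-\tfrac{1}{2t}(u+v)^2 = -\tfrac{u^2}{2t} - \tfrac{a^2t}{8} - \tfrac{1}{2}(\log x - \log y)$ and then applies Young's inequality to the linear term $\tfrac{1}{2}|\log x - \log y| \le \tfrac{a^2T}{4} + \tfrac{1}{4a^2T}|\log x - \log y|^2$, while you use the equivalent elementary bound $(u+v)^2 \ge \tfrac{1}{2}u^2 - v^2$; both produce $C_2 = 1/(4a^2)$ and a $T$-dependent $C_1$. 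For part (ii), the paper invokes part (i) and then uses the identity $\int_0^\infty e^{-p(\log x)^2}dx = \sqrt{\pi}\,p^{-1/2}e^{1/4p}$, whereas you substitute $w = \tfrac{\log x - \log y}{a} + \tfrac{at}{2}$ and evaluate directly, bounding $p_t(w)^{m-1}$ by its supremum $(2\pi t)^{-(m-1)/2}$ and using the Gaussian MGF. Your change-of-variables approach is a bit cleaner: the factors $e^{-a^2t/2}$ (from $x = y e^{a(w-at/2)}$) and $e^{a^2t/2}$ (from $\int p_t(w)e^{aw}\,dw$) cancel exactly, so the resulting constant $a^{1-m}(2\pi)^{-(m-1)/2}$ is in fact independent of $T$, and the argument does not depend on part (i) at all. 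Your observation that the stated $\gamma$-dependence of the constants is vacuous (the kernel $q_t$ involves only $a$) is also correct.
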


\begin{proof}
(i) We have 
\begin{align}
xq_t(x,y) & = \frac{1}{a\sqrt{2\pi t}}\exp\left(-\frac{1}{2t}\left\{\frac{\log x}{a} - \frac{\log y}{a} + \frac{at}{2} \right\}^2 \right) \\
& \leq c_1t^{-1/2} \exp(-c_2 t^{-1}|\log x - \log y|^2). \label{eq:qbd}
\end{align}
This inequality is clear since $t$ is restricted to $(0,T]$, and when $|\log x - \log y|^2$ is large it dominates the quantity in the exponential on the first line.

(ii) For any $p > 0$, one may show that
\begin{equation}\label{eq:plogx}
\int_0^\infty e^{-p(\log(x))^2}dx = \sqrt{\pi} p^{-1/2} e^{1/4p}.
\end{equation}
Indeed, this can be obtained by making the substitution $u = \log x$, and re-expressing the resulting integrand as a constant times a Gaussian density. Using this, together with the bound (i), we obtain 
\begin{align}
\int_0^\infty [xq_t(x,y)]^m dx & \leq  c_1^m t^{-m/2} \int_0^\infty \exp\left(-\frac{mc_2}{t}\log(x/y)^2 \right) dx \\
& = c_1^m t^{-m/2} \sqrt{\pi} (mc_2/t)^{-1/2} e^{t/4mc_2} y 
 \leq c_2 y t^{-(m-1)/2}.
\end{align}
\end{proof}

The next lemma will be used to prove the convergence of certain quadratic variation processes in the proof of Lemma \ref{lem:Wfind} below.
\begin{lemma} \label{lem:rint}
Let $x_1, x_2, t_1, t_2, T \in (0,\infty)$ with $T \leq t_1 \wedge t_2$. The following convergence holds in $L^1(\Pbd_a^\gamma)$.
\begin{align}
&\int_{(0,T)} \Bigg| \veps^{1/2}\sum_{i = 0}^\infty Y_i^\veps(s)^2 q_{t_1-s + \veps}(Y_i^\veps(s), x_1)q_{t_2-s + \veps}(Y_i^\veps(s), x_2) \\
& \hspace{1.7in} - \int_{(0,\infty)} y^2 q_{t_1-s}(y, x_1)q_{t_2-s}(y, x_2)dy \Bigg|ds \to 0 \quad \text{ as } \veps \to 0. \label{eq:covlim}
\end{align}
\end{lemma}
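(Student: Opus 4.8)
The plan is to split the difference in \eqref{eq:covlim} into two pieces: a \emph{discretization error}, replacing the integral over $(0,\infty)$ by the empirical sum against the random point measure $N_s(\veps^{-1/2}dy) \doteq \sum_i \delta_{Y_i^\veps(s)}(dy)$ (which, under $\Pbd_a^0$, is a Poisson random measure of intensity $\veps^{-1/2}\mathbf{1}_{(0,\infty)}(y)dy$ by Lemma \ref{lem:ispp}), and a \emph{time-shift error}, replacing $q_{t_j-s+\veps}$ by $q_{t_j-s}$. For the first piece, I would first pass from $\Pbd_a^\gamma$ to $\Pbd_a^0$ using the Radon--Nikodym derivative \eqref{eq:absctyn} together with a Cauchy--Schwarz step exactly as in the proof of Lemma \ref{lem:tightatpt} (the factor $\Ebd_a^0[(Y_{(0)}(0))^{4\gamma/a}]^{1/2}$ is finite since $Y_{(0)}(0)\sim\text{Exp}(1)$ under $\Pbd_a^0$). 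Under $\Pbd_a^0$, writing $g_s^\veps(y) \doteq y^2 q_{t_1-s+\veps}(y,x_1)q_{t_2-s+\veps}(y,x_2)$, the quantity $\veps^{1/2}\int g_s^\veps \, dN_s(\veps^{-1/2}\cdot)$ has mean $\int_0^\infty g_s^\veps(y)dy$, so the fluctuation is $\veps^{1/2}\int g_s^\veps \, d\widetilde N_s$. By Lemma \ref{lem:poissonint}(ii) with $n=1$ (after Jensen, to bound the $L^1$ norm by the $L^2$ norm), its second moment is $\veps \cdot \veps^{-1/2}\int_0^\infty g_s^\veps(y)^2\, dy = \veps^{1/2}\int_0^\infty g_s^\veps(y)^2 dy$. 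Using Lemma \ref{lem:qest}(i) to bound each $y q_{t_j-s+\veps}(y,x_j)$ by a Gaussian in $\log y$ and then Lemma \ref{lem:qest}(ii) (or a direct Gaussian integral as in \eqref{eq:plogx}), one gets $\int_0^\infty g_s^\veps(y)^2 dy \le C (t_1-s+\veps)^{-a_1}(t_2-s+\veps)^{-a_2}$ for suitable exponents summing to something integrable on $(0,T)$ (here using $T\le t_1\wedge t_2$ so that $t_j - s$ stays bounded below by a positive constant on $[0,T]$, hence the singularity issue does not even arise — the integrand is bounded uniformly in $s\in[0,T]$ by a constant). Thus the $L^1$ norm of the discretization error is $O(\veps^{1/4}\cdot T)\to 0$.

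For the time-shift error, the point is that $t\mapsto q_t(x,y)$ is smooth and bounded, uniformly over $t$ in the compact range $[T', t_1\vee t_2]$ with $T' = t_1\wedge t_2 - T > 0$ — note that since $s\le T$ and $T\le t_1\wedge t_2$, all the arguments $t_j - s$ and $t_j - s + \veps$ lie in $[t_j - T, t_j + 1]\subset(0,\infty)$ bounded away from $0$. Hence by the mean value theorem $|q_{t_j-s+\veps}(y,x_j) - q_{t_j-s}(y,x_j)| \le \veps \sup_{u\in[t_j-T,t_j+1]}|\partial_u q_u(y,x_j)|$, and from \eqref{eq:qdef} the derivative $\partial_u q_u$ is again of the form (polynomial in $\log y$) times a Gaussian in $\log y$, so $y^2|q_{t_1-s+\veps}q_{t_2-s+\veps} - q_{t_1-s}q_{t_2-s}|(y) \le \veps\, h_s(y)$ with $\int_0^\infty h_s(y)dy$ bounded uniformly in $s\in[0,T]$ (same Gaussian-integral estimates). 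Taking expectations of the empirical sum $\veps^{1/2}\sum_i \veps^{1/2} h_s(Y_i^\veps(s)) $ — after the change of measure to $\Pbd_a^0$ — the mean is $\int_0^\infty h_s(y)dy = O(1)$, so this contribution is $O(\veps)\cdot O(T)\to 0$ in $L^1$. Combining the two bounds and integrating over $s\in(0,T)$ gives \eqref{eq:covlim}.

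I expect the main obstacle to be purely bookkeeping: carefully organizing the Gaussian integrals coming from Lemma \ref{lem:qest} so that the products $y^2 q_{\cdot}q_{\cdot}$, $(y^2 q_\cdot q_\cdot)^2$, and $y^2|\partial_u(q_\cdot q_\cdot)|$ are all shown to be integrable in $y$ with bounds uniform in $s\in[0,T]$ (here the hypothesis $T\le t_1\wedge t_2$ is exactly what removes any small-time singularity), and tracking the change-of-measure and Poisson-moment constants. There is no genuine difficulty beyond that; the argument is a direct combination of Lemma \ref{lem:ispp}, \eqref{eq:absctyn}, Lemma \ref{lem:poissonint}, and Lemma \ref{lem:qest}, in the same spirit as the proof of Lemma \ref{lem:tightatpt}.
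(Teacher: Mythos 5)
Your decomposition of the error into a discretization piece and a time-shift piece is exactly the decomposition used in the paper, and your tools (Lemma~\ref{lem:ispp}, the change of measure \eqref{eq:absctyn}, Cauchy--Schwarz, Lemma~\ref{lem:poissonint}, Lemma~\ref{lem:qest}) are the right ones. However, there is a genuine gap: you claim the hypothesis $T\le t_1\wedge t_2$ forces $t_j-s$ to be bounded away from $0$ for $s\in[0,T]$, and in particular that $T'=t_1\wedge t_2 - T>0$. That inequality is \emph{not strict}: the statement allows $T=t_1\wedge t_2$, which is exactly the case needed in the application in the proof of Theorem~\ref{thm:formain1}, where one integrates up to $t_j\wedge t_\ell\wedge T$ and this upper limit can equal one of the $t_j$'s. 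When $T=t_1\wedge t_2$, at $s\uparrow T$ the kernel argument $t_j-s$ tends to $0$, so the interval $[t_j-T,t_j+1]$ contains $0$ and $q_u$, $\partial_u q_u$ blow up.

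This breaks both of your bounds. For the discretization piece your parenthetical claim that ``the integrand is bounded uniformly in $s\in[0,T]$'' is false in the boundary case; the correct (and sufficient) bound coming from Lemma~\ref{lem:qest}(ii) with $m=4$ and Cauchy--Schwarz is of the form $C\veps^{1/4}(T-s)^{-3/4}$, which is integrable on $(0,T)$ but not bounded. This is a minor fix. For the time-shift piece, the mean-value-theorem bound $\veps\sup_{u\in[t_j-T,t_j+1]}|\partial_u q_u(y,x_j)|$ is literally infinite once $t_j-T=0$, so that argument cannot be salvaged by just sharpening constants. The paper instead observes that $g_j^\veps(s,y)=g_j^0(s-\veps,y)$ and that $s\mapsto\int_0^\infty g_1^0(s,y)g_2^0(s,y)\,dy$ is continuous on $(0,T)$, giving pointwise convergence in $s$, and then closes with dominated convergence using the integrable (but singular) dominating function $C(T-s)^{-1/2}$ obtained from Lemma~\ref{lem:qest}(ii) with $m=2$. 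You would need to replace your MVT step with such a dominated convergence argument (or explicitly split off a small interval $(T-\delta,T)$ whose contribution is uniformly small).
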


\begin{proof}
For $j = 1,2$, let
$$
g_j^\veps(s,y) = yq_{t_j - s + \veps}(y, x_j), \quad\quad \text{ and } \quad\quad g_j^0(s,y) = yq_{t_j - s}(y, x_j).
$$
The quantity \eqref{eq:covlim} is bounded in expectation by 
\begin{align}
&\int_{(0,T)} \Ebd_a^\gamma\left| \veps^{1/2} \sum_{i = 0}^\infty g_1^\veps(s, Y_i^\veps(s))g_2^\veps(s, Y_i^\veps(s)) - \int_{(0,\infty)} g_1^\veps(s, y)g_2^\veps(s, y)dy \right|ds \\
& \quad + \int_{(0,T)} \left| \int_{(0,\infty)} \left[g_1^\veps(s, y)g_2^\veps(s, y) - g_1^0(s, y)g_2^0(s, y)\right] dy\right| ds. \label{eq:diff1}
\end{align}
To prove the lemma, it is enough to show that each term in \eqref{eq:diff1} converges to zero. To handle the first, we use the fact that, under the probability measure $\Pbd_a^0$, the point process $N^\veps(dy) : =\sum_{i = 0}^\infty \delta_{Y_i^\veps(0)}$ is distributed as a Poisson random measure of intensity $\veps^{-1/2}\mathbf{1}[0,\infty)(y) dy$. We have 
\begin{align}
& \Ebd_a^\gamma\left| \veps^{1/2} \sum_{i = 0}^\infty g_1^\veps(s, Y_i^\veps(s))g_2^{\veps}(s, Y_i^\veps(s)) - \int_{(0,\infty)} g_1^\veps(s, y)g_2^\veps(s, y)dy \right| \\
& \leq c_1 \Big(\Ebd_a^0\Big| \veps^{1/2} \sum_{i = 0}^\infty g_1^\veps(s, Y_i^\veps(0))g_2^{\veps}(s, Y_i^\veps(0)) - \int_{(0,\infty)} g_1^\veps(s, y)g_2^\veps(s, y)dy \Big|^2\Big)^{1/2} \\
& = c_1 \left( \veps^{1/2} \int_{(0,\infty)} g_1^\veps(s, y)^2 g_2^\veps(s, y)^2 dy \right)^{1/2} \\
& \leq \veps^{1/4}c_1 \left( \int_{(0,\infty)} g_1^\veps(s, y)^4 dy\right)^{1/4} \left( \int_{(0,\infty)} g_2^\veps(s, y)^4 dy \right)^{1/4} 
 \leq c_2\veps^{1/4}(T - s)^{-3/4}.\\
\label{eq:ebound}
\end{align}
Here the second line follows from stationarity and changing the probability measure to $\Pbd_a^0$ using \eqref{eq:margCOM}, and the Cauchy-Schwarz Inequality. The third line follows by a calculation similar to \eqref{eq:ZisGamma}, and by observing that the term in the paranthesis in the previous line is the variance of $\int \veps^{1/2} g_1^\veps(s,y) g_2^\veps(s,y) N^\veps(dy)$. In the  last line we use the following bound, which comes from Lemma \ref{lem:qest}(ii): For $j = 1,2$ and $s \in (0,T)$,
$$
\int_{(0,\infty)} g_j^\veps(s,y)^4 dy \leq \tilde{c}_1 x_j(t_j - s + \veps)^{-3/2} \leq \tilde{c}_1 x_j(T - s)^{-3/2}.
$$
Integrating the last line of \eqref{eq:ebound} over $(0,T)$ gives us a bound on the first term of \eqref{eq:diff1} which converges to zero as $\veps \to 0$.

To show the second term in \eqref{eq:diff1} converges to zero, we use dominated convergence. First, by Lemma \ref{lem:qest}(i), for any compact interval $[b,d] \subset (0,T)$,
\begin{align*}
& \sup_{s \in [b, d]} g_1^0(s,y)g_2^0(s,y) \leq c_2(t_1 - d)^{-1/2}(t_2 - d)^{-1/2}\\
& \hspace{1in} \times \exp\left(-c_3((t_1 - b)^{-1}|\log y - \log x_1|^2 +(t_2 - b)^{-1}|\log y- \log x_2|^2)\right).
\end{align*}
Since the bound is integrable in $y$ (see \eqref{eq:plogx}), this implies that $s \mapsto \int_0^\infty g_1^0(s,y)g_2^0(s,y) dy$ is continuous on $(0,T)$. In particular, for each $s \in (0,T)$, $$\int_0^\infty g_1^\veps(s,y)g_2^\veps(s,y) dy \to \int_0^\infty g_1^0(s,y)g_2^0(s,y) dy \quad \text{ as } \veps \to 0.$$ Moreover, by Lemma \ref{lem:qest}(ii) and Cauchy-Schwarz, for all $s \in (0,T)$
\begin{align}
&\left|\int_0^\infty [g_1^\veps(s,y)g_2^\veps(s,y) - g_1^0(s,y)g_2^0(s,y)]dy \right| \\
& \leq \left( \int_0^\infty |g_1^\veps(s,y)|^2 dy \right)^{1/2} \left( \int_0^\infty |g_2^\veps(s,y)|^2 dy \right)^{1/2}\\
&\quad + \left( \int_0^\infty |g_1^0(s,y)|^2 dy \right)^{1/2} \left( \int_0^\infty |g_2^0(s,y)|^2 dy \right)^{1/2} \\
& \leq c_4 ((T - s +\veps)^{-1/2} + (T - s)^{-1/2}) \leq 2c_4 (T - s)^{-1/2}.
\end{align}
Since the bound is integrable in $s$ on $(0,T)$, we conclude that the second term in \eqref{eq:diff1} converges to zero.
\end{proof}

The next lemma deals with the convergence of the finite-dimensional distributions of $(\hat{\clm}^\veps, \hat{\clw}^\veps)$. Fix $x_1, \dots, x_n \in (0,\infty)$ and $t_1,\dots, t_n \in [0,\infty)$. Define random vectors in $\mathbb{R}^n$ 
\begin{align*}
 \mathbf{M}^\veps = (\hat{\clm}^\veps(x_1,t_1), \dots, \hat{\clm}^\veps(x_n,t_n)), \;\; \mathbf{W}^\veps = (\hat{\clw}^\veps(x_1,t_1), \dots, \hat{\clw}^\veps(x_n,t_n)),
 \end{align*}
 and define $\mathbf{M}$, $\mathbf{W}$ similarly by replacing $\hat{\clm}^\veps$ with $\hat{\clm}$ and $\hat{\clw}^\veps$ with $\hat{\clw}$, respectively.

We suppress from our notation the dependence of these quantities on the $x_i$'s and $t_i$'s. In view of the definitions of $\hat\clm$ and $\hat\clw$, $\mathbf{M}$ and $\mathbf{W}$ are Gaussians with covariance matrices given, respectively, by 
$$
\Sigma_{j\ell}^{\mathbf{M}} = \int_0^{t_j \wedge t_\ell} 
a^2 y^2 q_{t_j - s}(y, x_j)q_{t_\ell - s}(y, x_\ell)dy,\;
\Sigma_{j\ell}^{\mathbf{W}} = \int_0^\infty \Psi^{x_j}(t_j, y)\Psi^{x_\ell}(t_\ell, y)dy, \; 
 1 \leq j, \ell \leq k.
$$

\begin{lemma} \label{lem:Wfind}
For any $\mathbf{u} \in \mathbb{R}^n$, $\mathbf{u} \cdot \mathbf{W}^\veps \Rightarrow \mathbf{u} \cdot \mathbf{W}$.
\end{lemma}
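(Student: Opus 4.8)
\textbf{Proof proposal for Lemma \ref{lem:Wfind}.}

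The plan is to establish a central limit theorem for a sum of (triangular-array) contributions indexed by the points of a Poisson process, first under $\Pbd_a^0$ and then transfer to $\Pbd_a^\gamma$ via the absolute continuity relation \eqref{eq:absctyn}. Recall that, writing the fixed linear combination as $\mathbf{u}\cdot\mathbf{W}^\veps = \sum_{j=1}^n u_j \hat\clw^\veps(x_j,t_j)$ and using the mass-conservation identity \eqref{eq:cofmass} exactly as in the proof of Lemma \ref{lem:tightatpt}, we may write
$$
\mathbf{u}\cdot\mathbf{W}^\veps = \veps^{1/4}\int_{(0,\infty)} H^\veps(y)\,\widetilde N(\veps^{-1/2}dy),\qquad H^\veps(y) \doteq \sum_{j=1}^n u_j\Psi^{x_j}(t_j+\veps,y),
$$
where $\widetilde N(\veps^{-1/2}dy) = N(\veps^{-1/2}dy) - \veps^{-1/2}\mathbf 1_{[0,\infty)}(y)dy$ is, under $\Pbd_a^0$ and by Lemma \ref{lem:ispp}, the compensated version of a Poisson random measure of intensity $\veps^{-1/2}\mathbf 1_{[0,\infty)}(y)dy$. (For the components with $t_j=0$ one uses $\hat\clw^\veps(0,x)$ directly; since $\Psi^{x}(\veps,\cdot)\to\mathbf 1_{(0,x]}$ in the relevant $L^2$ sense this case is handled by the same computation, and one checks the covariance limit matches $\Sigma^{\mathbf W}_{j\ell}$ with $\Psi^{x_j}(0,y)=\mathbf 1_{(0,x_j]}(y)$.)

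\textbf{Step 1: CLT under $\Pbd_a^0$.} Under $\Pbd_a^0$, $\mathbf u\cdot\mathbf W^\veps$ is a compensated Poisson integral, so its characteristic function has the explicit Lévy--Khintchine form
$$
\Ebd_a^0\big[e^{i\theta\,\mathbf u\cdot\mathbf W^\veps}\big] = \exp\!\left(\veps^{-1/2}\int_0^\infty\!\Big(e^{i\theta\veps^{1/4}H^\veps(y)} - 1 - i\theta\veps^{1/4}H^\veps(y)\Big)\,dy\right).
$$
A second-order Taylor expansion of the integrand gives that the exponent equals $-\tfrac{\theta^2}{2}\,\veps^{-1/2}\veps^{1/2}\int_0^\infty H^\veps(y)^2\,dy$ plus an error bounded by $c\,\theta^3\veps^{-1/2}\veps^{3/4}\int_0^\infty |H^\veps(y)|^3\,dy = c\theta^3\veps^{1/4}\int_0^\infty|H^\veps(y)|^3dy$. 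Using $0\le \Psi^{x}(t,\cdot)\le 1$ (Lemma \ref{lem:areprobs}) together with the $L^1$ bound $\int_0^\infty\Psi^x(t,y)\,dy = x$ from \eqref{eq:340}, one gets $\int_0^\infty|H^\veps(y)|^3\,dy \le (\max_j|u_j|n)^2\int_0^\infty|H^\veps(y)|\,dy \le C\sum_j x_j$, so the error term is $O(\veps^{1/4})\to 0$. Meanwhile $\veps^{1/2}\int_0^\infty H^\veps(y)^2\,dy = \int_0^\infty H^\veps(y)^2\,dy$, wait --- more carefully, $\veps^{-1/2}\cdot(\veps^{1/4})^2 = 1$, so the Gaussian term is $-\tfrac{\theta^2}{2}\int_0^\infty H^\veps(y)^2\,dy$, and by dominated convergence (dominating each $\Psi^{x_j}(t_j+\veps,y)$ using Lemma \ref{lem:psiinat}-type bounds, or directly the Gaussian tail estimates from the proof of Lemma \ref{lem:derpsi}) this converges to $\int_0^\infty \big(\sum_j u_j\Psi^{x_j}(t_j,y)\big)^2 dy = \mathbf u^{\mathsf T}\Sigma^{\mathbf W}\mathbf u$. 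Hence $\mathbf u\cdot\mathbf W^\veps\Rightarrow N(0,\mathbf u^{\mathsf T}\Sigma^{\mathbf W}\mathbf u)$ under $\Pbd_a^0$, which is exactly $\mathbf u\cdot\mathbf W$.

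\textbf{Step 2: transfer to $\Pbd_a^\gamma$.} By \eqref{eq:absctyn}, $d\Pbd_a^\gamma/d\Pbd_a^0 = e^{2\gamma X_{(0)}(0)} = Y_{(0)}(0)^{2\gamma/a}$, where under $\Pbd_a^0$ the random variable $Y_{(0)}(0)$ is $\mathrm{Exp}(1)$ (Lemma \ref{lem:ispp}), independent of the gap sequence $(U_i(0))_{i\ge 1}$; in particular the Radon--Nikodym density depends only on $Y_{(0)}(0)$. To conclude $\mathbf u\cdot\mathbf W^\veps\Rightarrow\mathbf u\cdot\mathbf W$ under $\Pbd_a^\gamma$ it suffices to show $\Ebd_a^\gamma[e^{i\theta\,\mathbf u\cdot\mathbf W^\veps}] = \Ebd_a^0[Y_{(0)}(0)^{2\gamma/a}e^{i\theta\,\mathbf u\cdot\mathbf W^\veps}]\to \Ebd[e^{i\theta\,\mathbf u\cdot\mathbf W}]$. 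Condition on $Y_{(0)}(0)$: given $Y_{(0)}(0)=v$, the remaining points are $v + \sum_{\ell=1}^i U_\ell(0)$ with $(U_\ell(0))_{\ell\ge1}$ i.i.d.\ $\mathrm{Exp}(1)$, and the corresponding conditional law of $\mathbf u\cdot\mathbf W^\veps$ differs from the unconditional one (which comes from the full rate-one Poisson process on $[0,\infty)$) only by the contribution of the single shifted point at $v\veps^{1/2}$; since $\sup_y|H^\veps(y)|\le n\max_j|u_j|$ is bounded, this single-point contribution is $O(\veps^{1/4})$ uniformly in $v$ on compacts, and the conditional characteristic function converges to the same Gaussian limit for every fixed $v$. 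Since $\Ebd_a^0[Y_{(0)}(0)^{2\gamma/a}] = \Gamma(1+2\gamma/a) <\infty$ and the integrand is bounded by $Y_{(0)}(0)^{2\gamma/a}$, dominated convergence gives $\Ebd_a^0[Y_{(0)}(0)^{2\gamma/a}e^{i\theta\,\mathbf u\cdot\mathbf W^\veps}]\to \Gamma(1+2\gamma/a)\,\Ebd[e^{i\theta\,\mathbf u\cdot\mathbf W}]$; dividing by $\Gamma(1+2\gamma/a)$ (which is the right normalization, cf.\ \eqref{eq:ZisGamma}) yields the claim.

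\textbf{Main obstacle.} The delicate point is the uniform control of the error terms and of the single-point perturbation in Step 2 as $\veps\to0$: one must verify that the $\veps$-dependence of the test functions $\Psi^{x_j}(t_j+\veps,\cdot)$ near $t_j=0$, and the discrepancy between the true point configuration $(Y_{(0)}(0), Y_{(0)}(0)+\sum U_\ell(0),\dots)$ and an exact rate-one Poisson process on $[0,\infty)$, do not spoil the Gaussian limit. These are controlled using the uniform bound $0\le\Psi^x\le1$, the $L^1$ identity $\int_0^\infty\Psi^x(t,y)dy=x$, and the Gaussian decay estimates established in the proofs of Lemmas \ref{lem:derpsi} and \ref{lem:psiinat}; the corresponding joint statement for $(\mathbf M^\veps,\mathbf W^\veps)$, needed later, will follow similarly once one also invokes Lemma \ref{lem:rint} to identify the limiting quadratic variation of the martingale part and the asymptotic independence of $\mathbf M^\veps$ and $\mathbf W^\veps$ (the former being measurable with respect to the driving Brownian motions, the latter with respect to the initial configuration).
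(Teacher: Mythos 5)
Step 1 of your proposal is correct and is essentially the paper's Case 1 ($\gamma=0$) argument: the L\'evy--Khintchine formula for a compensated Poisson integral, a third-order Taylor remainder bound, and the limit of the quadratic term. Your uniform bound $0\le\Psi^x\le1$ together with $\int_0^\infty\Psi^x(t,y)\,dy=x$ gives a clean control of $\int|H^\veps|^3$.

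Step 2, however, contains an incorrect key claim. You assert that, conditionally on $Y_{(0)}(0)=v$, the law of $\mathbf u\cdot\mathbf W^\veps$ ``differs from the unconditional one only by the contribution of the single shifted point at $v\veps^{1/2}$.'' This is not right. Conditionally on $Y_{(0)}(0)=v$, the rescaled configuration is $\{v\veps^{1/2}\}\cup\bigl(v\veps^{1/2}+\Pi^\veps\bigr)$, where $\Pi^\veps$ is a rate-$\veps^{-1/2}$ Poisson process on $(0,\infty)$; but the deterministic compensation $\veps^{-1/2}\int_0^\infty H^\veps(y)\,dy$ in the definition of $\hat\clw^\veps$ is still taken over all of $(0,\infty)$, including the interval $(0,v\veps^{1/2})$ which conditionally contains no points and which, under the unconditional Poisson law, would contain on average $v$ points. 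Thus the discrepancy between the conditional and target laws involves three error sources: (i) the atom at $v\veps^{1/2}$; (ii) the uncompensated drift $-\veps^{-1/4}\int_0^{v\veps^{1/2}}H^\veps(y)\,dy$; and (iii) the shift by $v\veps^{1/2}$ in the argument of $H^\veps$ in the variance computation. You control only (i). All three do in fact vanish as $\veps\to0$ for fixed $v$ (again using $0\le\Psi^x\le1$ and the $L^1$ identity), so your conditioning-plus-dominated-convergence route can be made rigorous, but the argument as written has a gap. The paper instead decomposes $\mathbf u\cdot\mathbf W^\veps=L^\veps+U^\veps$, where $L^\veps$ captures precisely the effect of replacing $\varphi^\veps(y)$ by $\varphi^\veps(y-Y_{(0)}^\veps(0))$, shows $L^\veps\to0$ in $\Pbd_a^\gamma$-probability, and then uses the second part of Lemma~\ref{lem:ispp} to identify the $\Pbd_a^\gamma$-law of $U^\veps-\veps^{1/4}\varphi^\veps(0)$ with the $\Pbd_a^0$-law of $\mathbf u\cdot\mathbf W^\veps$, so that Case 1 applies directly; this handles your items (ii)--(iii) without conditioning.
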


\begin{proof} 
We prove the result first for a drift of $\gamma = 0$, and then for a general drift (namely, first under $\Pbd_a^{0}$ and then under $\Pbd_a^{\gamma}$ for $\gamma > 0$).

\textit{Case 1: $\gamma = 0$.} Fix $\mathbf{u} = (u_1, \dots, u_n) \in \mathbb{R}^n$. By definition of $\hat{\clw}^\veps$ and conservation of mass property in Lemma \ref{lem:areprobs}, we have  
$$
\hat{\clw}^\veps(t,x) = \veps^{1/4}\sum_{i = 1}^{\infty} \left( \Psi^x(t + \veps, Y_i^\veps(0)) - \veps^{-1/2}\int_0^\infty \Psi^x(t + \veps, y)dy \right).
$$
Therefore, letting 
$$
\varphi^\veps(y) \doteq \sum_{j = 1}^n u_j\Psi^{x_j}(t_j + \veps,y),
$$
we can write  
\begin{equation}
\mathbf{u} \cdot \mathbf{W}^\veps = \sum_{j = 1}^n u_j \hat{\clw}^\veps(x_j, t_j) = \veps^{1/4}\left( \int_0^\infty \varphi^\veps(y)N^\veps(dy) - \int_0^\infty \varphi^\veps(y) \veps^{-1/2} dy \right), \label{eq:fdd_as_pi}
\end{equation}
where $N^\veps(dy) \doteq \sum_{i = 0}^\infty \delta_{Y_i^\veps(0)}(dy)$. By the characteristic function formula for integrals with respect to Poisson random measures \cite[Theorem 2.7, Page 41]{kyprianou2006introductory}, for any $\theta \in \mathbb{R}$,
\begin{equation}\label{master}
\Ebd_a^0\left[e^{i\theta(\mathbf{u} \cdot \mathbf{W}^\veps)}\right] = \exp\left(\int_0^{\infty}\left(e^{i\theta\veps^{1/4}\varphi^\veps(y)} - 1 - i\theta\veps^{1/4}\varphi^\veps(y)\right)\veps^{-1/2}dy\right).
\end{equation}
By Taylor's theorem,
$$
\veps^{-1/2}\int_0^{\infty}\left|e^{i\theta\veps^{1/4}\varphi^\veps(y)} - 1 - i\theta\veps^{1/4}\varphi^\veps(y) - \frac{(i\theta\veps^{1/4}\varphi^\veps(y))^2}{2}\right|dy \le \frac{|\theta|^3\veps^{1/4}}{3!}\int_0^{\infty}|\varphi^\veps(y)|^3dy 
$$
which converges to $0$ as $\veps \rightarrow 0$ by Lemma \ref{lem:psiinat}. Using this in \eqref{master}, and observing that
$$\int_0^{\infty}\left(\sum_{j = 1}^n u_j\Psi^{x_j}(t_j+\veps,y)\right)^2dy \to \int_0^{\infty}\left(\sum_{j = 1}^n u_j\Psi^{x_j}(t_j,y)\right)^2dy$$
as $\veps \to 0$,
we conclude that, as $\veps \rightarrow 0$,
$$
\Ebd_a^0\left[e^{i\theta(\mathbf{u} \cdot \mathbf{W}^\veps)}\right] \rightarrow \exp\left(-\frac{\theta^2}{2}\int_0^{\infty}\left(\sum_{j = 1}^n u_j\Psi^{x_j}(t_j,y)\right)^2dy\right) = \Ebd_a^0\left[e^{i\theta(\mathbf{u} \cdot \mathbf{W})}\right],
$$
finishing the proof for Case 1.

\textit{Case 2: $\gamma > 0$.} 
To handle the general drift case, the idea is to show that asymptotically $\mathbf{u} \cdot \mathbf{W}^\veps$ becomes independent of the position of the lowest particle $X_{(0)}$, and thus the replacement of $\Pbd_a^0$ with $\Pbd_a^\gamma$ does not effect the limiting distribution. In view of \eqref{eq:fdd_as_pi}, we can write 
\begin{equation} 
\mathbf{u} \cdot \mathbf{W}^\veps = L^\veps + U^\veps, \label{eq:UplusL}
\end{equation}
where 
\begin{align}
L^\veps & \doteq \veps^{1/4} \int_0^\infty \left[\varphi^\veps(y) - \varphi^\veps(y - Y_{(0)}^\veps(0))\right]N^\veps(dy), \text{ and } \\  U^\veps & \doteq \veps^{1/4}\left( \int_0^\infty \varphi^\veps(y - Y_{(0)}^\veps(0))N^\veps(dy) - \int_0^\infty \varphi^\veps(y) \veps^{-1/2} dy \right).
\end{align}
First, we will show that $L^\veps \to 0$ in $\Pbd_a^\gamma$ probability. 
Note that
$L^\veps = \sum_{j=1}^n u_j L^{\veps}_{t_j, x_j}, 
$
where, for $(t,x)  \in [0, \infty)\times (0,\infty)$
$$
L^{\veps}_{t, x} = 
\veps^{1/4} \int_0^\infty \left[\Psi^{x}(t+ \veps, y) - \Psi^x(t+ \veps, y - Y_{(0)}^\veps(0))\right]N^\veps(dy).$$
Thus it suffices to show that for each $(t,x)  \in [0, \infty)\times (0,\infty)$, $L^{\veps}_{t, x}\to 0$ in $\Pbd_a^\gamma$ probability. 
Note that, for $\eta >0$
$$\Pbd_a^\gamma(|L^{\veps}_{t, x}|>\eta) =
\Ebd_a^0(e^{2\gamma X_{(0)}(0)} \one\{|L^{\veps}_{t, x}|>\eta\})\le \left[\Ebd_a^0 e^{4\gamma X_{(0)}(0)}\right]^{1/2}\left[\Pbd_a^0 (|L^{\veps}_{t, x}|>\eta)\right]^{1/2}.$$
Thus it suffices to show that for each $(t,x)  \in [0, \infty)\times (0,\infty)$, $L^{\veps}_{t, x}\to 0$ in $\Pbd_a^0$ probability.
Note that 
$$
|L^{\veps}_{t, x}| \le \veps^{1/4} \int_0^\infty 
\int_{y - Y_{(0)}^\veps(0)}^y 
|\partial_z \Psi^x(t + \veps,z)| dz N^\veps(dy).
$$
For a fixed $M >0$,
\begin{multline*}
\Pbd_a^0 (|L^{\veps}_{t, x}|>\eta) \le
\Pbd_a^0 (|L^{\veps}_{t, x}|>\eta ; Y_{(0)}^{\veps}\le M\sqrt{\veps}) + \Pbd_a^0 (Y_{(0)}^{\veps} > M\sqrt{\veps})\\
\le \Pbd_a^0\left(\veps^{1/4} \int_0^\infty 
\int_{(y - M\sqrt{\veps})^+}^y 
|\partial_z \Psi^x(t + \veps,z)| dz N^\veps(dy) > \eta\right)
+ \Pbd_a^0 (Y_{(0)}^{\veps} > M\sqrt{\veps}).
\end{multline*}
Since $\limsup_{M\to \infty} \sup_{\veps>0}\Pbd_a^0 (Y_{(0)}^{\veps} > M\sqrt{\veps}) = 0$, it suffices to show that for each $(t,x)  \in [0, \infty)\times (0,\infty)$ and $M>0$
$$
\veps^{1/4} \int_0^\infty 
\int_{(y - M\sqrt{\veps})^+}^y 
|\partial_z \Psi^x(t + \veps,z)| dz N^\veps(dy) \to 0 \mbox{ in } L^1(\Pbd_a^0).
$$
Note that
\begin{multline*}
\veps^{1/4}\Ebd_a^0 \int_0^\infty 
\int_{(y - M\sqrt{\veps})^+}^y 
|\partial_z \Psi^x(t + \veps,z)| dz N^\veps(dy)\\
=\veps^{-1/4}\int_0^\infty 
\int_{(y - M\sqrt{\veps})^+}^y 
|\partial_z \Psi^x(t + \veps,z)| dz dy
\le M \veps^{1/4} \int_0^\infty |\partial_z \Psi^x(t + \veps,z)| dz 
\doteq M \tilde L^{\veps}_{t,x}.
\end{multline*}
By \eqref{eq:ydPsi}, 
$
\partial_z \Psi^x(t + \veps,z) = -q_{t + \veps}(z,x).
$
Therefore, 
$$
\tilde{L}_{t,x}^\veps = \veps^{1/4} \int_0^\infty q_{t + \veps}(z,x)dz = \veps^{1/4} \to 0, \quad \text{ as } \veps \to 0.
$$
where the last equality follows by Lemma \ref{lem:areprobs} (see the line after \eqref{eq:cofmass}).




This completes the proof of the statement $L^\veps_{t,x} \to 0$ in $\Pbd_a^\gamma$ probability  for each $(t,x)  \in [0, \infty)\times (0,\infty)$.

Recall \eqref{eq:UplusL}. We will be done if we can show that $U^\veps \Rightarrow \mathbf{u} \cdot \mathbf{W}$ under the measure $\Pbd_a^\gamma$. 
Note that
\begin{align*}
U^{\veps} - \veps^{1/4} \varphi^{\veps}(0)
&= \veps^{1/4} \left( \sum_{i=0}^{\infty} \varphi^{\veps} (Y^{\veps}_{(i)}(0) -
Y^{\veps}_{(0)}(0)) -  \varphi^{\veps}(0) - \int_0^\infty \varphi^\veps(y) \veps^{-1/2} dy \right)\\
&=\veps^{1/4} \left( \sum_{i=1}^{\infty} \varphi^{\veps} (Y^{\veps}_{(i)}(0) -
Y^{\veps}_{(0)}(0))  - \int_0^\infty \varphi^\veps(y) \veps^{-1/2} dy \right)\\
&= \veps^{1/4} \left( \int_0^\infty \varphi^\veps(y)\bar N^\veps(dy)
- \int_0^\infty \varphi^\veps(y) \veps^{-1/2} dy \right),
\end{align*}
where $\bar N^\veps(dy) \doteq \sum_{i = 0}^\infty \delta_{(Y_{(i)}^\veps(0)- 
Y_{(0)}^\veps(0))}(dy)$. Using  from Lemma \ref{lem:ispp} that $\{Y^{\veps}_{(i)} (0) - Y^{\veps}_{(0)} (0): i \ge 1\}$
is a Poisson point process with rate $\veps^{-1/2}$ under $\Pbd_a^\gamma$, 
we see that the law of $N^{\veps}$ under $\Pbd_a^0$  is same as the law of $\bar N^{\veps}$ under $\Pbd_a^\gamma$, and therefore
$$ \Pbd_a^{\gamma} \circ (U_{\veps} - \veps^{1/4} \varphi^{\veps}(0))^{-1} = \Pbd_a^0 \circ (\mathbf{u} \cdot \mathbf{W}^{\veps})^{-1}.
$$
The result now follows from Case 1 and the fact that, since $\varphi^{\veps}$
is bounded, $\veps^{1/4} \varphi^{\veps}(0)\to 0$ as $\veps \to 0$.
\end{proof}

\begin{proof}[Proof of Theorem \ref{thm:formain1}]
Fix $\mathbf{u} = (u_1, \dots, u_n)$ and $\mathbf{v} = (v_1, \dots, v_n)$ in $\mathbb{R}^n$. By tightness given in Lemma \ref{lem:tight}, it is enough to show that $(\mathbf{u} \cdot \mathbf{W}^\veps,  \mathbf{v} \cdot \mathbf{M}^\veps) \Rightarrow (\mathbf{u} \cdot \mathbf{W}, \mathbf{v} \cdot \mathbf{M}$). Proving joint convergence requires some care, since $\mathbf{W}^\veps$ and $\mathbf{M}^\veps$ are not independent in the prelimit. We define auxiliary $\sigma$-fields
$$
\mathcal{Y} \doteq \sigma\{Y_i(0), i \in \mathbb{N}_0\},
$$ 
and
$$
{\tilde{\mathcal{F}}_t \doteq \mathcal{Y} \vee \sigma\{W_i(s), 0 \leq s \leq t, i \in \mathbb{N}_0\} \vee \sigma\{W_{-1}(s), 0 \leq s \leq t\}, \text{ for } t \geq 0,}
$$
where $W_{-1}(\cdot)$ is a standard Brownian motion independent of the $W_i(\cdot)$'s
and $\mathcal{Y}$. For $t \geq 0$, let
\begin{equation}
V^\veps(t) = W_{-1}(t) + \sum_{j = 1}^n v_j\hat{\clm}^\veps(x_j, t \wedge t_j). \label{eq:Vdef}
\end{equation}
Observe that 
\begin{equation}
V^\veps(t) = W_{-1}(t) + \mathbf{v} \cdot \mathbf{M}^\veps, \text{ for all } t \geq \max\{t_1, \dots, t_n\}. \label{eq:thepointofallthis}
\end{equation}
Using the definition of $\hat{\mathcal{M}}^\veps(\cdot)$ and independence of $W_{-1}(\cdot)$ and $\hat{\mathcal{M}}^\veps(\cdot)$, we compute
\begin{align}
A^\veps(t) & \doteq \langle V^\veps \rangle_t \\
& = t + \sum_{1 \leq j, \ell \leq n} v_j v_\ell \langle \hat{\clm}^\veps(x_j, \cdot), \hat{\clm}^\veps(x_\ell, \cdot) \rangle_{t \wedge t_j \wedge t_\ell} \\
& = t + \sum_{1 \leq j, \ell \leq n} v_j v_\ell a^2 \veps^{1/2} \sum_{i = 0}^\infty \int_0^{t \wedge t_j \wedge t_\ell} [Y_i^\veps(s)]^2\partial_y \phi_{t_j}^{x_j, \veps}(s,Y_i^\veps(s)) \partial_y \phi_{t_l}^{x_\ell, \veps}(s,Y_i^\veps(s))ds \\
& = t + \sum_{1 \leq j, \ell \leq n} v_j v_\ell a^2 \veps^{1/2} \sum_{i = 0}^\infty \int_0^{t \wedge t_j \wedge t_\ell} [Y_i^\veps(s)]^2 q_{t_j - s + \veps}(Y_i^\veps(s), x_j) q_{t_\ell - s + \veps}( Y_i^\veps(s), x_l)ds.
\end{align}
Note that expressing the quadratic covariation as the infinite series in the third line is justified by Lemma \ref{lem:mart}. The last line uses the fact that, from \eqref{eq:ydPsi}, for $s\le t$,
\begin{equation}
\partial_y \phi_t^{x, \veps}(s,y) = \partial_y \Psi^x(t-s+\veps, y) = -q_{t-s+\veps}(y,x).
\end{equation}
The point of introducing the extra Brownian motion $W_{-1}(\cdot)$ in the definition of $V^\veps(\cdot)$ is that $A^\veps(\cdot)$ is strictly increasing and $A^\veps(t) \to \infty$ as $t \to \infty$. Hence, for any $s \geq 0$, the stopping time 
$$
T^\veps(s) \doteq \inf\{t \geq 0 : A^\veps(t) \geq s\}, 
$$
is well-defined, and moreover, $T^\veps(\cdot)$ is the unique inverse of $A^\veps(\cdot)$. \ab{By the Dubins-Schwarz Theorem}, the process 
$$
B^\veps(s) \doteq V^\veps(T^\veps(s)), s \geq 0,
$$
is a standard Brownian motion with respect to the filtration $\{\mathcal{H}_s^\veps\}$, where 
$
\mathcal{H}_s^\veps \doteq \tilde{\mathcal{F}}_{T^\veps(s)}
$
(cf. Theorem 4.6 in \cite{KSbook}.) Consequently, $B^\veps(\cdot)$ is independent of the $\sigma$-field $\mathcal{H}_0^\veps = \tilde{\mathcal{F}}_{T^\veps(0)} = \tilde{\mathcal{F}}_0 = \mathcal{Y}$. (Specifically, note that $B^\veps(\cdot)$ and $\mathbf{W}^\veps$ are independent.) By Lemma \ref{lem:Wfind}, $\mathbf{u} \cdot \mathbf{W}^\veps \Rightarrow \mathbf{u} \cdot \mathbf{W}$. Therefore, the following weak limit holds:
\begin{equation}
(\mathbf{u} \cdot \mathbf{W}^\veps, B^\veps(\cdot)) \Rightarrow (\mathbf{u} \cdot \mathbf{W}, B(\cdot)), \label{eq:weak_wB}
\end{equation}
where $B(\cdot)$ is a standard Brownian motion independent of $\mathbf{W}$. Moreover, the process $V^\veps(\cdot)$ has the following representation: 
\begin{equation}
V^\veps(s) = B^\veps(A^\veps(s)), \text{ for all } s \geq 0. \label{eq:Vrep}
\end{equation}
For $t \geq 0$, let
\begin{equation}
A^0(t) \doteq t + \sum_{1 \leq j,\ell \leq n} v_jv_\ell a^2 \int_0^{t \wedge t_j \wedge t_\ell} \int_0^\infty y^2 q_{t_j - s}(y,x_j)q_{t_\ell - s}(y,x_\ell)dy ds. \label{eq:A0def}
\end{equation}
For any $T \geq 0$,
\begin{align}
    & \sup_{t \in [0,T]} \left| A^\veps(t) - A^0(t) \right| \\
    & \leq \sum_{1 \leq j, \ell \leq n} |v_j v_\ell| a^2 \int_0^{t_j \wedge t_\ell \wedge T} \Bigg| \veps^{1/2}\sum_{i = 0}^\infty [Y_i^\veps(s)]^2 q_{t_j - s + \veps}( Y_i^\veps(s), x_j) q_{t_\ell - s + \veps}(Y_i^\veps(s), x_{\ell}) \\ 
    & \hspace{2in} - \int_0^\infty y^2 q_{t_j-s}(y, x_j)q_{t_\ell-s}(y, x_\ell)dy \Bigg|ds, \label{eq:Aunifloc}
\end{align}
and the right-hand side converges to zero in $L^1(\Pbd_a^\gamma)$ by Lemma \ref{lem:rint}.  Consequently, (see \cite[Lemma, Section 14]{billingsley2013convergence})
$$
(\mathbf{u} \cdot \mathbf{W}^\veps, V^\veps(\cdot) )
= (\mathbf{u} \cdot \mathbf{W}^\veps, B^\veps(A^\veps(\cdot)) )
\Rightarrow (\mathbf{u} \cdot \mathbf{W}, B(A^0(\cdot)) ).
$$
In particular, with $t= \max\{t_1, \ldots, t_n\}$,
$(\mathbf{u} \cdot \mathbf{W}^\veps, V^\veps(t) )
\Rightarrow (\mathbf{u} \cdot \mathbf{W}, B(A^0(t)) ).$
Recall from \eqref{eq:thepointofallthis} that
$V^{\veps}(t) = W_{-1}(t) + \mathbf{v} \cdot \mathbf{M}^\veps$
where $W_{-1}$ and $\mathbf{M}^\veps$ are independent.
Also note that $B(A^0(t))$ has the same distribution as $\mathbf{v} \cdot \mathbf{M} + \sqrt{t} \xi$ where $\xi$ is a standard Normal r.v. independent of $\mathbf{M}$. Combining the above we see that
$(\mathbf{u} \cdot \mathbf{W}^\veps, \mathbf{v} \cdot \mathbf{M}^\veps )
\Rightarrow (\mathbf{u} \cdot \mathbf{W}, \mathbf{v} \cdot \mathbf{M} )$.
The result follows.
\end{proof}

\subsection{Proof of Proposition \ref{prop:yaxis}}
\label{sec:proofprop}
The proofs in this section use several constructions and ideas from
 \cite{dembo2017equilibrium}. One key such idea is to establish uniform in time probability estimates by combining uniform estimates in small time windows with large deviation bounds and union estimates (see proof of Lemma \ref{lem:111}). Another idea is to estimate the processes $X_i$ by a pair of processes $X_i^l$ and $X_i^r$ that give lower and upper bounds on $X_i$, respectively (see Lemmas \ref{lem:expbd} and \ref{lem:111}).
 Define for $(t,x) \in [0,\infty) \times (0,\infty)$,
$
\rho^{\veps}_t(x) \doteq Y_{(I^{\veps}_t(x))}(t) - \veps^{-1/2}x$,
and for $j, j' \in \NN_0$, 
$$\cld(j, j', t) = (j-j') - (Y_{(j)}(t) - Y_{(j')}(t)).$$
It then follows that, for $(t,x) \in [0,\infty) \times (0,\infty)$,
\begin{equation}
\chi^{\veps}(t,x) - \tilde \chi^{\veps}(t,x) =
\veps^{1/4} \cld(I^{\veps}_t(x), I^{\veps}_0(x), t) +
\veps^{1/4} \rho^{\veps}_t(x) \label{eq:1253}
\end{equation}
and
\begin{equation}\label{eq:eq410}
\tilde \chi^{\veps}(t,x) - \check \chi^{\veps}(t,x) =
\veps^{1/4} \cld(I^{\veps}_0(x), i_{\veps}(x), t).
\end{equation}

We will need the following lemma.
\begin{lemma}
\label{lem:expbd}
For all $\gamma\ge 0$, $a >0$, there are $\kappa_1, \kappa_2 \in (0, \infty)$ such that
{for all $j \in \NN_0$ and $m \in (2a+1, \infty)$ with $j> (m-2\gamma)/a$}, and $t \in (0,\infty)$,
$$\Ebd^{\gamma}_a\left(\sup_{0\le s \le t}
e^{m |X_{(j)}(s) - X_{(j)}(0)|}\right) \le \kappa_2 (j+1) e^{\kappa_1 m^2 t}.$$
\end{lemma}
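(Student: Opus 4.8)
The plan is to control the ranked particle motion $X_{(j)}(s) - X_{(j)}(0)$ by sandwiching it between two simpler processes, following the strategy already used in Lemma \ref{lem:tailbd}. Since the lowest particle receives an upward drift $\gamma$, I would first observe that for an \emph{upper} bound on $X_{(j)}(s)$ one can couple with the process where every particle is given drift $\gamma$, and for a \emph{lower} bound one drops the drift entirely; more precisely, using the order-preserving property of the $\mathrm{Rk}_j$ operators (as in Lemma \ref{lem:tailbd}), $X_{(j)}(s)$ lies between $\mathrm{Rk}_j$ applied to the unlabelled family of driftless Brownian motions $X_i(0)+W_i(s)$ and $\mathrm{Rk}_j$ applied to $X_i(0)+\gamma s + W_i(s)$. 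Hence $|X_{(j)}(s)-X_{(j)}(0)|$ is bounded by $\gamma t$ plus the oscillation of a ranked driftless system, so it suffices to prove the bound for $\gamma$-dependent constants with the driftless comparison processes $X_i^{\ell}(s)=X_i(0)+W_i(s)$ (exactly the objects $X_i^{\veps,\ell}$ of Lemma \ref{lem:tailbd}, up to the $\veps$-shift which is irrelevant here).

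Next I would bound $\sup_{0\le s\le t} e^{m|X_{(j)}(s)-X_{(j)}(0)|}$ by $e^{m(X_{(j)}(0)-\inf_{s\le t}X_{(j)}^{\ell,\text{low}}(s))} + e^{m(\sup_{s\le t}X_{(j)}^{\ell,\text{up}}(s)-X_{(j)}(0))}$, where the "low" and "up" ranked processes are built from the driftless and $\gamma$-drifted Brownian families. For the upper deviation, $\sup_{s\le t}X_{(j)}^{\ell,\text{up}}(s) \le \max_{0\le i\le j}\bigl(X_{(i)}(0)+\gamma t + \sup_{s\le t}W_i(s)\bigr)$ is wrong in general (a higher-indexed particle could overtake), so instead I would use: for the $j$-th smallest at time $s$ to exceed level $X_{(j)}(0)+r$, at least $j+1$ of the particles must have done so, and I would union-bound over particles $i\ge 0$, splitting into $i\le 2j$ (finitely many, each contributing $\Ebd[e^{m(X_{(i)}(0)+\gamma t+\overline W_i(t))}]$ controlled via the reflection principle as in \eqref{eq:refprinc} and the moment bound \eqref{eq:ip1}/\eqref{eq:initmoment} on $\Ebd_a^\gamma[Y_i(0)^{-r}]$ and its positive-moment analogue $\Ebd_a^\gamma[e^{pX_{(0)}(0)}]<\infty$ noted before \eqref{eq:initmoment}) and $i>2j$ (tail, where $X_{(i)}(0)$ is large, Erlang-distributed, so a Gaussian large-deviation estimate $\Pbd(\overline W_i(t)\ge c)\le e^{-c^2/2t}$ combined with the Erlang tail makes the sum summable and of order $e^{\kappa_1 m^2 t}$). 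The factor $(j+1)$ arises precisely from the $O(j)$ terms in the finite part of this union bound; the $e^{\kappa_1 m^2 t}$ factor comes from Gaussian moment generating functions $\Ebd e^{m\overline W_i(t)}\le 2e^{m^2t/2}$. The lower deviation $X_{(j)}(0)-\inf_{s\le t}X_{(j)}^{\ell,\text{low}}(s)$ is handled symmetrically: using order preservation, $\inf_{s\le t}X_{(j)}^{\ell,\text{low}}(s)\ge$ the $j$-th smallest of $X_i(0)-\overline W_i^{\,-}(t)$ where $\overline W_i^{\,-}(t)=\sup_{s\le t}(-W_i(s))$, and one estimates $\Ebd[e^{m(X_{(j)}(0)-X_i(0)+\overline W_i^{\,-}(t))}]$, which is finite and small for $i\ge j$ by the Erlang gap structure (the constraint $m>2a+1$ and $j>(m-2\gamma)/a$ guarantee the relevant negative exponential moments of the gaps $Y_{(j)}(0)-Y_{(i)}(0)$, equivalently of $e^{-a(X_{(j)}(0)-X_i(0))}$, converge — this is where the hypotheses on $m$ and $j$ enter).

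The main obstacle I anticipate is the bookkeeping in the union bound that produces the $(j+1)$ prefactor without losing control of the $m$-dependence: one must carefully separate the "nearby" particles (indices comparable to $j$), whose displacement moments are $O(1)$ in $j$ but must be shown uniformly $O(e^{\kappa_1m^2t})$ using the reflection principle and the finiteness of $\Ebd_a^\gamma[e^{\pm p X_{(0)}(0)}]$ and of positive/negative moments of the Erlang gaps, from the "far" particles (indices $\gg j$), where the Gaussian tail must beat the growth of $e^{mX_{(i)}(0)}$ — this requires the change of measure \eqref{eq:absctyn} to reduce to $\Pbd_a^0$ and then a Cauchy--Schwarz split exactly as in \eqref{eq:initmoment}, after which the Erlang density $x^{i-1}e^{-x}/(i-1)!$ together with $\int e^{2mx}e^{-c x^2/t}\,dx \le c' e^{c'' m^2 t}$ closes the estimate. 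Once both deviations are bounded by $\kappa_2(j+1)e^{\kappa_1 m^2 t}$, adding them and absorbing the $e^{m\gamma t}$ factor into $\kappa_1$ (at the cost of enlarging $\kappa_1$) yields the claim.
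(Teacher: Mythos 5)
Your comparison-process framework and the treatment of the lower deviation are essentially the paper's: you introduce the driftless process $X_i^l(s)=X_i(0)+W_i(s)$ and the $\gamma$-drifted process $X_i^r(s)=X_i(0)+W_i(s)+\gamma s$, use the inequality $\inf_{s\le t}X^l_{(j)}(s)\ge \inf_{i\ge j}\inf_{s\le t}X^l_i(s)$, and control the resulting sum via the exponential gap moments; the hypotheses $m>2a+1$ and $j>(m-2\gamma)/a$ enter exactly as you say, to make $\sum_{i\ge j}\Ebd_a^\gamma e^{-m(X_{(i)}(0)-X_{(j)}(0))}$ of order $j+1$.

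However, your handling of the upper deviation contains a genuine error. You declare the pathwise bound
$$\sup_{s\le t}X^r_{(j)}(s)\ \le\ \max_{0\le i\le j}\Bigl(X_{(i)}(0)+\gamma t+\sup_{s'\le t}W_i(s')\Bigr)$$
to be ``wrong in general (a higher-indexed particle could overtake),'' and you replace it by a union bound over all $i\ge 0$ plus a claim that ``at least $j+1$ of the particles must have done so.'' The pathwise bound is in fact correct, and it is exactly what the paper uses. The reason is elementary: $X^r_{(j)}(s)$ is the $(j{+}1)$-th smallest value of $\{X^r_i(s):i\ge 0\}$, so at most $j$ values can lie strictly below it; hence among the $j{+}1$ deterministic particles $X^r_0(s),\dots,X^r_j(s)$ at least one must be $\ge X^r_{(j)}(s)$, which gives $X^r_{(j)}(s)\le\max_{0\le i\le j}X^r_i(s)$ for every $s$, regardless of any overtaking by higher-indexed particles. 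Overtaking by $i>j$ only pushes $X^r_{(j)}(s)$ \emph{up}, which does not threaten this inequality. Your replacement is also logically backwards: if $X^r_{(j)}(s)>L$ then \emph{all but at most $j$} of the (infinitely many) particles exceed $L$, so ``at least $j+1$ exceeded $L$'' is a vacuously weak statement that yields no useful union bound over $i\ge 0$. The correct containment is $\{X^r_{(j)}(s)>L\}\subset\bigcup_{0\le i\le j}\{X^r_i(s)>L\}$ — a union over the \emph{finite} set $\{0,\dots,j\}$, which is precisely where the $(j{+}1)$ prefactor comes from, and which the paper exploits at the level of expectations (not probabilities) by bounding $\sup_s e^{m(X^r_{(j)}(s)-X_{(j)}(0))}\le\sum_{i=0}^j\sup_s e^{m(X^r_i(s)-X_{(j)}(0))}$ and using $\Ebd_a^\gamma e^{-m(X_{(j)}(0)-X_{(i)}(0))}\le 1$ for $i\le j$. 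Restoring the simple pathwise bound makes your upper-deviation step both correct and shorter; as written, it has a gap.

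Two smaller imprecisions worth noting: for the lower deviation the relevant dominating quantity is $\inf_{i\ge j}(X_i(0)-\sup_{s\le t}(-W_i(s)))$, not ``the $j$-th smallest of $X_i(0)-\overline W_i^-(t)$'' over all $i\ge 0$ (the $j$-th order statistic of the full family can be smaller than the infimum over $i\ge j$); and your final step of absorbing $e^{m\gamma t}$ into $e^{\kappa_1 m^2 t}$ by enlarging $\kappa_1$ is fine, since $m\gamma\le \gamma m^2$ for $m\ge 1$.
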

\begin{proof}
Define for $i \in \NN_0$ and $t \ge 0$
\begin{equation}\label{eq:xrl}
X^{l}_i(t) = X_i(0) + W_i(t), \;\; X^{r}_i(t) = X_i(0) + W_i(t) + \gamma t.
\end{equation}
Fix $m \in (2a+1, \infty)$.
For $i, j \in \NN_0$, let
$$
U^r(t,i,j) = \sup_{s \in [0,t]} e^{m(X^r_i(s) - X_{(j)}(0))}, \;\;
U^l(t,i,j) = \sup_{s \in [0,t]} e^{-m(X^l_i(s) - X_{(j)}(0))}.
$$
Let 
$$
b_{i,j} \doteq \prod_{k=i+1}^j \left(\frac{2\gamma + ka}{2\gamma+ka +m}\right), \; i < j, \;
B_{i,j} \doteq \prod_{k=j+1}^i \left(\frac{2\gamma + ka}{2\gamma+ka +m}\right), \; j > i,
$$
and set $b_{ii}=B_{ii}=1$.
Then, for suitable $\kappa_1, \tilde\kappa_2 \in (0, \infty)$ (which depend only on $\gamma$), {writing $|W_i|_{*,t} \doteq \sup_{s \in [0,t]} |W_i(s)|$}, for $i\le j$
\begin{equation}\label{eq:332}
\Ebd^{\gamma}_a U^r(t,i,j) \le 
\Ebd^{\gamma}_a e^{-m(X_{(j)}(0) - X_{(i)}(0))}
\Ebd^{\gamma}_ae^{m (|W_i|_{*,t}+ \gamma t)}
\le b_{i,j} \tilde\kappa_2 e^{\kappa_1 m^2t},
\end{equation}
and for $i>j$
\begin{equation}\label{eq:333}
\Ebd^{\gamma}_a U^l(t,i,j) \le 
\Ebd^{\gamma}_a e^{-m(X_{(i)}(0) - X_{(j)}(0))}
\Ebd^{\gamma}_ae^{m |W_i|_{*,t}}
\le B_{i,j} \tilde\kappa_2 e^{\kappa_1 m^2t}.
\end{equation}
Here we have used the fact that, under $\Pbd^{\gamma}_a$, the gap sequence in \eqref{eq:gapt} (with $N=\infty$) is distributed as
$\pi_a^{\gamma} \doteq \otimes_{i=1}^{\infty} \mbox{Exp}(2\gamma+ ia)$.
Since $b_{i,j} \le 1$,
$$\Ebd^{\gamma}_a \sup_{0\le s \le t} e^{m(X^r_{(j)}(s) - X_{(j)}(0))} \le
\Ebd^{\gamma}_a\sum_{i=0}^j U^r(t, i,j) \le 
(\sum_{i=0}^j b_{i,j}) \tilde\kappa_2 e^{\kappa_1 m^2t}
\le (j+1) \tilde\kappa_2 e^{\kappa_1 m^2t},
$$
where the first inequality uses the fact that, for $s\ge 0$ and $j \in \NN_0$, $\max_{0\le i\le j} X^r_i(s) \ge X^r_{(j)}(s)$.
Also, since for all $j \in \NN_0$ and $s\ge 0$, $X^l_{(j)}(s) \ge \inf_{j\le i < \infty} X^l_i(s)$, we have
\begin{equation}\Ebd^{\gamma}_a \sup_{0\le s \le t} e^{-m(X^l_{(j)}(s) - X_{(j)}(0))} \le
\Ebd^{\gamma}_a \sum_{i=j}^{\infty} U^l(t, i,j) \le 
(\sum_{i=j}^{\infty} B_{i,j}) \tilde\kappa_2 e^{\kappa_1 m^2t}. \label{eq:347}
\end{equation}
We claim that, for all $j \in \NN_0$
$
\sum_{i=j}^{\infty} B_{i,j} \le 2(2\gamma+a+1) (j+1)$.
To see this, note that
$$\log \prod_{k=j+1}^i \left(\frac{2\gamma + ka}{2\gamma+ka +m}\right)
= - \sum_{k=j+1}^i \log (1 + m/(2\gamma +ka)),$$
and since $m/(2\gamma +ka)<1$ and $\log(1+x) \ge x/2$ for $x \in (0,1)$, we have 
$$B_{ij} = \prod_{k=j+1}^i \left(\frac{2\gamma + ka}{2\gamma+ka +m}\right) \le
\left(\frac{2\gamma + a(i+1)}{2\gamma + a(j+1)}\right)^{-m/2a}.$$
Thus, recalling that $m-2a>1$,
\begin{align*}
\sum_{i=j+1}^{\infty} B_{i,j} &\le
(2\gamma + a (j+1))^{m/2a} \sum_{i=j+1}^{\infty} (2\gamma + a (i+1))^{-m/2a}\\
&\le 
\frac{1}{a} \frac{2a}{m-2a} (2\gamma + a (j+1))
\le 2(2\gamma + a(j+1)).
\end{align*}
and so
$$\sum_{i=j}^{\infty} B_{i,j} \le 1+ 2(2\gamma + a(j+1)) \le 2(2\gamma+a+1)(j+1),$$
which proves the claim.
Combining this with \eqref{eq:347}
we have that
$$\Ebd^{\gamma}_a \sup_{0\le s \le t} e^{-m(X^l_{(j)}(s) - X_{(j)}(0))} \le 2(2\gamma+a+1) (j+1) \tilde\kappa_2 e^{\kappa_1 m^2t}.$$
Finally,
\begin{align*}
\Ebd^{\gamma}_a \sup_{0\le s \le t}
e^{m |X_{(j)}(s) - X_{(j)}(0)|} &\le
\Ebd^{\gamma}_a \sup_{0\le s \le t} e^{m(X^r_{(j)}(s) - X_{(j)}(0))}
+ \Ebd^{\gamma}_a \sup_{0\le s \le t} e^{-m(X^l_{(j)}(s) - X_{(j)}(0))}\\
&\le (j+1) \kappa_2 e^{\kappa_1 m^2t},
\end{align*}
where $\kappa_2 = (1+ 2(2\gamma+a+1))\tilde \kappa_2$.
\end{proof}

\begin{lemma}\label{lem:expdec1}
Fix $\gamma \ge 0$, $a>0$, $\upsilon >0$, $0< \ell < L<\infty$, and $\theta \in (3/2, \infty)$. Then, for every $\beta \in (0, 1/2)$ there are $\kappa_3, \kappa_4 \in (0, \infty)$ and $\veps^* >0$ such that for all $\veps \in (0, \veps^*)$,
\begin{align*}
&\sup_{\ell \le x \le L} \Pbd_a^{\gamma}
\left(\veps^{1/4} Y_{(i_{\veps}(x))}(0)
\sup_{0\le s \le \veps^{\theta/2}} \left|
\exp\{a(X_{(i_{\veps}(x))}(s) - X_{(i_{\veps}(x))}(0)) + a^2s/2\}
-1\right| > \upsilon \right) \\
&\le \kappa_3 e^{-\kappa_4\veps^{-\beta/2}}.
\end{align*}
\end{lemma}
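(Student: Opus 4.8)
The plan is to split the event according to whether the initial height $Y_{(i_\veps(x))}(0)$ is atypically large or the ranked trajectory $X_{(i_\veps(x))}(\cdot)$ makes an atypically large excursion over the short window $[0,\veps^{\theta/2}]$; I will in fact establish the stronger bound $\kappa_3 e^{-\kappa_4\veps^{-1/4}}$, which implies the assertion since $\beta<1/2$ forces $\veps^{-1/4}\ge\veps^{-\beta/2}$ for $\veps$ small. Write $j=i_\veps(x)$ and $\Delta_s\doteq X_{(j)}(s)-X_{(j)}(0)$, and recall $Y_{(j)}(0)=\exp\{aX_{(j)}(0)\}$. First I would record an elementary reduction: on the event $\{\sup_{0\le s\le\veps^{\theta/2}}|\Delta_s|\le c\veps^{1/4}\}$ one has, for $\veps$ small, $|a\Delta_s+a^2s/2|\le ac\veps^{1/4}+\tfrac{a^2}{2}\veps^{\theta/2}<1$, so (using $|e^u-1|\le 3|u|$ for $|u|<1$ and $\theta>1/2$) $\sup_{0\le s\le\veps^{\theta/2}}|\exp\{a\Delta_s+a^2s/2\}-1|\le 6ac\veps^{1/4}$; hence on the additional event $\{Y_{(j)}(0)\le 2L\veps^{-1/2}\}$ the expression inside the probability is at most $12aLc$. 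Choosing $c\doteq\upsilon/(24aL)$, it follows that for $\veps$ small and all $x\in[\ell,L]$,
\[
\Pbd_a^\gamma\Big(\veps^{1/4}Y_{(i_\veps(x))}(0)\sup_{0\le s\le\veps^{\theta/2}}\big|\exp\{a(X_{(i_\veps(x))}(s)-X_{(i_\veps(x))}(0))+a^2s/2\}-1\big|>\upsilon\Big)\le p_1^\veps(x)+p_2^\veps(x),
\]
with $p_1^\veps(x)\doteq\Pbd_a^\gamma(Y_{(j)}(0)>2L\veps^{-1/2})$ and $p_2^\veps(x)\doteq\Pbd_a^\gamma(\sup_{0\le s\le\veps^{\theta/2}}|X_{(j)}(s)-X_{(j)}(0)|>c\veps^{1/4})$.

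For $p_1^\veps$ I would pass to the reference measure $\Pbd_a^0$ via \eqref{eq:absctyn} and Cauchy--Schwarz, getting $p_1^\veps(x)\le(\Ebd_a^0\exp\{4\gamma X_{(0)}(0)\})^{1/2}\,\Pbd_a^0(Y_{(j)}(0)>2L\veps^{-1/2})^{1/2}$; the first factor equals $(\Ebd_a^0 Y_{(0)}(0)^{4\gamma/a})^{1/2}<\infty$ since $Y_{(0)}(0)\sim\mathrm{Exp}(1)$ under $\Pbd_a^0$ (Lemma \ref{lem:ispp}). By the same lemma, $Y_{(j)}(0)$ has the $\mathrm{Gamma}(j+1,1)$ law under $\Pbd_a^0$, and since $j=i_\veps(x)\le L\veps^{-1/2}$ a Chernoff bound gives $\Pbd_a^0(Y_{(j)}(0)>2L\veps^{-1/2})\le e^{-c_1\ell\veps^{-1/2}}$ for a universal $c_1>0$, uniformly in $x\in[\ell,L]$; hence $p_1^\veps(x)\le e^{-c_2\ell\veps^{-1/2}}\le e^{-c_2\ell\veps^{-\beta/2}}$.

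For $p_2^\veps$ I would combine Lemma \ref{lem:expbd} with Markov's inequality at an $\veps$-dependent exponent: for any $m>2a+1$ with $j>(m-2\gamma)/a$,
\[
p_2^\veps(x)\le e^{-mc\veps^{1/4}}\,\Ebd_a^\gamma\Big(\sup_{0\le s\le\veps^{\theta/2}}\exp\{m|X_{(j)}(s)-X_{(j)}(0)|\}\Big)\le\kappa_2(j+1)\exp\big(-mc\veps^{1/4}+\kappa_1m^2\veps^{\theta/2}\big).
\]
I then take $m=m_\veps\doteq\tfrac{a\ell}{2}\veps^{-1/2}$. For $\veps$ small this satisfies $m_\veps>2a+1$, and since $j=\lfloor\veps^{-1/2}x\rfloor\ge\veps^{-1/2}\ell-1$ whereas $(m_\veps-2\gamma)/a=\tfrac{\ell}{2}\veps^{-1/2}-2\gamma/a$, also $j>(m_\veps-2\gamma)/a$, uniformly in $x\in[\ell,L]$. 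With this choice $m_\veps c\veps^{1/4}=\tfrac{a\ell c}{2}\veps^{-1/4}$, while $\kappa_1m_\veps^2\veps^{\theta/2}=\kappa_1\tfrac{a^2\ell^2}{4}\veps^{\theta/2-1}=o(\veps^{-1/4})$ as $\veps\to0$ because $\theta>3/2$; absorbing the prefactor $\kappa_2(j+1)\le\kappa_2(L\veps^{-1/2}+1)$ (subexponential in $\veps^{-1/4}$) yields $p_2^\veps(x)\le\kappa_3'e^{-\kappa_4'\veps^{-1/4}}\le\kappa_3'e^{-\kappa_4'\veps^{-\beta/2}}$, uniformly in $x\in[\ell,L]$. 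Combining with the bound on $p_1^\veps$ and choosing $\veps^*$ small enough to validate all the ``$\veps$ small'' steps completes the proof, with $\kappa_3,\kappa_4$ depending on $a,\gamma,\upsilon,\ell,L,\theta,\beta$.

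\textbf{Main obstacle.} The delicate step is the estimate of $p_2^\veps$: one must pick the Markov exponent $m_\veps$ large enough that $m_\veps c\veps^{1/4}$ produces a stretched-exponential rate beating $\veps^{-\beta/2}$, yet small enough to remain admissible in Lemma \ref{lem:expbd} — and since $i_\veps(x)$ is of order $\veps^{-1/2}$ this caps $m_\veps$ at order $\veps^{-1/2}$, which is precisely where $\beta<1/2$ is used — while simultaneously keeping the quadratic correction $\kappa_1 m_\veps^2\veps^{\theta/2}$ of lower order, which is where $\theta>3/2$ enters, all uniformly over $x\in[\ell,L]$.
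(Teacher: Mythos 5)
Your proof is correct and follows essentially the same strategy as the paper's: split the event according to whether $Y_{(i_\veps(x))}(0)$ is atypically large (controlled by a Chernoff bound on a Gamma$(j+1,1)$ variable after a change of measure to $\Pbd_a^0$) or the ranked trajectory $X_{(i_\veps(x))}(\cdot)$ makes a large excursion over $[0,\veps^{\theta/2}]$ (controlled by Markov's inequality at exponent $m_\veps$ together with Lemma \ref{lem:expbd}). The only noteworthy difference is your choice of Markov exponent $m_\veps \asymp \veps^{-1/2}$, pushed to the admissibility boundary of Lemma \ref{lem:expbd}, which yields the sharper rate $e^{-\kappa_4\veps^{-1/4}}$; the paper's proof takes the more conservative $m(\veps)\asymp\veps^{-(\beta+1/2)/2}$, producing exactly the claimed $e^{-\kappa_4\veps^{-\beta/2}}$ rate — both choices exploit $\theta>3/2$ to keep the $\kappa_1 m^2\veps^{\theta/2}$ correction subdominant.
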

\begin{proof}
Fix $\theta \in (3/2, \infty)$ and $0< \ell < L<\infty$.
Let $\sigma(\veps) \doteq \frac{\upsilon}{4e L \log 2} \veps^{1/4}$.
Choose $\veps_0$ sufficiently small so that $\sigma(\veps_0)<1$, $\min\{\ell, 1\}\veps_0^{-1/2} >1$.
Then, for $\veps\in (0,\veps_0)$,
\begin{align}
&\Pbd_a^{\gamma}
\Big(\veps^{1/4} Y_{(i_{\veps}(x))}(0)
\sup_{0\le s \le \veps^{\theta/2}} \Big|
\exp\{a(X_{(i_{\veps}(x))}(s) - X_{(i_{\veps}(x))}(0)) + a^2s/2\}
-1\Big| > \upsilon \Big)\\
&\le \Pbd_a^{\gamma}
\Big(\sup_{0\le s \le \veps^{\theta/2}} |a(X_{(i_{\veps}(x))}(s) - X_{(i_{\veps}(x))}(0)) + a^2s/2|
\ge \sigma(\veps)\Big)
+ \Pbd_a^{\gamma}(e \sigma(\veps)\veps^{1/4} Y_{(i_{\veps}(x))}(0) > \upsilon),
\end{align}
where we have used the fact that, for $|x| \in [0,1]$, $|e^x-1| \le |x|e$.
Next note that since, for all $x \in [\ell,L]$, $Y_{(i_{\veps}(x))}(0)$ is the sum of $i_{\veps}(x)$ iid Exponential random variables with 
mean $1$ under the law $\Pbd_a^0$ (note that by choice of $\veps_0$, $i_{\veps}(x)>1$ for $x \in [\ell, L]$), writing $\kappa \doteq \left(\Ebd_a^0\left[(Y_{(0)}(0))^{4\gamma/a}\right]\right)^{1/2}$,
\begin{align}
\Pbd_a^{\gamma}(e \sigma(\veps)\veps^{1/4} Y_{(i_{\veps}(x))}(0) > \upsilon)
&\le \kappa \left[\Pbd_a^0(e \sigma(\veps)\veps^{1/4} Y_{(i_{\veps}(x))}(0) > \upsilon)\right]^{1/2}\\
&\le \kappa \exp\left\{-\frac{\upsilon}{4e\sigma(\veps)} \veps^{-1/4}\right\} \left(\int_0^{\infty} e^{x/2} e^{-x} dx\right)^{\veps^{-1/2}L/2}\nonumber\\
&= \kappa\exp\{-(L\log 2)\veps^{-1/2}\} \exp\{(L\log 2)\veps^{-1/2}/2\}\\
& = \kappa\exp\{-(L\log 2)\veps^{-1/2}/2\}.\label{eq:502}
\end{align}
Fix $\beta \in (0, 1/2)$ and let $m(\veps) \doteq \veps^{-\beta_1/2} 4eL \log 2/\upsilon$, where 
$\beta_1 = \beta + 1/2$.
Choose $\veps_1 \in  (0,\veps_0)$ so that $a^2\veps_1^{\theta/2} \le \sigma(\veps_1) = \frac{\upsilon}{4 eL \log 2} \veps_1^{1/4}$,
$(\veps^{-1/2}\ell-1) > (m(\veps) - 2\gamma)/a$ for all $\veps \in (0, \veps_1)$,
and $m(\veps_1) > 2a+1$.
Then, for all $\veps \in (0, \veps_1)$,
\begin{align*}
&\Pbd_a^{\gamma}
\left(\sup_{0\le s \le \veps^{\theta/2}} |a(X_{(i_{\veps}(x))}(s) - X_{(i_{\veps}(x))}(0)) + a^2s/2|
\ge \sigma(\veps)\right)\\
&\le
\Pbd_a^{\gamma}\left(\sup_{0\le s \le \veps^{\theta/2}} |(X_{(i_{\veps}(x))}(s) - X_{(i_{\veps}(x))}(0)|
\ge \frac{\sigma(\veps)}{2a}\right)\\
&\le 
\Ebd_a^{\gamma}\left(\sup_{0\le s \le\veps^{\theta/2}} e^{m(\veps) |(X_{(i_{\veps}(x))}(s) - X_{(i_{\veps}(x))}(0)| }
\right) e^{-m(\veps) \sigma(\veps)/2a}\\
&\le \kappa_2 (L\veps^{-1/2} + 1) e^{\kappa_1 m(\veps)^2 \veps^{\theta/2}}e^{-m(\veps) \sigma(\veps)/2a},
\end{align*}
where the last line follows from Lemma \ref{lem:expbd}
on using the fact that, by our choice of $\veps_1$,
for all $x \in [\ell, L]$, $i_{\veps}(x) > (m(\veps)-2\gamma)/a$.

Since $2\beta_1-\theta <\beta$, we can choose $\veps_2\in (0, \veps_1)$ such that for all $\veps \in (0,\veps_2)$,
$$(L\veps^{-1/2} + 1) e^{\kappa_1 16e^2(\log 2)^2 L^2 \veps^{-(2\beta_1-\theta)/2}/\upsilon^2} e^{-\frac{1}{2a}\veps^{-\beta/2}} \le e^{-\frac{1}{4a} \veps^{-\beta/2}}.$$
Then for all $\veps\in (0, \veps_2)$,
$$
\Pbd_a^{\gamma}
\left(\sup_{0\le s \le \veps^{\theta/2}} |a(X_{(i_{\veps}(x))}(s) - X_{(i_{\veps}(x))}(0)) + a^2s/2|
\ge \sigma(\veps)\right) \le \kappa_2 e^{-\frac{1}{4a} \veps^{-\beta/2}}.
$$
The result follows.
\end{proof}

We will use the following tail estimate for a standard rate $1$ Poisson process (cf. \cite[Proposition 6.2]{budhiraja2017control}). 
\begin{lemma}\label{lem:PPLD}
Let $\{\cln(t)\}_{t\ge 0}$ be  a rate $1$ Poisson process. For each $\gamma>0$ and $b_1, b_2 \ge 0$, there exist $\veps_0 \in (0,1)$ and $B_1, B_2 \in (0, \infty)$ such that for all $\veps \in (0,\veps_0)$ and $L_1 \in (0, \infty)$
\begin{equation}\label{eq:ppldp}
P\left(\sup_{0 \le s \le \veps^{-(2b_1+ b_2)}L_1} |\cln(s)-s| \ge \gamma \veps^{-(b_1+b_2)}L_1\right)
\le B_1 e^{-L_1B_2\veps^{-b_2}}.
\end{equation}
\end{lemma}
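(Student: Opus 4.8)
The plan is to reduce \eqref{eq:ppldp} to a Chernoff bound for the exponential martingales of the compensated Poisson process, combined with Doob's maximal inequality; this is essentially the content of \cite[Proposition 6.2]{budhiraja2017control}, and I sketch the self-contained argument. Write $T \doteq \veps^{-(2b_1 + b_2)} L_1$ for the time horizon and $\delta \doteq \gamma\veps^{-(b_1 + b_2)} L_1$ for the deviation level appearing in \eqref{eq:ppldp}. The only arithmetic input needed is the scaling identity $\delta^2 / T = \gamma^2 \veps^{-b_2} L_1$, together with the trivial inequality $\delta \ge \gamma\veps^{-b_2} L_1$ valid for $\veps \in (0,1]$ since $b_1 \ge 0$; thus any bound of Gaussian type $\exp(-c\, \delta^2/T)$ already produces precisely the exponent $\veps^{-b_2} L_1$ claimed in \eqref{eq:ppldp}, with $B_2$ of order $\gamma^2 \wedge \gamma$.

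For the core step, recall that for every $\lambda \in \RR$ the process $M^{(\lambda)}_s \doteq \exp\{\lambda \cln(s) - s(e^\lambda - 1)\} = \exp\{\lambda(\cln(s) - s) - s(e^\lambda - 1 - \lambda)\}$ is a nonnegative martingale with $M^{(\lambda)}_0 = 1$. For $\lambda > 0$, on the event $\{\cln(s) - s \ge \delta\}$ one has $M^{(\lambda)}_s \ge \exp\{\lambda\delta - T(e^\lambda - 1 - \lambda)\}$, since $e^\lambda - 1 - \lambda \ge 0$ and $s \le T$; hence Doob's maximal inequality $P(\sup_{0 \le s \le T} M^{(\lambda)}_s \ge c) \le E[M^{(\lambda)}_T]/c = 1/c$ yields $P(\sup_{0 \le s \le T}(\cln(s) - s) \ge \delta) \le \exp\{-\lambda\delta + T(e^\lambda - 1 - \lambda)\}$. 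Optimizing over $\lambda$ using $e^\lambda - 1 - \lambda \le \lambda^2$ on $[0,1]$ (choosing $\lambda = (\delta/2T) \wedge 1$) gives $P(\sup_{0 \le s \le T}(\cln(s) - s) \ge \delta) \le \exp\{-\tfrac14 \min(\delta^2/T, \delta)\}$. Running the analogous computation with $\lambda = -\mu$, $\mu = \delta/T$, and the bound $e^{-\mu} - 1 + \mu \le \mu^2/2$ (valid for all $\mu \ge 0$), gives $P(\sup_{0 \le s \le T}(s - \cln(s)) \ge \delta) \le \exp\{-\delta^2/(2T)\}$. A union bound then produces $P(\sup_{0 \le s \le T}|\cln(s) - s| \ge \delta) \le 2\exp\{-\tfrac14 \min(\delta^2/T, \delta)\}$.

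It remains only to substitute. Using $\delta^2/T = \gamma^2 \veps^{-b_2} L_1$ and $\delta \ge \gamma\veps^{-b_2} L_1$ for $\veps \in (0,1]$, one gets $\min(\delta^2/T, \delta) \ge (\gamma^2 \wedge \gamma)\veps^{-b_2} L_1$, so the left-hand side of \eqref{eq:ppldp} is bounded by $2\exp\{-\tfrac14(\gamma^2 \wedge \gamma)\veps^{-b_2} L_1\}$ for all $\veps \in (0,1)$ and all $L_1 \in (0,\infty)$. This is of the asserted form with $B_1 = 2$, $B_2 = \tfrac14(\gamma^2 \wedge \gamma)$, and any $\veps_0 \in (0,1)$.

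I do not expect a genuine obstacle here; the estimate is routine once the scaling identity $\delta^2/T = \gamma^2 \veps^{-b_2} L_1$ is observed. The one point requiring a little care is the optimization in the upper-tail Chernoff bound, where the moderate-deviation regime $\delta \le T$ (contributing the dominant $\delta^2/T$ term) and the large-deviation regime $\delta > T$ (contributing the $\delta$ term) call for different choices of $\lambda$, producing the $\min(\delta^2/T, \delta)$; but since $\delta/T = \gamma\veps^{b_1}$, for $\veps$ small one is always in the moderate regime, and the target exponent $\veps^{-b_2} L_1$ emerges from the $\delta^2/T$ term as needed.
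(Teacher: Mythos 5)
Your proof is correct and, in effect, supplies the argument that the paper delegates to the cited reference \cite[Proposition 6.2]{budhiraja2017control} --- the paper itself does not prove this lemma. The route you take (the exponential Poisson martingale $M^{(\lambda)}_s = \exp\{\lambda \cln(s) - s(e^\lambda - 1)\}$, Doob's maximal inequality, the elementary bounds $e^{\lambda} - 1 - \lambda \le \lambda^2$ for $\lambda \in [0,1]$ and $e^{-\mu} - 1 + \mu \le \mu^2/2$ for $\mu \ge 0$, and the scaling identity $\delta^2/T = \gamma^2 \veps^{-b_2} L_1$) is the standard Cram\'er--Chernoff argument for Poisson concentration and gives the stated bound with explicit constants $B_1 = 2$, $B_2 = \frac{1}{4}(\gamma^2 \wedge \gamma)$, and any $\veps_0 \in (0,1)$. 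One minor imprecision in your closing remark: when $b_1 = 0$ the ratio $\delta/T$ equals $\gamma$ for every $\veps$, so if $\gamma > 2$ the upper-tail optimization always selects the large-deviation branch $\lambda = 1$, not the moderate one; this does not affect the validity of the argument, since your unified bound $\exp\{-\frac{1}{4}\min(\delta^2/T, \delta)\}$ already covers both branches and the $\delta$-branch contributes $\ge \gamma \veps^{-b_2} L_1$ to the exponent, which is of the required form.
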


The following lemma controls the fluctuations of $\chi^{\veps}$, $\tilde \chi^{\veps}$ and $\check \chi^{\veps}$. 
\begin{lemma}\label{lem:111}
Fix $\theta \in (3/2, \infty)$, $T  \in (0,\infty)$, $0< \ell<L<\infty$, $\gamma \ge 0$ and $a>0$.
Define $t_k^{\veps} \doteq k\veps^{\theta/2}$, $k \in \NN_0$.
Let
$$F^{\veps}_1(T,\ell, L) \doteq \sup\{ |\chi^{\veps}(t,x) - \chi^{\veps}(t_k^{\veps}, x)|: t_k^{\veps} \le T, t \in [t_k^{\veps}, t_{k+1}^{\veps}], x \in [\ell,L]\},$$
$$F^{\veps}_2(T,\ell, L) \doteq \sup\{ |\tilde \chi^{\veps}(t,x) - \tilde \chi^{\veps}(t_k^{\veps}, x)|: t_k^{\veps} \le T, t \in [t_k^{\veps}, t_{k+1}^{\veps}], x \in [\ell,L]\},
$$
$$F^{\veps}_3(T,\ell, L) \doteq \sup\{ |\check \chi^\veps(t,x) - \check \chi^\veps(t_k^\veps, x)|: t_k^\veps \le T, t \in [t_k^{\veps}, t_{k+1}^{\veps}], x \in [\ell,L]\}.
$$
Then, for every $\upsilon > 0$ and $m \in \NN$,
\begin{equation}\label{eq:i123}
\limsup_{\veps\to 0} \veps^{-m/2} \Pbd^{\gamma}_a(F^N_i(T,L) > \upsilon)= 0, \; i=2,3.
\end{equation}
Furthermore, if $\theta =2$, we also have
\begin{equation}\label{eq:i123b}
\limsup_{\veps\to 0} \veps^{-m/2} \Pbd^{\gamma}_a(F^{\veps}_1(T,L) > \upsilon)= 0. 
\end{equation}

\end{lemma}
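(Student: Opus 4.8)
The plan is to control each of the three oscillation quantities $F^\veps_i(T,\ell,L)$ by the standard device, imported from \cite{dembo2017equilibrium}, of (a) bounding the oscillation over a single small time window $[t_k^\veps, t_{k+1}^\veps]$ uniformly in $x$ with a large-deviation-type estimate, and then (b) taking a union bound over the $O(T\veps^{-\theta/2})$ windows and the finitely many (in effect, a $\veps$-net of) values of $x\in[\ell,L]$. Since we want the probability to be $o(\veps^{m/2})$ for \emph{every} $m$, the single-window estimate must be stretched-exponentially small, i.e. of order $\exp\{-c\veps^{-\beta}\}$ for some $\beta>0$; then multiplying by the polynomial-in-$\veps^{-1}$ number of windows and net points still leaves something super-polynomially small. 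The quantity $\check\chi^\veps(t,x) = \veps^{1/4}(\sum_i \one_{(0,x]}(Y_i^\veps(t)) - \veps^{-1/2}x)$ changes, over a time window, only through particles crossing the level $\veps^{-1/2}x$; combined with the fact (Lemma \ref{lem:ispp}) that the $Y_i^\veps(0)$ form a Poisson process of rate $\veps^{-1/2}$ and the short-time displacement estimates for the $X_{(i)}$'s (Lemmas \ref{lem:expbd}, \ref{lem:expdec1}), one bounds the number of such crossings over $[t_k^\veps,t_{k+1}^\veps]$ and gets \eqref{eq:i123} for $i=3$.

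Concretely, for $F^\veps_3$: on the window $[t_k^\veps,t_{k+1}^\veps]$, $\check\chi^\veps(t,x)-\check\chi^\veps(t_k^\veps,x) = \veps^{1/4}(I^\veps_t(x) - I^\veps_{t_k^\veps}(x))$, and the absolute value of the right side is at most $\veps^{1/4}$ times the number of indices $i$ for which $Y_i^\veps(\cdot)$ crosses $\veps^{-1/2}x$ during the window. By Lemma \ref{lem:expdec1} (with its $\theta$ and a suitable $\beta$), off an event of probability $\le \kappa_3 e^{-\kappa_4\veps^{-\beta/2}}$, for each fixed $x$ no particle that is at distance more than, say, $\upsilon\veps^{1/4}$ (in the $Y^\veps$-scale) from $\veps^{-1/2}x$ at time $t_k^\veps$ can reach level $\veps^{-1/2}x$ by $t_{k+1}^\veps$; hence the crossing particles all lie in a $Y^\veps$-window of width $O(\upsilon\veps^{1/4})$ around $\veps^{-1/2}x$, which in the pre-scale is an interval of length $O(\upsilon\veps^{-1/4})$. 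The number of Poisson($\veps^{-1/2}$) points in such an interval is, by the Poisson tail bound in Lemma \ref{lem:PPLD} (with the appropriate choice of $b_1,b_2$ so that $\veps^{-1/2}\cdot\veps^{-1/4} = \veps^{-3/4}$), concentrated near its mean $O(\upsilon\veps^{-3/4})$ with stretched-exponential deviations; since $\veps^{1/4}\cdot\veps^{-3/4} = \veps^{-1/2}\to\infty$ we in fact need to argue more carefully — the correct reading is that we must choose the window-width from Lemma \ref{lem:expdec1} so that the \emph{total} $Y^\veps$-displacement over $[t_k^\veps,t_{k+1}^\veps]$ is $o(\veps^{1/4})$, giving $O(\veps^{-1/4}\cdot o(1))$ crossing candidates, and then a Poisson concentration bound makes the $\veps^{1/4}\times(\text{count})$ product $<\upsilon$ with stretched-exponential probability. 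Passing from a single $x$ to all $x\in[\ell,L]$ uses monotonicity of $x\mapsto I^\veps_t(x)$ so that a net of $O(\veps^{-p})$ points suffices, and the union over windows and net points preserves the $\exp\{-c\veps^{-\beta/2}\}$ rate up to polynomial factors, yielding \eqref{eq:i123} for $i=3$. The case $i=2$, i.e. $F^\veps_2$ with $\tilde\chi^\veps(t,x) = \veps^{1/4}(I^\veps_0(x) - Y_{(I^\veps_0(x))}(t))$, is handled by the same short-time displacement estimates applied directly to the single ranked particle $Y_{(I^\veps_0(x))}$: here the increment over a window is $\veps^{1/4}(Y_{(I^\veps_0(x))}(t_k^\veps) - Y_{(I^\veps_0(x))}(t))$, controlled by Lemma \ref{lem:expdec1} together with Lemma \ref{lem:expbd} for the moment/large-deviation input, again off a stretched-exponentially small event, followed by the union bound. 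For \eqref{eq:i123b}, when $\theta=2$ the windows have length $\veps$, and the same argument applies to $\chi^\veps$ after first invoking \eqref{eq:1253}: $\chi^\veps - \tilde\chi^\veps = \veps^{1/4}\cld(I^\veps_t(x),I^\veps_0(x),t) + \veps^{1/4}\rho^\veps_t(x)$, and both the discrepancy term and $\rho^\veps_t(x)$ are controlled over a single $\veps$-window by the crossing-count and displacement estimates above; the choice $\theta=2$ is exactly what makes $\veps^{-3/4}\cdot\veps^{1/2}$ (windows $\times$ per-window scale in Lemma \ref{lem:PPLD}) balance so that the Poisson fluctuations in $\cld$ over a window are $o(\veps^{-1/4})$.

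The main obstacle, and the step requiring the most care, is the short-time control of the \emph{crossing count} $I^\veps_t(x) - I^\veps_{t_k^\veps}(x)$ uniformly in $x$ with a stretched-exponential error. One cannot simply bound it by the displacement of a single particle; instead one must combine (i) Lemma \ref{lem:expdec1}, which controls the maximal $Y^\veps$-displacement of the \emph{specific} ranked particle near level $\veps^{-1/2}x$, with (ii) Lemma \ref{lem:expbd} to rule out a far-away particle making an anomalously large excursion to the level, and (iii) the Poisson tail bound Lemma \ref{lem:PPLD} to count candidates in the resulting narrow spatial window — all glued with the union-over-windows argument described in the proof outline of this section (\textquotedblleft uniform estimates in small time windows with large deviation bounds and union estimates\textquotedblright). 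Getting the exponents $(\theta,\beta, b_1, b_2)$ to line up so that polynomial-in-$\veps^{-1}$ union factors are absorbed — and, for \eqref{eq:i123b}, so that the critical choice $\theta=2$ works — is the delicate bookkeeping at the heart of the proof; everything else is a routine application of the cited lemmas and Borel--Cantelli-type summation.
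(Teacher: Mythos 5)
Your plan has a core confusion of definitions that undermines the whole outline. You write that $\check\chi^\veps(t,x)-\check\chi^\veps(t_k^\veps,x)=\veps^{1/4}(I^\veps_t(x)-I^\veps_{t_k^\veps}(x))$, i.e.\ a crossing count, and build the $i=3$ argument around counting particles that cross the level $\veps^{-1/2}x$. But $\check\chi^\veps(t,x)=\veps^{1/4}(i_\veps(x)-Y_{(i_\veps(x))}(t))$, whose time increment is $-\veps^{1/4}(Y_{(i_\veps(x))}(t)-Y_{(i_\veps(x))}(t_k^\veps))$: the displacement of a \emph{single} ranked particle at the deterministic rank $i_\veps(x)$. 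The crossing count $I^\veps_t(x)-I^\veps_{t_k^\veps}(x)$ is the increment of $\chi^\veps$, not $\check\chi^\veps$. You have swapped the easy case for the hard one. The $F^\veps_3$ bound is in fact a direct application of Lemma \ref{lem:expdec1} (after reinterpreting the single-particle displacement via the exponential transform), combined with stationarity and a union bound over $O(\veps^{-\theta/2}T)$ windows and $O(\veps^{-1/2}L)$ distinct rank values $i_\veps(x)$; no counting of crossings is involved, which is why it works for \emph{every} $\theta>3/2$.

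The crossing-count argument you describe is precisely what is needed for $F^\veps_1$, and that is where your proposal is weakest. You invoke the decomposition $\chi^\veps-\tilde\chi^\veps=\veps^{1/4}\cld(I^\veps_t(x),I^\veps_0(x),t)+\veps^{1/4}\rho^\veps_t(x)$, but this compares $\chi^\veps$ and $\tilde\chi^\veps$ at a fixed time; it does not control the time increment within a window, because the term $\cld(I^\veps_t(x),I^\veps_0(x),t)$ itself hides the very same crossing-count difficulty through the $t$-dependence of $I^\veps_t(x)$. The actual argument (absent from your proposal) bounds $\sup_{t\in[0,\veps]}|\chi^\veps(t,x)-\chi^\veps(0,x)|$ by separately counting particles initially below the threshold that come up past it during the window (via the indicators $H^{\veps,r}(i,j)$ built from the upper-bounding free motion $X^r_i$) and particles initially above that drop below (via $H^{\veps,l}(i,j)$ built from $X^l_i$), using the explicit exponential gap distributions together with Gaussian tails of the free driving motions to obtain uniform $L^q$ bounds for these counts. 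A related point: you aim for a stretched-exponential single-window estimate in all cases, but the crossing count is not stretched-exponentially concentrated (one Poisson point near the threshold crosses with probability bounded away from $0$); the argument for $F^\veps_1$ instead gets $O(\veps^{q/4})$ per-window probability for arbitrary $q\in\NN$ via Markov with $q$-th moments, which is polynomial in $\veps$ but, since $q$ is arbitrary, still beats the union-bound polynomial factor and is enough to conclude \eqref{eq:i123b}.

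Your treatment of $F^\veps_2$ is closer to being on the right track: it is indeed a single ranked particle at the random rank $I^\veps_0(x)$, and one can try to apply the displacement lemma there. The paper's slightly different route — use Lemma \ref{lem:PPLD} to ensure $I^\veps_0(\ell)\ge i_\veps(\ell/2)$ and $I^\veps_0(L)\le i_\veps(2L)$ off a stretched-exponentially small event, and on that event bound $F^\veps_2(T,\ell,L)$ by $F^\veps_3(T,\ell/2,2L)$ — is cleaner because it avoids any issue of conditioning on a random rank in the displacement estimate.
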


\begin{proof}
 Let $T,L, \theta, \gamma , a$ be as in the statement of the lemma. Consider first $F^{\veps}_3$.
Fix $\upsilon>0$, $x\in [\ell,L]$ and $t_k^{\veps} \le T$.
Then, for fixed $\beta \in (0, 1/2)$ 
\begin{multline*}
\Pbd^{\gamma}_a\left( \sup_{t \in [t^{\veps}_k, t^{\veps}_{k+1}]} |\check \chi^{\veps}(t,x) - \check \chi^{\veps}(t_k^{\veps}, x)| > \upsilon\right)
= \Pbd^{\gamma}_a\left( \sup_{t \in [t^{\veps}_k, t^{\veps}_{k+1}]} \veps^{1/4} |Y_{(i_{\veps}(x))}(t) - Y_{(i_{\veps}(x))}(t^{\veps}_k)| > \upsilon\right)\\
= \Pbd^{\gamma}_a\left( \sup_{t \in [0, \veps^{\theta/2}]}\veps^{1/4} |Y_{(i_{\veps}(x))}(t) - Y_{(i_{\veps}(x))}(0)| > \upsilon\right)\\
= \Pbd_a^{\gamma}
\left(\veps^{1/4} Y_{(i_{\veps}(x))}(0)
\sup_{0\le s \le \veps^{\theta/2}} \left|
\exp\{a(X_{(i_{\veps}(x))}(s) - X_{(i_{\veps}(x))}(0)) + a^2s/2\}
-1\right| > \upsilon \right)\\
\le \kappa_3 e^{-\kappa_4\veps^{-\beta/2}},
\end{multline*}
where we have use the stationarity of $\{(Y_i(t+ \cdot))_{i \in \NN_0}, t \ge 0\}$ in the second equality and made use of Lemma \ref{lem:expdec1} in obtaining the last inequality.

By a simple union bound it now follows that
$$\Pbd^{\gamma}_a(F^{\veps}_3(T,\ell, L) > \upsilon) \le (\veps^{-\theta/2}T+1)(\veps^{-1/2}L+1) \kappa_3e^{-\kappa_4\veps^{-\beta/2}}.$$
This proves \eqref{eq:i123} for $i=3$.

Consider now $i=2$. Then 
\begin{align*}
&\Pbd^{\gamma}_a\left( \max_{t^{\veps}_k \le T} \sup_{t \in [t^{\veps}_k, t^{\veps}_{k+1}]}\sup_{x \in [\ell,L]}|\tilde \chi^{\veps}(t,x) - \tilde \chi^{\veps}(t_k^{\veps}, x)| > \upsilon\right)\\
&= \Pbd^{\gamma}_a\left(\max_{t^{\veps}_k \le T} \sup_{t \in [t^{\veps}_k, t^{\veps}_{k+1}]}\sup_{x \in [\ell,L]}\veps^{1/4}|Y_{(I^{\veps}_0(x))}(t) - Y_{(I_0^{\veps}(x))}(t^{\veps}_k)| > \upsilon\right)\\
&\le \Pbd^{\gamma}_a\left( \max_{t^{\veps}_k \le T}\sup_{t \in [t^{\veps}_k, t^{\veps}_{k+1}]}\sup_{x \in [\ell/2,2L]}\veps^{1/4}|Y_{(i_{\veps}(x))}(t) - Y_{(i_{\veps}(x))}(t^{\veps}_k)| > \upsilon\right) +
\Pbd^{\gamma}_a\left(I^{\veps}_0(L) > i_{\veps}(L)\right)
\\
&\quad + \Pbd^{\gamma}_a\left(I^{\veps}_0(\ell) < i_{\veps}(\ell/2)\right)\\
&\le \Pbd^{\gamma}_a\left(F^{\veps}_3(T, \ell/2, 2L) > \upsilon\right) + 
\Pbd^{\gamma}_a\left(I^{\veps}_0(L) > i_{\ell}(2L)\right) + \Pbd^{\gamma}_a\left(I^{\veps}_0(\ell) < i_{\veps}(\ell/2)\right).
\end{align*}
Applying Lemma \ref{lem:PPLD}, we have that, for all  $\veps \in (0, L^2/4)$, for some  $k_0, k_1, k_2 \in (0, \infty)$ 
\begin{multline}\Pbd^{\gamma}_a\left(I^{\veps}_0(L) > i_{\veps}(2L)\right) 
+ \Pbd^{\gamma}_a\left(I^{\veps}_0(\ell) < i_{\veps}(\ell/2)\right)\\
\le 
k_0\left(\left(\Pbd^{0}_a\left(I^{\veps}_0(L) > i_{\veps}(2L)\right)\right)^{1/2}
+ \left(\Pbd^{0}_a\left(I^{\veps}_0(\ell) < i_{\veps}(\ell/2)\right)\right)^{1/2}\right)
\le
k_1 e^{-\veps^{-1/2}k_2}. \label{eq:514}
\end{multline}
Indeed, when $\veps \in (0, L^2/4)$,
\begin{align*}
\Pbd^{0}_a\left(I^{\veps}_0(L) > i_{\veps}(2L)\right) 
&\le \Pbd^{0}_a\left(|I^{\veps}_0(L) - L \veps^{-1/2}| > \frac{L}{2} \veps^{-1/2}  \right),
\end{align*}
and
\begin{align*}
\Pbd^{0}_a\left(I^{\veps}_0(\ell) < i_{\veps}(\ell/2)\right) \le 
\Pbd^{0}_a\left(I^{\veps}_0(\ell) < \ell \veps^{-1/2}/2)\right)
\le \Pbd^{0}_a\left(|I^{\veps}_0(\ell)- \ell \veps^{-1/2}| > \ell \veps^{-1/2}/2)\right).
\end{align*}
The estimate in \eqref{eq:514} now follows on applying Lemma \ref{lem:PPLD}, with  $b_1=0$, $b_2=1/2$, $\gamma=1/2$ and,  first with $L_1=L$ and then with $L_1=\ell$.

Proof of \eqref{eq:i123} for $i=2$ now follows on combining the  estimate in \eqref{eq:514}  with \eqref{eq:i123} for $i=3$ (which was shown previously).

Finally consider $i=1$ and take $\theta =2$.
Consider for $x \in [0,L]$ and $t \in [0, \veps]$
\begin{align}
| \chi^{\veps}(t,x) -  \chi^{\veps}(0, x)|
&=\veps^{1/4} \left|\sum_{i=0}^{\infty} \one_{(0, \veps^{-1/2}x]}(Y_i(t)) - \sum_{i=0}^{\infty} \one_{(0, \veps^{-1/2}x]}(Y_i(0))\right|\nonumber\\
&= \veps^{1/4} \left|\sum_{i=0}^{\infty} \one_{(-\infty, c_{\veps}(x,t)]}(X_i(t)) - \sum_{i=0}^{\infty} \one_{(-\infty, c_{\veps}(x,0)]}(X_i(0))\right|, \label{eq:853}
\end{align}
where
$c_{\veps}(x,t) \doteq -(\log \veps)/(2a) + a^{-1} \log x - at/2$.
Note that
\begin{align}
&\veps^{1/4} \left|\sum_{i=0}^{\infty} \one_{(-\infty, c_{\veps}(x,t)]}(X_i(t)) - \sum_{i=0}^{\infty} \one_{(-\infty, c_{\veps}(x,0)]}(X_i(0))\right|\nonumber\\
&\le \veps^{1/4} \left (\sum_{i <  I^{\veps}_0(x)} \one_{(c_{\veps}(x,t), \infty)}(X_i(t))
+ \sum_{i \ge  I^{\veps}_0(x)} \one_{(-\infty, c_{\veps}(x,t)]}(X_i(t)) \right).
\label{eq:854}
\end{align}
Let, for $i, j \in \NN_0$,
\begin{align*}
H^{\veps,l}(i,j) &= \one \{\inf_{t \in [0, \veps]} X^{l}_i(t) + \frac{a}{2}t \le 
X_{(j)}(0)\}, \; 
H^{\veps,r}(i,j)&= \one \{\sup_{t \in [0, \veps]} X^{r}_i(t) + \frac{a}{2}t \ge 
X_{(j-1)}(0)\},
\end{align*}
where $X^l_i$ and $X^r_i$ are as in \eqref{eq:xrl}.
Let
$$H^{\veps,l}(j) \doteq \sum_{i\ge j} H^{\veps,l}(i,j), \;\;
H^{\veps,r}( j) \doteq \sum_{i<j} H^{\veps,r}(i,j).
$$
Note that
\begin{align*}
\sum_{i \ge I_0^{\veps}(x)} \one_{(-\infty, c_{\veps}(x,t)]}(X_i(t))
&= \sum_{i \ge I_0^{\veps}(x)} \one_{(-\infty, c_{\veps}(x,0)]}(X_i(t) +at/2)\\
&\le \sum_{i \ge I_0^{\veps}(x)} \one_{(-\infty, X_{I_0^{\veps}(x)}(0)]}(X_i(t) +at/2),
\end{align*}
where the last line follows on noting that since 
$I_0^{\veps}(x) = \max\{i \ge 0: X_i(0) \le c_{\veps}(x,0)\}+1$ (with maximum over an empty set taken to be $-1$), $X_{I_0^{\veps}(x)}(0) \ge c_{\veps}(x,0)$.
It thus follows that, for $t \in [0, \veps]$,
\begin{align}
\sum_{i \ge I_0^{\veps}(x)} \one_{(-\infty, c_{\veps}(x,t)]}(X_i(t))
&\le \sum_{i \ge I_0^{\veps}(x)} \one \{\inf_{t \in [0, \veps]} X^{l}_i(t) + \frac{a}{2}t \le 
X_{(I_0^{\veps}(x))}(0)\}\nonumber\\
&=\sum_{i \ge I_0^{\veps}(x)} H^{\veps,l}(i,I_0^{\veps}(x)) = H^{\veps,l}(I_0^{\veps}(x)).
\label{eq:851}
\end{align}
In a similar manner
\begin{multline*}
\sum_{i <  I^{\veps}_0(x)} \one_{(c_{\veps}(x,t), \infty)}(X_i(t))
= \sum_{i <  I^{\veps}_0(x)} \one_{(c_{\veps}(x,0), \infty)}(X_i(t) + at/2)\\ 
\le \sum_{i <  I^{\veps}_0(x)} \one_{(X_{(I_0^{\veps}(x)-1)}(0), \infty)}(X_i(t) + at/2),
\end{multline*}
where the last line uses the observation that, by definition of $I_0^{\veps}(x)$,
$c_{\veps}(x,0) \ge X_{(I_0^{\veps}(x)-1)}(0)$.
It now follows that, for $t \in [0, \veps]$,
\begin{align}
\sum_{i <  I^{\veps}_0(x)} \one_{(c_{\veps}(x,t), \infty)}(X_i(t))
&\le \sum_{i <  I^{\veps}_0(x)} \one \{\sup_{t \in [0, \veps]} X^{r}_i(t) + \frac{a}{2}t \ge 
X_{(I^{\veps}_0(x)-1)}(0)\}\nonumber\\
&= \sum_{i <  I^{\veps}_0(x)} H^{\veps,r}(i,I^{\veps}_0(x))
= H^{\veps,r}( I^{\veps}_0(x)). \label{eq:852}
\end{align}
Combining \eqref{eq:851} and \eqref{eq:852} with \eqref{eq:853} and \eqref{eq:854} we obtain that
\begin{equation}\label{eq:in3}
\sup_{t \in [0, \veps]}  | \chi^{\veps}(t,x) -  \chi^{\veps}(0, x)|
\le \veps^{1/4} H^{\veps,l}(I^{\veps}_0(x)) +  \veps^{1/4} H^{\veps,r}(I^{\veps}_0(x)).
\end{equation}
Let $A^{\veps} \doteq \{ I^{\veps}_0(\ell) \ge \ell \veps^{-1/2}/2 \mbox{ and } I^{\veps}_0(L) \le 2L\veps^{-1/2}\}$.
Then we can find $k_3, k_4 \in (0, \infty)$ such that
\begin{equation}\label{eq:in1}
\Pbd^{\gamma}_a((A^{\veps})^c) \le k_3 e^{-\veps^{-1/2} k_4}.
\end{equation}
Next, for $\upsilon>0$, $j \in [\frac{\ell}{2}\veps^{-1/2}, 2L\veps^{-1/2}]$ and $q \in \NN$,
\begin{align}\label{eq:1245n}
\Pbd^{\gamma}_a(\veps^{1/4} H^{\veps,l}(j) >\upsilon)
&\le \veps^{q/4} \upsilon^{-q} \Ebd^{\gamma}_a|H^{\veps,l}(j)|^q.
\end{align}
Also, for $i\ge j$
\begin{multline*}
\Ebd^{\gamma}_a|H^{\veps,l}(i,j)|^q
= \Pbd^{\gamma}_a\left(\inf_{t \in [0, \veps]} (X^{l}_i(t) + \frac{a}{2}t) \le 
X_{(j)}(0)\right)\\
= \Pbd^{\gamma}_a\left(\inf_{t \in [0, \veps]} (W_i(t) + \frac{a}{2}t) \le 
X_{(j)}(0) - X_{(i)}(0)\right)
\le \Pbd^{\gamma}_a\left(|W_i(\veps)|
\ge X_{(i)}(0) - X_{(j)}(0) \right).
\end{multline*}
By scaling properties of the Brownian motion, independence of $W_i$ from $\{X_j(0)\}_{j\in \NN_0}$, and Markov's inequality, we have now, for any $\beta>0$
\begin{align*}
\Ebd^{\gamma}_a|H^{\veps,l}(i,j)|^q
\le E(e^{\beta|W_1(1)|}) 
\Ebd^{\gamma}_a\left(e^{-\beta \veps^{-1/2}(X_{(i)}(0) - X_{(j)}(0))}\right).
\end{align*}
Choose $\beta \in (0,1)$ sufficiently small so that, for all $\veps\in (0,1)$,
$$\beta \veps^{-1/2} < 2\gamma + \veps^{-1/2} a\ell/2.
$$
Then we claim that
\begin{align}
\Ebd^{\gamma}_a\left(e^{-\beta \veps^{-1/2}(X_{(i)}(0) - X_{(j)}(0))}\right)\le
\left(\frac{2\gamma +(i+1)a}{2\gamma+(j+1)a}\right)^{-\beta \veps^{-1/2}/2a}.\label{claim1001}
\end{align}
To see this, note that the left side, denoted as
$A_{ij}$, equals $\prod_{k=j+1}^i \frac{2\gamma+ka}{2\gamma + ka + \sigma}$ where $\sigma = \beta\veps^{-1/2}$ and the product is taken to be $1$ when $i=j$. Thus, using the inequality
$\log (1+x) \ge x/2$ for $x \in [0,1]$,
\begin{align*}
\log A_{ij} \le -\frac{\sigma}{2} \sum_{k=j+1}^i \frac{1}{2\gamma+ka} 
\le -\frac{\sigma}{2a} (\log(2\gamma + (i+1) a) - \log(2\gamma + (j+1) a)). 
\end{align*}
The claim now follows readily from the above estimate.

From the estimate in \eqref{claim1001} we see that there is a $\veps_0 \in (0,1)$ and $d_1 \in (0, \infty)$ such that, for all $\veps \in (0,\veps_0)$, using Minkowski's inequality,
\begin{multline}
\left(\Ebd^{\gamma}_a|H^{\veps,l}(j)|^q\right)^{1/q}
\le \sum_{i\ge j} \left(\Ebd^{\gamma}_a|H^{\veps,l}(i,j)|^q\right)^{1/q}\\ 
\le 
\sum_{i\ge j}
[\Ebd^{\gamma}_a(e^{\beta|W_1(1)|})]^{1/q} 
[\Ebd^{\gamma}_a\left(e^{-\beta \veps^{-1/2}(X_{(i)}(0) - X_{(j)}(0))}\right)]^{1/q}
\le d_1 \frac{2\gamma +ja}{\beta\veps^{-1/2} -2aq}.\label{eq:1014}
\end{multline}
To see the last inequality, note that
\begin{align*}
\sum_{i=j}^{\infty} \left(\frac{2\gamma +(i+1)a}{2\gamma+(j+1)a}\right)^{-\beta \veps^{-1/2}/2aq}
&= (2\gamma+(j+1)a)^{\beta \veps^{-1/2}/2aq} 
\sum_{i=j}^{\infty} (2\gamma +(i+1)a)^{-\beta \veps^{-1/2}/2aq}\\
&\le 
\left(\frac{2\gamma+(j+1)a}{2\gamma+ja} \right)^{\beta \veps^{-1/2}/2aq}
\frac{2q(2\gamma +ja)}{\beta \veps^{-1/2} - 2aq}.
\end{align*}
This, together with the fact that
$$
\limsup_{\eps \downarrow 0}\sup_{j \ge \ell \veps^{-1/2}/2}\left(\frac{2\gamma+(j+1)a}{2\gamma+ja} \right)^{\beta \veps^{-1/2}/2aq} <\infty
$$
gives the inequality in \eqref{eq:1014}.

Thus, there is a $\veps_1\in (0, \veps_0)$ such that
$\sup_{\veps \in (0,\veps_1)}\sup_{j \in [\frac{\ell}{2}\veps^{-1/2}, 2L\veps^{-1/2}]}
\Ebd^{\gamma}_a|H^{\veps,l}(j)|^q \doteq d_2<\infty$.
From \eqref{eq:1245n}
it now follows that, for all $\upsilon>0$ and $q \in \NN$, 
\begin{equation}
\limsup_{\veps\to 0}\veps^{-q/8}\left(\sup_{j \in [\ell \veps^{-1/2}/2, 2L\veps^{-1/2}]} \Pbd^{\gamma}_a(\veps^{1/4} H^{\veps,l}(j) >\upsilon)\right) =0.\label{eq:in2}
\end{equation}
Now, for $\upsilon>0$, $j \in [\frac{\ell}{2}\veps^{-1/2}, 2L\veps^{-1/2}]$ and $q \in \NN$
\begin{align}
\Pbd^{\gamma}_a(\veps^{1/4} H^{\veps,r}(j) >\upsilon)
&\le \veps^{q/4} \upsilon^{-q} \Ebd^{\gamma}_a| H^{\veps, r}(j)|^q.
\label{eq:1001}
\end{align}
Also, for $0 \le i<j$
\begin{align*}
\Ebd^{\gamma}_a| H^{\veps, r}(i,j)|^q
&= \Pbd^{\gamma}_a(\sup_{t \in [0, \veps]} (X^r_i(t) + at/2) > X_{(j-1)}(0))\\
&= \Pbd^{\gamma}_a(\sup_{t \in [0, \veps]} (W_i(t) + (\gamma +a/2)t) > X_{(j-1)}(0) - X_{(i)}(0))\\
&\le \Pbd^{\gamma}_a(|W_1(1)| > \veps^{-1/2} (X_{(j-1)}(0) - X_{(i)}(0)
- (\gamma +a/2)\veps))\\
&\le \Ebd^{\gamma}_a e^{|W(1)|} e^{\veps^{1/2}(\gamma+ a/2)}
\Ebd^{\gamma}_a e^{- \veps^{-1/2}(X_{(j-1)}(0) - X_{(i)}(0))}\\
&= \Ebd^{\gamma}_a e^{|W(1)|} e^{\veps^{1/2}(\gamma+ a/2)}
\prod_{k=i+1}^{j-1}\Ebd^{\gamma}_a e^{- \veps^{-1/2}(X_{(k)}(0) - X_{(k-1)}(0))}.
\end{align*}
We can find $\veps_2 \in (0, 1)$ and $\varsigma \in (0, \infty)$
such that for all $\veps \in (0,\veps_2)$ and $0 \le k \le j \le 2L\veps^{-1/2}$
$$\veps^{-1/2}(X_{(k)}(0) - X_{(k-1)}(0)) \ge_d U \mbox{ where } U \sim \mbox{Exp}(\varsigma).$$
Letting $b = Ee^{-U}$, we then have for some $\veps_3 \in (0,\veps_2)$ and $d_3 \in (0, \infty)$, for all $\veps \in (0,\veps_3)$
$$\left(\Ebd^{\gamma}_a| H^{\veps, r}(i,j)|^q\right)^{1/q}
\le d_3 b^{(j-i-1)/q}.
$$
Then, 
\begin{align*}
\left(\Ebd^{\gamma}_a| H^{\veps,r}(j)|^q\right)^{1/q}
\le \sum_{0\le i< j} \left(\Ebd^{\gamma}_a|H^{\veps,r}(i,j)|^q\right)^{1/q}
\le d_3 \sum_{0\le i< j} b^{(j-i-1)/q} 
\le (1 - b^{1/q})^{-1}.
\end{align*}
It then follows from \eqref{eq:1001} that
for all $\upsilon>0$ and $q \in \NN$, 
\begin{equation}\label{eq:in4}
\limsup_{\veps\to 0}\veps^{-q/8}\left(\sup_{j \in [\ell \veps^{-1/2}/2, 2L\veps^{-1/2}]} \Pbd^{\gamma}_a(\veps^{1/4} H^{\veps,r}(j) >\upsilon)\right) =0.
\end{equation}
The statement in \eqref{eq:i123b} now follows on using the stationarity of $\chi^\veps(t+\cdot,x)$, the estimates in
\eqref{eq:in3},\eqref{eq:in1}, \eqref{eq:in2},  \eqref{eq:in4}, recalling that $q \in \NN$ is arbitrary,
and an application of the union bound.
\end{proof}

We now return to the proof of Proposition \ref{prop:yaxis}.

{\bf Proof of Proposition \ref{prop:yaxis}.}
We begin by showing \eqref{eq:show2}. For $t \in [0,T]$ and $x \in [0, L]$,
 we have from \eqref{eq:eq410} that
$$|\tilde \chi^\veps(t,x) - \check \chi^\veps(t,x)| =
\veps^{1/4} |\cld(I^\veps_0(x), i_\veps(x), t)|.$$

Recalling, for each $t\ge 0$, $\{(Y_{(i)}(t) - Y_{(i-1)}(t)\}_{i\ge 1}$ are iid mean $1$ Exponential random variables,
it follows that, for every $m>1$, there is a $\tilde C_m \in (0, \infty)$ such that
for all $j< j'$
\begin{align*}
\Ebd^{\gamma}_a|\cld(j, j', t)|^m = \Ebd^{\gamma}_a|(j-j') - (Y_{(j)}(t) - Y_{(j')}(t))|^m \le \tilde C_m |j-j'|^{m/2}.
\end{align*}
Let $\theta =2$ and $t_k^\veps \ge 0$ be defined as in Lemma \ref{lem:111}.
Fix $\upsilon >0$ and $\varsigma \in (1/2,1)$.
Then, for $m \in \NN$, $t^\veps_k \le T$, $j, j' \le 2(L+1)\veps^{-1/2}$, $|j-j'| \le \veps^{-\varsigma/2}L$,
$$
\Pbd^{\gamma}_a\left(\veps^{1/4} |\cld(j, j', t)| > \upsilon\right)
\le \upsilon^{-m} \veps^{m/4}\Ebd^{\gamma}_a|\cld(j, j', t)|^m
\le \tilde C_m\upsilon^{-m} \veps^{m/4} \veps^{-m\varsigma/4}L^{m/2}.
$$
By a union bound it then follows that
\begin{align}
&\Pbd^{\gamma}_a\left(\veps^{1/4} |\cld(j, j', t_k^{\veps})| > \upsilon
\mbox{ for some } t_k^\veps \le T,\; j,j' \le 2(L+1)\veps^{-1/2}, \;|j-j'| \le \veps^{-\varsigma/2}L\right)\nonumber\\
&\qquad \le \tilde C_m L^{m/2} \upsilon^{-m} \veps^{\frac{m}{4} (1-\varsigma)} (T \veps^{-1}+1) (4(L+1)^2\veps^{-1}).\label{eq:est1}
\end{align}
Taking $m$ to be suitably large we see that the last expression converges to $0$ as $\veps\to 0$. 

Next,
\begin{multline}
\Pbd^{\gamma}_a\left(\sup_{t \in [0,T]} \sup_{x \in [\ell, L]}
|\tilde \chi^\veps(t,x) - \check\chi^\veps(t,x)| > 2\upsilon\right)\nonumber
\le 
\Pbd^{\gamma}_a\left(\max_{t_k^\veps \le T} \sup_{x \in [\ell, L]}
|\tilde \chi^\veps(t_k^\veps,x) - \check\chi^\veps(t_k^\veps,x)| > \upsilon\right)\nonumber\\
 + \Pbd^{\gamma}_a\left(F^\veps_3(T,\ell,L) > \upsilon/2\right) +
\Pbd^{\gamma}_a\left(F^\veps_2(T, \ell,L) > \upsilon/2\right). \label{eq:est2}
\end{multline}
Also,
\begin{align}
&\Pbd^{\gamma}_a\left(\max_{t_k^\veps\le T} \sup_{x \in [\ell, L]}
|\tilde \chi^\veps(t_k^\veps,x) - \check\chi^\veps(t_k^\veps,x)| > \upsilon\right)\nonumber\\
& = \Pbd^{\gamma}_a\left(\max_{t_k^\veps \le T} \sup_{x \in [\ell, L]}
\veps^{1/4} |\cld(I^\veps_0(x), i_\veps(x), t_k^\veps)| > \upsilon\right)\nonumber\\
&\le \Pbd^{\gamma}_a\left(\veps^{1/4} |\cld(j, j', t_k^\veps)| > \upsilon
\mbox{ for some } t_k^\veps \le T,\; j,j' \le 2(L+1)\veps^{-1/2}, \;|j-j'| \le \veps^{-\varsigma/2}L\right)\nonumber\\
& \quad + \Pbd^{\gamma}_a\left(I^\veps_0(L) > 2(L+1) \veps^{-1/2}\right)
+ \Pbd^{\gamma}_a\left(\sup_{x \in [\ell, L]} |I^\veps_0(x) - i_\veps(x)| > L\veps^{-\varsigma/2}\right).\\ \label{eq:est3}
\end{align}

Applying the estimate \eqref{eq:ppldp} in Lemma \ref{lem:PPLD}, as in \eqref{eq:in1}, we 
can find $d_1, d_2 \in (0, \infty)$ such that
\begin{align}
\Pbd^{\gamma}_a\left(I^\veps_0(L) > 2(L+1) \veps^{-1/2}\right)
\le\Pbd^{\gamma}_a\left( |I^\veps_0(L) - L\veps^{-1/2}| > (L+2)\veps^{-1/2}\right)
\le d_1e^{-\veps^{-1/2} d_2}.\\ \label{eq:est4}
\end{align}
Also, recalling that $\varsigma \in (1/2, 1)$, we have on applying
\eqref{eq:ppldp} with $L_1 = L, \gamma = 1/2$, $b_1= (1-\varsigma)/2$
and $b_2 = (\varsigma-1/2)$, that for some $\tilde d_3, d_3, d_4 \in (0, \infty)$,
and $\cln$ a standard rate $1$ Poisson process,
\begin{align}
 \Pbd^{\gamma}_a\left(\sup_{x \in [\ell, L]} |I^\veps_0(x) - i_\veps(x)| > \veps^{-\varsigma/2}\right) &\le
\tilde d_3 \left(\Pbd^{0}_a\left(\sup_{x \in [\ell, L]} |I^\veps_0(x) - i_\veps(x)| > \veps^{-\varsigma/2}\right)\right)^{1/2}\\
&\le
 \tilde d_3\left(P(\sup_{0 \le s \le \veps^{-1/2}L} |\cln(s) -s| > \frac{1}{2} \veps^{-\varsigma/2})\right)^{1/2}\nonumber\\
 &\le d_3 e^{ - d_4\veps^{-(2\varsigma-1)/2} }. \label{eq:est5}
\end{align}
Combining Lemma \ref{lem:111}, and the estimates in \eqref{eq:est1}, 
\eqref{eq:est2}, \eqref{eq:est3}, \eqref{eq:est4}, \eqref{eq:est5}, we have the convergence stated in \eqref{eq:show2}.

We now consider the statement in \eqref{eq:show1}, 
Note that 
\begin{multline*}
\Pbd^{\gamma}_a\left(\sup_{t \in [0,T]} \sup_{x \in [\ell, L]}
| \chi^\veps(t,x) - \tilde\chi^\veps(t,x)| > 2\upsilon\right)
\le 
\Pbd^{\gamma}_a\left(\max_{t_k^\veps \le T} \sup_{x \in [\ell, L]}
| \chi^\veps(t_k^\veps,x) - \tilde\chi^\veps(t_k^\veps,x)| > \upsilon\right)\\
\quad + \Pbd^{\gamma}_a\left(F^\veps_1(L) > \upsilon/2\right) +
\Pbd^{\gamma}_a\left(F^\veps_2(L) > \upsilon/2\right). 
\end{multline*}
Furthermore, using \eqref{eq:1253},
\begin{align}
&\Pbd^{\gamma}_a\left(\max_{t_k^\veps \le T} \sup_{x \in [\ell, L]}
| \chi^\veps(t_k^\veps,x) - \tilde\chi^\veps(t_k^\veps,x)| > \upsilon\right)\nonumber\\
& = \Pbd^{\gamma}_a\left(\max_{t_k^\veps \le T} \sup_{x \in [\ell, L]}
|\veps^{1/4} \cld(I^\veps_{t_k^\veps}(x), I^\veps_0(x), t_k^\veps) + 
\veps^{1/4} \rho^\veps_{t_k^\veps}(x)|  > \upsilon\right)\nonumber\\
&\le \Pbd^{\gamma}_a\left(\veps^{1/4} |\cld(j, j', t_k^\veps)| > \upsilon/2
\mbox{ for some } t_k^\veps \le T,\; j,j' \le 2(L+1)\veps^{-1/2}, \;|j-j'| \le \veps^{-\varsigma/2}L\right)\nonumber\\
& \quad + \Pbd^{\gamma}_a\left(I^\veps_0(L) > 2(L+1) \veps^{-1/2}\right)
+ \Pbd^{\gamma}_a\left(\max_{t_k^\veps \le T}I^\veps_{t_k^\veps}(L) > 2(L+1) \veps^{-1/2}\right)\\
&\quad + \Pbd^{\gamma}_a\left(\veps^{1/4} \max_{t_k^\veps \le T} \sup_{x \in [\ell, L]} |\rho^\veps_{t_k^\veps}(x)|  > \upsilon/2\right) \nonumber\\
&\quad + \Pbd^{\gamma}_a\left(\sup_{x \in [\ell, L]} |I^\veps_0(x) - i_\veps(x)| > \frac{1}{2}\veps^{-\varsigma/2}\right)
+ \Pbd^{\gamma}_a\left(\sup_{x \in [\ell, L]} \max_{t_k^\veps\le T}|I^\veps_{t_k^\veps}(x) - i_\veps(x)| > \frac{1}{2}\veps^{-\varsigma/2}\right). \label{eq:est3'}
\end{align}
All terms except the fourth term on the right side are estimated as before (with an additional union bound).
{For the fourth term we will use the fact that, under $\Pbd_a^0$, $\rho^\veps_t(x)$ is distributed as $Exp(1)$ for any $t \ge 0,\, x>0$.
Fix $\beta > 1/2$ and define for $j \in \NN_0$, $x^\veps_j 
\doteq j\veps^{\beta/2}$. Note that, if $x \in [x^\veps_j, x^\veps_{j+1}]$ for some $j \in \NN_0$, then for any $t \ge 0$,
$$
\left|\rho^\veps_t(x) - \rho^\veps_t(x^\veps_j)\right| \le \left|\rho^\veps_t(x^\veps_{j+1}) - \rho^\veps_t(x^\veps_j)\right| + 2 \veps^{-1/2}|x^\veps_{j+1} - x^\veps_j| \le \left|\rho^\veps_t(x^\veps_{j+1})\right| + \left|\rho^\veps_t(x^\veps_j)\right| + 2\veps^{(\beta - 1)/2}.
$$
Combining these observations, we obtain positive $\veps_0,d_5$ such that for $\veps \in (0, \veps_0)$,
\begin{align*}
\Pbd^{\gamma}_a\left(\veps^{1/4} \max_{t_k^\veps \le T} \sup_{x \in [\ell, L]} |\rho^\veps_{t_k^\veps}(x)|  > \upsilon/2\right) 
&\le d_5\left(\Pbd^0_a\left(\veps^{1/4} \max_{t_k^\veps \le T} \sup_{x \in [\ell, L]} |\rho^\veps_{t_k^\veps}(x)|  > \upsilon/2\right)\right)^{1/2}\\
&\le d_5\left(\Pbd^0_a\left(\veps^{1/4} \max_{t_k^\veps \le T} \max_{x_j^\veps \le L} |\rho^\veps_{t_k^\veps}(x_j^\veps)|  > \upsilon/8\right)\right)^{1/2}\\
&\le d_5(\veps^{-\beta/2}L+1)^{1/2}(\veps^{-1}T+1)^{1/2}  e^{-\upsilon \veps^{-1/4}/16}.
\end{align*}
This gives the desired estimate on the fourth term and completes the proof  in \eqref{eq:show1}.}


\subsection{Proof of Corollary \ref{cor:fixxprocess}} \label{ssec:corproof} 

\begin{proof} By Theorem \ref{thm:main2}, as $\veps \to 0$, under $\Pbd^{\gamma}_a$,
$$ \veps^{-1/4}\left(X^{\veps}_{(i_{\veps}(x))}(\cdot) - X^{\veps}_{(i_{\veps}(x))}(0)\right) \Rightarrow \frac{1}{ax} (u(\cdot,x) - u(0,x)),$$
where $u$ may be expressed as 
\begin{equation}
u(t,x) = \hat\clw(t,x) + \hat\clm(t,x), \quad (t,x) \in [0,\infty) \times (0,\infty).
\end{equation}
Here the pair of processes $(\hat\clw,\hat\clm)$ is defined as in Theorem \ref{thm:formain1}. Since $u$ is Gaussian, it is enough to prove that the process $u(\cdot,x) - u(0,x)$ has the correct covariance structure for each $x \in (0,\infty)$. By independence of the white noise measures $B$ and $W$, for any $t, t' \in [0,\infty)$, 
\begin{align} 
& \Ebd_a^\gamma[(u(t,x) - u(0,x))(u(t',x) - u(0,x))] \\
& = \Ebd_a^\gamma[\hat\clw(t',x)\hat\clw(t,x)] - \Ebd_a^\gamma[\hat\clw(0,x)\hat\clw(t,x)] - \Ebd_a^\gamma[\hat\clw(t',x)\hat\clw(0,x)] + \Ebd_a^\gamma[\hat\clw(0,x)^2] \\
& \hspace{1in} + \Ebd_a^\gamma[\hat\clm(t',x)\hat\clm(t,x)], \label{eq:ucov}
\end{align} 
noting that $\hat\clm(0,x) = 0$ a.s. We sketch calculation of each term below. We may assume without lost of generality that $t, t' > 0$. We have 
\begin{align} 
\Ebd_a^\gamma[\hat\clw(t',x)\hat\clw(t,x)] & = \int_0^\infty \Psi^x(t,y)\Psi^x(t',y)dy \\
& = - \int_0^\infty \left[y \partial_y\Psi^x(t,y) \cdot \Psi^x(t',y) + y \partial_y\Psi^x(t',y) \cdot \Psi^x(t,y) \right] dy \label{eq:wcov}
\end{align}
by integration by parts. By \eqref{eq:Psixexp} and \eqref{eq:ydPsi}, 
\begin{align}
-\int_0^\infty y \partial_y\Psi^x(t,y) \cdot \Psi^x(t',y) dy &= \frac{1}{a} \int_0^\infty p_t( a^{-1}(\log x - \log y- \frac{a^2t}{2})) \\
&\quad \times\int_{-\infty}^{a^{-1}(\log x - \log y- \frac{a^2t}{2})} p_{t'}(w)dw.
\end{align}
By a change of variables, the right-hand side may be seen to be equal to 
\begin{align}
x \int_{-\infty}^\infty \int_{-\infty}^{z - \frac{at}{2} - \frac{at'}{2}} p_t(z)p_{t'}(w) dw dz &= x \int_{-\infty}^\infty \int_{-\infty}^{\xi(z, t, t')} p_1(z)p_1(w)dw dz\\
& = x \int_{-\infty}^{-\frac{a}{2}(t + t')^{1/2}} p_1(z)dz. 
\end{align}
where $\xi(z,t,t') \doteq (\frac{t}{t'})^{1/2}z - \frac{a}{2}(t')^{-1/2}(t + t')$ and the last equality may be obtained by rotational symmetry of the Gaussian distribution $p_1(z)p_1(w)dw dz$. Computing similarly $-\int_0^\infty y \partial_y\Psi^x(t',y) \cdot \Psi^x(t,y) dy$, we conclude from \eqref{eq:wcov} that 
\begin{equation}
\Ebd_a^\gamma[\hat\clw(t',x)\hat\clw(t,x)] = 2x \int_{-\infty}^{-\frac{a}{2}(t + t')^{1/2}} p_1(z)dz = 2x \left(1 - \Phi\left(\frac{a}{2}(t + t')^{1/2}\right)\right). \label{eq:cov1}
\end{equation}
Using dominated convergence as $t \to 0$ and $t' \to 0$, one may obtain as well that 
\begin{align}
& \quad\quad\quad \Ebd_a^\gamma[\hat\clw(0,x)\hat\clw(t,x)] = 2x  \left(1 - \Phi\left(\frac{a}{2}t^{1/2}\right)\right), \label{eq:cov2} \\
& \Ebd_a^\gamma[\hat\clw(t',x)\hat\clw(0,x)] = 2x  \left(1 - \Phi\left(\frac{a}{2}(t')^{1/2}\right)\right), \quad \text{ and } \quad \Ebd_a^\gamma[\hat\clw(0,x)^2] = x. \label{eq:cov3}
\end{align}

We compute the last term in \eqref{eq:ucov} as follows:
\begin{align}
\Ebd_a^\gamma[\hat\clm(t',x)\hat\clm(t,x)] = a^2 \int_0^{t \wedge t'} \int_0^\infty y^2 q_{t - s}(y,x)q_{t' - s}(y,x) dy ds.
\end{align}
For $\sigma, \sigma' \in (0,\infty)$, we have
\begin{align}
a^2 \int_0^\infty y^2 q_\sigma(y,x)q_{\sigma'}(y,x) dy & = \int_0^\infty p_{\sigma}\left( \frac{\log y}{a} - \frac{\log x}{a} + \frac{a\sigma}{2} \right)p_{\sigma'}\left( \frac{\log y}{a} - \frac{\log x}{a} + \frac{a\sigma'}{2} \right)dy \\
& = ax \int_{-\infty}^\infty p_{\sigma}(w)e^{-\frac{aw}{2} - \frac{a^2 \sigma}{8}} p_{\sigma'}(w)e^{-\frac{aw}{2} - \frac{a^2 \sigma'}{8}} e^{aw} dw \\
& = ax e^{-\frac{a^2}{8}(\sigma + \sigma')} \int_{-\infty}^\infty p_{\sigma}(w)p_{\sigma'}(w) dw = \frac{ax}{\sqrt{2\pi}} \frac{e^{-\frac{a^2}{8}(\sigma + \sigma')}}{(\sigma + \sigma')^{1/2}},
\end{align}
where the second equality follows by a change of variables and last by noting that $p_{\sigma}(w)p_{\sigma'}(w) = (2\pi)^{-1/2}(\sigma + \sigma')^{-1/2}p_{(\sigma^{-1} + (\sigma')^{-1})^{-1}}(w)$. From this, one may show that 
\begin{align}
\Ebd_a^\gamma[\hat\clm(t',x)\hat\clm(t,x)] & = \frac{ax}{\sqrt{2\pi}} \int_0^{t \wedge t'} \frac{e^{-\frac{a^2}{8}(t + t' - 2s)}}{(t + t' - 2s)^{1/2}}ds \\
& = 2x \int_{\frac{a}{2}|t - t'|^{1/2}}^{\frac{a}{2}(t + t')^{1/2}} p_1(w) dw = 2x\left( \Phi\left(\frac{a}{2}(t + t')^{1/2}\right) - \Phi\left(\frac{a}{2}|t' - t|^{1/2}\right) \right). \label{eq:cov4}
\end{align}
Combining  \eqref{eq:cov1}, \eqref{eq:cov2}, \eqref{eq:cov3}, and \eqref{eq:cov4}, we obtain that the quantity in \eqref{eq:ucov} equals, 
\begin{align}
2x\left(\Phi\left(\frac{a}{2}t^{1/2}\right) + \Phi\left(\frac{a}{2}(t')^{1/2}\right) - \Phi\left(\frac{a}{2}|t' - t|^{1/2}\right) - \frac{1}{2}\right),
\end{align}
which implies the desired covariance formula.
\end{proof}

\section{Proof of Lowest Particle Asymptotics}

\begin{proof}[Proof of Theorem \ref{thm:lpasymptotics}]
Let $\tilde{X}_i(t) = X_i(t) + \frac{at}{2}$ and $\tilde{X}_{(i)}(t) = X_{(i)}(t) + \frac{at}{2}$.
Letting $F_\eta(x) = \int_{-\infty}^x f_\eta(z)dz$, it suffices to show that, for all $x, y \in \RR$,
\begin{equation}
    \Pbd_a^\gamma(\tilde{X}_{(0)}(0) \leq x, \tilde{X}_{(0)}(t) \leq y) \to F_\eta(x)F_\eta(y) \text{ as } t \to \infty.
\end{equation}

For $x \in \mathbb{R}$, let $J_0(x) \doteq \min\{i \in \mathbb{N}_0 : X_i(0) > x\}$. For each $T \in (0,\infty)$ and $M \in \mathbb{R}$, we let
$$
A_{T,M} \doteq \{\text{for all } t \geq T, X_i(t) \neq X_{(0)}(t) \text{ for } 0 \leq i < J_0(M)\}.
$$
(We set $A_{T,M} = \Omega$ if $J_0(M) = 0$.)

\textit{Step 1.} We will show that $\lim_{M \to \infty}\lim_{T \to \infty}\Pbd_a^\gamma(A_{T,M}) = 1$. First, notice that 
\begin{align}
A_{T,M} \supset B_{T,M}^0 \cap B_{T,M}^1,
\end{align}
where 
$$
B_{T,M}^0 \doteq \Big\{\text{for all } t \geq T, \inf_{0 \leq i < J_0(M)} X_i(t) > -\frac{at}{4}\Big\}, \; B_{T,M}^1 \doteq \Big\{\text{for all } t \geq T, \inf_{i \geq J_0(M)} X_i(t) \leq -\frac{at}{4} \Big\}.
$$
It is sufficient to show that $\lim_{M \to \infty}\lim_{T \to \infty}\Pbd_a^\gamma(B_{T,M}^0) = 1$ and $\lim_{M \to \infty} \lim_{T \to \infty}\Pbd_a^\gamma(B_{T,M}^1) = 1$. For the first limit, we use union bound to write  
\begin{align}
\Pbd_a^\gamma((B_{T,M}^0)^c) & \leq \left\{\text{for some $0 \leq i < J_0(M)$ and some $t \in [T,\infty)$}, X_i(t) \le -\frac{at}{4} \right\} \\
& \leq \Pbd_a^\gamma(J_0(M) \geq e^{2aM}) + \sum_{i = 0}^{\lfloor e^{2aM} \rfloor} \Pbd_a^\gamma(\text{for some } t \in [T,\infty), X_i(t) \le -\frac{at}{4}). \label{eq:B0cbd}
\end{align}
To estimate the terms of the sum, note that for each $i$, $X_i(t) \geq X_i(0) + W_i(t)$. Using this, we obtain for $R > 0$, $i \in \NN_0$,
\begin{align}
& \Pbd_a^\gamma(\text{for some } t \in [T,\infty), X_i(t) \le -\frac{at}{4}) \leq \Pbd_a^\gamma(\text{for some } t \in [T,\infty), X_i(0) + W_i(t) \le -\frac{at}{4}) \\
& \vspace{1in}\leq \Pbd_a^\gamma(X_0(0) < -R) + \Pbd_a^\gamma(\text{for some } t \in [T,\infty), -R + W_i(t) \le -\frac{at}{4}).
\end{align}
The second term goes to zero as $T \to \infty$, while the first term goes to zero as $R \to \infty$ by the a.s. finiteness of  $X_0(0)$. Thus the sum in \eqref{eq:B0cbd} converges to zero as $T \to \infty$ for any fixed $M$. To see that the first term in \eqref{eq:B0cbd} converges to zero as $M \to \infty$, we estimate
\begin{align}
\Pbd_a^\gamma(J_0(M) > e^{2aM}) \leq \Pbd_a^\gamma(Y_{(\lfloor e^{2aM} \rfloor)}(0) \leq e^{aM}) \leq e^{aM} \Ebd_a^\gamma[Y_{(\lfloor e^{2aM} \rfloor)}(0)^{-1}] \leq c_1 e^{-aM},\\ \label{eq:Ibigbd}
\end{align}
where the last bound is obtained using the \ab{Gamma} distribution of the random variable $Y_{(\lfloor e^{2aM},  \rfloor)}(0)$, under $\Pbd^{0}_a$, using a calculation similar to \eqref{eq:initmoment}. We conclude that $\lim_{M \to \infty}\lim_{T \to \infty}\Pbd^{\gamma}_a((B_{T,M}^0)^c) = 0$.

To prove the second limit, we proceed as follows. 
Let 
$
Y^{M,*}(t) \doteq \inf_{i \geq J_0(M)} e^{a(X_i(t) + \frac{at}{2})}
$, for $t \in [0,\infty)$.
Observe that 
$$
(B_{T,M}^1)^c = \{Y^{M,*}(t) > e^{a^2t/4} \text{ for some $t \geq T$}\} \subset \bigcup_{n = \lfloor T \rfloor}^\infty \left\{\sup_{t \in [n,n+1]} Y^{M,*}(t) > e^{a^2n/4}\right\}.
$$
Hence, applying a union bound and Markov's inequality,
\begin{multline}
\Pbd_a^\gamma((B_{T,M}^1)^c)  \leq \Pbd_a^\gamma(J_0(M) > e^{2aM}) \\
+ \sum_{n = \lfloor T \rfloor}^\infty \Pbd_a^\gamma\left(\{J_0(M) \leq e^{2aM}\} \cap \left\{\sup_{t \in [n,n+1]} Y^{M,*}(t) > e^{a^2n/4}\right\} \right) \\
 \leq \Pbd_a^\gamma(J_0(M) > e^{2aM}) + \sum_{n = \lfloor T \rfloor}^\infty e^{-a^2n/4}\Ebd_a^\gamma\Big[ \mathbf{1}\{J_0(M) \leq e^{2aM}\} \sup_{t \in [n,n+1]} Y^{M,*}(t) \Big]. \label{eq:notYlow}
\end{multline}
To conclude that, for each fixed $M$, the second term goes to zero as $T \to \infty$, it is enough to show that the expectation inside the sum is bounded uniformly in $n$. Observe that for all $t$,
$$
Y^{M,*}(t) = \inf_{i \geq J_0(M)} Y_i(t) \leq Y_{(J_0(M))}(t).
$$
Hence, 
\begin{multline}
\Ebd_a^\gamma\left[ \mathbf{1}\{J_0(M) \leq e^{2aM}\} \sup_{t \in [n,n+1]} Y^{M,*}(t) \right]  \leq \Ebd_a^\gamma\left[ \sup_{t \in [n,n+1]} Y_{(\lfloor e^{2aM} \rfloor)}(t) \right]\\
 = \Ebd_a^\gamma\left[ \sup_{t \in [0,1]}Y_{(\lfloor e^{2aM} \rfloor)}(t) \right] 
 \leq \Ebd_a^\gamma\left[ \sup_{t \in [0,1]}\max\{Y_{i}(t), 0 \leq i \leq e^{2aM}\}\right] \\
  \leq \sum_{0 \leq i \leq e^{2aM}} \Ebd_a^\gamma\left[\sup_{t \in [0,1]} Y_i(t) \right], \label{eq:expbd}
\end{multline}
using stationarity of $\{Y_{(i)}(t), i \geq 0\}$ to obtain the equality. For each $i$, $Y_{i}(t) \leq Y_i(0)e^{a(W_i(t) + \gamma t) + a^2t/2}$, which gives us,
\begin{align}
\Ebd_a^\gamma\left[ \sup_{t \in [0,1]} Y_{i}(t) \right]  \leq c_2\Ebd_a^\gamma\left[Y_{i}(0)\exp\left(a \sup_{t \in [0,1]} W_i(t) \right) \right] = {c_2 \Ebd_a^\gamma[Y_i(0)]\Ebd_a^\gamma\exp\left(a |W_i(1)| \right) < \infty.} \label{eq:exp2bd}
\end{align}
  This proves the desired uniform in $n$ bound.
Applying the bounds \eqref{eq:expbd}, \eqref{eq:exp2bd}, and \eqref{eq:Ibigbd} to \eqref{eq:notYlow}, we obtain that $\lim_{M \to \infty} \lim_{T \to \infty}\Pbd_a^\gamma((B_{T,M}^1)^c) = 0$, as desired.
This completes the proof of Step 1.

\textit{Step 2.} To complete the proof, let $\tilde{X}^{M,*}(t) \doteq \inf_{i \geq J_0(M)} \tilde{X}_i(t)$. We make the following observation:
\begin{equation} \label{eq:keyob}
\text{On the event } \{J_0(M) \geq 1\} \cap A_{T,M}, \quad \tilde{X}^{M,*}(t) = \tilde{X}_{(0)}(t) \text{ for } t \geq T.
\end{equation}
Thus, for $t\ge T$,
\begin{align}
&\Pbd_a^\gamma(\tilde{X}_{(0)}(0) \leq x, \tilde{X}_{(0)}(t) \leq y) \\
& = \Pbd_a^\gamma(\{\tilde{X}_{(0)}(0) \leq x, \tilde{X}^{M,*}(t) \leq y\} \cap \{J_0(M) \geq 1\} \cap A_{T,M}) + \epsilon_1(T,M) \\
& = \Pbd_a^\gamma(\{\tilde{X}_{(0)}(0) \leq x, \tilde{X}^{M,*}(t) \leq y\} \cap \{J_0(M) \geq 1\}) + \epsilon_2(T,M), \label{eq:todecuple}
\end{align}
where, for $j = 1,2$, {$\epsilon_j(T,M) \leq c_3(\Pbd_a^\gamma((A_{T,M})^c) + \Pbd_a^\gamma(J_0(M) = 0))$}, 
and so\\ $\lim_{M \to \infty} \lim_{T \to \infty} \epsilon_j(T,M) = 0$.
We have
\begin{align}
& \Pbd_a^\gamma(\{\tilde{X}_{(0)}(0) \leq x, \tilde{X}^{M,*}(t) \leq y\} \cap \{J_0(M) \geq 1\}) \\
& = \frac{1}{\Gamma(1 + 2\gamma/a)}\Ebd_a^0[Y_{(0)}(0)^{2\gamma/a}\mathbf{1}\{J_0(M) \geq 1\}\mathbf{1}\{\tilde{X}_{(0)}(0) \leq x, \tilde{X}^{M,*}(t) \leq y\}]. 
\end{align}
Let for $-\infty <a \le b<\infty$, $J_0(a,b] \doteq J_0(b) - J_0(a)$ and define
$$\mathcal{G}_1 \doteq \sigma\{J_0(a,b]: -\infty<a \le b \le M\}, \;\;
\mathcal{G}_2 \doteq \sigma\{J_0(a,b]: M<a \le b <\infty\} \vee \sigma\{W_i, i \in \NN_0\}.
$$
Under $\Pbd_a^0$, $\mathcal{G}_1$ and $\mathcal{G}_2$ are independent.
Furthermore, $\mathbf{1}(J_0(M) \ge 1)Y_{(0)}(0)^{2\gamma/a}$ is $\mathcal{G}_1$ measurable and
$\tilde{X}^{M,*}(t)$ is $\mathcal{G}_2$ measurable.

Thus, the last line above is  equal to 
\begin{align}
 \frac{1}{\Gamma(1 + 2\gamma/a)}\Ebd_a^0[Y_{(0)}(0)^{2\gamma/a}\mathbf{1}\{J_0(M) \geq 1\}\mathbf{1}\{\tilde{X}_{(0)}(0) \leq x\}]\Ebd_a^0[\mathbf{1}\{\tilde{X}^{M,*}(t) \leq y\}].
 \end{align}
Next note that
\begin{align*}
&\Ebd_a^0[\mathbf{1}\{\tilde{X}^{M,*}(t) \leq y\}] \\
&= \frac{1}{\Gamma(1 + 2\gamma/a)}\Ebd_a^0[\mathbf{1}\{J_0(M) \geq 1\}Y_{(0)}(0)^{2\gamma/a}] \Ebd_a^0[
\mathbf{1}\{\tilde{X}^{M,*}(t) \leq y\}] + \epsilon_3(M)\\
&=  \frac{1}{\Gamma(1 + 2\gamma/a)}\Ebd_a^0[\mathbf{1}\{J_0(M) \geq 1\}Y_{(0)}(0)^{2\gamma/a}
\mathbf{1}\{\tilde{X}^{M,*}(t) \leq y\}] + \epsilon_3(M),
\end{align*}
where $\lim_{M \to \infty} \epsilon_3(M) = 0$.
Also, in view of \eqref{eq:keyob} and Step 1,
\begin{align*}
&\Ebd_a^0[\mathbf{1}\{J_0(M) \geq 1\}Y_{(0)}(0)^{2\gamma/a}
\mathbf{1}\{\tilde{X}^{M,*}(t) \leq y\}]\\
 &=
 \Ebd_a^0[\mathbf{1}\{J_0(M) \geq 1\}Y_{(0)}(0)^{2\gamma/a}
\mathbf{1}\{\tilde{X}_{(0)}(t) \leq y\}] + \epsilon_4(T,M)\\
&=\Ebd_a^0[Y_{(0)}(0)^{2\gamma/a}
\mathbf{1}\{\tilde{X}_{(0)}(t) \leq y\}] + \epsilon_5(T,M),
\end{align*}
where $\lim_{M \to \infty} \lim_{T \to \infty} \epsilon_j(T,M) = 0$ for $j=4,5$.
Combining these equalities we have
\begin{multline*}
\Pbd_a^\gamma(\tilde{X}_{(0)}(0) \leq x, \tilde{X}_{(0)}(t) \leq y)
= \frac{1}{\Gamma(1 + 2\gamma/a)}\Ebd_a^0[Y_{(0)}(0)^{2\gamma/a}\mathbf{1}\{\tilde{X}_{(0)}(0) \leq x\}]\\
 \cdot \frac{1}{\Gamma(1 + 2\gamma/a)}\Ebd_a^0[Y_{(0)}(0)^{2\gamma/a}\mathbf{1}\{\tilde{X}_{(0)}(t) \leq y\}] + \epsilon_6(T,M) \\
 = \Pbd_a^\gamma(\tilde{X}_{(0)}(0) \leq x )\Pbd_a^\gamma(\tilde{X}_{(0)}(t) \leq y) + \epsilon_6(T,M),
 \end{multline*}
where $\lim_{M \to \infty} \lim_{T \to \infty} \epsilon_6(T,M) = 0$.
Under
$\Pbd^{\gamma}_a$, by stationarity, $\tilde{X}_{(0)}(t)$ has the same distribution as $\tilde{X}_{(0)}(0)$, for all $t \in [0,\infty)$, which  by the discussion in the introduction (see \eqref{eq:inhomogeneous2}), is given by the density function $f_\eta$, defined in \eqref{eq:feta}. 
Thus
$$\Pbd_a^\gamma(\tilde{X}_{(0)}(0) \leq x, \tilde{X}_{(0)}(t) \leq y) = F_\eta(x)F_\eta(y) + \epsilon_6(T,M).$$
The result follows on sending $T\to \infty$ and then $M\to \infty$.
\end{proof}

%
%

\begin{funding}
Research supported in part by  the RTG award (DMS-2134107) from the NSF.  SB was supported in part by the NSF-CAREER award (DMS-2141621).
AB was supported in part by the NSF (DMS-2152577). \ab{We acknowledge the valuable feedback of an anonymous referee which led to significant improvements in the article.}
\end{funding}



\bibliographystyle{imsart-number} 
\bibliography{atlas_ref, inert_ref}       


\end{document}